\newtheorem{theorem}[subsubsection]{Theorem} 
\newtheorem{conjecture}[subsubsection]{Conjecture}
\theoremstyle{definition}
\newtheorem{proposition}[subsubsection]{Proposition}
\newtheorem{condition}[subsubsection]{Condition}
\newtheorem{notation}[subsubsection]{Notation}
\newtheorem{question}[subsubsection]{Question}
\newtheorem{lemma}[subsubsection]{Lemma}
\newtheorem{corollary}[subsubsection]{Corollary}
\numberwithin{equation}{subsection}
\theoremstyle{remark}
\newtheorem{remark}[subsubsection]{Remark} 
\numberwithin{equation}{section}
\DeclareMathOperator{\id}{id}
\DeclareMathOperator{\im}{im}
\DeclareMathOperator{\Span}{span}
\DeclareMathOperator{\rk}{rk}
\DeclareMathOperator{\stab}{Stab}
\DeclareMathOperator{\cent}{Cent}
\DeclareMathOperator{\Sp}{Sp}
\DeclareMathOperator{\Aut}{Aut}
\DeclareMathOperator{\codim}{codim}
\DeclareMathOperator{\dr}{dR}
\DeclareMathOperator{\crys}{crys}
\DeclareMathOperator{\ord}{ord} 
\DeclareMathOperator{\diag}{diag} 
\DeclareMathOperator{\fil}{Fil}
\DeclareMathOperator{\Conj}{conj}
\DeclareMathOperator{\Hdg}{Hdg}
\DeclareMathOperator{\sbt}{Sbt} 
\DeclareMathOperator{\fr}{Fr} 
\DeclareMathOperator{\bruh}{Bruh}
\newcommand{\fkg}{\mathfrak{g}}
\newcommand{\fkm}{\mathfrak{m}}
\newcommand{\gr}{\text{Gr}}
\newcommand{\gzip}{\text{$G$-$\mathtt{Zip}$}} 
\newcommand{\gzipf}{\text{$G$-$\mathtt{ZipFlag}$}} 
\newcommand{\gozip}{\text{$G_0$-$\mathtt{Zip}$}} 
\newcommand{\zipf}{\text{-$\mathtt{ZipFlag}$}}
\newcommand{\std}{{\rm Std}}
\newcommand{\spin}{{\rm spin}}
\newcommand{\pizipb}{\pi_{\mathtt{Zip},\mathcal{B}}}
\newcommand{\piflagz}{\pi_{\mathtt{Flag},\mathtt{Zip}}}
\newcommand{\piflags}{\pi_{\mathtt{Flag},\text{Sbt}}}
\newcommand{\calo}{\mathcal{O}}
\newcommand{\call}{\mathcal{L}}
\newcommand{\calx}{\mathcal{X}}
\newcommand{\mz}{\ZZ}
\newcommand{\mq}{\QQ}
\newcommand{\mr}{\RR}
\newcommand{\mf}{\FF}
\newcommand{\mg}{\GG}
\newcommand{\rH}{\mathrm{H}} 
\newcommand{\GL}{\mathrm{GL}} 
\newcommand{\SL}{\mathrm{SL}} 
\newcommand{\SO}{\mathrm{SO}} 
\newcommand{\GSO}{\mathrm{GSO}} 
\newcommand{\GSp}{\mathrm{GSp}} 
\newcommand{\GSpin}{\mathrm{GSpin}} 
\newcommand{\GU}{\mathrm{GU}}
\newcommand{\U}{\mathrm{U}}
\newcommand{\EO}{\mathrm{EO}} 
\newcommand{\ha}{\mathrm{Ha}} 
\newcommand{\stefan}[1]{{\color{blue} (Stefan: #1)}}
\providecommand{\keywords}[1]
{
  \small	
  \textbf{\textit{Keywords---}} {#1}
}
\newcommand{\GZip}{\mathop{\text{$G$-{\tt Zip}}}\nolimits}
\newcommand{\GspZip}{\mathop{\text{$\GSp(2g)$-{\tt Zip}}}\nolimits}
\newcommand{\GLnZip}{\mathop{\text{$\GL(n)$-{\tt Zip}}}\nolimits}
\newcommand{\GLVZip}{\mathop{\text{$\GL(V)$-{\tt Zip}}}\nolimits}
\newcommand{\GF}{\mathop{\text{$G$-{\tt ZipFlag}}}\nolimits}
\newcommand{\Asf}{\mathsf{A}}
\newcommand{\Bsf}{\mathsf{B}}
\newcommand{\Csf}{\mathsf{C}}
\newcommand{\Dsf}{\mathsf{D}}
\renewcommand{\AA}{\mathbf{A}}
\newcommand{\CC}{\mathbf{C}}
\newcommand{\FF}{\mathbf{F}}
\newcommand{\GG}{\mathbf{G}}
\newcommand{\NN}{\mathbf{N}}
\newcommand{\PP}{\mathbf{P}}
\newcommand{\QQ}{\mathbf{Q}}
\newcommand{\RR}{\mathbf{R}}
\newcommand{\XX}{\mathbf{X}}
\newcommand{\ZZ}{\mathbf{Z}}
\newcommand{\Sscr}{{\mathscr S}}
\newcommand{\Vscr}{{\mathscr V}}
\newcommand{\Lcal}{{\mathcal L}}
\newcommand{\Ocal}{{\mathcal O}}
\newcommand{\eg}{e.g.,\ }
\newcommand{\ie}{i.e.,\ }
\newcommand{\fp}{\FF_p}
\newcommand{\leftexp}[2]{{\vphantom{#2}}^{#1}{#2}}
\newcommand{\iw}{\leftexp{I}{W}}
\newcommand{\clp}{\mathsf{clp}}
\newcommand{\dR}{{\rm dR}}
\newcommand{\ad}{\textnormal{ad}}
\newcommand{\qp}{\QQ_p}
\newcommand{\gofqp}{\GG(\qp)}
\newcommand{\af}{\AA_f}
\newcommand{\gofafp}{\GG(\AA_f^p)}
\newcommand{\gx}{(\GG, \XX)}
\newcommand{\type}{\mathsf{type}}
\title[An Ogus Principle for Zip period maps]{An Ogus Principle for Zip period maps: The Hasse invariant's vanishing order via `Frobenius and the Hodge filtration'}
\author[W. Goldring and S. Reppen]{Wushi Goldring, Stefan Reppen}
\address[W. Goldring]{Department of Mathematics, Stockholm University}
\address[S. Reppen]{\parbox{\linewidth}{Department of Mathematics, Stockholm University \\
Graduate School of Mathematical Sciences, the University of Tokyo}}
\email[W. Goldring, S. Reppen]{wushijig@gmail.com, stefan.reppen@gmail.com}
\let\c@equation=\c@subsubsection
\let\c@figure=\c@subsubsection
\begin{document}

\maketitle

\begin{abstract}
This paper generalizes a result of Ogus that, under certain technical conditions, the vanishing order of the Hasse invariant of a  family $Y/X$ of $n$-dimensional Calabi-Yau varieties in characteristic $p$ at a point $x$ of $X$ equals the "conjugate line position" of $H^n_{\dR}(Y/X)$ at $x$, i.e. the largest $i$ such that the line of the conjugate filtration is contained in $\fil^i$ of the Hodge filtration. 
For every triple $(G,\mu,r)$ consisting of a connected, reductive $\fp$-group $G$, a cocharacter $\mu \in X_*(G)$ and an $\fp$-representation $r$ of $G$, we state a generalized Ogus Principle. 
If $\zeta:X \to \GZip^{\mu}$ is a smooth morphism (=`Zip period map'), then the group theoretic Ogus Principle implies an Ogus Principle on $X$. We deduce an Ogus Principle for several Hodge and abelian-type Shimura varieties and the moduli space of K3 surfaces.
\end{abstract}

\keywords{Zip period maps, Shimura varieties, Hasse invariants, Schubert varieties, Hodge filtration, $G$-zips}
\tableofcontents
\section{Introduction}
This paper illustrates a novel example in the program by one of us (W. G.) to reinterpret and generalize algebro-geometric structure within the frameworks of \textit{G-Zip Geometricity} \cite{Goldring-Koskivirta-global-sections-compositio} joint with J.-S. Koskivirta and \textit{Geometry-by-groups} \cite{goldring.griffiths}. The algebro-geometric starting point of this example is a principle of Ogus \cite{ogus-Calabi-Yau} \cite{Ogus-height-strata-K3} describing the vanishing order of the Hasse invariant on the base of a  family of Calabi-Yau varieties in characteristic $p>0$ as the largest step of the (descending) Hodge filtration containing the line (=smallest step) of the (ascending) conjugate filtration. This `\textit{conjugate line position}' is by its very definition given by the $G$-Zip package. The same is true about the divisor of the Hasse invariant, which reduces to Chevalley's formula in Schubert calculus \cite[Theorem 2.2.1]{goldring.koskivirta.invent}. However, it is interesting that, via Ogus' Principle, the vanishing order of the Hasse invariant at singular points of its divisor is also given group-theoretically in terms of $G$-Zips. This latter kind of geometric information may at first seem more `genuinely geometric' and less amenable   to group-theoretic techniques. Indeed it was not previously considered in applications of $G$-Zips to algebraic geometry.

This paper centers on the group theoretic Ogus' principle, 
generalizing Ogus' Principle to the setting of triples $(G,\mu,r)$, where $G$ is a connected, reductive $\fp$-group, $\mu \in X_*(G)$ is a cocharacter over an algebraic closure and $r$ is an $\fp$-representation of $G$. Our group theoretic Ogus' principle simultaneously generalizes Ogus' original principle in at least three ways:
\begin{enumerate}
\item
\label{item-intro-abstract-hodge}
The generalization from geometric families $Y/X$ in characteristic $p$ to "abstract" stacks $\GZip^{\mu}$ of $G$-Zips is analogous to generalizing from geometric families $Y/X$ over $\CC$ to "abstract" variations of Hodge structure (VHS). It shows that Ogus' Principle makes sense for all $(G,\mu)$ even when there is no known geometric family $Y/X$ associated to some $(G,\mu)$.
\item 
\label{item-intro-weights-motivic}
The abstract setting~\ref{item-intro-abstract-hodge} highlights that Ogus' Principle also applies to geometric families $Y/X$ whose fibers are not Calabi-Yau (CY) varieties. 
What matters is that the cohomology (or equivalently the representation $r$) be of CY-type~\eqref{sec-CY-rep}. 
This illustrates that the CY-setting in Ogus' Principle concerns \textit{weights} (considered either Hodge-theoretically, motivically or representation-theoretically) rather than \textit{spaces} 
(\eg varieties).  
Building on the example of Hilbert modular varieties treated by one of us (S. R.) \cite{reppen1}, our fundamental example of considering more general geometric families $Y/X$ is when $X$ is the special fiber of an integral canonical model of a Hodge-type Shimura variety and $Y/X$ is a universal abelian scheme\footnote{Here we follow Ogus \cite{ogus-Calabi-Yau} by defining an $n$-dimensional variety $X$ to be Calabi-Yau if $\dim H^0(X, \Ocal_X)=\dim H^n(X,\Ocal_X)=1$ and $H^i(X, \Ocal_X)=0$ for all $i \neq 0,n$. 
The vanishing of $H^1(X,\Ocal_X)$ excludes abelian varieties.}.
\item
\label{item-intro-tannakian}
As already alluded to in~\ref{item-intro-weights-motivic}, stating Ogus' Principle for different groups $G$ and different representations $r$ connects it to representation theory and offers a Tannakian framework\footnote{The categories of $G$-Zips are not strictly speaking Tannakian since they are not abelian, for the same reason as for vector bundles, $G$-bundles etc.} within which to study it.
\end{enumerate}
The title pays homage to Mazur's pioneering paper "Frobenius and the Hodge filtration" \cite{Mazur-Frobenius-Hodge} which sowed the seeds for many of the developments considered here.
\subsection{\texorpdfstring{$G$}{G}-Zip Geometricity}
\label{sec-intro-zip-geom}
Let $p$ be a prime and let $k$ be an algebraic closure of $\fp$. Let $G$ be a connected, reductive $\fp$-group and let $\mu \in X_*(G)$ be a cocharacter over $k$. 
Associated to the pair $(G,\mu)$, Pink-Wedhorn-Ziegler \cite{pink.wedhorn.ziegler.zip.data} \cite{pink.wedhorn.ziegler.additional} define a stack $\GZip^{\mu}$ classifying $G$-Zips of type $\mu$. Morphisms $X \to \GZip^{\mu}$ give a mod $p$ analogue of Hodge structures with $G$-structure: 
Let $Y/X$ be a proper, smooth family of $k$-schemes whose Hodge-de Rham spectral sequence degenerates at $E_1$ and whose Hodge and de Rham cohomology sheaves are locally free. Then, for every $i \geq 0$, the de Rham cohomology $H^i_{\dR}(Y/X)$ is classified by a morphism $X \to \GLnZip^{\mu}$, where $n$ is the rank of $H^i_{\dR}(Y/X)$ and $\mu$ is deduced from the Hodge filtration, just like the Hodge-Deligne cocharacter in classical Hodge theory. 
Motivated by this analogy, we refer to morphisms of $k$-stacks $\zeta:X \to \GZip^{\mu}$ as \textit{Zip period maps} (including the case $X=\GZip^{\mu}$, $\zeta=\id$).  

Recall \cite[Question~A]{Goldring-Koskivirta-global-sections-compositio} that the question driving $G$-Zip geometricity is:
\begin{question}
\label{q-G-Zip-geom}
    Given a Zip period map 
\addtocounter{equation}{-1}
\begin{subequations}
\begin{equation}
 \label{eq-intro-zip-period}   
\zeta: X \to \GZip^{\mu},
\end{equation}    
\end{subequations}
 what geometry of $X$ is determined by properties of the morphism $\zeta$ and the stack $\GZip^{\mu}?$
\end{question}
Some successful examples of the philosophy of $G$-Zip geometricity include the works of Goldring-Koskivirta on strata Hasse invariants and their applications to the Langlands correspondence \cite{goldring.koskivirta.invent} and on the cone of global sections \cite{Goldring-Koskivirta-global-sections-compositio}, \cite{Goldring-Koskivirta-GS-cone}, \cite{Goldring-Koskivirta-divisibility}, 
the work of Brunebarbe-Goldring-Koskivirta-Stroh on ampleness of automorphic bundles \cite{Brunebarbe-Goldring-Koskivirta-Stroh-ampleness}, the works of Wedhorn-Ziegler \cite{Wedhorn-Ziegler-tautological} and Cooper \cite{Cooper-tautological-ring-hilbert} on tautological rings   
and the work of Cooper-Goldring  on strata-effectivity in these rings \cite{Cooper-Goldring-strata-eff}. 
\subsubsection{A key test-case: Hodge and abelian-type Shimura varieties} 
\label{sec-intro-Hodge-type}
Let $\gx$ be an abelian-type Shimura datum. 
Assume that $\GG$ is unramified at $p$.
Let $K_p \subset \gofqp$ be a hyperspecial maximal compact subgroup. 
By the work of Kisin \cite{kisin-hodge-type-shimura} and Vasiu \cite{vasiu}, as $K^p$ ranges over open, compact subgroups of $\gofafp$,  the associated projective system of Shimura varieties  admits an integral canonical model $(\Sscr_{K_pK^p}\gx)_{K^p}$ in the sense of Milne \cite{Milne-integral-canonical-models}. 
Set $K:=K_pK^p$ and let $S_K$ be the special $k$-fiber of $\Sscr_{K_pK^p}\gx$. 

Assume that $\gx$ is of Hodge type. For $g \geq 1$, let $(\GSp(2g), \XX_g)$ be the Siegel datum, consisting of the $\QQ$-split symplectic similitude group $\GSp(2g)$ and the Siegel double half-space $\XX_g$. Given a symplectic embedding
\addtocounter{equation}{-1}
\begin{subequations}
\begin{equation}
\label{eq-symplectic-embedding}
\gx \hookrightarrow (\GSp(2g), \XX_g), 
\end{equation}
for all sufficiently small $K^p$ there exists a level $K' \subset \GSp(2g, \af)$ and an induced finite map from $S_K$ to the special $k$-fiber of the Siegel-type Shimura variety $S_{g,K'}$  \cite[(2.3.3)]{kisin-hodge-type-shimura}. If $Y/S_K$ is the resulting family of abelian schemes, the Zip period map associated to $H^1_{\dR}(Y/S_K)$ factors through a smooth (Zhang \cite{zhang}) surjective (Kisin-Madapusi Pera-Shin \cite{Kisin-Madapusi-Pera-Shin-Honda-Tate}) morphism 

\begin{equation}
\label{eq-zeta-shimura}
\zeta:S_K \to \GZip^{\mu},
\end{equation}    
\end{subequations}
 where $G$ is the reductive $\fp$-group deduced from the $\QQ$-group $\GG$ and $\mu \in X_*(G)$ is a representative of the conjugacy class of cocharacters deduced from the Hermitian symmetric space $\XX$. See \cite[\S\S4.1-4.2]{goldring.koskivirta.invent} for more details. When $\gx$ is not of Hodge-type,  there still exists a smooth, surjective morphism~\eqref{eq-zeta-shimura} by \cite{shen.zhang}, but it no longer arises from an $F$-Zip of the form $H^1_{\dR}(Y/S_K)$.

\subsection{An Ogus Principle for Zip period maps
}
\label{sec-general-ogus}
\subsubsection{$F$-Zips, d'apr\`es Moonen-Wedhorn \cite{moonen.wedhorn}} When $G=\GL(n)$, a $G$-Zip of type $\mu$ corresponds to an $F$-Zip of rank $n$ and type $\mu$ as previously defined in \cite{moonen.wedhorn}. Recall that such an $F$-Zip on a stack $X$ is a quadruple 
\addtocounter{equation}{-1}
\begin{subequations}
\begin{equation}
\label{eq-intro-F-Zip}
\underline{\Vscr}=(\Vscr, \fil_{\Hdg}^{\bullet}, \fil^{\Conj}_{\bullet}, \varphi^{\bullet}),    
\end{equation}
\end{subequations}
where $\Vscr$ is a rank $n$ vector bundle on $X$, $\fil_{\Hdg}^{\bullet}\Vscr$ (resp. $\fil^{\Conj}_{\bullet}\Vscr$) is a descending (resp. ascending) filtration on $\Vscr$ by locally direct factors and $\varphi^{\bullet}$ is an isomorphism \textit{zipping} the Frobenius pullback of the descending graded pieces with the ascending graded pieces: 
$\varphi^{\bullet}:(\gr_{\Hdg}^{\bullet})^{(p)} \stackrel{\sim}{\to} \gr_{\bullet}^{\Conj}$. Following Deligne's sign convention for Hodge structures, the type $\mu \in X_*(\GL(n))$ is specified by the rule that $\rk \gr_{\Hdg}^i$ is the multiplicity of $-i$ as $\mu$-weight of $r$.
\subsubsection{The Hasse invariant and conjugate line position of an $F$-Zip of CY-type}
\label{sec-F-Zip-CY-type}
Let $i_{0}$ be the largest integer such that $\fil_{\Hdg}^{i_{0}}=\Vscr$. 
An $F$-Zip~\eqref{eq-intro-F-Zip} is of \underline{CY-type} if $\gr_{\Hdg}^{i_{0}}:=\fil_{\Hdg}^{i_0}/\fil_{\Hdg}^{i_0+1}$ is a line bundle.
The Hasse invariant of an $F$-Zip~\eqref{eq-intro-F-Zip} of CY-type is the composition 
\addtocounter{equation}{-1}
\begin{subequations}
  \begin{equation}
  \label{eq-def-hasse}
\ha(\underline{\Vscr}): (\gr_{\Hdg}^{i_{0}})^{(p)} \stackrel{\varphi^i}{\to} \gr_{i_{0}}^{\Conj} \to \Vscr \to \gr_{\Hdg}^{i_{0}}.    
\end{equation}
Equivalently, the Hasse invariant is a global section $\ha(\underline{\Vscr}) \in H^0(X, (\gr_{\Hdg}^{i_{0}})^{-(p-1)})$, since $\gr_{\Hdg}^{i_{0}}$ is a line bundle.
If $\underline{\Vscr}$ is not of CY-type and $d:= \rk \gr^{i_{0}}$, its Hasse invariant is the determinant (=$d$th exterior power) of~\eqref{eq-def-hasse}.
Equivalently, the Hasse invariant of such a $\underline{\Vscr}$ is the Hasse invariant~\eqref{eq-def-hasse} of the CY $F$-Zip $\wedge^d \underline{\Vscr}$.  Analogous to Ogus' case, the conjugate line position of a CY-F-Zip $\underline{\Vscr}$ at a $k$-point $x$ is defined by\footnote{In the literature, what we call the `conjugate line position' is sometimes called the `$a$-number'. We prefer the former since it is more descriptive.} 
\begin{equation}
\label{eq-intro-clp}
\clp_x(\underline{\Vscr}):=|\{j \in \ZZ | j \textnormal{ is a }\mu\textnormal{-weight of }r \textnormal{ and } \gr_{i_{0},x}^{\Conj} \subset \fil_{\Hdg,x}^j\}|-1,    
\end{equation}
\end{subequations}
the number of nontrivial pieces of the Hodge filtration containing the conjugate line $\gr_{i_{0},x}^{\Conj}$.
\subsubsection{Ogus' result}
\label{sec-ogus-result}
Let $Y/X$ be a proper smooth family of $n$-dimensional, Calabi-Yau $k$-varieties. Under certain technical conditions, including the degeneration of the Hodge-de Rham spectral sequence of $Y/X$, Ogus shows that for all $x \in X(k)$, the vanishing order and conjugate line position of the CY-$F$-Zip $H^n_{\dR}(Y/X)$ are equal:
\begin{equation}
\label{eq-intro-ogus for de rham}
\ord_x \ha(H^n_{\dR}(Y/X))=\clp_x(H^n_{\dR}(Y/X)).
\end{equation}
\subsubsection{CY-Representations}
\label{sec-CY-rep}
An $\fp$-representation $r:G \to \GL(V)$ is of \underline{CY-type} if the highest $\mu$-weight of $V_k$ has multiplicity one. Let $\underline{\Vscr}(G,\mu,r)$ be the universal $F$-Zip  of rank $\dim V$ and type $r \circ \mu$ over $\GLVZip^{r\circ \mu}$. If $r$ is of CY-type, define a Hasse invariant $\ha(G,\mu,r):=r^*\ha(\underline{\Vscr}(G,\mu,r))$ on $\GZip^{\mu}$ and conjugate line position $\clp(G,\mu,r):=\clp(\underline{\Vscr}(G,\mu,r))$. 
Let $\lambda$ be the highest weight of $V$ relative a maximal torus containing the image of $\mu$ and base of simple roots. Then $\ha(G,\mu,r)$ is a global section of the associated line bundle  $\Lcal_{\gzip}(-\lambda)^{p-1}$ on $\GZip^{\mu}$~\eqref{line.bundles.on.stacks}.
\subsubsection{Ogus' Principle for $(G,\mu,r)$} Let $r$ be of CY-type. Assume that the Hasse invariant $\ha(G,\mu,r)$ is not identically zero. Say that Ogus' Principle holds for $(G,\mu,r)$ if for all $x \in \GZip^{\mu}(k)$,
\addtocounter{equation}{-1}
\begin{subequations}
\begin{equation}
\label{eq-ogus-principle}
\ord_x \ha(G,\mu,r)=\clp_x(G,\mu,r).
\end{equation}
\end{subequations}
\subsubsection{Ogus' Principle for $(X,\zeta,r)$}
Given a Zip period map $\zeta$~\eqref{eq-intro-zip-period}, the conjugate line position is invariant under pullback.  If $\zeta$ is smooth then so is the Hasse invariant's vanishing order. So 
Ogus' Principle for $(G,\mu,r)$ implies an Ogus Principle 
for $(X,\zeta, r)$. If $\zeta$ arises from $H^i_{\dR}(Y/X)$ for some geometric family $Y/X$ as in~\ref{sec-intro-zip-geom}, then the Hasse invariant of the $F$-Zip $H^i_{\dR}(Y/X)$ on $X$ simultaneously agrees with both the classical definition of the Hasse invariant via de Rham cohomology (as in \cite{ogus-Calabi-Yau}, \cite{katz}) and the pullback of the Hasse invariant of the universal $F$-Zip over $\GLnZip^{\mu}$.
\begin{question}
 \label{q-intro-ogus}
 Which triples $(G,\mu,r)$ and $(X,\zeta, r)$ satisfy Ogus' Principle~\eqref{eq-ogus-principle}?
\end{question}

We establish several instances of Ogus' Principle, 
including many that apply to Shimura varieties $S_K$. 
Let $L:=\cent(\mu)$ be the Levi subgroup of $G_k$ centralizing $\mu$. When $G=\GL(n)$ or $G \subset \GL(n)$ is the (similitude) group of a bilinear form, let $\std:G \hookrightarrow \GL(n)$ denote the inclusion.  
\begin{theorem}
\label{th-main}
Ogus' Principle holds for the following triples $(G,\mu,r)$:
\begin{enumerate}
\item
\label{item-GL}
$G=\GL(n)$, $\type(L)=\Asf_{n-2}$ and $r=\wedge^n(\std \oplus \std^{\vee})$,
\item
\label{item-GSp}
$G=\GSp(2n)$, $\type(L)=\Asf_{n-1}$ and $r=\wedge^n \std^{\vee}$,
\item
\label{item-th-main-odd-orthogonal-Bn}
$G=\SO(2m+1)$, $\type(L)=\Bsf_{m-1}$ and $r=\std$ 
\item
\label{item-th-main-even-orthogonal-Dn}
$G=\SO(2m)$, $\type(L)=\Dsf_{m-1}$ and $r=\std$
\item 
\label{item-th-main-GL(4)}
$G=\GL(4)$, $\type(L)=\Asf_1 \times \Asf_1$ and $r=\wedge^2(\std)$.
\end{enumerate}   
\end{theorem}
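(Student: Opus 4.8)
The plan is to reduce the equality $\ord_x \ha(G,\mu,r) = \clp_x(G,\mu,r)$ to a statement about the stack $\GZip^{\mu}$ itself, since both sides are defined point-by-point on $\GZip^{\mu}(k)$ and $\GZip^{\mu}$ has finitely many points indexed by ${}^I W$. First I would recall the Pink--Wedhorn--Ziegler description: $\GZip^{\mu} \cong [E \backslash G_k]$ for the zip group $E$, with orbits indexed by the quotient ${}^I W$ of the Weyl group, and a partial order refining the closure relation. The conjugate line position $\clp_x$ is a purely combinatorial invariant of the zip datum — for the CY-representations $r$ in the list it is the number of Hodge-filtration steps containing the conjugate line, which unwinds to a length-type function on ${}^I W$ computable from $w$ and $\mu$. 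So the left-hand side is where the real work is: I must compute the vanishing order of the Hasse section $\ha(G,\mu,r) \in H^0(\GZip^{\mu}, \Lcal_{\gzip}(-\lambda)^{p-1})$ at each zip stratum.

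For the vanishing order, I would use the flag-space presentation $\GF^{\mu} \to \GZip^{\mu}$ (Goldring--Koskivirta) together with the Brylinski--Kostant / Chevalley-type formula already cited in the introduction, but pushed one level deeper: Chevalley's formula gives only the \emph{divisor} of $\ha$, i.e. the order along codimension-one strata, whereas here I need the order at \emph{all} strata, including deep ones. The key step is to pull $\ha$ back along a well-chosen curve (or the local ring of a point) transverse to the stratum $x$ and compute the order of vanishing via the explicit matrix formula for $\varphi^\bullet$ on the universal $F$-zip. Concretely: over a neighborhood of a point of stratum $w$ inside $\GZip^{\mu}$, trivialize the vector bundle $\Vscr(G,\mu,r)$, write the zip isomorphism $\varphi^{i_0}$ and the composite \eqref{eq-def-hasse} as an explicit function of the unipotent coordinate $u$ parametrizing the $E$-orbit, and read off the order of vanishing of that function in terms of how many powers of $p$-th-power-of-coordinate appear. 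I expect this order to equal a sum of coefficients $\langle \lambda, \alpha^\vee \rangle$ over roots $\alpha$ "crossed" by $w$, and the combinatorial identity to be checked is that this sum equals $\clp_x$.

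For each of the five cases the representation $r$ is a (wedge power of a) small standard-type representation, so the weight combinatorics is completely explicit. For \eqref{item-GL} and \eqref{item-GSp} ($\type(L) = \Asf_{n-2}$, resp. $\Asf_{n-1}$) the Levi is essentially $\GL_{n-1}$, so ${}^I W$ is small and the conjugate filtration has only two jumps; here $\clp_x \in \{0,1\}$ and the computation should reduce to the classical ordinary/non-ordinary dichotomy, recovering Ogus in the abelian-variety and Siegel cases. The orthogonal cases \eqref{item-th-main-odd-orthogonal-Bn} and \eqref{item-th-main-even-orthogonal-Dn} with $r = \std$ are the genuinely new K3-type cases: the standard representation of $\SO(2m+1)$ or $\SO(2m)$ has three weights $\{1,0,-1\}$ with the middle one of high multiplicity, $\clp_x$ can take values $0,1,2$, and the stratification of $\GZip^{\mu}$ is the EO-stratification of the relevant orthogonal Shimura variety; I would match $\clp_x$ against the height stratification à la Ogus--Moonen. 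Case \eqref{item-th-main-GL(4)} with $r = \wedge^2\std$ and $\type(L) = \Asf_1\times\Asf_1$ is a low-rank exceptional check done by direct enumeration of ${}^I W$ (order $6$) and the eight relevant weights of $\wedge^2$ of the standard $4$-dimensional representation.

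The main obstacle I anticipate is the passage from the \emph{divisor} of $\ha$ (Chevalley's formula, codimension one) to the vanishing order at \emph{deeply singular} strata: a priori the section could vanish to higher order than the naive "linear" prediction because of interactions between several branches of the Hasse divisor meeting at $x$, or it could vanish to lower order than $\sum \langle \lambda, \alpha^\vee\rangle$ if those branches are not in "general position". Controlling this requires showing that, in the explicit unipotent coordinates on the zip stratum, the Hasse function factors as a monomial times a unit — equivalently that the Hasse divisor, though singular, is "toric-like" (a product of coordinate hyperplanes) in suitable local coordinates near each point. This normal-crossings-type statement for the Hasse divisor on $\GZip^{\mu}$ is, I expect, exactly where the CY-type hypothesis on $r$ (multiplicity one of the highest weight) is used: it forces $\gr_{\Hdg}^{i_0}$ to be a line bundle and makes $\ha$ a \emph{single} equation rather than a determinant of a larger block, so the local picture is one-dimensional in the relevant direction. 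Establishing this local monomiality — perhaps via an inductive reduction to rank-one Levi subgroups, or via an explicit Bruhat-cell computation on $G_k$ — is the technical heart; once it is in place, matching the exponent against $\clp_x$ is a finite weight-theoretic check, case by case, using the tables of ${}^I W$.
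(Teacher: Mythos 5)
Your broad outline is aligned with the paper's: reduce along smooth maps to the stack, compute $\clp$ combinatorially via Moonen--Wedhorn standard $F$-zips, and compute $\ord$ by working in explicit coordinates on the zip/flag space. You also correctly anticipate the formula $\ord(f_{\lambda})=\sum\langle\lambda,\alpha_i^{\vee}\rangle$ when $w$ is a product of \emph{distinct} simple reflections (this is Theorem~\ref{distinct.simple} in the paper), and you correctly flag the passage from the divisor (Chevalley) to vanishing order at deep strata as the crux.

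However, the key strategy you propose for that crux --- showing that in suitable local coordinates near each stratum the Hasse section factors as a \emph{monomial times a unit}, i.e.\ a ``toric-like'' normal-crossings picture --- is in fact false, and the failure is exactly where the deep strata live. In all the orthogonal cases~\eqref{item-th-main-odd-orthogonal-Bn},~\eqref{item-th-main-even-orthogonal-Dn} and the related cases, the minimal Weyl representative $w\in{}^{I}W$ for the deepest stratum involves \emph{repeated} simple reflections, and Proposition~\ref{case.n=1} shows that for $w=s_{\alpha}s_{\beta}s_{\alpha}$ the pulled-back section in the natural coordinates $(a_1,a_2,a_3)$ has the form
\begin{equation*}
\Big(a_1 a_3 + (-1)^{1-\langle\alpha,\beta^{\vee}\rangle} a_2^{-\langle\alpha,\beta^{\vee}\rangle}\Big)^{\langle\lambda,\alpha^{\vee}\rangle}\,a_2^{\langle\lambda,\beta^{\vee}\rangle}\cdot(\text{unit}),
\end{equation*}
which is not a monomial: the factor $a_1a_3\pm a_2^{k}$ is irreducible for $k\geq 1$ and contributes $\min\{2,-\langle\alpha,\beta^{\vee}\rangle\}$ to the order, not a sum of root pairings over ``crossed roots''. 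So both the monomiality claim and the closed-form you expected fail at the deep strata, and one cannot sidestep this with the CY hypothesis; the CY hypothesis only ensures $\ha$ is a single section rather than a determinant. The paper's actual proof handles this by explicitly rewriting $\prod_i s_{\alpha_i}x_{\alpha_i}(a_i)$ into $U^{+}TU$ form via the root-theoretic identities \ref{com.relations}--\ref{sx}, producing the recursively defined quantities $E_i$, $F_i$ in \S\ref{Bn-notation} and \eqref{order.of.vanishing.Fi}, whose orders at the origin feed the $\min\{2,\cdot\}$ terms in Theorems~\ref{main.theorem.highest.weight.vector.typeBn} and~\ref{main.theorem.highest.weight.vector.typeDn}. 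Two further points of divergence from your plan: the Siegel case~\eqref{item-GSp} is not handled by the Schubert-cell machinery but by a direct matrix calculation on $\GSp(2n)$ (\S\ref{siegel.case}); and the $\GL(4)$ case~\eqref{item-th-main-GL(4)} is deduced from $\SO(6)$ via the exceptional isomorphism and the functoriality lemma~\ref{lemma: functoriality for good w}, not by direct enumeration. The $(n-1,1)$ unitary case~\eqref{item-GL} is handled by yet another method, the Pl\"ucker embedding.
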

In the orthogonal cases~\ref{item-th-main-odd-orthogonal-Bn}-\ref{item-th-main-even-orthogonal-Dn}, $\std$ is of CY-type. Its $\mu$-weights are $-1,0,1$ with multiplicities $1,2m-1,1$ for type $\Bsf_m$ and $1,2m-2,1$ for type $\Dsf_m$  respectively.

The method used to prove~\ref{th-main}, laid out in \S\ref{sec-intro-approaches}, also shows that Ogus' Principle fails but only so slightly in the following setting dual to~\ref{th-main}\ref{item-th-main-odd-orthogonal-Bn}, where $(G,\mu)$ does not arise from a Shimura variety.
\begin{theorem}
\label{th-intro-Cn-Cn-1}
Assume that $G=\Sp(2n)$, $\type(L)=\Csf_{n-1}$ 
 and $r=\std$. Then Ogus' Principle fails for $(G,\mu,r)$.  The divisor of the Hasse invariant $\ha(G,\mu,r)$ is reduced. Ogus' Principle holds for all $x \in \GZip^{\mu}(k)$, except that $\clp_x(G,\mu<r)=2$ for $x$ the closed point (=minimal zip stratum) of $\GZip^{\mu}$.
 \end{theorem}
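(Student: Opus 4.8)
The plan is to make the zip stratification of $\GZip^{\mu}$ completely explicit. The parabolic $P = P_{\mu}$ is the stabilizer of a line in the $2n$-dimensional symplectic space, and every line in a symplectic space is isotropic, so $G/P \cong \mathbb{P}^{2n-1}$. Hence $\iw$ is a chain of $2n$ elements, of lengths $0,1,\dots,2n-1$, and $\GZip^{\mu}$ is stratified by locally closed substacks $\calx_0,\dots,\calx_{2n-1}$ with $\dim\calx_i = i$ and $\overline{\calx_i} = \calx_0 \cup \cdots \cup \calx_i$, where $\calx_{2n-1}$ is open dense and $\calx_0$ is the unique closed point, i.e.\ the minimal zip stratum. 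I would then carry out three steps: (i) compute $\clp_x(G,\mu,r)$ on each $\calx_i$; (ii) compute $\ord_x\ha(G,\mu,r)$ on each $\calx_i$, and show the divisor of $\ha(G,\mu,r)$ is reduced; (iii) compare.

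For (i): the $\mu$-weights of $\std$ are $-1,0,1$ with multiplicities $1,2n-2,1$, so the Hodge filtration is $\Vscr = \fil^{-1}_{\Hdg} \supsetneq \fil^0_{\Hdg} \supsetneq \fil^1_{\Hdg} \supsetneq 0$ with $\fil^0_{\Hdg}$ a hyperplane and the Hodge line $\fil^1_{\Hdg}$ of rank one. Writing $C_x := \gr_{i_0,x}^{\Conj}$ for the conjugate line, one has $\clp_x = 0$ if $C_x \not\subset \fil^0_{\Hdg,x}$, $\clp_x = 1$ if $C_x \subset \fil^0_{\Hdg,x}$ but $C_x \neq \fil^1_{\Hdg,x}$, and $\clp_x = 2$ if $C_x = \fil^1_{\Hdg,x}$. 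Using the explicit $E$-orbit representatives of \cite{pink.wedhorn.ziegler.zip.data}, I would verify: $\ha(G,\mu,r)$ vanishes exactly where $C_x \subset \fil^0_{\Hdg,x}$, namely along $\overline{\calx_{2n-2}}$; $\clp_x = 0$ on the open stratum; and the further condition $C_x = \fil^1_{\Hdg,x}$ cuts out precisely the closed point $\calx_0$. This last point is the combinatorial heart of the statement, and a dimension count makes it plausible: along $\overline{\calx_{2n-2}}$ the conjugate line $C_x$ varies in a $\mathbb{P}^{2n-2}$ while $\fil^1_{\Hdg,x}$ is determined by the rest of the datum, so their coincidence locus is zero-dimensional. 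The outcome is $\clp_x = 0$ on $\calx_{2n-1}$, $\clp_x = 1$ on $\calx_1,\dots,\calx_{2n-2}$, and $\clp_x = 2$ on $\calx_0$.

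For (ii): the section $\ha(G,\mu,r)$ lies in $H^0(\GZip^{\mu},\Lcal_{\gzip}(-\varpi_1)^{p-1})$, with $\varpi_1$ the highest weight of $\std$. By Chevalley's formula \cite[Theorem~2.2.1]{goldring.koskivirta.invent} its divisor is the unique codimension-one stratum closure $\overline{\calx_{2n-2}}$ with multiplicity $\langle\varpi_1,\alpha_1^{\vee}\rangle = 1$; thus the divisor is reduced and irreducible, and $\ord_x\ha(G,\mu,r) = 1$ for $x \in \calx_{2n-2}$. Since $\GZip^{\mu}$ is smooth and the divisor is reduced and irreducible, $\ord_x\ha(G,\mu,r)$ equals $1$ at the smooth points of $\overline{\calx_{2n-2}}$ and is $\geq 2$ at its singular points, so it remains to show $\overline{\calx_{2n-2}}$ is smooth everywhere, in particular at $\calx_0$. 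For this I would establish --- by exhibiting an explicit \'etale chart, or by invoking the known comparison of zip-strata closures with Schubert varieties in $G/P$ --- that under $G/P \cong \mathbb{P}^{2n-1}$ the closure $\overline{\calx_{2n-2}}$ is modeled \'etale-locally on the Schubert divisor $\mathbb{P}^{2n-2} \subset \mathbb{P}^{2n-1}$, and that every Schubert variety of a projective space is a linear subspace, hence smooth. This yields $\ord_x\ha(G,\mu,r) = 1$ for all $x \in \overline{\calx_{2n-2}}$.

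Combining (i)--(iii): $\ord_x\ha = \clp_x$ on $\calx_{2n-1}$ (both $0$) and on $\calx_1,\dots,\calx_{2n-2}$ (both $1$), while $\ord_{\calx_0}\ha = 1 \neq 2 = \clp_{\calx_0}$ --- exactly the asserted behaviour. I expect the main obstacle to be twofold: first, confirming by explicit computation with orbit representatives that $\clp$ jumps from $1$ to $2$ exactly at the bottom stratum and nowhere higher; and second, more essentially, establishing that $\overline{\calx_{2n-2}}$ is genuinely smooth as a substack of $\GZip^{\mu}$ --- that is, that the local structure of this zip-stratum closure, and not merely its combinatorial closure relations, matches that of the smooth Schubert divisor in $\mathbb{P}^{2n-1}$. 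Conceptually the failure is explained by the contrast with the orthogonal case~\ref{th-main}\ref{item-th-main-odd-orthogonal-Bn}: there $G/P$ is an odd-dimensional quadric whose codimension-one Schubert variety is a quadric cone, singular precisely at the minimal stratum with vanishing order $2$ --- matching $\clp = 2$ --- whereas replacing that quadric by $\mathbb{P}^{2n-1}$ resolves the vertex and lowers the Hasse invariant's vanishing order there to $1$, one short of the conjugate line position.
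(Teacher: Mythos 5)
The paper's proof is a one-line reduction to the $(\Bsf_m,\Bsf_{m-1})$ computation via the machinery of \S\ref{section:method of jantzen/demazure}: for $\lambda=e_1$ one explicitly evaluates $f_\lambda$ on translated Schubert cells $wU^+B$ and observes that $\ord(E_i)=\min\{2,-\langle\alpha_i,\gamma^\vee\rangle\}$ is $1$ in type $\Csf_n$ (long root $\gamma=2e_n$, $\gamma^\vee=e_n$) rather than $2$ in type $\Bsf_m$ (short root $\gamma=e_m$, $\gamma^\vee=2e_m$), and then the $\clp$ side is unchanged because $(W,W_I)$ is the same. Your $\clp$ step is the same as the paper's (Moonen--Wedhorn standard $F$-zips), but your order-of-vanishing step is a genuinely different argument: Chevalley's formula for the generic multiplicity, then smoothness of the divisor to upgrade to an everywhere-exact order $1$. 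This is conceptually cleaner and, as you note, explains the $\Bsf$--$\Csf$ contrast as ``smooth hyperplane vs.\ quadric cone.'' One precision is worth fixing, though. You invoke ``a comparison of zip-strata closures with Schubert varieties in $G/P$''; the right invocation is via the Bruhat stack $\mathcal{B}_{I,J,\Delta}=[P\backslash G_k/Q]$ and the smooth morphism $\pizipb$ of \S\ref{background.gzips}: the divisor of $\ha(G,\mu,\std)$ is, by \Cref{theorem: order of vanishing of hasse invariant on stack of G-zips for Bn and Dn Shimura datum}-type considerations, the $\pizipb$-preimage of the codimension-one Bruhat stratum closure, which modulo $P$ is the $P$-orbit closure $\PP(\ell_0^\perp)\cong\PP^{2n-2}$ inside $G/Q\cong\PP^{2n-1}$; this is smooth, so its preimage under the smooth $\pizipb$ is smooth, and then your reduced-plus-smooth argument gives $\ord\equiv 1$ on the divisor. (In the $\Bsf_m$ case the same $P$-orbit closure in the odd quadric $G/Q$ is the cone over a smaller quadric, singular at the vertex, so this route also recovers $\ord=2$ there.) In short, your proof works once ``Schubert divisor in $G/P$'' is replaced by the Bruhat divisor in $[P\backslash G/Q]$, and it trades the paper's explicit root-group computation for a smoothness/local-model argument --- a fair exchange that buys a cleaner geometric explanation at the cost of needing the Bruhat-stack smoothness machinery.
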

 In view of~\ref{th-intro-Cn-Cn-1}, we ask:
 \begin{question}
 Which triples $(G,\mu,r)$ and $(X,\zeta, r)$ satisfy the inequality
 \addtocounter{equation}{-1}
 \begin{subequations}
 \begin{equation}
 \label{eq-intro-ineq}
 \ord_x \ha(G,\mu,r) \leq \clp_x(G,\mu,r)?   
 \end{equation}    
 \end{subequations}
 
 \end{question}
\begin{remark}
\label{rmk-ogus-assumptions}
We do not understand how our smoothness assumption on $\zeta$ compares with Ogus' assumptions. An advantage of our approach is that smoothness is a general condition, whereas according to Ogus, some of his assumptions are a priori more specialized to the setting of Calabi-Yau varieties. Both smoothness and Ogus' assumptions hold for K3 surfaces.   
\end{remark}

\begin{remark}
Prior to Ogus, it was a classical result of Igusa that the Hasse invariant of elliptic curves has simple zeros \cite{igusa}.
Recently Bhatt-Singh gave an upper bound for the Hasse invariant's vanishing order on certain families of Calabi-Yau hypersurfaces  \cite[Theorem 4.6]{bhatt.singh}. 
\end{remark}

\subsection{Ogus' Principle for Hodge-type Shimura varieties}
\label{sec-intro-app-Shimura}
Consider the key test-case~\ref{sec-intro-Hodge-type} of a Hodge-type Shimura variety $S_K$ and a universal family of abelian schemes $Y/S_K$ associated to a symplectic embedding $\varphi$~\eqref{eq-symplectic-embedding}.  The $F$-Zip $H^g_{\dR}(Y/S_K)=\wedge^gH^1_{\dR}(Y/S_K)$ is of CY-type. It arises from the representation $r$ which is the $g$th exterior power of the dual of $G \to \GSp(2g) \to \GL(2g)$ deduced from $\varphi$. The classical Hasse invariant on $S_K$ is the Hasse invariant of the $F$-Zip $H_{\dr}^{g}(Y/S_K)$ and equals $\zeta^*\ha(G,\mu,r)$. On the other hand, a result of Wedhorn \cite{Wedhorn-ordinariness-Shimura-varieties}, generalized by Wortmann \cite{Wortmann-mu-ordinary} characterises when the Hasse invariant of $S_K$ is identically zero. Based on \ref{th-main}, we make the following:  
\begin{conjecture}
\label{conj-shimura} Assume that the conjugacy class of $\mu$ is defined over $\fp$.
For every $x\in S_K(k)$, 
$$\ord_x \ha(H^g_{\dR}(Y/S_K))=\clp_x H^g_{\dR}(Y/S_K).
$$
\end{conjecture}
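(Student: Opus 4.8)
\emph{Towards a proof.}
The plan is to reduce Conjecture~\ref{conj-shimura} to the group-theoretic Ogus Principle~\eqref{eq-ogus-principle} via the smooth, surjective Zip period map $\zeta\colon S_K\to\GZip^\mu$ of~\eqref{eq-zeta-shimura}, and then to establish the latter one $\QQ$-simple factor at a time. First I would record that, with $r=\wedge^g\std^\vee$ precomposed with the reductive $\fp$-model of the symplectic embedding~\eqref{eq-symplectic-embedding}, one has $\ha(H^g_{\dR}(Y/S_K))=\zeta^*\ha(G,\mu,r)$, and that $r$ is of CY-type because $\mu$ is minuscule, so that the Hodge filtration of $H^1$ has two steps each of rank $g$ and the top $\mu$-weight space of $\wedge^g\std^\vee$ is a line. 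Since $\zeta$ is smooth the Hasse invariant's vanishing order is pulled back, and the conjugate line position is always pulled back; hence Conjecture~\ref{conj-shimura} is \emph{equivalent} to Ogus' Principle for the single triple $(G,\mu,r)$. (The hypothesis that $[\mu]$ is defined over $\fp$ is convenient — it makes $\GZip^\mu$, $\ha(G,\mu,r)$ and $\clp(G,\mu,r)$ defined over $\fp$ — but is probably inessential.)

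Next, by a standard but somewhat technical reduction, I would pass to $\QQ$-simple factors. Writing $\GG^{\ad}=\prod_i\GG_i$ with $\GG_i$ $\QQ$-simple, and using the $\fp$-rationality of $[\mu]$ to descend this decomposition, the symplectic space $V=\bigoplus_iV_i$, and $\mu$ compatibly over $\fp$ up to isogeny and a central torus, one checks that the zip stratification of $\GZip^\mu$ — hence $\clp$ — is pulled back from $\prod_iG_i^{\ad}\text{-}\mathtt{Zip}^{\mu_i}$, and that the CY line of $\wedge^gV^\vee$ is the external tensor product of the CY lines of the $\wedge^{g_i}V_i^\vee$, so that $\ha(G,\mu,r)$ is the external product of the $\ha(G_i,\mu_i,r_i)$ with $r_i=\wedge^{g_i}\std^\vee\circ\iota_i$. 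Because passing to an isogenous group pulls back the relevant line bundle and its canonical section along a morphism of zip stacks that does not change vanishing orders, this reduces~\eqref{eq-ogus-principle} for $(G,\mu,r)$ to~\eqref{eq-ogus-principle} for each $(G_i,\mu_i,r_i)$. Since $\mu_i$ is minuscule, $(G_i^{\ad},\mu_i)$ is of one of the types arising from Hodge-type Shimura data: type $\Asf$ of signature $(p,q)$ (unitary, or symplectic when $p=q$), type $\Bsf$ or $\Dsf$ ($\GSpin$), or the quaternionic type $\Dsf^{\mathbb{H}}$. Whenever the resulting pair $(\type(L_i),r_i)$ is on the list of Theorem~\ref{th-main} — the Siegel case $\type(L)=\Asf_{n-1}$; the unitary signatures $(n-1,1)$ and $(2,2)$; and, once the CY-Hasse invariant and conjugate line position of the spin- or half-spin-$F$-Zip carried by $H^1$ of the Kuga--Satake abelian scheme have been identified with those of the $\std$-$F$-Zip, the $\GSpin$ cases of types $\Bsf$ and $\Dsf$ — we conclude by Theorem~\ref{th-main}. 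For the remaining factors I would extend the method of~\S\ref{sec-intro-approaches}: by Chevalley's formula~\cite[Theorem~2.2.1]{goldring.koskivirta.invent} the divisor of $\ha(G_i,\mu_i,r_i)$ is an explicit combination of codimension-one zip strata; one then computes its multiplicity at a point of a deeper stratum from the multiplicities at that point of the strata closures passing through it, and matches the answer with the combinatorial formula~\eqref{eq-intro-clp} for $\clp_x$.

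I expect this last step, for the triples \emph{not} on the list of Theorem~\ref{th-main}, to be the main obstacle: a uniform computation of $\ord_x\ha(G_i,\mu_i,r_i)$ at the deep zip strata for every Shimura-type $(\type(L_i),r_i)$, in particular for unitary signatures $(p,q)$ with $p,q\ge 2$ and $(p,q)\ne(2,2)$ and for the quaternionic type $\Dsf^{\mathbb{H}}$. Theorem~\ref{th-intro-Cn-Cn-1} shows that $\ord_x\ha<\clp_x$ is possible at the minimal stratum when $\mu$ is \emph{not} minuscule, so the argument cannot be purely formal: one must exploit that $\mu$ is minuscule of Shimura type to rule such a drop out. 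Concretely, the hard point is to control the singularities of the codimension-one zip-strata closures at the deepest strata — equivalently, the failure of the divisor of $\ha$ to be normal crossings there — and to show that, weighted by Chevalley's multiplicities, they add up to precisely the conjugate line position. A subsidiary obstacle is the orthogonal comparison invoked above, between the spin/half-spin $F$-Zip on $H^1$ of the Kuga--Satake variety and the $\std$-$F$-Zip: one must show that they have the same conjugate line position and the same Hasse vanishing order at every point, which is what places the $\GSpin$ (and K3) cases under Theorem~\ref{th-main}.
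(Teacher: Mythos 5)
The statement you are asked to prove is a \emph{conjecture} in the paper: the authors do not prove it, and explicitly say so by calling it Conjecture~\ref{conj-shimura} and then recording in Corollary~\ref{cor-intro-Hodge-type} only those instances that follow from Theorem~\ref{th-main} via the smooth, surjective Zip period map~\eqref{eq-zeta-shimura}. Your opening reduction --- identifying $\ha(H^g_{\dR}(Y/S_K))$ with $\zeta^*\ha(G,\mu,r)$ for $r=\wedge^g\std^\vee$ composed with the $\fp$-model of the symplectic embedding, noting $r$ is of CY-type, and using that $\zeta$ is smooth and surjective to show Conjecture~\ref{conj-shimura} is \emph{equivalent} to Ogus' Principle for the triple $(G,\mu,r)$ --- is exactly what the paper does to obtain Corollary~\ref{cor-intro-Hodge-type}, and you correctly reproduce the list of covered cases (Siegel; unitary signatures $(n-1,1)$ split, and $(2,2)$). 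You also correctly flag Theorem~\ref{th-intro-Cn-Cn-1} as showing that the inequality cannot be formal and minusculeness must be used, which the paper hints at only implicitly.

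Where your proposal goes beyond the paper is the proposed reduction to $\QQ$-simple adjoint factors and the claim that $\clp$ and the CY-Hasse invariant are compatible with this decomposition. That idea is not in the paper, and it is not trivial: the $F$-Zip $\wedge^g\underline{\Vscr}^\vee$ for $\underline{\Vscr}=\bigoplus_i\underline{\Vscr}_i$ does not factor as an external product, only its CY line and (one would have to check carefully) its conjugate line and Hodge filtration do so in a way that makes $\clp$ additive; you assert this but do not argue it. The paper simply does not attempt the general Hodge-type case. You are also right to single out the $\GSpin$ case as unresolved for the conjecture in this form: the paper computes $\ord_x\ha(G,\mu,\spin)$ (Theorem~\ref{theorem: order of vanishing of hasse invariant on stack of G-zips for Bn and Dn Shimura datum}) but the conjugate line position is computed only for $\std$ on the orthogonal group (Theorem~\ref{theorem: conjugate line position orthogonal case}), not for $\spin$; so $\GSpin$ Hodge-type Shimura varieties do not appear in Corollary~\ref{cor-intro-Hodge-type} and the spin-vs-$\std$ comparison you describe is indeed an open step. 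In summary, your proposal is not a proof --- and honestly says so --- but it accurately matches the paper's partial result, correctly names the remaining obstacles (general unitary signatures, quaternionic type, the spin comparison, multiplicities of zip-strata closures at deep strata), and adds one genuine new structural idea (factorization along $\QQ$-simple pieces) whose verification would itself require work not carried out either by you or by the paper.
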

The main result~\ref{th-main} coupled with the smoothness of $\zeta$ for $S_K$~\eqref{eq-zeta-shimura} implies:
\begin{corollary}
\label{cor-intro-Hodge-type}
Conjecture~\ref{conj-shimura} holds for $S_K$ if the $\QQ$-group $\GG$ is either of the following:
\begin{enumerate}
    \item\label{item: cor hodge type unitary}
    A unitary similitude group associated to an imaginary quadratic field $K/\QQ$ such that the base change $\GG_{\RR}$ to $\RR$ has signature $(n-1,1)$ or $(2,2)$ and $p$ splits in $K$ in the $(n-1,1)$-case.
    \item
    \label{item: cor hodge type siegel}
    The $\QQ$-split symplectic group (the Siegel case).
    
\end{enumerate}
\end{corollary}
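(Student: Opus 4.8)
The plan is to deduce Conjecture~\ref{conj-shimura} for each $S_K$ in the list from Theorem~\ref{th-main}, by transporting Ogus' Principle along the Zip period map~\eqref{eq-zeta-shimura} and then matching the triple $(G,\mu,r)$ attached to $S_K$ with one appearing in Theorem~\ref{th-main}. Recall from~\S\ref{sec-intro-Hodge-type} and~\S\ref{sec-intro-app-Shimura} that $\zeta$ in~\eqref{eq-zeta-shimura} is smooth and surjective, that the classical Hasse invariant of $S_K$ equals $\zeta^*\ha(G,\mu,r)$, and that $H^g_{\dR}(Y/S_K)$ is the pullback along $r\circ\zeta$ of the universal $F$-Zip. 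Hence, by the paragraph ``Ogus' Principle for $(X,\zeta,r)$'' in~\S\ref{sec-general-ogus}: smoothness of $\zeta$ gives $\ord_x\ha(H^g_{\dR}(Y/S_K))=\ord_{\zeta(x)}\ha(G,\mu,r)$ for all $x\in S_K(k)$; invariance of the conjugate line position under pullback gives $\clp_x H^g_{\dR}(Y/S_K)=\clp_{\zeta(x)}(G,\mu,r)$; and surjectivity of $\zeta$ (together with the previous identity and the nonvanishing of $\ha(G,\mu,r)$ from Theorem~\ref{th-main}) shows $\zeta^*\ha(G,\mu,r)\not\equiv 0$ and propagates the equality~\eqref{eq-ogus-principle} from all of $\GZip^{\mu}(k)$ to all of $S_K(k)$. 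So it suffices to identify $(G,\mu,r)$ in each case and invoke the corresponding item of Theorem~\ref{th-main}.

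For the Siegel case, $\GG=\GSp(2g)$ is $\QQ$-split, so $G=\GSp(2g)$ over $\fp$ is split and the conjugacy class of $\mu$ is visibly $\fp$-rational; the centralizer $L=\cent(\mu)$ of the Siegel cocharacter has $\type(L)=\Asf_{g-1}$, and $H^g_{\dR}(Y/S_K)=\wedge^g H^1_{\dR}(Y/S_K)$ arises from $r=\wedge^g\std^{\vee}$. This is exactly Theorem~\ref{th-main}\ref{item-GSp}. For a unitary similitude group of signature $(n-1,1)$, the reflex field is $K$ (as $n-1\neq 1$), so $\fp$-rationality of the $\mu$-conjugacy class forces $p$ to split --- which is why that hypothesis is imposed; granting it, $\GG_{\QQ_p}\cong\GL_n\times\Gm$, hence $G$ over $\fp$ is $\GL_n$ up to the central torus (which contributes only one-dimensional $F$-Zips, affecting neither $\ha$ nor $\clp$), the signature $(n-1,1)$ cocharacter has $\type(L)=\Asf_{n-2}$, and the splitting of $H^1_{\dR}(Y/S_K)$ along the two primes above $p$ identifies the $2n$-dimensional symplectic representation with $\std\oplus\std^{\vee}$, so $H^n_{\dR}(Y/S_K)$ arises from $r=\wedge^n(\std\oplus\std^{\vee})$. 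This is Theorem~\ref{th-main}\ref{item-GL}.

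The signature $(2,2)$ case is the substantial one. Here the reflex field is $\QQ$, so the $\mu$-conjugacy class is $\fp$-rational for every unramified $p$; moreover $G_k\cong\GL_4\times\Gm$ and the signature $(2,2)$ cocharacter has $\type(L)=\Asf_1\times\Asf_1$. Over $k$, the rank-$8$ $F$-Zip $H^1_{\dR}(Y/S_K)$ decomposes as $\underline\cale\oplus\underline\calf$, where $\underline\cale$ has Hodge type $(2,2)$ (so $\fil^1$ has rank $2$) and $\underline\calf$ is, up to one-dimensional twists, the $F$-Zip dual of $\underline\cale$; and $H^4_{\dR}(Y/S_K)=\wedge^4 H^1_{\dR}(Y/S_K)$. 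Since $\fil^1$ has rank $2$ in each factor, the tensorial decomposition $\wedge^4(\underline\cale\oplus\underline\calf)=\bigoplus_{i+j=4}\wedge^i\underline\cale\otimes\wedge^j\underline\calf$ places both the top Hodge line and the bottom conjugate line inside the single summand $\wedge^2\underline\cale\otimes\wedge^2\underline\calf$; as $\ha$ and $\clp$ are multiplicative on tensor products of CY-$F$-Zips and unchanged by one-dimensional twists, this yields $\ord_x\ha(H^4_{\dR}(Y/S_K))=2\ord_x\ha(\wedge^2\underline\cale)$ and $\clp_x H^4_{\dR}(Y/S_K)=2\clp_x(\wedge^2\underline\cale)$ for every $x\in S_K(k)$. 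Thus Ogus' Principle for $H^4_{\dR}(Y/S_K)$ becomes Ogus' Principle for the rank-$6$ CY-$F$-Zip $\wedge^2\underline\cale$, of type $(\GL_4,\mu_1,\wedge^2\std)$ --- equivalently, via $\SL_4\cong\mathrm{Spin}_6$, the standard $F$-Zip of $\SO_6$ with $\type(L)=\Dsf_2=\Asf_1\times\Asf_1$, matching Theorem~\ref{th-main}\ref{item-th-main-GL(4)} (equivalently~\ref{item-th-main-even-orthogonal-Dn} with $m=3$). When $p$ splits in $K$, $\underline\cale$ is literally the pullback of the universal rank-$4$ $F$-Zip along a morphism to the split $\GL(4)$-zip stack that is smooth because $\zeta$ is, so the transport argument above closes the case.

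The main obstacle is the signature $(2,2)$ case at an inert prime. Then $G_{\fp}$ is the quasi-split \emph{outer} form of $\GL_4\times\Gm$; the decomposition $H^1_{\dR}\otimes k=\underline\cale\oplus\underline\calf$ exists only after base change to $k$ (or $\fp^2$), and the $\fp$-Frobenius exchanges the two summands, so $\underline\cale$ is not the pullback of a rank-$4$ $F$-Zip from $\fp$ and the zip-stack morphism induced by $G_k\to\GL_4$ is not defined over $\fp$. One therefore either (i) proves directly that the $k$-stack $\GZip^{\mu}$ of this outer form --- in which the pair $(\underline\cale,\underline\calf)$ with its cross-Frobenius pairing lives --- satisfies Ogus' Principle for $\wedge^2\std$, rerunning the method of~\S\ref{sec-intro-approaches} and checking that the Frobenius twist (which fixes the Levi type $\Asf_1\times\Asf_1$ but a priori alters the closure order on zip strata) leaves $\ord_x\ha$ equal to the Frobenius-insensitive quantity $\clp_x$; or (ii) shows that the induced $k$-morphism $S_K\otimes k\to\GLnZip^{\mu_1}$ classifying $\underline\cale$ is smooth despite not descending to $\fp$, and then invokes Theorem~\ref{th-main}\ref{item-th-main-GL(4)} directly. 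I expect approach (i) to be the cleaner route, and the verification that the Frobenius twist does not disturb the deeper vanishing orders to be the crux of the whole Corollary.
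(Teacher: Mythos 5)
Your transport argument — pulling back $\ord$ and $\clp$ along the smooth, surjective $\zeta$ of~\eqref{eq-zeta-shimura} — is exactly what the paper does, and your identifications of $(G,\mu,r)$ with Theorem~\ref{th-main}\ref{item-GSp} (Siegel) and Theorem~\ref{th-main}\ref{item-GL} (unitary of signature $(n-1,1)$ at split $p$, where $G\cong\GL(n)$ over $\fp$ and $H^1_{\dR}$ splits as $\std\oplus\std^{\vee}$) match the corollaries in \S\ref{siegel.case} and \S\ref{unitary.case} precisely. One small inaccuracy: surjectivity of $\zeta$ is only used to guarantee $\zeta^*\ha\not\equiv 0$ (so that the conjecture is nonvacuous); propagating the pointwise equality~\eqref{eq-ogus-principle} from $\GZip^{\mu}$ to $S_K$ requires only smoothness.

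Where you genuinely diverge from the paper is the $(2,2)$ case. The paper's body corollary for unitary $(2,2)$ cites only Corollary~\ref{corollary: main theorem for GL(4)}, i.e.\ Ogus' Principle for the triple $(\GL(4),\mu,\wedge^2\std)$ — a rank-$6$ $F$-Zip — and does not spell out how this implies Conjecture~\ref{conj-shimura}, which is stated for $H^4_{\dR}(Y/S_K)=\wedge^4 H^1_{\dR}(Y/S_K)$, a rank-$\binom{8}{4}$ $F$-Zip with vanishing orders a priori $0,2,4$. Your decomposition $\wedge^4(\underline\cale\oplus\underline\calf)=\bigoplus_{i+j=4}\wedge^i\underline\cale\otimes\wedge^j\underline\calf$, isolating the summand $\wedge^2\underline\cale\otimes\wedge^2\underline\calf$ containing both the top Hodge line and the bottom conjugate line, together with additivity of $\ord\ha$ and $\clp$ under tensor products and duality, is the missing bridge. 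This additivity is plausible and checkable from the definitions in \ref{sec-F-Zip-CY-type}, but it is not proved anywhere in the paper, so your argument contains a lemma the paper implicitly assumes. This makes your route more complete than the paper's on this point.

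Your concern about the inert prime in the $(2,2)$ case, however, is a misreading that you should not elevate to the ``crux of the whole Corollary.'' Although the conjugacy class of $\mu$ is $\fp$-rational for any unramified $p$ in signature $(2,2)$ — so Conjecture~\ref{conj-shimura} \emph{could} be formulated at inert $p$ — the paper's remark following Corollary~\ref{cor-intro-Hodge-type} explicitly defers ``the case of inert primes'' in the unitary setting to upcoming work, and Theorem~\ref{th-intro-orthogonal} likewise imposes ``$G$ split over $\fp$'' in its unitary case. The intended scope of the $(2,2)$ item is therefore the split case, where $\underline\cale$ is $\fp$-rational and the argument you sketch goes through; your approaches (i) and (ii) for the outer $\fp$-form of $\GL_4$ describe work the paper does not claim to do and need not be carried out here.
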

When $G=\GSpin(2n+1)$ (resp. $G=\GSpin(2n)$) is a spin similitude group of type $\Bsf_n$ (resp. $\Dsf_n)$ and $\type(L)=\Bsf_{n-1}$ (resp. $\type(L)=\Dsf_{n-1}$), we also explicitly  determine the vanishing order of the Hasse invariant $\ha(G,\mu,r)$ at every point of $\GZip^{\mu}$ for $r$ the spin representation. By pullback along $\zeta$~\eqref{eq-zeta-shimura} this gives the corresponding result for $\GSpin$ Hodge-type Shimura varieties $S_K$.

When $S_K$ is a Hilbert modular variety~\ref{conj-shimura} was previously shown by one of us (S.R.) \cite{reppen1}. The methods there also prove~\eqref{eq-ogus-principle} for $(G,\mu,r)$ if $\type(G)=\Asf_1^d$, $\mu$ is regular and $r$ is faithful of dimension $2d$. 
\begin{remark}
In upcoming joint work with Jean-Stefan Koskivirta we complete the computations in the unitary case to the case of inert primes and arbitrary signatures. As applications, we also use the order of vanishing to deduce (non)smoothness of Bruhat strata closures. We intend to obtain similar results for Ekedahl-Oort strata closures. 
\end{remark}
\subsection{Ogus' Principle for  abelian-type Shimura varieties and the moduli space of K3 surfaces}
Assume that $\GG$ is a $\QQ$-group such that $\GG_{\RR} \cong \SO(n-2,2), \GSO(n-2,2)$ or $\U(n-1,1)$. In the latter case assume that $G$ is split over $\FF_p$. Let $\gx$ be an abelian-type Shimura datum and $S_K$ as in~\ref{sec-intro-Hodge-type}. Let $r\colon G\to \GL(V)$ be either the standard representation $r=\std\colon G\to \GL(m)$ in the orthgonal case, or $r=\wedge^n(\std\oplus \std^{\vee})$ in the unitary case. 
Applying cases~\ref{item-th-main-odd-orthogonal-Bn},~\ref{item-th-main-even-orthogonal-Dn} and \ref{item-GL} of the main result~\ref{th-main} gives an Ogus Principle for the Shimura variety $S_K$:
\begin{theorem}
\label{th-intro-orthogonal}
Ogus' Principle holds for $(S_K,\zeta,r)$: For every $x \in S_K(k)$,
$$\ord_x \zeta^*\ha(G,\mu,r)=\clp_x(G,\mu,r).$$ \end{theorem}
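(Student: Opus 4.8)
The plan is to obtain Theorem~\ref{th-intro-orthogonal} as a formal consequence of the group-theoretic main result~\ref{th-main}, using the smoothness of the Zip period map. Recall from~\S\ref{sec-general-ogus} that $\clp$ is invariant under pullback and that, for a \emph{smooth} morphism $\zeta\colon X\to\GZip^{\mu}$, the vanishing order of the Hasse invariant is likewise unchanged under $\zeta^{*}$; hence Ogus' Principle for a triple $(G,\mu,r)$ implies Ogus' Principle for $(X,\zeta,r)$. For the Shimura variety $S_{K}$ attached to $\gx$ (as in~\S\ref{sec-intro-Hodge-type}), the Zip period map $\zeta\colon S_{K}\to\GZip^{\mu}$ is smooth --- in the Hodge-type case by Zhang~\cite{zhang}, and in the general abelian-type case by Shen--Zhang~\cite{shen.zhang}. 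So it remains only to identify, in each of the three cases of the theorem, the triple $(G,\mu,r)$ attached to $\gx$ with one of the triples listed in~\ref{th-main}.

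This identification is a bookkeeping of the reductive $\fp$-group $G$ deduced from $\GG$, the Hodge cocharacter $\mu$ coming from $\XX$, and the Levi $L=\cent(\mu)$. In the orthogonal case $\GG_{\RR}\cong\SO(n-2,2)$ or $\GSO(n-2,2)$, the group $G$ is --- up to a central similitude torus --- the special orthogonal group of an $n$-dimensional quadratic $\fp$-space; writing $n=2m+1$ (resp.\ $n=2m$) it is of type $\Bsf_{m}$ (resp.\ $\Dsf_{m}$). The Shimura cocharacter is minuscule and acts on $\std$ with weights $-1,0,1$ of multiplicities $1, n-2, 1$, so $L$ is, up to isogeny, $\SO(n-2)\times\Gm$, which is of type $\Bsf_{m-1}$ (resp.\ $\Dsf_{m-1}$), and $\std$ is of CY-type; thus $(G,\mu,\std)$ is the triple of case~\ref{th-main}\ref{item-th-main-odd-orthogonal-Bn} (resp.\ \ref{item-th-main-even-orthogonal-Dn}). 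In the unitary case $\GG_{\RR}\cong\U(n-1,1)$ with $G$ split over $\fp$, the group $G$ is $\GL(n)$ up to a central $\Gm$; the signature-$(n-1,1)$ cocharacter $\mu$ is minuscule with $L=\cent(\mu)\cong\GL(n-1)\times\GL(1)$ of type $\Asf_{n-2}$, and the prescribed representation $r=\wedge^{n}(\std\oplus\std^{\vee})$ --- normalized so that its $\mu$-weights are $0$ and $1$, each of multiplicity $n$ --- is of CY-type and is the representation of case~\ref{th-main}\ref{item-GL}.

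Granting these identifications,~\ref{th-main} yields $\ha(G,\mu,r)\not\equiv 0$ and $\ord_{x}\ha(G,\mu,r)=\clp_{x}(G,\mu,r)$ for every $x\in\GZip^{\mu}(k)$. Pulling this equality back along the smooth morphism $\zeta$ and invoking pullback-invariance of $\clp$ gives $\ord_{x}\zeta^{*}\ha(G,\mu,r)=\clp_{x}(G,\mu,r)$ for all $x\in S_{K}(k)$, which is exactly the assertion of Theorem~\ref{th-intro-orthogonal}.

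The steps that require genuine care, rather than being purely formal, both sit in the second paragraph: (i) pinning down $G$ as an $\fp$-group and checking that the central similitude torus present in the $\GSO$ case and the central $\Gm$ present in the split-unitary case are harmless for both $\ha$ and $\clp$, so that one may work with $\SO$, respectively $\GL(n)$; and (ii) verifying that the conjugacy class of the Hodge cocharacter $\mu$ produced by the Shimura datum is precisely the one with $\type(L)$ as demanded by~\ref{th-main}, together with matching the normalization of $r$ so that CY-type holds. The substantive mathematical content --- the proof of the group-theoretic Ogus Principle~\ref{th-main} itself --- is independent of the present deduction and is established separately.
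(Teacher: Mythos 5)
Your proposal is correct and follows essentially the same route as the paper: establish the group-theoretic Ogus Principle \ref{th-main} for $(G,\mu,r)$ (with the appropriate $\Bsf_{m}$, $\Dsf_{m}$, and split-$\Asf_{n-1}$ identifications of $(G,\mu,L)$ coming from the Shimura datum), then deduce the statement on $S_K$ by pullback along the Zip period map $\zeta$, using pullback-invariance of $\clp$ and preservation of vanishing order under the smooth morphism $\zeta$ (Zhang / Shen--Zhang). This is exactly how the paper proves the two corollaries on $S_{K_0}$ and the unitary $S_K$ in \S\ref{sec-orthogonal-case} and \S\ref{unitary.case}.
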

In contrast to the Hodge-type case,~\ref{th-intro-orthogonal} illustrates that Ogus' Principle continues to hold in a case where $r$ has more than two $\mu$-weights; equivalently the Hodge filtration admits more than two graded pieces.

Specialize further to $m=21$.   Using Madapusi-Pera's extension of the Kuga-Satake construction to mixed characteristic \cite{Madapusi-Tate-K3} recovers Ogus' results for K3 surfaces within the $G$-Zip Geometricity framework.
\begin{corollary}
\label{cor-k3-intro}
Let $M^{\circ}_{2d, K}$ be the moduli space of K3 surfaces with level $K$ and a polarization of degree $2d$. Let $Y/M^{\circ}_{2d, K}$ be the universal family. Then Ogus' Principle holds for the CY $F$-Zip $H^2_{\dR}(Y/M^{\circ}_{2d, K})$.  
\end{corollary}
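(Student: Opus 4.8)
The plan is to reduce the statement to case \ref{item-th-main-odd-orthogonal-Bn} (or \ref{item-th-main-even-orthogonal-Dn}) of Theorem \ref{th-main} via the Kuga-Satake construction in mixed characteristic, exactly as Theorem \ref{th-intro-orthogonal} is deduced. First I would recall the setup: the primitive second de Rham cohomology $P^2_{\dR}(Y/M^{\circ}_{2d,K})$, together with its cup-product pairing, carries the structure of an $F$-Zip with $G$-structure for $G$ the relevant orthogonal (or $\GSO$) group acting on the $m=21$-dimensional quadratic space; its cocharacter $\mu$ has Levi $L$ of type $\Bsf_{m-1}$ or $\Dsf_{m-1}$ depending on parity, and the standard representation $r = \std$ has $\mu$-weights $-1,0,1$ with the top weight of multiplicity one, so $\underline{\Vscr} = P^2_{\dR}(Y/M^{\circ}_{2d,K})$ is a CY-$F$-Zip in the sense of \S\ref{sec-F-Zip-CY-type}. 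By Madapusi-Pera's theorem \cite{Madapusi-Tate-K3}, $M^{\circ}_{2d,K}$ admits an étale (in particular smooth) morphism to the special fiber $S_K$ of a $\GSpin$-type Shimura variety, compatibly with the Kuga-Satake abelian scheme; composing with the smooth, surjective Zhang--Kisin--Madapusi-Pera-Shin morphism $\zeta\colon S_K \to \GZip^{\mu}$ of \eqref{eq-zeta-shimura} gives a smooth Zip period map $\zeta_{\mathrm{K3}}\colon M^{\circ}_{2d,K} \to \GZip^{\mu}$ classifying precisely $\underline{\Vscr}$.

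The second step is to identify the two invariants appearing in Ogus' Principle on $M^{\circ}_{2d,K}$ with their pullbacks from $\GZip^{\mu}$. For the conjugate line position this is immediate: $\clp_x(\underline{\Vscr})$ depends only on the $F$-Zip data at $x$, which is $\zeta_{\mathrm{K3}}(x)$, so it is pulled back from $\clp(G,\mu,\std)$ with no hypothesis on $\zeta_{\mathrm{K3}}$. For the Hasse invariant, by \S\ref{sec-F-Zip-CY-type} the Hasse invariant of $\underline{\Vscr} = P^2_{\dR}(Y/M^{\circ}_{2d,K})$ is, up to the usual twist, the pullback $\zeta_{\mathrm{K3}}^* \ha(G,\mu,\std)$; here one must also check that this classical/$F$-Zip-theoretic Hasse invariant agrees with the Hasse invariant of $H^2_{\dR}(Y/M^{\circ}_{2d,K})$ featuring in the statement — but $H^2_{\dR}$ differs from $P^2_{\dR}$ only by the algebraic class of the polarization, which sits in the weight-$0$ part $\gr^0_{\Hdg}$ and is fixed by Frobenius, hence contributes trivially to the Hasse invariant; so the Hasse invariants of $H^2_{\dR}$ and of $P^2_{\dR}$ coincide, and likewise their conjugate line positions. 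Finally, since $\zeta_{\mathrm{K3}}$ is smooth, vanishing orders are preserved under pullback (as in \S\ref{sec-general-ogus}), so for every $x \in M^{\circ}_{2d,K}(k)$,
\[
\ord_x \ha(H^2_{\dR}(Y/M^{\circ}_{2d,K})) = \ord_{\zeta_{\mathrm{K3}}(x)} \ha(G,\mu,\std) = \clp_{\zeta_{\mathrm{K3}}(x)}(G,\mu,\std) = \clp_x(H^2_{\dR}(Y/M^{\circ}_{2d,K})),
\]
where the middle equality is Theorem \ref{th-main}\ref{item-th-main-odd-orthogonal-Bn} or \ref{item-th-main-even-orthogonal-Dn} for $m=21$ odd, i.e. type $\Bsf_{10}$.

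The main obstacle is not any new group-theoretic computation — that is entirely subsumed in Theorem \ref{th-main} — but rather the compatibility bookkeeping of the second step: one must verify carefully that the Kuga-Satake morphism $M^{\circ}_{2d,K}\to S_K$ really does pull the $F$-Zip $H^2_{\dR}$ (or its primitive part) back from the universal $F$-Zip attached to the $\GSpin$ Shimura datum, with matching cocharacter $\mu$ and matching representation $r=\std$, and that the non-identical-vanishing hypothesis on $\ha(G,\mu,\std)$ required to even state Ogus' Principle is met (it is, since $M^{\circ}_{2d,K}$ has a nonempty ordinary locus by the surjectivity of $\zeta$). Once these identifications are in place, Ogus' classical results for K3 surfaces \cite{Ogus-height-strata-K3} are recovered as the special case $G$ of type $\Bsf_{10}$, now inside the $G$-Zip Geometricity framework.
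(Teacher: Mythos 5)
Your proposal follows essentially the same route as the paper: reduce to case \ref{item-th-main-odd-orthogonal-Bn} of Theorem~\ref{th-main} via the Kuga--Satake period map of Madapusi-Pera and smooth pullback of vanishing orders. However, one identification is off. The paper's \S\ref{sec-K3} invokes \cite[Corollary~5.15]{Madapusi-Tate-K3} to get an open immersion $\iota_{K_0}^{KS}\colon M^{\circ}_{2d,K}\to S_{K_0}$ into the \emph{orthogonal} (abelian-type) Shimura variety $S_{K_0}$ attached to $\GG_0$ with $\GG_{0,\RR}\cong\SO(n,2)$, not into the $\GSpin$-type (Hodge-type) Shimura variety $S_K$ as you assert. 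This matters because you want the target Zip stack to be $\gozip^{\mu_0}$, carrying the standard representation $\std$ of the orthogonal group whose $F$-Zip is $P^2_{\dR}$, and the paper therefore composes $\zeta_{K3}=\zeta_{K_0}\circ\iota_{K_0}^{KS}$ directly and quotes Theorems~\ref{corollary: ogus principle for Bn} and \ref{theorem: order of vanishing of hasse invariant on stack of G-zips for Bn and Dn Shimura datum}; for the $\GSpin$ Shimura variety the natural representation supplied by the Hodge embedding is the \emph{spin} representation, so your route would need an extra appeal to functoriality (as in \S\ref{section: functoriality of ogus principle, lemma section}) to pass from $\GSpin$ to $\SO$. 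Since you do eventually land on $r=\std$ and $G=\SO(21)$ of type $\Bsf_{10}$, the conclusion is the same, but the cited morphism should be to $S_{K_0}$. Also a notational slip: with $m=21$ the dimension of the quadratic space, the group is $\SO(2\cdot 10+1)$, so the Levi has type $\Bsf_{9}$, not "$\Bsf_{m-1}$" in your $m=21$ sense; you silently correct this to $\Bsf_{10}$ for the group at the end. Your remark that passing from $H^2_{\dR}$ to $P^2_{\dR}$ only removes the Frobenius-fixed polarization class in $\gr^0_{\Hdg}$, hence changes neither the Hasse invariant nor the conjugate line position, is a genuinely useful compatibility check which the paper leaves implicit.
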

\subsection{Connection to the Bruhat stratification}
\label{sec-intro-Bruhat} Wedhorn \cite{wedhorn.bruhat}\cite{wedhorn.bruhat2} defined a Bruhat stratification of $\GZip^{\mu}$ and -- by pullback along $\zeta$ -- of the Shimura varieties $S_K$. The conjugate line position agrees with the pullback to $\GZip^{\mu}$ of a coarser Bruhat stratification of $\GLnZip^{r \circ \mu}$, where one replaces the `conjugate parabolic' $Q$ with the potentially larger parabolic of $\GL(n)$ stabilizing the conjugate line. 
For the triples $(G,\mu,r)$ treated in the main result~\ref{th-main}, we show that the pullback of this coarser Bruhat stratification of $\GLnZip^{r \circ \mu}$ agrees with true Bruhat stratification of $\GZip^{\mu}$.    
By definition, the conjugate line position is a special case of the relative position of two flags, where one flag is simply the conjugate line. In the Hodge-type case~(\S\ref{sec-intro-app-Shimura}), the conjugate line position is equivalent to the relative position of the flags $\fil^1_{\Hdg}$ and $\fil_1^{\Conj}$ in $H^1_{\dR}(Y/S_K)$. In the orthogonal abelian case~\ref{th-intro-orthogonal}, the conjugate line position determines the full relative position of the Hodge and conjugate filtrations of the universal $F$-Zip $\underline{\Vscr}(G,\mu,r)$ because the orthogonal $F$-Zips are self-dual, so the line of the conjugate filtration determines its hyperplane.  
\subsection{Approaches to 
Ogus' Principle}
\label{sec-intro-approaches}
The main result~\ref{th-main} is proved by separately computing the vanishing order $\ord_x\ha(G,\mu,r)$ and the conjugate line position $\clp_x(G,\mu,r)$ and observing -- oh miracle -- that they are equal. It is desirable to have a more conceptual explanation for why the two invariants are equal. As explained above, the consequences for Shimura varieties~\ref{cor-intro-Hodge-type},~\ref{th-intro-orthogonal} and moduli of K3 surfaces~\ref{cor-k3-intro} are deduced by pullback along a smooth Zip period map.
\subsubsection{Computing the vanishing order}
Several different methods with overlapping scopes are used to compute the Hasse invariant's vanishing order. In the Siegel case, an explicit description of the Hasse invariant on $\GspZip^{\mu}$ is used to compute the order directly. The other methods are all based on the diagram  
\begin{equation}
\label{eq-zipflag-sbt-diagram}  \xymatrixrowsep{1pc} \xymatrix{ & \GF^{\mu} \ar[rd]^-{\piflags} \ar[ld]_{\piflagz} & \\ \GZip^{\mu} & & \sbt:=[B \backslash G/B],  }
\end{equation}
introduced in \cite[2.1.2,~(2.3.1)]{goldring.koskivirta.invent}, where $B$ is an $\fp$-Borel subgroup of the parabolic $P$ of non-positive $\mu$-weights.

There is a character $\lambda \in X^*(B)$ and a highest weight section $f_{\lambda}$ on $\sbt$ such that 
$\piflagz^*\ha(G,\mu,r)=\piflags^*(w_0f_{\lambda})$, where $w_0$ is the longest element of the Weyl group $W$ of $G$ associated to a maximal torus $T\subset B$. 
Since the maps~\eqref{eq-zipflag-sbt-diagram} are smooth, one reduces to computing the order of $f_{\lambda}$. 
To this end, a general approach is: First, view $f_{\lambda}$ as a highest weight vector in the representation $H^0(G/B,\call(\lambda))$.  Second, explicitly describe such vectors on translates of Schubert cells of $G/B$ of the form $wU^{+}B/B$, where $U^{+}$ is the unipotent radical of the opposite Borel (\S\ref{section:method of jantzen/demazure}). We accomplish this when $w$ is a Weyl group element of ``simple enough'' form.  
In particular, our method covers the entire poset $\iw$ of minimal representatives of cosets of the Weyl group  $W_L$ of $L$ in 
$W$ 
in all cases of~\ref{th-main} except the Siegel case. 
These descriptions could be of independent interest. 

For~\ref{th-main}\ref{item-GL} ($G=\GL(n)$), a more elementary and direct way to compute the order of  $f_{\lambda}$ is given by studying the Pl\"ucker embedding of $G/B$.
\subsubsection{Computing the conjugate line position}
This is achieved using the "standard" $F$-Zips of Moonen-Wedhorn \cite[(1.9)]{moonen.wedhorn} which give an explicit combinatorial model of every isomorphism class of $F$-Zips over $k$.
\subsubsection{Functoriality of Ogus' Principle}\label{section: functoriality of ogus principle, introduction}
Let $\varphi\colon G\to G'$ be a morphism with central kernel between connected, reductive $\fp$-groups with isomorphic adjoint groups such that $\varphi(G)$ is normal in $G'$. 
Let $r'\colon G'\to \GL(V)$ be an $\fp$-representation. By definition,  $\clp_x(G,\mu, r' \circ \varphi)=\clp_{\varphi(x)}(G', \varphi \circ \mu,r')$ for all $x \in \gzip^{\mu}(k)$. If ${}^{I}W$ has simple enough form, then $\ord_x\ha(G,r,\mu)=\ord_{\varphi(x)}\ha(G',\varphi\circ \mu,r')$ (see \S\ref{section: functoriality of ogus principle, lemma section} for a precise statement). We expect this to hold in general. It holds if $\varphi$ is smooth, in particular if $\ker \varphi$ is a torus. 
In these cases, Ogus' Principle is equivalent for $(G,\mu,r)$ and $(G',\varphi \circ \mu, r')$. For example, the coincidental isomorphism $\Asf_3 \cong \Dsf_3$ implies that~\ref{th-main}\ref{item-th-main-GL(4)} is equivalent to~\ref{th-main}\ref{item-th-main-even-orthogonal-Dn} for $m=3$. 
\subsection{de Rham vs crystalline methods; \texorpdfstring{$F$}{F}-Zips vs \texorpdfstring{$F$}{F}-crystals}
\label{sec-intro-dR-vs-crys}
In both \cite{ogus-Calabi-Yau} and \cite{Ogus-height-strata-K3}, Ogus uses crystalline cohomology and related methods. To study the Hasse invariant and conjugate line position of a family $Y/X$, Ogus uses the $F$-crystal $H^n_{\crys}(Y/X)$. 
By contrast, the philosophy of $G$-Zip geometricity in general and this paper in particular only requires the de Rham cohomology $H^n_{\dR}(Y/X)=H^n_{\crys}(Y/X) \otimes_{W(k)} k$, \ie the reduction mod $p$ of the crystalline cohomology. It seems natural that there should be an explanation of Ogus' Principle and its generalizations considered here only using de Rham cohomology because both sides of the principle, the Hasse invariant and conjugate line position, are defined purely in terms of de Rham cohomology (or more generally $F$-Zips). This paper largely achieves this expectation.

An additional advantage of using de Rham cohomology and $F$-Zips instead of crystalline cohomology and $F$-crystals is that there is much less data to keep track of mod $p$. For example, there are finitely many isomorphism classes of $F$-Zips over $k$ of fixed rank, but for many such isomorphism classes, there are infinitely many isomorphism classes of $F$-crystals reducing to a single isomorphism class of $F$-Zips. Regarding the construction of generalized Hasse invariants when the classical one is identically zero, \cite{Goldring-Nicole-mu-Hasse} used the Dieudonn\'e crystal $H^1_{\crys}(Y/X)$ of a family of abelian schemes $Y/X$ to construct a $\mu$-ordinary Hasse invariant for unitary Shimura varieties, but the approaches in \cite{goldring.koskivirta.invent} and \cite{Koskivirta-Wedhorn-Hasse} based on $G$-Zips seem much more general and robust. For example, the divisors of the Hasse invariants constructed in \cite{goldring.koskivirta.invent} and \cite{Koskivirta-Wedhorn-Hasse} are given explicitly in terms of root pairings, while it is unclear to us how to determine the divisor of the Hasse invariant constructed via crystalline cohomology in \cite{Goldring-Nicole-mu-Hasse}.    
\subsection{Outline}
\label{sec-outline}
\S\ref{background} provides notation and background on reductive groups, Weyl groups and $G$-Zips. The technical heart of the paper is \S\ref{section:method of jantzen/demazure}: It carries out the approach sketched in~\ref{sec-intro-approaches} by obtaining explicit expressions for highest weight sections on the Schubert stack. 
The remaining sections apply this to prove the different cases of~\ref{th-main} and its applications to Shimura varieties and the moduli space of K3 surfaces: \S\ref{sec-orthogonal-case}, \S\ref{unitary.case}, \S\ref{siegel.case} treat the cases that $G$ is an orthogonal group, $G=\GL(n)$  and  $G=\GSp(2g)$ respectively.

\section*{Acknowledgements}
We thank Simon Cooper and Jean-Stefan Koskivirta for enlightening conversations. W.G. thanks Marc-Hubert Nicole for bringing Ogus' works on Hasse invariants to his attention.

Part of this work was carried out while S.R. was a JSPS International Research Fellow. W.G. thanks the Knut \& Alice Wallenberg Foundation for its support under Wallenberg Academy Fellow grant KAW 2019.0256 and grants KAW 2018.0356, KAW 2022.0308. W.G. thanks the Swedish Research Council for its support under grant \"AR-NT-2020-04924.

\section{Notation and background}\label{background}
\subsection{Generalities}
Throughout, let $p$ be a prime and let $k$ be an algebraic closure of $\fp$. Let $\sigma: k \to k$ denote the arithmetic Frobenius $a \mapsto a^p$. Given a $k$-scheme $X$, let $X^{(p)}:=X \otimes_{k,\sigma} k$ be its Frobenius twist. 
The unipotent radical of an algebraic $k$-group $H$ is denoted $R_u(H)$. Given subgroups $P,Q$ of $H$ and $E$ of $P \times Q$, the action of $E$ on $H$ is always given by $(a,b) \cdot h=ahb^{-1}$.  

\subsection{Root data and Weyl groups} 
Let $G$ be a connected, reductive $\fp$-group. Let $T \subset G$ be a maximal torus. 
\subsubsection{Root data} Let $(X^*(T), R; X_*(T), R^{\vee})$ be the root datum of $(G_k,T_k)$, where $X^*(T)$ (resp. $X_*(T)$) is the character (resp. cocharacter) group of $T_k$ and $R$ (resp. $R^{\vee}$) is the  set of roots (resp. coroots) of $T_k$ in $G_k$. For each $\alpha\in R$ let $U_\alpha$ denote the root group and $x_\alpha\colon \mg_a\to U_\alpha$ the root homomorphism. 
Let $R^{+}\subseteq R$ be the system of positive roots such that $\alpha \in R^+$  if and only if 
$U_{-\alpha}\subseteq B$. Let $\Delta \subset R^+$ denote the base of simple roots. Let $X^*_+(T)$ be the cone of $\Delta$-dominant characters.
\subsubsection{Weyl groups}
For $\alpha \in R$, let $s_{\alpha} \in \Aut(X^*(T)_{\QQ})$ be the root reflection $s_{\alpha}(x)=x-\langle x, \alpha^{\vee} \rangle \alpha$.   Let $W=W(G,T)$ be the Weyl group of $G_k$ relative $T_k$. The pair $(W, \{s_{\alpha}\}_{\alpha \in \Delta})$ is a Coxeter system. Write $l:W \to \NN$ for its length function and $\leq$ for its Bruhat-Chevalley order. By \cite[Exp. XXIII, Section 6]{sga3}, every $w\in W$ admits a lift $\Dot{w}\in N_G(T)$ such that $\Dot{(w_1w_2)}=\Dot{w}_1\Dot{w}_2$ whenever $l(w_1w_2)=l(w_1)+l(w_2)$. 
By abuse of notation we omit the $\cdot$ and denote $\Dot{w}$ simply by $w$; it should be clear from context whether $w$ is viewed as an element of $W$ or of $N_G(T)$. 

For $I \subset \Delta$, let $W_I=\langle s_{\alpha} | \alpha \in I\rangle$ be the associated standard parabolic subgroup of $W$ and $w_{0,I}$ its longest element. 
Write $w_0:=w_{0,\Delta}$ for the longest element of $W$. 
For $I,J\subset \Delta$, 
let ${}^{I}W$ (resp. $W^J$, ${}^{I}W^{J}$) denote the set of minimal length representatives of the cosets (resp. double cosets) $W_I \backslash W$ (resp. $W/W_J$, $W_I \backslash W /W_J$). 
\subsubsection{The Hodge character}
\label{sec-hodge-char}
Let $\mu \in X_*(G)$. Let $\rho:G \to \GL(V)$ be a representation with precisely two $\mu$-weights $a<b$. Let $L:=\cent_{G_k}(\mu)$ be the Levi subgroup centralizing $\mu$. The Hodge character of $(G,\mu,\rho)$ is $\eta=\eta(G,\mu,\rho)=\det V_{a}$, where $V_a$ is the $a$-weight space for $\mu$. One has $\eta \in X^*(L)$. The exterior power $\wedge^{\dim V_b}r$ is of CY-type.  

\subsection{\texorpdfstring{$G$}{G}-zips}
\label{background.gzips}
Let $\mu \in X_*(G)$ be a cocharacter. It determines a pair of opposite parabolics $P, P^{+}$ of $G_k$ intersecting in a common Levi factor $L:=P \cap P^+=\cent_{G_k}\mu$. The Lie algebra of $P$ (resp. $P^+$) is the sum of the non-positive (resp. non-negative) $\mu$-weight spaces in $\fkg$. Let $Q\coloneqq (P^+)^{(p)}$ and $M\coloneqq L^{(p)}$. As in \cite[1.2.3]{goldring.koskivirta.invent},  assume that there exists an $\fp$-Borel subgroup $B \subset P$. Let $I$ (resp. $J$) denote the type of $P$ (resp. $Q$). Let $z\coloneqq w_0w_{0,J}$, the longest element in $W^J$.
%
%
%
%
%
 
%
%
%
%
%
%
%
%
%
%
\subsubsection{The stack of \texorpdfstring{$G$-zips and $G$-zip flags}{gzip}}
We briefly recall the various notions of zips that we use, and we refer to the original papers \cite{pink.wedhorn.ziegler.zip.data, pink.wedhorn.ziegler.additional, goldring.koskivirta.flag} for details. 
For any $k$-scheme $S$, we have the following definitions.

A \textit{$G$-zip of type $\mu$} over $S$ is a tuple $(I,I_P,I_Q, \iota)$ where $I$ is a $G_k$-torsor over $S$, $I_P\subset I$ is a $P$-torsor over $S$, $I_Q \subset I^{(p)}$ is a $Q$-torsor over $S$ and 
\begin{equation*}
    \iota : I_{P}^{(p)}/R_u(P)^{(p)} \xrightarrow{\cong} I_Q/R_u(Q)
\end{equation*}
is an isomorphism of $M$-torsors. A morphism of $G$-zips $(I,I_P,I_Q,\iota)\to (I',I_P',I_Q',\iota')$ is a morphism of $G_k$-torsors $I\to I'$ compatible with the reductions to $P$ and $Q$ and compatible with the morphisms $\iota, \iota'$.

A \textit{$G$-zip flag of type $\mu$} over $S$ is a pair $(\underline{I},I_B)$ where $\underline{I}=(I,I_P,I_Q,\iota)$ is a $G$-zip of type $\mu$ over $S$ and $I_B\subset I_P$ is a $B$-torsor. A morphism of $G$-zip flags is a morphism of the underlying $G$-zips compatible with the reductions to $B$.

Let $\gzip^{\mu}(S)$ (resp. $\gzipf^{\mu}(S)$) denote the category of $G$-zips (resp. $G$-zip flags) of type $\mu$ over $S$. Both of these constructions give rise to smooth algebraic $k$-stacks $\gzip^{\mu}$ resp. $\gzipf^{\mu}$. 
The forgetful map $(\underline{I},I_B)\mapsto \underline{I}$ induces a smooth morphism
\[
\piflagz\colon \gzipf^{\mu}\to \gzip^{\mu}.
\]
\subsubsection{Description as quotient stacks}
Let $E\coloneqq P\times_M Q$, and let $E'\coloneqq E\cap (B_k\times G_k)$. The groups $E$ and $E'$ act on $G_k$ via $(a,b)\cdot g = agb^{-1}$. By \cite[Theorem 2.1.2.]{goldring.koskivirta.invent} the natural projections induce a commutative diagram
\begin{equation*}
    \begin{tikzcd}
       \gzipf^{\mu} \arrow[r, "\piflagz"] \arrow[d, "\cong"] & \gzip^{\mu} \arrow[d, "\cong"] \\
    \left[E'\backslash G_k \right] \arrow[r] & \left[ E\backslash G_k \right] 
    \end{tikzcd}
\end{equation*}
%
\subsubsection{}
The \textit{Schubert stack} is the quotient stack 
\begin{equation*}
    \sbt \coloneqq [(B_k\times B_k)\backslash G_k].
\end{equation*}
The map $g \mapsto gz$ induces an isomorphism $B_k\times B_k\cong B_k\times {}^{z}B_k$, whence an isomorphism $\sbt \cong [(B_k\times {}^{z}B_k)\backslash G_k]$. Composing with the natural projection $\gzipf^{\mu}\cong [E'\backslash G_k] \to [(B_k\times {}^{z}B_k)\backslash G_k]$ gives a smooth morphism
\begin{equation*}
    \piflags : \gzipf^{\mu} \to \sbt.
\end{equation*}
\subsubsection{}
The \textit{Bruhat stack} is the quotient stack
\begin{equation*}
    \mathcal{B}_{I,J,\Delta} \coloneqq [P\backslash G_k/Q],
\end{equation*}
By \cite[Section 2]{wedhorn.bruhat}, the stack $\mathcal{B}_{I,J,\Delta}$ is smooth and the morphism $\pi_{\mathtt{Zip},\mathcal{B}}\colon \gzip^{\mu}\to \mathcal{B}_{I,J,\Delta}$ induced by the inclusion $E\to P\times Q$ is representable, of finite type and smooth. 
%
\subsubsection{Stratifications}\label{section:stratifications}
The Bruhat stratification expresses $G$ as the disjoint union of locally closed subschemes
\begin{equation*}\label{g.is.union.of.b}
    G = \coprod_{w\in W} BwB.
\end{equation*}
Define $\sbt_w \coloneqq [(B_k\times B_k)\backslash B_kwB_k]$ and $\mathcal{X}_w \coloneqq \piflags^{-1}(\sbt_w)$. The substacks $\sbt_w$ (resp. $\mathcal{X}_w$) form a stratification of $\sbt$ (resp. $G\zipf^{\mu}$) into locally closed substacks. Similarly there is a stratification of $\mathcal{B}_{I,J,\Delta}$ indexed by ${}^{I}W^{J}$.

Under the action of $E$ on $G_k$ the $E$-orbits form a stratification of $G_k$ which then induces a stratification of $\gzip^{\mu}$. There is an explicit bijection \cite[(1.4.2)]{goldring.koskivirta.invent} 
\begin{equation*}
    \begin{aligned}
        {}^{I}W&\to \{ \text{$E$-orbits in $G$} \} 
    \end{aligned}
\end{equation*}
Denote by $G_w$ the stratum corresponding to $w \in {}^{I}W$. By \cite[Proposition 2.4.3]{goldring.koskivirta.invent},  $\piflagz(\overline{\calx}_w)=[E\backslash \overline{G}_w]$ and $\piflagz(\calx_w)=[E\backslash G_w]$ for all $w \in \iw$.
\subsubsection{Zip period maps}
Consider a morphism of $k$-stacks $\zeta:X \to \gzip^{\mu}$, referred to as a \textit{Zip period map} (including the case $X=\gzip^{\mu}$, $\zeta=\id$), and let $w\in {}^{I}W$. We refer to $X_{w}^{\EO}\coloneqq \zeta^{-1}(w)$ as the \textit{zip stratum}, or the \textit{Ekedahl-Oort stratum}, of $w$. Similarly if $w\in {}^{I}W^{J}$ corresponds to a point $x_w\in \mathcal{B}_{I,J,\Delta}$ then we refer to $X_w^{\bruh}\coloneqq (\pi_{\mathtt{Zip},\mathcal{B}}\circ \zeta)^{-1}(w)$ as the \textit{Bruhat stratum} of $w$.
%
%
\subsubsection{Associated line bundles}
\label{line.bundles.on.stacks}
Let $\nu \in X^*(L)$ and $\lambda, \lambda' \in X^*(T)$. The associated sheaves construction \cite[Section I.5.8]{jantzen} gives line bundles $\mathcal{L}_{\gzip}(\nu)$, $\mathcal{L}_{\gzipf}(\lambda)$, $\mathcal{L}_{\sbt}(\lambda, \lambda')$ and $\mathcal{L}_{G/B}(\lambda)$ on $\gzip^{\mu}$‚ $\gzipf^{\mu}$, $\sbt$ and $G/B$ respectively. Pullback along $\piflagz$ and $\piflags$ yields line bundles on $\gzipf^{\mu}$. 
By \cite[Lemma 3.1.1]{goldring.koskivirta.invent} we have that $\piflagz^*\Lcal_{\gzip}(\nu)= \Lcal_{\gzipf}(\nu)$ and
\begin{equation*}
\begin{aligned}
    \piflags^{*}\mathcal{L}_{\sbt}(\lambda, \lambda') &= \mathcal{L}_{\gzipf}(\lambda + p{}^{\sigma^{-1}}(z\lambda')).
\end{aligned}
\end{equation*}
By \cite[Theorem 2.2.1]{goldring.koskivirta.invent} the space $H^{0}(\sbt, \mathcal{L}_{\sbt}(\lambda, \lambda'))\neq 0$ if and only if $\lambda\in X_{+}^*(T)$ and $\lambda' = -w_0\lambda$. Define 
\begin{equation*}
 D_{w_0} : X^{*}(T)\to X^{*}(T) \quad \textnormal{ by } \quad  D_{w_0}(\lambda) := \lambda - p{}^{\sigma^{-1}}(zw_{0}^{-1}\lambda).
\end{equation*}
Then 
\begin{equation*}
    \piflags^{*}\mathcal{L}_{\sbt}(\lambda , - w_0 \lambda) = \mathcal{L}_{\gzipf}(D_{w_0}(\lambda)).
\end{equation*}
Let $\lambda\in X_+^*(T)$. Let $H^0(\lambda)\coloneqq H^0(G/B,\Lcal_{G/B}(\lambda))$. By \cite[Remark 5.2]{reppen1}) we have that
\begin{equation*}
    H^{0}(\sbt,\call(-w_0\lambda,\lambda))= H^{0}(\lambda)_{w_0\lambda}.
\end{equation*}
\subsubsection{The Hodge line bundle}
\label{sec-hodge-line-bundle}
The line bundle $\Lcal_{\GZip}(\eta)$ associated to the Hodge character~\ref{sec-hodge-char} is the Hodge line bundle. In the Hodge-type Shimura variety setting~\ref{sec-intro-Hodge-type}, let $\varphi$ be a symplectic embedding~\eqref{eq-symplectic-embedding} and let $\rho$ be the dual of the induced embedding $G \to \GSp(2g) \to \GL(2g)$. Then the two $\mu$-weights of $\rho$ are $0,-1$. The Hodge line bundle of $\GZip^{\mu}$ pulls back to the Hodge line bundle of the Shimura variety $S_K$ via the Zip period map~\eqref{eq-intro-zip-period}. 
\subsection{Order of vanishing}\label{section.order.of.vanishing}
Let $S$ be a smooth scheme over $k$, $\mathcal{L}$ a line bundle on $S$ and $f \in H^0(S, \mathcal{L})$ a global section. Denote by $\mathcal{I}$ the image of the morphism $\mathcal{L}^{-1}\to \mathcal{O}_S$ induced by $f$. The zero scheme of $f$ is defined to be the subscheme determined by $\mathcal{I}$ (cf. 
\cite[Appendix A, C.6]{hartshorne}). 
For all $n \in \NN$, we say that $f$ vanishes to order $n$ at a closed point $s$ in $S$ if $\mathcal{I}_s$ is contained in $\mathfrak{m}_s^n$, where $\fkm_s$ is the maximal ideal of $\mathcal{O}_{S,s}$. The order of vanishing of $f$ at $s$ is defined to be the maximal such integer $n$, denoted by $\ord_s(f)$. Let $[X/H]$ be any of the stacks of \S\ref{background.gzips}, let $\chi \in X^*(H)$ be a character and let $f \in H^0(\chi)$. 
The order of vanishing of $f$ on $\overline{x}\in [X/H](k)$ is defined to be the order of vanishing of $f$ on $x\in X(k)$, where $x$ is a lift $\overline{x}$. This is well-defined since $f(xh)=\chi^{-1}(h)f(x)$ for all $x$ in $X$ and $h$ in $H$. Finally, for any scheme $S$ and smooth morphism $\zeta\colon S\to [X/H]$, we have that $\ord_s(\zeta^*f)=\ord_{\zeta(s)}(f)$.
\section{Explicit expression of highest weight sections on Schubert cells}\label{section:method of jantzen/demazure}
%
\subsection{Geometry of reductive groups}\label{section: geometry of reductive groups}
Fix a connected reductive group $G$ over an algebraically closed field $k$ of characteristic $p>0$. We fix also a maximal Borus $T\subset B$, and we let $B^{+}$ denote the opposite Borel and $U$ and $U^{+}$ denote the unipotent part of $B$ respectively $B^{+}$. Let $R$ be a system of roots corresponding to $T$, let $\Delta$ be a system of simple roots determined by $B^{+}$ and let $R^{+}$ denote the corresponding positive roots. Let $W=W(G,T)$ denote the Weyl group. For a root $\alpha \in R$, let $U_{\alpha}$ denote the root group corresponding to $\alpha$ and $x_{\alpha} : \mg_{a}\to G$ the corresponding root group morphism.

Recall that a subset $R'$ of $R^+$ is \textit{closed} if for all $\alpha, \beta\in R'$, $(\NN\alpha + \NN\beta)\cap R\subset R'$. The subset $R'$ is called \textit{unipotent} if $R'\cap(-R')=\emptyset$. For a closed and unipotent subset $R'\subset R$, let let $U(R')$ be the subgroup of $G$ generated by all $U_{\alpha}$, with $\alpha \in R'$. For such a subset, multiplication induces an isomorphism $\prod_{\alpha\in R'}U_{\alpha}\cong U(R')$, where the product is taken in any order (cf. Jantzen \cite[Section II.1.7.]{jantzen}). 

Given $w\in W$ let $R'(w)\coloneqq \{ \alpha \in R^{+} : w^{-1}(\alpha) > 0 \}$ and $R''(w) \coloneqq \{ \alpha \in R^{+} : w^{-1}(\alpha)< 0 \}$. 
%
As explained in \cite[Section II.1.9]{jantzen}, the root group morphisms and group multiplication give a commutative diagram of isomorphisms
\begin{equation*}
\begin{tikzcd}
    k^{|R'(w)|} \times k^{|R''(w)|} \arrow[d] \arrow[r] & k^{|R^{+}|} \arrow[d] \\
    \prod_{\alpha \in R'(w)} U_{\alpha} \times \prod_{\alpha \in R''(w)} U_\alpha \arrow[d] \arrow[r] & \prod_{\alpha \in R^{+}} U_\alpha \arrow[d] \\
    U(R'(w))\times U(R''(w)) \arrow[r] & U^{+}
\end{tikzcd}.
\end{equation*}
The (composition of the) vertical morphisms induces thus the diagram
\begin{equation*}
\begin{tikzcd}
    B^{+}wB = wU(w^{-1}R'(w))B \arrow[r, hookrightarrow] \arrow[d, "\cong"] & wU^{+}B \arrow[d, "\cong"] \\
    k^{|R^{+}|-l(w)}\times B \arrow[r, hookrightarrow] & k^{|R^{+}|}\times B
\end{tikzcd}
\end{equation*}
from which we can identify $B^{+}wB$ with the subset
\begin{equation*}
\begin{aligned}
    \Big\{ (a_1, ..., a_{|R^{+}|})\in k^{|R^{+}|} : a_{|R^{+}|-l(w)+1}= ... = a_{|R^{+}|}=0 \Big\} \times B \\
    \subset k^{|R^{+}|}\times B \cong wU^{+}B.
\end{aligned}
\end{equation*}
Given a function $f$ on $wU^{+}B$, we obtain the order of vanishing at points in $B^{+}wB$ under this identification.
\begin{remark}\label{general.strategy.f.to.w0f}
If $f$ is a function on $wU^{+}B$, then $w_0 \cdot f$ is a function on $w_{0}wU^{+}B$ and for all $g \in wU^{+}(k)B(k)$, we have that $\text{ord}_{w_{0}g}(w_0 \cdot f) = \text{ord}_{g}(f)$. 
Since 
\begin{equation*}
    Bw_0 w B = w_0 B^{+}wB = w_0 wU(w^{-1}R'(w))B \subset w_0wU^{+}B,
\end{equation*}
we see that the vanishing order of $w_0 \cdot f$ on $B w_0 w B$ equals that of $f$ on $B^{+}w B$.
\end{remark}
\subsection{Order of vanishing via highest weight vectors}\label{general.strategy}
From \ref{line.bundles.on.stacks} and \S\ref{section: geometry of reductive groups} we find the following general strategy to compute the order of vanishing of the Hasse invariant. 
Let $r$ be a representation of $G$ of CY-type such that $\ha(G,\mu,r)$ is not identically zero. Let $\eta$ be the negative of the highest weight of $(r,V)$. Then $\ha(G,\mu,r)\in H^0(\gzip^\mu, \Lcal_{\gzip}((p-1)\eta))$. Let $\lambda$ be a character such that $D_{w_0}(\lambda) = (p-1)\eta$. Then $\piflagz^*\ha(G,\mu,r)$ is nonvanishing on $\calx_{w_0}$ and thus there is a section $\ha_{\sbt}\in H^0(\sbt, \call_{\sbt}(\lambda,-w_0\lambda))$ such that $\piflags^*\ha_{\sbt}=\piflagz^*\ha(G,\mu,r)$ (cf. \cite[Proposition 5.1.3]{koskivirta.imai}). Suppose that $w_0\lambda =-\lambda$. Then 
\begin{equation}
\begin{aligned}
    \piflags^*\call_{\sbt}(\lambda,-w_0\lambda)&
    = \piflagz^*\Lcal_{\gzip}((p-1)\eta)\\
    H^0(\sbt, \call_{\sbt}(\lambda,-w_0\lambda)) &= H^0(\lambda)_{w_0\lambda}.
\end{aligned}
\end{equation}
Let $f_\lambda \coloneqq w_0\cdot \ha_{\sbt}\in H^0(\lambda)_{\lambda}$. For any $w\in {}^{I}W$, by \ref{general.strategy.f.to.w0f} we see that to obtain the order of vanishing of $\ha_{\sbt}$ on points in $Bw_0 wB$ it suffices to obtain the order of vanishing of $f_{\lambda}$ on points in $B^+ w B$. Since the map $\piflags\colon \gzipf^{\mu}\to \sbt$ is induced by the morphism $G\to G$, $g\mapsto gz$, and since $\piflagz^*\ha(G,\mu,r) = \piflags^*\ha_{\sbt}$, we 
thus see that to obtain the order of vanishing of $\ha(G,\mu,r)$ on the Ekedahl-Oort stratum in which $w_0wz^{-1}$ lies, it suffices to obtain the order of vanishing of $f_{\lambda}$ on all points in $B^+ w B$ (cf. \ref{section:stratifications}).
%
%
\subsection{Global sections of highest weight modules}
Let $\lambda\in X^*_+(T)$ and $f_\lambda\in H^0(\lambda)_{\lambda}$. 
The classical construction of $f_{\lambda}$ as a function on $G$ first gives an explicit description of $f_{\lambda}$ on 
\begin{equation}
\label{eq-flambda-codim-1}
U^{+}B\cup \bigcup_{\alpha \in \Delta}s_{\alpha}U^{+}B.    
\end{equation}
 It then uses that the codimension of~\eqref{eq-flambda-codim-1} in $G$ is at least 2 coupled with the normality of $G$ to deduce that $f_{\lambda}$ extends to all of $G$.

Motivated by \S\ref{general.strategy} we extend the explicit description of $f_\lambda$ to subsets $wU^+B\subset G$ for Weyl group elements $w$ of ``simple enough'' form (when written as a product of simple reflections). 
The idea for this construction is the following. 
If $w = \prod_\alpha s_\alpha$ is a product of simple reflections $s_\alpha$, then we use that $s_\alpha$ normalizes $\prod_{\beta\in R\setminus \{\alpha\}} U_\beta$ to successively rewrite $wU^{+}$ as $\Big(\prod_{\beta\in R^{+}\setminus I_w}U_\beta\Big)\Big(\prod_\alpha s_\alpha U_\alpha\Big)$, for some suitably chosen indexing set $I_w$. 
We then use some facts about the root theory of reductive groups to manipulate an expression of the form $u^+ \prod_\alpha s_\alpha x_{\alpha}u_0$, with $u^{+}\in U^+$ and $u\in U$, into being of the form $u_0^{+}tu_0$, with $u_0^{+}\in U^+$, $t\in T$ and $u_0\in U$. Since we know from the classical construction that $f_\lambda(u_0^{+}tu_0) = \lambda^{-1}(t)$, we obtain an explicit description of $f_\lambda$ on $wU^+B$ and thus the order of vanishing of $f_\lambda$ on points therein. 

\subsection{Notation}
\label{sec-notation-root-computations}
We list here the facts used in the computations below (see \cite[Part II Chapter 1]{jantzen} for a reference). We also introduce some notation that will be convenient throughout the proofs.
%
%
%
%
%
%
\begin{enumerate}[label=\textbf{R.\arabic*}]
    \item
    \label{item-closed-root-set}
    For any set of positive roots $\gamma_1, ..., \gamma_m\in R^+$, let 
    \begin{equation*}
        U^{+,\gamma_1, ..., \gamma_n} = \prod_{\gamma\in R^+ \setminus \{ \gamma_1, ..., \gamma_n\}}U_\gamma.
    \end{equation*}
    If $w=\prod_{i=1}^n s_{\alpha_i}$ is a product of simple reflections, let
    \[
    R^{w}\coloneqq \{ \alpha_{1}, s_{\alpha_1}(\alpha_{2}), s_{\alpha_1}s_{\alpha_2}(\alpha_{3}),..., s_{\alpha_1}\cdots s_{\alpha_{n-1}}(\alpha_{n})\}
    \]
    and let
    \[
    R^{+,w}\coloneqq R^{+}\setminus R^{w}.
    \]
    \item\label{com.relations} For two roots $\gamma_1, \gamma_2\in R$ such that $\gamma_1 + \gamma_2 \neq 0$, there are $c_{ij}\in \ZZ$ for all $i,j>0$ with $i\gamma_1+j\gamma_2\in R$ and such that
    \begin{equation*}
    x_{\gamma_1}(a)x_{\gamma_2}(b) = \prod_{i,j>0}x_{i\gamma_1 + j\gamma_2}(c_{ij}a^{i}b^{j})x_{\gamma_2}(b)x_{\gamma_1}(a).
\end{equation*}
Given such $i,j$'s, we will write $x_{I\gamma_1 + J\gamma_2} = \prod_{i,j>0}x_{i\gamma_1 + j\gamma_2}$, for simplicity. Similarly, if $w$ is an element of the Weyl group, let $x_{w(I\gamma_1 + J\gamma_2)}\coloneqq \prod_{i,j>0}x_{w(i\gamma_1 + j\gamma_2)}$.
\item\label{wx} If $w$ is an element of the Weyl group, and $\gamma$ is a root, then 
\begin{equation*}
    w x_{\gamma}(a) = x_{w\gamma}(\pm a)w,
\end{equation*}
for all $a\in k$.
\item\label{tx} For any $t\in T$ and $\gamma \in R$‚ we have that
\begin{equation*}
    tx_{\gamma}(a)=x_{\gamma}(\gamma(t)a)t,
\end{equation*}
for all $a\in k$.
\item\label{sx} For any $\gamma \in R$, we have that
\begin{equation*}
    s_{\gamma}x_{\gamma}(a) = x_{\gamma}(-a^{-1})\gamma^{\vee}(-a^{-1})x_{-\gamma}(a^{-1}),
\end{equation*}
for all $a\in k$.
\end{enumerate}
%
%
%
\subsection{Closedness of \texorpdfstring{$R^{+,w}$}{} root sets}\label{assumption that subset of roots is closed}
We state a condition used in \S\ref{section: product of distinct simple root reflections}-\S\ref{section: computations for type Dn}: 
\begin{condition}
\label{cond-closed}
For a product of simple reflections $s_1 \cdots s_n$,  the set $R^{+,s_{j}s_{j+1}\cdots s_{n}}$ is closed for all $j>1$.      
\end{condition}

\subsubsection{Closedness for type \texorpdfstring{$\Bsf_m$}{Bn}}  Consider a $\Bsf_m$ root system, with simple roots $\alpha_i = e_{i}-e_{i+1}$ for $i=1,...,m-1$ and $\alpha_m=e_m$. Fix $l< m$, let $w_j\coloneqq s_j\cdots s_{m-1} s_m s_{m-1}\cdots s_{m-l}$ for all $j\leq m$. A computation shows that
\begin{equation*}
    \begin{aligned}
        R^{w_j} &= \{ \alpha_j, s_j(\alpha_{j+1}), s_js_{j+1}(\alpha_{j+2}), ..., s_j\cdots s_m s_{m-1}\cdots s_{m-l-1}(\alpha_{m-l}) \} \\
        &=\{ e_j - e_{j+1}, ..., e_j -e_m, e_j, e_j + e_m, e_j + e_{m-1}, ..., e_j + e_{m-l+1} \}
    \end{aligned}
\end{equation*}
We proceed to show that $R^{+,w_j}=R^+\setminus R^{w_j}$ is closed.

Suppose on the contrary that there exist $\alpha, \beta \in R^{+,w_j}$ and that there are $a,b\in \NN$ such that $a\alpha + b\beta \in R^{w_j}$. We divide into cases:
\begin{enumerate}
\item\label{item: case 1 closedness for bn} 
Suppose first that $a\alpha+b\beta = e_j - e_k$. Since $e_j - e_k=\sum_{i=j}^{k-1}\alpha_i$ and the simple roots are linearly independent, we must have that $a=b=1$ and there are disjoint subsets $I_\alpha,I_\beta\subset \{ j, ..., k-1\}$ such that $\alpha = \sum_{i\in I_\alpha}\alpha_i$ and $\beta = \sum_{i\in I_\beta}\alpha_i$. If there are some $i,i'\in I_\alpha$ such that $i+1<i'$ and $i'-1\notin I_\alpha$, then $e_{i'}$ has positive coefficient when we write $\alpha$ as a sum of the $e_i$'s. Similarly, if $i''\coloneqq \max\{r\in I_\alpha : r<i' \}$ then $e_{i''+1}$ has a negative coefficient when we write $\alpha$ as a sum of the $e_i$'s. Since $\alpha$ is a root, this implies that $\alpha = e_{i'} - e_{i''}$. But since $i''< i'$, this contradicts that $\alpha$ is a positive root. Thus, there is some $i'$ such that $I_\alpha = \{j,j+1,...,i'\}$, and consequently $\alpha = \sum_{i=j}^{i'}\alpha_i=e_j-e_{i'+1}$. This contradicts that $\alpha\notin R^{w_j}$.
\item\label{item: case 2 closedness for bn}
Suppose now that $a\alpha + b\beta = e_j + e_k$, for some $k\geq m-l+1$. Since $e_j \in R^{w_j}$ we may assume that $\alpha = e_j \pm e_r$ for some $r>j$.\footnote{Indeed, $e_j$ must occur with positive coefficient for either $\alpha$ or $\beta$ and since $\alpha,\beta\notin R^{w_j}$ we see that both of them must be of the form $e_r\pm e_s$ for some integers $s,r$. Thus, for either $\alpha$ or $\beta$, $s=j$.} If $\alpha = e_j - e_r$ then $\alpha \in R^{w_j}$ so we have that $\alpha = e_j + e_r$, and we must have that $r<m-l+1$ (otherwise $\alpha\in R^{w_j}$). Then we must have that $\beta = e_s - e_r$ for some $s$ such that $s<r$ and $s\neq j$ (else $\beta \in R^{w_j}$), and we must have that $a=b=1$. But then $e_j+e_k =a\alpha + b\beta = e_j + e_s$, and since $s<r<m-l+1$ and $k\geq l-m+1$ this is a contradiction. 
\item
Finally $a\alpha + b\beta \neq e_j$ since $e_j$ is a simple root.
\end{enumerate}
This shows that $R^{+,w_j}$ is closed.
\subsubsection{Closedness for type \texorpdfstring{$\Dsf_m$}{Dn}}
Consider the case of a $\Dsf_m$ root system, with simple roots $\alpha_i = e_{i}-e_{i+1}$ for $i=1,...,m-1$ and $\alpha_m=e_{m-1}+e_m$. Fix $l< m$, let $w_j\coloneqq s_j\cdots s_{m-1} s_{m} s_{m-2}\cdots s_{m-l}$ for all $j\leq m$. A computation shows that
\begin{equation*}
    \begin{aligned}
        R^{w_j} &= \{ \alpha_j, s_j(\alpha_{j+1}), s_js_{j+1}(\alpha_{j+2}), ..., s_j\cdots s_{m-1} s_{m} s_{m-2}\cdots s_{m-l-1}(\alpha_{m-l}) \} \\
        &= \{ e_j - e_{j+1}, ..., e_j - e_{m-1}, e_j + e_{m}, e_j + e_{m-1}, ..., e_j + e_{m-l+1} \}.
    \end{aligned}
\end{equation*}
We show that $R^{+,w_j}=R^{+}\setminus R^{w_j}$ is closed. Suppose on the contrary that there are $\alpha, \beta \in R^{+,w_j}$ such that $a\alpha +b\beta = e_j - e_k$ for some $a,b\in \NN$.

If $a\alpha+b\beta = e_j - e_k$ or if $a\alpha + b\beta = e_j + e_m$, then the same argument as in \ref{item: case 1 closedness for bn} in the $\Bsf_m$-case applies. If  $a\alpha + b\beta = e_j + e_k$ for some $k\geq m-l+1$, then the same argument as in \ref{item: case 2 closedness for bn} in the $\Bsf_m$-case applies. Thus, $R^{+,w_j}$ is closed.
\subsection{Products of distinct simple reflections}\label{section: product of distinct simple root reflections}
\begin{theorem}\label{distinct.simple}
Let $\lambda\in X^{*}_+(T)$ and  $f_{\lambda}\in H^{0}(\lambda)_\lambda$. Let $w=s_{\alpha_1}\cdots s_{\alpha_n}$ be  a product of distinct simple reflections satisfying~\ref{cond-closed}. The order of vanishing of $f_{\lambda}$ at every point in $B^{+}wB$ is  
    \begin{equation*}
        \ord(f_{\lambda}) = \sum_{i=1}^{n}\langle \lambda , \alpha_{i}^{\vee} \rangle.
    \end{equation*}
\end{theorem}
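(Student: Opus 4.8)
The plan is to carry out the strategy of~\S\ref{general.strategy} explicitly. Since $f_\lambda$ is an eigenvector for the two-sided $B^+\times B$-translation action (invariant under left translation by $U^+$, transforming by $\lambda$ under left translation by $T$, and by $\lambda^{-1}$ under right translation by $B$), its order of vanishing is constant on the single orbit $B^+wB$, so it suffices to compute the order at the generic point of $B^+wB$ of the regular function $F(a):=f_\lambda\big(w\prod_{\gamma\in R^+}x_\gamma(a_\gamma)\big)$ on $wU^+\cong\mathbb{A}^{|R^+|}$. Under the identification of~\S\ref{section: geometry of reductive groups}, $B^+wB$ is cut out in $wU^+B$ by the $l(w)$ coordinates $a_\gamma$ with $w\gamma<0$, so what we want is the $(a_\gamma:w\gamma<0)$-adic order of $F$; by left $U^+$-invariance $F$ in fact only involves those coordinates. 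Here $w=s_{\alpha_1}\cdots s_{\alpha_n}$ is automatically reduced of length $n$, since its simple reflections are distinct.

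The computation peels off the reflections one at a time. Using distinctness --- so that each $s_{\alpha_k}$ normalizes $U(R^+\setminus\{\alpha_k\})$ and is not tangled up with the remaining $\alpha_j$ --- together with Condition~\ref{cond-closed} --- so that the root sets carrying the successive $U^+$-factors are closed, hence $U(\cdot)$ of them are subgroups admitting the product decompositions recalled in~\ref{item-closed-root-set} --- one first rewrites $w\prod_\gamma x_\gamma(a_\gamma)=u^+\,s_{\alpha_1}x_{\alpha_1}(c_1)\cdots s_{\alpha_n}x_{\alpha_n}(c_n)\,u_0$ with $u^+\in U^+$, $u_0\in U$, and each $c_i$ an explicit function of the $a_\gamma$. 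Working in the local ring at the generic point of $B^+wB$, where the relevant $c_i$ are nonzero, one then applies~\ref{sx}, $s_{\alpha_i}x_{\alpha_i}(c)=x_{\alpha_i}(-c^{-1})\alpha_i^\vee(-c^{-1})x_{-\alpha_i}(c^{-1})$, to each of these factors and uses~\ref{com.relations},~\ref{wx},~\ref{tx} to push the $U^+$-parts to the left, the $U$-parts to the right, and collect the conjugated torus factors in the middle, arriving at $w\prod_\gamma x_\gamma(a_\gamma)=\tilde u^+\,t\,\tilde u_0$ with $\tilde u^+\in U^+$, $\tilde u_0\in U$. Then $F=f_\lambda(\tilde u^+ t\tilde u_0)=\lambda(t)^{-1}$.

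The crux is that the modified coordinates produced along the way and the conjugating Weyl elements combine so that $F=\pm\prod_{i=1}^{n}a_{\delta_i}^{\langle\lambda,\alpha_i^\vee\rangle}$, where $\delta_i:=s_{\alpha_n}\cdots s_{\alpha_{i+1}}(\alpha_i)$ enumerates the positive roots $\delta$ with $w\delta<0$. The essential point is the cancellation: a factor $a_\delta$ that $\lambda(t)^{-1}$ picks up from one conjugated torus factor is exactly killed by a denominator $a_\delta^{-1}$ that the earlier modifications have introduced into the coordinate $c_j$ of a later step, so that no $a_\delta$ winds up with exponent exceeding the pairing $\langle\lambda,\alpha_i^\vee\rangle$; this is precisely where distinctness and Condition~\ref{cond-closed} are used. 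Granting this, since the $\delta_i$ are distinct coordinate functions vanishing on $B^+wB$, the $(a_\gamma:w\gamma<0)$-adic order of $F$ equals $\sum_{i=1}^{n}\langle\lambda,\alpha_i^\vee\rangle$.

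I expect the main obstacle to be exactly this bookkeeping: showing the initial rewriting is well defined (this is where Condition~\ref{cond-closed} enters), and tracking, through the commutator terms of~\ref{com.relations}, which modified coordinate plays the role of $c_i$ at each step and that the final monomial carries no parasitic factors. As an independent sanity check --- handy in small rank and for pinning down signs --- one can instead recover $\ord(f_\lambda)$ along $B^+wB$ from the zero divisor $\Div(f_\lambda)=\sum_{\alpha\in\Delta}\langle\lambda,\alpha^\vee\rangle\,\overline{B^+s_\alpha B/B}$ on $G/B$, granted that a generic point of $B^+wB/B$ lies in the smooth locus of $\overline{B^+s_{\alpha_i}B/B}$ for each $i$ --- a fact which again relies on the hypotheses.
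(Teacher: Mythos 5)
Your proposal follows essentially the same route as the paper's proof: the decomposition $wU^+ = U^{+,\alpha_1,s_{\alpha_1}(\alpha_2),\ldots}\,s_{\alpha_1}U_{\alpha_1}\cdots s_{\alpha_n}U_{\alpha_n}$ (where Condition~\ref{cond-closed} is used to make the iterated splitting legitimate), followed by using~\ref{sx},~\ref{com.relations},~\ref{wx},~\ref{tx} to push $U^+$-parts left and $U$-parts right, and then reading off $f_\lambda = \lambda(t)^{-1}$. But you take an unnecessary detour which is the source of all the bookkeeping you flag as the ``obstacle'': you insist on expressing $F$ as an explicit monomial $\pm\prod a_{\delta_i}^{\langle\lambda,\alpha_i^\vee\rangle}$ in the $U^+$-coordinates $a_\gamma$, and then worry that ``denominators'' $a_\delta^{-1}$ produced by~\ref{sx} might not cancel. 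This worry is a red herring, and the paper avoids it entirely. Once the decomposition is established, the paper works \emph{directly} in the slice coordinates $(u_1,a_1,\ldots,a_n,t,u)$ of $wU^+B\cong U^{+,\text{stuff}}\times k^n\times T\times U$, and proves by induction (this is the genuine content, and where distinctness of the $\alpha_i$ is used to commute $U_{-\alpha_i}$ with $U_{\alpha_n}$, since $\alpha_n-\alpha_i$ is not a root) the identity
\[
\prod_{i=1}^n s_{\alpha_i}x_{\alpha_i}(a_i) = u^+\,\prod_{i=1}^n\alpha_i^\vee(-a_i^{-1})\,u^-,\qquad u^+\in U^+,\ u^-\in U,
\]
with the \emph{original} $a_i$'s in the torus factor --- no modified coordinates, no inverses leaking into later steps. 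From this, $f_\lambda = \prod_i(-a_i)^{\langle\lambda,\alpha_i^\vee\rangle}\lambda(t)^{-1}$, and since the $a_i$ are genuine coordinates (hence a regular sequence at any point of $B^+wB = \{a=0\}$), the order is $\sum_i\langle\lambda,\alpha_i^\vee\rangle$ immediately. If you prefer to stay in your $a_\gamma$-coordinates, you do not need the monomial claim at all: your $c_i(a)$ are precisely the paper's $a_i$ pulled back, and the fact that $(c_1,\ldots,c_n)$ is part of a regular system of parameters at the generic point of $B^+wB$ (which is exactly what the decomposition gives you) already yields $\ord(F)=\ord\bigl(\prod(-c_i)^{\langle\lambda,\alpha_i^\vee\rangle}\bigr)=\sum_i\langle\lambda,\alpha_i^\vee\rangle$. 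So the proposal is essentially correct in outline, but the ``cancellation'' you identify as the crux is not actually needed, and the one step you do need --- the inductive identity above, which is where distinctness genuinely enters --- is the step you leave unverified. The ``sanity check'' via the zero divisor is a legitimately different route, but as you note it rests on the unproved claim that the generic point of $B^+wB/B$ is a smooth point of each $\overline{B^+s_{\alpha_i}B/B}$; the paper does not take this route.
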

\begin{proof}
Let $w=s_{\alpha_1}\cdots s_{\alpha_n}$ for distinct simple root reflections $s_{\alpha_1},..., s_{\alpha_n}$.
%
Since $s_{\alpha}$ normalizes $U^{+,\alpha}$ for all simple roots $\alpha$, an induction on $n$ using~\ref{cond-closed} shows that
\begin{equation*}
    wU^+ = U^{+,\alpha_1, s_{\alpha_1}(\alpha_2), s_{\alpha_1}s_{\alpha_2}(\alpha_3), ..., s_{\alpha_1}\cdots s_{\alpha_{n-1}}(\alpha_n)}s_{\alpha_1}U_{\alpha_1}\cdots s_{\alpha_n}U_{\alpha_n}.
\end{equation*}
Hence we have an isomorphism of varieties
\begin{equation}\label{identification.gb}
    \begin{aligned}
    U^{+,\alpha_1, s_{\alpha_1}(\alpha_2), s_{\alpha_1}s_{\alpha_2}(\alpha_3), ..., s_{\alpha_1}\cdots s_{\alpha_{n-1}}(\alpha_n)} \times k^{n} \times T \times U &\to wU^{+}B \\
    (u_1, a_1, ..., a_n, t, u ) \mapsto u_1 \prod_{i=1}^{n}s_{\alpha_i}x_{\alpha_i}(a_i) tu.
    \end{aligned}
\end{equation}
Under this identification $B^{+}wB$ is the zero locus of the coordinates of $k^n$. Now we rewrite the image of this morphism to obtain an expression lying in $U^{+}TU$ in order to apply the explicit description of $f_\lambda$. We first prove by induction on $n$ that for all $(a_1, ..., a_n) \in k^n$, 
\begin{equation}\label{equation: coordinate change distinct simple reflections}
    \prod_{i=1}^{n}s_{\alpha_i}x_{\alpha_i}(a_i) = u^{+} \prod_{i=1}^{n}\alpha_{i}^{\vee}(-a_{i}^{-1}) u^{-},
\end{equation}
for some $u^{+}\in U^+$ and $u^{-}=\prod_{i=1}^{n}x_{-\alpha_i}(a_i') \in U$, for some $a_i'$. If $n=1$ we are done by 
(\ref{sx}). So suppose that~\eqref{equation: coordinate change distinct simple reflections} is true for $n-1$ for some $n\geq 2$. Then by induction and using \ref{sx} we have 
\begin{equation*}
    \prod_{i=1}^{n}s_{\alpha_i}x_{\alpha_i}(a_i) = u^{+} \prod_{i=1}^{n-1}\alpha_{i}^{\vee}(-a_{i}^{-1}) \prod_{i=1}^{n-1}x_{-\alpha_i}(a_i')x_{\alpha_n}(-a_{n}^{-1})\alpha_{n}^{\vee}(-a_{n}^{-1})x_{-\alpha_n}(a_{n}^{-1}),
\end{equation*}
for some $a_i' \in k$ and some $u^{+}\in U^{+}(k)$. 
Since $\alpha_i\neq \alpha_n$ for all $i\neq n$ we have $x_{-\alpha_i}(a)x_{\alpha_n}(b)=x_{\alpha_n}(b)x_{-\alpha_i}(a)$ for all $a,b \in k$. Thus
\begin{equation*}
    \prod_{i=1}^{n}s_{\alpha_i}x_{\alpha_i}(a_i) =u^{+} \prod_{i=1}^{n-1}\alpha_{i}^{\vee}(-a_{i}^{-1})x_{\alpha_n}(-a_{n}^{-1})\prod_{i=1}^{n-1}x_{-\alpha_i}(a_i')\alpha_{n}^{\vee}(-a_{n}^{-1})x_{-\alpha_n}(a_{n}^{-1}).
\end{equation*}
By \ref{tx}, 
\begin{multline*}
    \prod_{i=1}^{n}s_{\alpha_i}x_{\alpha_i}(a_i) =
    u^{+}x_{\alpha_n}\Big(-\prod_{i=1}^{n-1}\alpha_{n}(\alpha_{i}^{\vee}(-a_{i}^{-1})a_{n}^{-1}\Big) \cdot
    \prod_{i=1}^{n}\alpha_{i}^{\vee}(-a_{i}^{-1})\prod_{i=1}^{n-1}x_{-\alpha_i}\Big((-\alpha_i)(\alpha_{n}^{\vee}(-a_{n}^{-1})a_i'\Big)x_{-\alpha_n}(a_{n}^{-1}).
\end{multline*}
This prove \ref{equation: coordinate change distinct simple reflections}.

Under the identification of (\ref{identification.gb}), for any $(u_1,a_1, ..., a_n, t,u)\in wU^{+}B$, 
\begin{equation*}
    (u_1,a_1, ..., a_n, t,u)=u_1 \prod_{i=1}^{n}s_{\alpha_i}x_{\alpha_i}(a_i)tu= u^{+} \prod_{i=1}^{n}\alpha_{i}^{\vee}(-a_{i}^{-1})t u^{-}
\end{equation*}
for some $u^{+}\in U^+$ and some $u^{-}\in U$. Hence, the definition of $f_{\lambda}$ on $U^+ B$ implies that on the subvariety $U^{+,\alpha_1, s_{\alpha_1}(\alpha_2), s_{\alpha_1}s_{\alpha_2}(\alpha_3), ..., s_{\alpha_1}\cdots s_{\alpha_{n-1}}(\alpha_n)} \times (k\setminus\{0\})^{n} \times T \times U \subset wU^{+}B$ the function $f_{\lambda}$ is defined by
\begin{equation*}
    f\Big(u_1 \prod_{i=1}^{n}s_{\alpha_i}x_{\alpha_i}(a_i)tu\Big) = \lambda\Big(\prod_{i=1}^{n}\alpha_{i}^{\vee}(-a_{i}^{-1})t \Big)^{-1} = \prod_{i=1}^{n} (-a_i)^{\langle \lambda , \alpha_{i}^{\vee} \rangle} \lambda(t)^{-1}.
\end{equation*}
Since $\lambda$ is dominant, $f_{\lambda}$ can be extended to $wU^{+}B$ and for $g \in B^{+}wB$, i.e. $(a_1, ..., a_n) = (0,...,0)$, the order of vanishing of $f_{\lambda}$ is given by $\sum_{i=1}^{n}\langle \lambda , \alpha_{i}^{\vee} \rangle$. 
\end{proof}
\subsection{Repeated root reflections}
The following addresses the issue when a simple reflection occurs twice. It is used as the base step in the induction process of \ref{odd.case.comp} and \ref{even.case.comp} below.
\begin{proposition}\label{case.n=1}
Let $\lambda\in X^{*}_+(T)$ and let $f_{\lambda}\in H^{0}(\lambda)_\lambda$. 
If $w = s_{\alpha} s_{\beta} s_{\alpha}$ is a product of simple root reflections with $\alpha\neq \beta$, then the order of vanishing of $f_{\lambda}$ at every point of $B^{+}wB$ is 
    \begin{equation}
\label{eq-alpha-beta-alpha}
        \ord(f_\lambda) = \langle \lambda , \beta^{\vee} \rangle + \langle \lambda , \alpha^{\vee} \rangle \min\{2 , -\langle \alpha , \beta^{\vee} \rangle \}.
    \end{equation}
\end{proposition}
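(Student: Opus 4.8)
The plan is to follow the template of the proof of Theorem~\ref{distinct.simple}, the extra ingredient being an $\SL_2$-collision that handles the repeated reflection $s_\alpha$. I would first dispose of the case $\langle\alpha,\beta^\vee\rangle=0$: there $s_\alpha$ and $s_\beta$ commute, $w=s_\beta$ in $W$, so $B^{+}wB=B^{+}s_\beta B$, and Theorem~\ref{distinct.simple} applied to the single reflection $s_\beta$ (Condition~\ref{cond-closed} being vacuous for $n=1$) gives $\ord(f_\lambda)=\langle\lambda,\beta^\vee\rangle$, which is~\eqref{eq-alpha-beta-alpha} because $\min\{2,0\}=0$. So from now on assume $k:=-\langle\alpha,\beta^\vee\rangle\geq 1$; then $w=s_\alpha s_\beta s_\alpha$ is reduced of length $3$.

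Next I would set up the parametrization of $wU^{+}B$ exactly as in~\eqref{identification.gb}. Condition~\ref{cond-closed} holds here: $R^{+,s_\alpha}=R^{+}\setminus\{\alpha\}$ is closed since a simple root is not a positive $\NN$-combination of two positive roots, and $R^{+,s_\beta s_\alpha}=R^{+}\setminus\{\beta,s_\beta(\alpha)\}$ with $s_\beta(\alpha)=\alpha+k\beta$ is closed since any $\alpha+k\beta=a\gamma_1+b\gamma_2$ with $\gamma_1,\gamma_2\in R^{+}$ would force one of $\gamma_1,\gamma_2$ to have zero $\alpha$-coordinate, hence to be a positive multiple of $\beta$ that is a root, i.e.\ equal to $\beta$, which is excluded. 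One thereby obtains an isomorphism onto $wU^{+}B$, $(u_1,a_1,a_2,a_3,t,u)\mapsto u_1\,s_\alpha x_\alpha(a_1)\,s_\beta x_\beta(a_2)\,s_\alpha x_\alpha(a_3)\,tu$ ($u_1$ ranging over the product of the $U_\gamma$, $\gamma\in R^{+,w}$), under which $B^{+}wB$ is the locus $a_1=a_2=a_3=0$.

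The heart of the argument is to rewrite, on the open locus $a_1a_2a_3\neq 0$, the core $C:=s_\alpha x_\alpha(a_1)s_\beta x_\beta(a_2)s_\alpha x_\alpha(a_3)$ as $u^{+}\tau u^{-}$ with $u^{+}\in U^{+}$, $\tau\in T$, $u^{-}\in U$. After expanding each $s_\gamma x_\gamma(a)$ by~\ref{sx}, I would move all positive root groups to the left and all negative root groups to the right, keeping torus factors in the middle, using: $U_{-\alpha}$ commutes with $U_\beta$ and $U_{-\beta}$ with $U_\alpha$ (for distinct simple roots $\gamma_1\neq\gamma_2$ no $-i\gamma_1+j\gamma_2$ with $i,j\ge1$ is a root, its coordinates being of mixed sign); torus elements move past root groups by~\ref{tx}; products of positive (resp.\ negative) root groups stay in $U^{+}$ (resp.\ $U$) by~\ref{com.relations}. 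The only genuinely new step is the collision $x_{-\alpha}(a_1^{-1})\,x_\alpha(a_3')$ of opposite root groups for the \emph{same} root $\alpha$, where $a_3'=(-a_2)^{k}(-a_3^{-1})$ is produced by moving $x_\alpha(-a_3^{-1})$ past $\beta^\vee(-a_2^{-1})$; for this I would derive from~\ref{sx} and~\ref{tx} the identity $x_{-\gamma}(y)x_\gamma(x)=x_\gamma(\ast)\,\gamma^\vee\!\big((1+xy)^{-1}\big)\,x_{-\gamma}(\ast)$, valid when $1+xy\neq0$. Collecting the four torus contributions --- $\alpha^\vee(-a_1^{-1})$, $\alpha^\vee(-a_3^{-1})$ from the outer factors, $\beta^\vee(-a_2^{-1})$ from the middle one, and $\alpha^\vee\!\big((1+a_1^{-1}a_3')^{-1}\big)=\alpha^\vee\!\big(a_1a_3(a_1a_3+(-1)^{k+1}a_2^{k})^{-1}\big)$ from the collision --- and multiplying out the three $\alpha^\vee$'s yields $\tau=\alpha^\vee\!\big((a_1a_3+(-1)^{k+1}a_2^{k})^{-1}\big)\beta^\vee(-a_2^{-1})$.

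Since $f_\lambda$ is given on $U^{+}B$ by $f_\lambda(u^{+}tu)=\lambda(t)^{-1}$ and the outer factors $u_1,u^{+},u^{-},u$ are irrelevant, on the open locus
\[
f_\lambda=\big(a_1a_3+(-1)^{k+1}a_2^{k}\big)^{\langle\lambda,\alpha^\vee\rangle}(-a_2)^{\langle\lambda,\beta^\vee\rangle}\lambda(t)^{-1};
\]
as $\lambda$ is dominant this is a polynomial in $a_1,a_2,a_3$, hence, being regular on the irreducible variety $wU^{+}B$ and agreeing with $f_\lambda$ on a dense open, it coincides with $f_\lambda|_{wU^{+}B}$. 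The monomials $a_1a_3$ and $a_2^{k}$ have orders $2$ and $k$ and nonzero coefficients, so $a_1a_3+(-1)^{k+1}a_2^{k}$ has order $\min\{2,k\}$ at the origin; hence along $\{a_1=a_2=a_3=0\}$ one gets $\ord(f_\lambda)=\langle\lambda,\alpha^\vee\rangle\min\{2,k\}+\langle\lambda,\beta^\vee\rangle$, which is~\eqref{eq-alpha-beta-alpha}. I expect the main obstacle to be the bookkeeping of the collision step: getting the torus factor $\gamma^\vee((1+xy)^{-1})$ right, the inverse included, and checking that shuffling the outer $U^{+}$ and $U$ parts creates no spurious torus factors --- which is precisely what the ``distinct simple roots give no mixed-sign roots'' observation ensures, by confining all collisions to the single root $\alpha$.
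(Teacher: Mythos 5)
Your proposal is correct and arrives at exactly the same polynomial expression for $f_\lambda$ as the paper's proof, but the handling of the repeated reflection is organized differently. The paper keeps the leftmost $s_\alpha$ unexpanded, applies~\ref{sx} only to $s_\beta x_\beta(a_2)$ and $s_\alpha x_\alpha(a_3)$, then moves the resulting $x_\alpha\big((-a_2)^{k}(-a_3^{-1})\big)$ leftward past the torus and $x_{-\beta}$ factors and \emph{adds} it to $x_\alpha(a_1)$ inside the same $U_\alpha$, giving $s_\alpha x_\alpha(d)$ with $d=a_1+(-a_2)^{k}(-a_3^{-1})$; a final application of~\ref{sx} to this combined factor yields the torus contribution $\alpha^\vee(-d^{-1})$, and the identity $(-d)(-a_3)=a_1a_3+(-1)^{k+1}a_2^{k}$ reproduces your polynomial. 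You instead expand all three $s_\gamma x_\gamma$ by~\ref{sx} at once and resolve the unavoidable opposite-root collision $x_{-\alpha}(y)x_\alpha(x)$ via the Gaussian-elimination identity $x_{-\gamma}(y)x_\gamma(x)=x_\gamma(*)\gamma^\vee\big((1+xy)^{-1}\big)x_{-\gamma}(*)$. The paper never produces an opposite-root collision (it resolves the issue upstream, on the positive side), while your route is more uniform expand-then-normal-form at the cost of importing one extra $\SL_2$ identity; both deposit the same torus element $\alpha^\vee\big((a_1a_3+(-1)^{k+1}a_2^{k})^{-1}\big)\beta^\vee(-a_2^{-1})$ in the middle. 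Two further points in your favor: you make explicit the needed closedness of $R^{+,s_\alpha}$ and $R^{+,s_\beta s_\alpha}$ (stated without proof in the paper), and you separately dispose of the degenerate case $\langle\alpha,\beta^\vee\rangle=0$, where $s_\alpha s_\beta s_\alpha=s_\beta$ is not reduced and the parametrization~\eqref{identification.gb} breaks down (e.g.\ $s_\alpha s_\beta(\alpha)=-\alpha$ is not a positive root) --- the paper's argument tacitly assumes the reduced case, so your preliminary reduction fills a small gap.
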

\begin{proof}
The set $R^{+,s_\beta s_\alpha}$ is closed. 
Similar to the previous case, 
\begin{equation*}
\begin{aligned}
    s_\alpha s_\beta s_\alpha U^{+} = s_\alpha s_\beta U^{+,\alpha}s_\alpha U_\alpha &= s_\alpha U^{+, \beta , s_\beta(\alpha)}s_\beta U_\beta s_\alpha U_\alpha \\
    &= U^{+, \alpha, s_\alpha(\beta), s_\alpha(s_\beta(\alpha))}s_\alpha U_\alpha s_\beta U_\beta s_\alpha U_\alpha.
\end{aligned}
\end{equation*}
This yields the isomorphism of varieties
\begin{equation}\label{want.to.rewrite}
\begin{aligned}
    U^{+, \alpha, s_\alpha(\beta), s_\alpha(s_\beta(\alpha))}(k) \times k^3 \times T(k) \times U(k) &\to s_\alpha s_\beta s_\alpha U^{+}(k)B(k) \\
    (u_1,a_1,a_2,a_3,t,u) &\mapsto u_1s_\alpha x_\alpha(a_1) s_\beta x_\beta(a_2) s_\alpha x_\alpha(a_3) tu.
\end{aligned}
\end{equation}
Again we will rewrite the image of this map to obtain an expression in $U^{+}TU$, but because $s_\alpha x_\alpha$ occurs twice, the procedure becomes more complicated since \ref{com.relations} does not hold for $\alpha$ and $-\alpha$. 
Note that in the final expression, the value that the root group morphisms $x_{\alpha_i}$ take does not affect the order of vanishing of $f_\lambda$. To ease the exposition, we will write $x_\alpha$ instead of $x_{\alpha}(-)$ as soon as the value it takes does not affect the order of vanishing of $f_\lambda$. 

With these simplifications, we can rewrite~\eqref{want.to.rewrite} as follows:

\resizebox{0.9\textwidth}{!}{\begin{minipage}{\textwidth}
\begin{equation*}
\begin{aligned}
&\mathrel{\phantom{=}} u_1s_\alpha x_\alpha(a_1) s_\beta x_\beta(a_2) s_\alpha x_\alpha(a_3) tu \\
&= u_1 s_\alpha x_\alpha(a_1) x_\beta(-a_{2}^{-1}) \beta^{\vee}(-a_{2}^{-1})x_{-\beta} x_\alpha(-a_{3}^{-1})\alpha^{\vee}(-a_{3}^{-1})x_{-\alpha}tu \\
&\hspace{6cm} \text{(by (\ref{sx}) for $s_{\beta}x_{\beta}s_{\alpha}x_{\alpha}$)} \\
&=u_1 s_\alpha x_\alpha(a_1) x_\beta(-a_{2}^{-1}) \beta^{\vee}(-a_{2}^{-1}) x_\alpha(-a_{3}^{-1})x_{-\beta}\alpha^{\vee}(-a_{3}^{-1})x_{-\alpha}tu \\
&\hspace{6cm} \text{(because $x_{-\beta}$ and $x_{\alpha}$ commute)} \\
&=u_1 s_\alpha x_\alpha(a_1) x_\beta(-a_{2}^{-1}) x_\alpha\Big((-a_{2})^{-\langle \alpha, \beta^{\vee} \rangle}(-a_{3}^{-1})\Big) \beta^{\vee}(-a_{2}^{-1})\alpha^{\vee}(-a_{3}^{-1})x_{-\beta}x_{-\alpha}tu \\
&\hspace{6cm} \text{(by (\ref{tx}) for $\beta^{\vee}x_{\alpha}$ and $x_{-\beta}\alpha^{\vee}$)} \\
&= u_1 s_\alpha x_{I\alpha + J\beta} x_\beta x_\alpha(a_1)  x_\alpha\Big((-a_{2})^{-\langle \alpha, \beta^{\vee} \rangle}(-a_{3}^{-1})\Big) \beta^{\vee}(-a_{2}^{-1})\alpha^{\vee}(-a_{3}^{-1})x_{-\beta}x_{-\alpha}tu \\
&\hspace{6cm} \text{(by (\ref{com.relations}) for $x_{\alpha}x_{\beta}$)} \\
&= u_1 x_{s_\alpha(I\alpha + J\beta)} x_{s_\alpha(\beta)} s_\alpha x_\alpha(a_1)  x_\alpha\Big((-a_{2})^{-\langle \alpha, \beta^{\vee} \rangle}(-a_{3}^{-1})\Big) \beta^{\vee}(-a_{2}^{-1})\alpha^{\vee}(-a_{3}^{-1})x_{-\beta}x_{-\alpha}tu \\
&\hspace{6cm} \text{(by (\ref{wx}) for $s_{\alpha}x_{I\alpha+J\beta}$ and $s_{\alpha}x_{\beta}$)} \\
&= u_1 x_{s_\alpha(I\alpha + J\beta)} x_{s_\alpha(\beta)} s_\alpha  x_\alpha(d) \beta^{\vee}(-a_{2}^{-1})\alpha^{\vee}(-a_{3}^{-1})x_{-\beta}x_{-\alpha}tu \\
&\hspace{6cm} \text{(
 with $d\coloneqq a_1+(-a_{2})^{-\langle \alpha, \beta^{\vee} \rangle}(-a_{3}^{-1})$)} \\
&=u_1 x_{s_\alpha(I\alpha + J\beta)} x_{s_\alpha(\beta)} x_\alpha \alpha^{\vee}(-d^{-1})x_{-\alpha} \beta^{\vee}(-a_{2}^{-1})\alpha^{\vee}(-a_{3}^{-1})x_{-\beta}x_{-\alpha}tu \\
&\hspace{6cm} \text{(by (\ref{sx}) for $s_{\alpha}x_{\alpha}$)} \\
&=u_1 x_{s_\alpha(I\alpha + J\beta)} x_{s_\alpha(\beta)} x_\alpha \alpha^{\vee}(-d^{-1}) \beta^{\vee}(-a_{2}^{-1})\alpha^{\vee}(-a_{3}^{-1})x_{-\alpha}x_{-\beta}x_{-\alpha}tu \\
&\hspace{6cm} \text{(by (\ref{tx}) for $x_{-\alpha}\beta^{\vee}$ and $x_{-\alpha}\alpha^{\vee}$)} \\
&=u_1 x_{s_\alpha(I\alpha + J\beta)} x_{s_\alpha(\beta)} x_\alpha \alpha^{\vee}(-d^{-1}) \beta^{\vee}(-a_{2}^{-1})\alpha^{\vee}(-a_{3}^{-1})tx_{-\alpha}x_{-\beta}x_{-\alpha}u \\
&\hspace{6cm} \text{(by (\ref{tx}) for $x_{-\alpha}x_{-\beta}x_{-\alpha}t$)}.
\end{aligned}
\end{equation*}
\end{minipage}}

For all $a\in k$, the element $x_{s_{\alpha}(I\alpha + J\beta)}(a) \in U^{+}(k)$ since $i,j>0$ and $s_{\alpha}$ permutes $R^{+}\setminus \{\alpha\}$. Similarly, $x_{s_\alpha(\beta)}(a) \in U^{+}(k)$. Hence, the expression above lies in $U^{+}(k)T(k)U(k)$.

Let $X\subset \AA^3$ be the subscheme defined by $d=0$. On $$U^{+, \alpha, s_\alpha(\beta), s_\alpha(s_\beta(\alpha))}(k) \times ((k\setminus \{0\})^3\setminus X(k)) \times T(k) \times U(k)$$ the function $f_\lambda$ can be written as 
\begin{equation*}
\begin{aligned}
    &\mathrel{\phantom{=}}f_\lambda\Big(u_1 s_\alpha x_\alpha(a_1) s_\beta x_\beta(a_2) s_\alpha x_\alpha(a_3) tu\Big) \\
    &= \lambda(t)^{-1} (-d)^{\langle \lambda , \alpha^{\vee} \rangle} (-a_{2})^{\langle \lambda , \beta^{\vee} \rangle} (-a_{3})^{\langle \lambda , \alpha^{\vee} \rangle } \\
    &=(-1)^{2\langle \lambda , \alpha^{\vee} \rangle+ \langle \lambda, \beta^{\vee} \rangle}\lambda(t)^{-1} \Big(a_1 a_3 + (-1)^{1-\langle \alpha, \beta^{\vee}\rangle}a_{2}^{-\langle \alpha, \beta^{\vee} \rangle}\Big)^{\langle \lambda, \alpha^{\vee} \rangle} a_{2}^{\langle \lambda , \beta^{\vee} \rangle}.
\end{aligned}
\end{equation*}
Since $\alpha$ and $\beta$ are distinct simple roots, $\langle \alpha, \beta^{\vee} \rangle \leq 0$. Since $\lambda$ is dominant, the function $f_\lambda$ extends to $$U^{+, \alpha, s_\alpha(\beta), s_\alpha(s_\beta(\alpha))}(k) \times k^3 \times T(k) \times U(k) = s_\alpha s_\beta s_\alpha U^{+}(k)B(k).$$ 
Its order of vanishing at $(a_1,a_2,a_3)=(0,0,0)$ is given by~\eqref{eq-alpha-beta-alpha}. 
\end{proof}
%
%
%
%
\subsection{Computations for type \texorpdfstring{$\Bsf_m$}{Bn}} Let $(G,\mu)$ as in \ref{th-main}\ref{item-th-main-odd-orthogonal-Bn}. 
This section describes $f_\lambda\in H^0(\lambda)_{\lambda}$ on $wU^+B$ for all $w \in \iw$. 
\subsubsection{Notation}
\label{Bn-notation}
Let $\alpha_1, ..., \alpha_n, \gamma$ be distinct simple roots. For fixed $a_{i,1},a_{i,2},c \in k$, $i=1,...,n$, define $(E_j)_{j=1}^{n}$  recursively by
\begin{equation*}
\begin{aligned}
    E_1 &= a_{1,1}a_{1,2} + (-1)^{1-\langle \alpha_1, \gamma^{\vee}\rangle}  c^{-\langle \alpha_1, \gamma^{\vee} \rangle}, \\ E_{j+1} &= a_{j+1,1}a_{j+1,2} + (-1)^{1-\langle \alpha_{j+1}, \gamma^{\vee}\rangle}  c^{-\langle \alpha_{j+1},\gamma^{\vee}\rangle} \prod_{i=1}^{j}E_i^{-\langle \alpha_{j+1},\alpha_{i}^{\vee} \rangle},
\end{aligned}
\end{equation*}
and let
\begin{equation*}
    t_n \coloneqq \alpha_n^{\vee}(E_n^{-1})\cdots \alpha_1^{\vee}(E_1^{-1}) \gamma^{\vee}(- c^{-1}).
\end{equation*}
\begin{remark}
The dependence of $E_1,..., E_n$ and $t_n$ on the simple roots $\alpha_1, ..., \alpha_n, \gamma$ is omitted to simplify notation.
\end{remark}
View each $E_i$ as a function in $a_{1,1},a_{1,2}, ..., a_{n,1},a_{n,2},c$. Letting $\ord(E_i)$ denote the order at $0$, 
\begin{equation}\label{order.of.vanishing.Ei}
    \ord(E_1) =  \min\{ 2, -\langle \alpha_1, \gamma^{\vee} \rangle \}, \,\,\, 
    \ord(E_i) = \min\Big\{ 2, - \langle \alpha_i, \gamma^{\vee} \rangle - \sum_{j=1}^{i-1}\langle \alpha_i, \alpha_{j}^{\vee} \rangle \ord(E_j) \Big\}.
\end{equation}
A quick computation shows that
\begin{equation}\label{e.n+1.alpha.n+1}
a_{n+1,1}a_{n+1,2} - \alpha_{n+1}(  t_n) = E_{n+1}.
\end{equation}
\begin{theorem}\label{main.theorem.highest.weight.vector.typeBn}
Let $\lambda\in X^{*}_+(T)$ and $f_{\lambda}\in H^{0}(\lambda)_\lambda$. Let $\beta_1, ..., \beta_l, \alpha_1, ..., \alpha_n, \gamma$ be distinct simple roots. Assume that $w=s_{\beta_1}\cdots s_{\beta_l}s_{\alpha_n}\cdots s_{\alpha_1}s_\gamma s_{\alpha_1}\cdots s_{\alpha_n}$ satisfies~\ref{cond-closed}. The order of vanishing of $f_{\lambda}$ on every point of $B^{+}wB$ is 
\begin{equation*}
        \ord(f_\lambda) = \sum_{j=1}^{l}\langle \lambda, \beta_j^{\vee}\rangle + \langle \lambda, \gamma^{\vee} \rangle + \sum_{i=1}^{n}\langle \lambda, \alpha_{i}^{\vee} \rangle \ord(E_i).
\end{equation*}
\end{theorem}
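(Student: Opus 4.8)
The plan is to combine the reduction of \S\ref{general.strategy} with the computational method of Proposition~\ref{case.n=1}, promoted to an induction on $n$. Write $w=s_{\beta_1}\cdots s_{\beta_l}\,s_{\alpha_n}\cdots s_{\alpha_1}\,s_\gamma\,s_{\alpha_1}\cdots s_{\alpha_n}$ and let $C_n$ denote the ``core'' product $s_{\alpha_n}x_{\alpha_n}(a_{n,1})\cdots s_{\alpha_1}x_{\alpha_1}(a_{1,1})\cdot s_\gamma x_\gamma(c)\cdot s_{\alpha_1}x_{\alpha_1}(a_{1,2})\cdots s_{\alpha_n}x_{\alpha_n}(a_{n,2})$, so that every point of $wU^+B$ can be written $u_1\big(\prod_{j=1}^{l}s_{\beta_j}x_{\beta_j}(a_j)\big)C_n\,tu$. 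Since $w$ satisfies Condition~\ref{cond-closed}, the same induction on word length as in the proofs of Theorem~\ref{distinct.simple} and Proposition~\ref{case.n=1} (using repeatedly that $s_\delta$ normalizes $U^{+,\delta}$ for a simple root $\delta$) produces an isomorphism of varieties $U^{+,w}\times k^{l}\times k^{n}\times k\times k^{n}\times T\times U\xrightarrow{\ \sim\ }wU^+B$, where $U^{+,w}=\prod_{\gamma'\in R^{+,w}}U_{\gamma'}$, sending $(u_1,(a_j),(a_{i,1}),c,(a_{i,2}),t,u)$ to the element just displayed, and under which $B^+wB$ is the closed subvariety cut out by the vanishing of all the $k$-coordinates $(a_j),(a_{i,1}),c,(a_{i,2})$. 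By \S\ref{general.strategy} it then suffices to describe $f_\lambda$ on the open dense subset of $wU^+B$ where those coordinates are nonzero, and at the end invoke dominance of $\lambda$.

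The heart of the proof is a normal form established by induction on $n$: there are $u_n^+\in U^+(k)$ and $u_n^-\in U(k)$, each a product of root subgroups $U_\delta$ for roots $\delta$ supported on $\{\alpha_1,\dots,\alpha_n,\gamma\}$, with $C_n=u_n^+\,t_n\,u_n^-$, where $t_n$ is as in \S\ref{Bn-notation}. The base case $n=1$ is precisely the chain of identities in the proof of Proposition~\ref{case.n=1}, relabelling $(\alpha,\beta,a_1,a_2,a_3)$ as $(\alpha_1,\gamma,a_{1,1},c,a_{1,2})$: that computation exhibits $C_1$ as $u_1^+$ times the torus element $\alpha_1^\vee(-d^{-1})\gamma^\vee(-c^{-1})\alpha_1^\vee(-a_{1,2}^{-1})$ times $u_1^-$, and since $T$ is abelian this reorganizes as $\alpha_1^\vee((d\,a_{1,2})^{-1})\gamma^\vee(-c^{-1})=t_1$, because $d\,a_{1,2}=E_1$. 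For the inductive step, substitute $C_n=u_n^+t_nu_n^-$ into $C_{n+1}=s_{\alpha_{n+1}}x_{\alpha_{n+1}}(a_{n+1,1})\,C_n\,s_{\alpha_{n+1}}x_{\alpha_{n+1}}(a_{n+1,2})$. Because $u_n^+$ has no $U_{\alpha_{n+1}}$-component, push the leading $s_{\alpha_{n+1}}$ rightwards past it (which only rescales it to another element of $U^+$, as $s_{\alpha_{n+1}}$ permutes $R^+\setminus\{\alpha_{n+1}\}$), and then follow the manipulations of Proposition~\ref{case.n=1} step by step: apply \ref{sx} to each occurrence of $s_{\alpha_{n+1}}x_{\alpha_{n+1}}(\cdot)$, move root groups across torus elements with \ref{tx}, move reflections past root groups with \ref{wx}, and amalgamate the two resulting $U_{\alpha_{n+1}}$-factors with \ref{com.relations}. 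Tracking the arguments of all torus factors through these operations, the net torus contribution comes out as $t_n\,\alpha_{n+1}^\vee(E^{-1})$ with $E=a_{n+1,1}a_{n+1,2}-\alpha_{n+1}(t_n)$; by~\eqref{e.n+1.alpha.n+1} this equals $t_n\,\alpha_{n+1}^\vee(E_{n+1}^{-1})=t_{n+1}$, which closes the induction.

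Granting the core normal form, the prefix $\prod_j s_{\beta_j}x_{\beta_j}(a_j)$ is treated as in Theorem~\ref{distinct.simple}: iterating \ref{sx} rewrites it as $v^+\big(\prod_j\beta_j^\vee(-a_j^{-1})\big)v^-$ with $v^+\in U^+(k)$ and $v^-=\prod_j x_{-\beta_j}(a_j')\in U(k)$. Because each $\beta_j$ is simple and distinct from all of $\alpha_1,\dots,\alpha_n,\gamma$, no $m'\delta-m\beta_j$ with $m,m'\ge1$ is a root when $\delta$ is a positive root supported on $\{\alpha_1,\dots,\alpha_n,\gamma\}$; hence $v^-$ commutes past $u_n^+$, is merely rescaled by \ref{tx} on passing $t_n$, and is then absorbed into $u_n^-$, while $\prod_j\beta_j^\vee(-a_j^{-1})$ rescales $u_n^+$ and is absorbed into the central torus. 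Collecting, the general point of $wU^+B$ lies in $U^+\cdot\big(\prod_j\beta_j^\vee(-a_j^{-1})\,t_n\,t\big)\cdot U$, so the description of $f_\lambda$ on $U^+B$ gives, on the locus where all $k$-coordinates are nonzero,
\begin{equation*}
f_\lambda\Big(u_1\big(\textstyle\prod_{j}s_{\beta_j}x_{\beta_j}(a_j)\big)C_n\,tu\Big)=\lambda(t)^{-1}\prod_{j=1}^{l}(-a_j)^{\langle\lambda,\beta_j^\vee\rangle}\,(-c)^{\langle\lambda,\gamma^\vee\rangle}\prod_{i=1}^{n}E_i^{\langle\lambda,\alpha_i^\vee\rangle}.
\end{equation*}
Since $\lambda$ is dominant, every exponent $\langle\lambda,\beta_j^\vee\rangle$, $\langle\lambda,\gamma^\vee\rangle$, $\langle\lambda,\alpha_i^\vee\rangle$ is $\ge0$, so the right-hand side is a regular function on $wU^+B$ (a product of polynomials in the $k$-coordinates times the nowhere-vanishing unit $\lambda(t)^{-1}$), and hence, by irreducibility of $wU^+B$, agrees with $f_\lambda$ everywhere. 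As the expression involves only the transverse coordinates, its order at every point of $B^+wB$ (where these vanish while $u_1,t,u$ vary) is the same; by additivity of the $\fkm$-adic order, $\ord(a_j)=\ord(c)=1$, and~\eqref{order.of.vanishing.Ei} for $\ord(E_i)$, this order is $\sum_j\langle\lambda,\beta_j^\vee\rangle+\langle\lambda,\gamma^\vee\rangle+\sum_i\langle\lambda,\alpha_i^\vee\rangle\,\ord(E_i)$, as asserted.

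I expect the inductive passage for the core to be the main obstacle. As already flagged in the proof of Proposition~\ref{case.n=1}, \ref{com.relations} fails for the pair $\alpha_{n+1},-\alpha_{n+1}$, so the two occurrences of $s_{\alpha_{n+1}}x_{\alpha_{n+1}}(\cdot)$ cannot simply be slid together; one of them must be carried through $t_nu_n^-$ and the intervening reflection, which produces a torus element conjugated by $s_{\alpha_{n+1}}$, namely $s_{\alpha_{n+1}}(t_n)=t_n\,\alpha_{n+1}^\vee(\alpha_{n+1}(t_n)^{-1})$, together with temporarily displaced root groups of either sign. Checking that every displaced term has the sign needed to be reabsorbed into $u_{n+1}^+$ or $u_{n+1}^-$, so that the final torus factor is exactly $t_n\,\alpha_{n+1}^\vee(E_{n+1}^{-1})$ with no leakage, is where Condition~\ref{cond-closed} and the palindromic, nested shape of the $\alpha$-part of $w$ are essential; this bookkeeping, though elementary, is a multi-step elaboration of the displayed computation in Proposition~\ref{case.n=1} and is the most delicate point.
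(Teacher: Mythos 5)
Your proposal follows essentially the same route as the paper's: the normal form $C_n=u_n^+t_nu_n^-$ established by induction on $n$ with Proposition~\ref{case.n=1} as base case is precisely the paper's Lemma~\ref{odd.case.comp}, the absorption of the $\beta_j$-prefix is the paper's Lemma~\ref{odd.case.comp.cor1}, and the final order count from the explicit formula for $f_\lambda$ is identical. The only cosmetic difference is whether you first decompose $\prod_j s_{\beta_j}x_{\beta_j}$ via \ref{sx} into $v^+(\prod_j\beta_j^\vee)v^-$ and then slide $v^-$ past $u_n^+$, or, as the paper's (omitted) argument does, first move the reflections past $u_n^+$ and then apply \ref{sx}; both orderings amount to the same bookkeeping.
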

The proof of \ref{main.theorem.highest.weight.vector.typeBn} uses the following two lemmas.
\begin{lemma}\label{odd.case.comp}
There exists a finite set $\mathcal{B}(n)$ of positive linear combinations of $\alpha_1, ..., \alpha_n,\gamma$, and a subset $\mathcal{B}'(n)\subset \{\alpha_1, ..., \alpha_n, \gamma\}$ such that
\begin{equation}\label{odd.case.comp.main.expression}
    \prod_{i=1}^{n}s_{\alpha_{n+1-i}}x_{\alpha_{n+1-i}}(a_{n+1-i,1})s_\gamma x_\gamma(c) \prod_{i=1}^{n} s_{\alpha_i}x_{\alpha_i}(a_{i,2})=\underbrace{\Big(\prod_{\beta\in \mathcal{B}} x_\beta\Big)}_{\eqqcolon u_{n}^+}t_n \underbrace{\Big(\prod_{\beta'\in \mathcal{B}'} x_{-\beta'}\Big)}_{\eqqcolon u_n}
\end{equation}
\end{lemma}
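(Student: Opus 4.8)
I plan to prove the identity by induction on $n$, mirroring at the outer layer the chain of manipulations carried out in the proof of Proposition~\ref{case.n=1}. For $n=1$ the claim is precisely what that proof establishes, applied with $\alpha=\alpha_1$, $\beta=\gamma$ and with the auxiliary elements $u_1\in U^+$ and $tu\in B$ appearing there set equal to $1$: it rewrites $s_{\alpha_1}x_{\alpha_1}(a_{1,1})s_\gamma x_\gamma(c)s_{\alpha_1}x_{\alpha_1}(a_{1,2})$ as a product of positive root groups (indexed by positive combinations of $\alpha_1,\gamma$), times the torus element $\alpha_1^\vee(-d^{-1})\gamma^\vee(-c^{-1})\alpha_1^\vee(-a_{1,2}^{-1})$ with $d=a_{1,1}+(-c)^{-\langle\alpha_1,\gamma^\vee\rangle}(-a_{1,2}^{-1})$, times $x_{-\alpha_1}x_{-\gamma}x_{-\alpha_1}$. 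Since $T$ is abelian and $\alpha_1^\vee(-d^{-1})\alpha_1^\vee(-a_{1,2}^{-1})=\alpha_1^\vee((d a_{1,2})^{-1})$ with $d a_{1,2}=E_1$, the torus factor is $t_1$, and one reads off $\mathcal{B}(1)$ of the required shape and $\mathcal{B}'(1)=\{\alpha_1,\gamma,\alpha_1\}$.

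For the inductive step, assume~\eqref{odd.case.comp.main.expression} for $n$, say with right-hand side $u_n^+ t_n u_n$. Then the left-hand side for $n+1$ is $s_{\alpha_{n+1}}x_{\alpha_{n+1}}(a_{n+1,1})\cdot u_n^+ t_n u_n\cdot s_{\alpha_{n+1}}x_{\alpha_{n+1}}(a_{n+1,2})$, and I would simplify it by peeling off the two outer reflections. Note $\alpha_{n+1}$ is a simple root distinct from $\alpha_1,\dots,\alpha_n,\gamma$, hence from every root indexing $u_n^+$ and $u_n$. First, apply~\ref{sx} to the rightmost $s_{\alpha_{n+1}}x_{\alpha_{n+1}}(a_{n+1,2})$ and slide $u_n$ rightward past the resulting $x_{\alpha_{n+1}}(\cdot)$ (the difference of two distinct simple roots is never a root) and past the resulting $\alpha_{n+1}^\vee(\cdot)$ by~\ref{tx}, turning that block into $x_{\alpha_{n+1}}(\cdot)\,\alpha_{n+1}^\vee(\cdot)\,u_n'\,x_{-\alpha_{n+1}}(a_{n+1,2}^{-1})$ with $u_n'$ a rescaling of $u_n$ (same index set $\mathcal{B}'(n)$). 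Next, slide $u_n^+$ leftward past $s_{\alpha_{n+1}}x_{\alpha_{n+1}}(a_{n+1,1})$ using~\ref{com.relations} and then~\ref{wx}: the commutator terms $i\alpha_{n+1}+j\beta$ stay positive, and $s_{\alpha_{n+1}}$ permutes $R^+\setminus\{\alpha_{n+1}\}$, so this produces a new element of $U^+$ whose root groups are positive combinations of $\alpha_1,\dots,\alpha_{n+1},\gamma$. What remains to simplify is the core $s_{\alpha_{n+1}}x_{\alpha_{n+1}}(a_{n+1,1})\,t_n\,x_{\alpha_{n+1}}(\cdot)$: move $t_n$ to the right of $x_{\alpha_{n+1}}(\cdot)$ via~\ref{tx}, merge the two adjacent $x_{\alpha_{n+1}}$-factors into a single $x_{\alpha_{n+1}}(d_{n+1})$ with $d_{n+1}=a_{n+1,1}-\alpha_{n+1}(t_n)a_{n+1,2}^{-1}$, apply~\ref{sx} to $s_{\alpha_{n+1}}x_{\alpha_{n+1}}(d_{n+1})$, and push the new $x_{-\alpha_{n+1}}(d_{n+1}^{-1})$ rightward past all torus factors via~\ref{tx}.

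After these moves the product has the form $u_{n+1}^+\cdot\big(\alpha_{n+1}^\vee(-d_{n+1}^{-1})\,t_n\,\alpha_{n+1}^\vee(-a_{n+1,2}^{-1})\big)\cdot u_{n+1}$, where $u_{n+1}^+\in U^+$ is the collected product of positive root groups together with one extra $x_{\alpha_{n+1}}$, so that $\mathcal{B}(n+1)$ consists of positive combinations of $\alpha_1,\dots,\alpha_{n+1},\gamma$, and $u_{n+1}=x_{-\alpha_{n+1}}(\cdot)\,u_n'\,x_{-\alpha_{n+1}}(a_{n+1,2}^{-1})\in U$, so $\mathcal{B}'(n+1)=\{\alpha_{n+1}\}\cup\mathcal{B}'(n)\cup\{\alpha_{n+1}\}\subset\{\alpha_1,\dots,\alpha_{n+1},\gamma\}$ — here one simply refrains from reordering the negative root groups. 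Since $T$ is abelian, the middle factor equals $t_n\,\alpha_{n+1}^\vee\big((d_{n+1}a_{n+1,2})^{-1}\big)$, and $d_{n+1}a_{n+1,2}=a_{n+1,1}a_{n+1,2}-\alpha_{n+1}(t_n)=E_{n+1}$ by~\eqref{e.n+1.alpha.n+1}; hence it equals $t_n\,\alpha_{n+1}^\vee(E_{n+1}^{-1})=t_{n+1}$, which closes the induction.

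The individual manipulations are all routine instances of the root-group identities~\ref{com.relations},~\ref{wx},~\ref{tx},~\ref{sx}, so I expect the only real difficulty to be organizational: one must check at every stage that the root groups produced stay inside the unipotent subgroups attached to the sub-root-system spanned by $\{\alpha_1,\dots,\alpha_{n+1},\gamma\}$, with the positivity needed to conclude $u_{n+1}^+\in U^+$ and $u_{n+1}\in U$, and one must keep the $(-1)$'s and inverses organized carefully enough to recognize $d_{n+1}a_{n+1,2}$ as exactly $E_{n+1}$. As in the proof of~\ref{case.n=1}, the (irrelevant) arguments of the $x_\beta$'s are suppressed, and all identities are understood over the dense open locus where $c$ and the $E_i$ are invertible, which is all that is needed for the applications in~\ref{main.theorem.highest.weight.vector.typeBn}.
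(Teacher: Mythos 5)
Your proposal is correct and takes essentially the same route as the paper's proof: induction on $n$ with Proposition~\ref{case.n=1} as base case, defining $u_{n+1}^+$ and $u_{n+1}$ by sliding $u_n^+$ leftward through $s_{\alpha_{n+1}}x_{\alpha_{n+1}}$ via~\ref{com.relations} and~\ref{wx}, applying~\ref{sx} to the outer reflections, merging the adjacent $x_{\alpha_{n+1}}$-factors, and recognizing $d_{n+1}a_{n+1,2}=E_{n+1}$ via~\eqref{e.n+1.alpha.n+1}. The only divergences are cosmetic — you perform~\ref{sx} on the rightmost factor before sliding $u_n^+$ rather than defining $u^+$ first, and you track $d_{n+1}$ and the torus bookkeeping a bit more explicitly — but the ideas and checks (distinctness of $\alpha_{n+1}$ from the indexing roots, positivity of the roots generated, commutativity across distinct simple roots) match the paper's argument.
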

\begin{proof}
By induction on $n$. The case $n=1$ is \ref{case.n=1}. Suppose that the statement holds for some $n \geq 1$. Let $u_{n+1}\coloneqq u_nx_{-\alpha_{n+1}}$ and $u_{n+1}^{+}\coloneqq u^+ x_{\alpha_{n+1}}$, where $u^+$ is defined via 
\begin{equation}\label{u+.def}
    u^+ s_{\alpha_{n+1}}x_{\alpha_{n+1}}=s_{\alpha_{n+1}}x_{\alpha_{n+1}}u_{n}^+,
\end{equation}
by referring to \ref{com.relations} and \ref{wx}. More precisely, by the induction hypothesis on $u_n^+$ we can write $u_n^+=\prod_{\tau} x_{\tau}$ for positive roots $\tau$ such that $\tau\notin \Delta\setminus \{ \alpha_1, ..., \alpha_n, \gamma \}$. In particular, $\tau\neq \alpha_{n+1}$. By \ref{com.relations},  
\begin{equation*}
    x_{\alpha_{n+1}} u_n^+ 
    =\prod_\tau\prod_{i,j>0} x_{i\tau+j\alpha_{n+1}} x_{\tau} x_{\alpha_{n+1}}
\end{equation*}
for some $i=i(\tau),j=j(\tau)>0$. 
Using \ref{wx} we get that
\begin{equation*}
\begin{aligned}
    s_{\alpha_{n+1}}x_{\alpha_{n+1}} u_{n}^+
    &= \underbrace{\prod_\tau \prod_{i,j>0} x_{s_{\alpha_{n+1}}(i\tau+j\alpha_{n+1})} x_{s_{\alpha_{n+1}}(\tau)} }_{u^+} s_{\alpha_{n+1}} x_{\alpha_{n+1}}.
\end{aligned}
\end{equation*}
By assumption on $u_{n}^+$ each $\tau$ is a linear combination of the simple roots $\alpha_1, ..., \alpha_{n}, \gamma$. Hence each $i\tau + j \alpha_{n+1}$ is a linear combination of $\alpha_1, ..., \alpha_{n+1}, \gamma$. Since $\Delta$ is a linearly independent set and since $i,j>0$, one has $i\tau+j\alpha_{n+1}\neq \alpha_{n+1}$. Since $s_{\alpha_{n+1}}$ permutes $R^+\setminus\{ \alpha_{n+1} \}$ the roots $s_{\alpha_{n+1}}(i\tau + j \alpha_{n+1})$ and $s_{\alpha_{n+1}}(\tau)$ are positive for all $\tau$ and all $i,j$ and each is a linear combination of $\alpha_1, ..., \alpha_{n+1}, \gamma$. Let $\mathcal{B}(n+1)$ (resp. $\mathcal{B}'(n+1)$) the set of roots occurring in the expression of $u_{n+1}^{+}$ (resp. $u_{n+1}$) as a product of root groups.

We prove \eqref{odd.case.comp.main.expression} similarly to \ref{case.n=1}. We have 

 \resizebox{0.9\textwidth}{!}{\begin{minipage}{\textwidth}
\begin{equation*}
\begin{aligned}
&\prod_{i=1}^{n+1}s_{\alpha_{n+2-i}}x_{\alpha_{n+2-i}}(a_{n+2-i,1})s_\gamma x_\gamma(c) \prod_{i=1}^{n+1} s_{\alpha_i}x_{\alpha_i}(a_{i,2}) \\
 &= u^{+}s_{\alpha_{n+1}}x_{\alpha_{n+1}}(a_{n+1,1})t_n  u_n s_{\alpha_{n+1}}x_{\alpha_{n+1}}(a_{n+1,2})\\
 & \hspace{6cm} \text{(by induction and definition of $u^+$)} \\
&=u^{+}s_{\alpha_{n+1}}x_{\alpha_{n+1}}(a_{n+1,1}) x_{\alpha_{n+1}}\Big((-a_{n+1,2}^{-1})\alpha_{n+1}(t_n )\Big)t_n  \alpha_{n+1}^{\vee}(-a_{n+1,2}^{-1}) u_n x_{-\alpha_{n+1}} \\
& \hspace{6cm} \text{(by \ref{sx} for $\alpha_{n+1}$ and then \ref{tx} for $t_n $)} \\
&=u^{+}x_{\alpha_{n+1}} \alpha_{n+1}^{\vee}\Big(((-a_{n+1,2}^{-1})E_{n+1})^{-1}\Big)x_{-\alpha_{n+1}}\alpha_{n+1}^{\vee}(-a_{n+1,2}^{-1})t_n   u_n x_{-\alpha_{n+1}} \\
& \hspace{6cm} \text{(by \Cref{e.n+1.alpha.n+1} and then by \ref{sx} for $\alpha_{n+1}$)} \\
&=u_{n+1}^{+} t_{n+1} u_{n+1},
\end{aligned}
\end{equation*}
\end{minipage}}
\vspace{1\baselineskip}\\
where the last equality follows from the definition of $u_{n+1}^+$ and $u_{n+1}$.
\end{proof}
\begin{lemma}\label{odd.case.comp.cor1}
Let $\beta_1, .., \beta_l, \alpha_1, ..., \alpha_n, \gamma$ be distinct simple roots, and let $u_{n}^{+}$, $t_n$ and $u_n$ be as above. Then
\begin{equation*}
\prod_{j=1}^{l}s_{\beta_j}x_{\beta_j} u_{n}^{+}t_n  u = v^{+} \prod_{j=1}^{l}\beta_{j}^{\vee} t_n  v,
\end{equation*}
where $v^{+}=\prod_\eta x_{\eta}$ with each $\eta$ a positive root which is a linear combination of the $\alpha_1, ..., \alpha_n, \beta_1, ..., \beta_l$ and $v =\prod_{\eta'} x_{-\eta'}$ with each $\eta'$ a simple root among $\alpha_1, ..., \alpha_n, \beta_1, ..., \beta_l$.
\end{lemma}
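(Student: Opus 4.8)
The plan is to induct on $l$, peeling off the leftmost factor $s_{\beta_1}x_{\beta_1}$ and invoking the inductive hypothesis for the remaining data $\beta_2,\dots,\beta_l,\alpha_1,\dots,\alpha_n,\gamma$; the bookkeeping runs parallel to the proofs of \ref{distinct.simple} and \ref{odd.case.comp}. For the base case $l=0$ one takes $v^{+}=u_n^{+}$ and $v=u$ ($=u_n$), which have the asserted form by \ref{odd.case.comp}: $u_n^{+}$ is a product of root groups for positive linear combinations of $\alpha_1,\dots,\alpha_n,\gamma$, and $u_n$ is a product of root groups $x_{-\beta'}$ with $\beta'$ simple. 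For the inductive step I would write the left-hand side as $s_{\beta_1}x_{\beta_1}\cdot\bigl(\prod_{j=2}^{l}s_{\beta_j}x_{\beta_j}\,u_n^{+}t_n u\bigr)$ and rewrite the parenthesised factor, by the inductive hypothesis, as $\bar v^{+}\bigl(\prod_{j=2}^{l}\beta_j^{\vee}\bigr)t_n\,\bar v$, where $\bar v^{+}\in U^{+}$ is a product of root groups for positive roots supported on $\{\beta_2,\dots,\beta_l,\alpha_1,\dots,\alpha_n,\gamma\}$ and $\bar v$ is a product of negative simple root groups among those roots.

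It then remains to push $s_{\beta_1}x_{\beta_1}(b_1)$ to the right, through $\bar v^{+}$ and through $\bigl(\prod_{j=2}^{l}\beta_j^{\vee}\bigr)t_n$. Because $\beta_1$ is distinct from every $\alpha_i$, from $\gamma$, and from $\beta_2,\dots,\beta_l$, no root occurring in $\bar v^{+}$ equals $\beta_1$, so \ref{com.relations} moves $x_{\beta_1}$ to the right of $\bar v^{+}$ at the cost of inserting root groups $x_{i\beta_1+j\mu}$ ($i,j>0$), all attached to positive roots still supported on $\{\beta_1,\dots,\beta_l,\alpha_1,\dots,\alpha_n,\gamma\}$. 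Since a relation $i\beta_1+j\mu=\beta_1$ with $i,j>0$ is impossible ($\mu$ being a positive root), every root in this enlarged positive expression is $\neq\beta_1$, and \ref{wx} carries $s_{\beta_1}$ past it; the positive part stays inside $U^{+}$ and supported on the same simple roots. Now $s_{\beta_1}x_{\beta_1}(b_1)$ sits next to $\bigl(\prod_{j=2}^{l}\beta_j^{\vee}\bigr)t_n\,\bar v$; apply \ref{sx} to replace it by $x_{\beta_1}(\cdot)\,\beta_1^{\vee}(\cdot)\,x_{-\beta_1}(\cdot)$ and absorb the leading $x_{\beta_1}(\cdot)$ into the positive part to form $v^{+}$. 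Finally, by \ref{tx} slide $x_{-\beta_1}(\cdot)$ past $\prod_{j=2}^{l}\beta_j^{\vee}$ and then past $t_n$ — this only rescales its argument, which is immaterial for later order-of-vanishing computations — while $\beta_1^{\vee}(\cdot)$ commutes with the other $\beta_j^{\vee}$ and with $t_n$ inside $T$, so $\beta_1^{\vee}(\cdot)\prod_{j=2}^{l}\beta_j^{\vee}=\prod_{j=1}^{l}\beta_j^{\vee}$. Setting $v:=x_{-\beta_1}(\cdot)\,\bar v$, which is still a product of negative simple root groups, gives the identity $v^{+}\bigl(\prod_{j=1}^{l}\beta_j^{\vee}\bigr)t_n\,v$ with $v^{+}$ and $v$ of the required form.

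The one load-bearing point — and the reason for stripping off $\beta_1$ from the \emph{left}, rather than collecting all the $s_{\beta_j}x_{\beta_j}$ at once — is that two negative simple root groups $x_{-\beta_j}$, $x_{-\beta_{j'}}$ need not commute, and commuting them would create negative \emph{non-simple} root groups, destroying the claimed shape of $v$. Peeling off $\beta_1$ first makes the new $x_{-\beta_1}$ land at the left end of the negative part, so it is merely prepended to $\bar v$ and no two negative root groups are ever interchanged; the positive non-simple root groups produced along the way are harmless, since they only enlarge $v^{+}\in U^{+}$. Beyond this organisational point I anticipate no real obstacle: the computation is the same kind of root-group manipulation as in \ref{case.n=1} and \ref{odd.case.comp}, and the pairwise distinctness of $\beta_1,\dots,\beta_l,\alpha_1,\dots,\alpha_n,\gamma$ is precisely what keeps all the commutator terms under control.
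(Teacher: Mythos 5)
Your argument is correct and is essentially the paper's own proof, which is just the one-line pointer ``combine the proof of \ref{distinct.simple} with \ref{odd.case.comp}''; you carry out exactly the same manipulations using \ref{com.relations}--\ref{sx}, merely organized as an induction on $l$ rather than handling all $l$ factors $s_{\beta_j}x_{\beta_j}$ in one pass. The ``load-bearing point'' you flag is a sensible precaution, though it is not the only way to avoid commuting negative simple root groups (sliding the positive root groups and coweight factors leftward, as in the proof of \ref{distinct.simple}, works equally well), and in the lemma's downstream use only $v\in U$ and $v^{+}\in U^{+}$ matter, so the finer shape of $v$ is not actually needed.
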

\begin{proof}
This follows by combining the proof of \ref{distinct.simple} with \ref{odd.case.comp}.
\end{proof}
%
%
\begin{proof}[Proof of \Cref{main.theorem.highest.weight.vector.typeBn}]
By the previous lemmas and~\ref{cond-closed}, we may rewrite any element in $wU^{+}U$ in the form $u^{+} \prod_{j=1}^{l}\beta_{j}^{\vee} t_n  u$. If $x_{\beta_i}$ takes the argument $b_i\in k$, for all $i=1,...,l$, then  
\begin{equation*}
    f_{\lambda}(u^{+} \prod_{j=1}^{l}\beta_{j}^{\vee} t_n  u) = \prod_{j=1}^{l}(-b_j)^{\langle \lambda, \beta_j^{\vee}\rangle } \lambda(t_n )^{-1}.
\end{equation*} 
The result follows from the definition~\ref{Bn-notation} of $t_n$.
\end{proof}
\begin{remark}
The preceding two lemmas are only used in the proof of \ref{main.theorem.highest.weight.vector.typeBn}. However, to extend our results on the order of vanishing to more general Weyl group elements by induction on the length,  it seems easier to first work with the expressions in \ref{odd.case.comp.cor1} and then conclude statements on the order vanishing, rather than working directly with \ref{main.theorem.highest.weight.vector.typeBn}.
\end{remark}
\subsubsection{Computation for the Hasse invariant in type \texorpdfstring{$\Bsf_m$}{Bm}}\label{computations.for.hasse.invariant.typeBn}
We apply \ref{main.theorem.highest.weight.vector.typeBn} when $\type(\Delta)=\Bsf_m$. Consider a root datum of type $\Bsf_m$ with simple roots $e_1-e_2,..., e_{m-1}-e_{m},e_m$. Let 
 $\beta_1 = e_1 - e_2$, ..., $\beta_l = e_l - e_{l+1}$ and $\alpha_n = e_{l+1} - e_{l+2}$, ..., $\alpha_1 = e_{m-1}-e_m$ and $\gamma = e_m$. Plugging into \eqref{order.of.vanishing.Ei} gives $\ord(E_i)=2$ for all $i$.

Let $\lambda = e_1$ be the fundamental weight associated to $e_1-e_2$. We compute the order of vanishing of $f_{\lambda}$ at every point of $B^{+}wB$ for $w \in W$ as in \ref{main.theorem.highest.weight.vector.typeBn}. 
Consider two cases:
\begin{enumerate}
    \item 
    \label{item: vanishing deepest stratum type Bn}
If $\alpha_n = e_1 - e_2$, i.e. if ``there are no $\beta_j$'s'', then \ref{main.theorem.highest.weight.vector.typeBn} implies that
\begin{equation*}
    \ord(f_\lambda) = \langle \lambda, \gamma^\vee \rangle + \sum_{i=1}^{m-1}\langle \lambda , \alpha_i^\vee \rangle \ord(E_i) = 2.
\end{equation*}
\item 
\label{item: vanishing middle strata type Dn}
If $\alpha_n = e_l - e_{l+1}$ for $l>1$, then 
\begin{equation*}
    \ord(f_\lambda) = \sum_j \langle \lambda , \beta_j^\vee \rangle + \langle \lambda, \gamma^\vee \rangle + \sum_i \langle \lambda, \alpha_i^\vee \rangle \ord(E_i) = 1 + 0 + 0\cdot 2 = 1.
\end{equation*}

\end{enumerate}


%
%
%
\subsection{Computations for type \texorpdfstring{$\Dsf_m$}{Dn}}\label{section: computations for type Dn}
We now complement the previous results with the analogous ones to cover the case of even orthogonal groups. Since the proofs are identical, we simply refer to the proofs of the corresponding statement in the odd case.
\subsubsection{Notation}
Let $\alpha_1, ..., \alpha_n, \beta, \gamma$ be distinct simple roots. For fixed $a_{i,1},a_{1,2},b,c \in k$, $i=1,...,n$, let $(F_j)_{j=1}^{n}$ be defined recursively by
\begin{equation}
\begin{aligned}
    F_1 &= a_{1,1}a_{1,2} + (-1)^{1 - \langle \alpha_1, \beta^\vee \rangle - \langle \alpha_1, \gamma \rangle} b^{-\langle \alpha_1, \beta^\vee \rangle} c^{-\langle \alpha_1, \gamma \rangle} \\
    F_{j+1} &= a_{j+1,1}a_{j+1,2} + (-1)^{1-\langle \alpha_1, \beta^\vee \rangle-\langle \alpha_{j+1},\gamma^{\vee}\rangle} b^{-\langle \alpha_1, \beta^\vee \rangle} c^{-\langle \alpha_{j+1},\gamma^{\vee}\rangle} \prod_{i=1}^{j}F_i^{-\langle \alpha_{j+1},\alpha_{i}^{\vee}\rangle},
\end{aligned}
\end{equation}
and let
\begin{equation*}
    t_n \coloneqq \alpha_n^{\vee}( F_n^{-1})\cdots \alpha_1^{\vee}( F_1^{-1}) \beta^\vee(- b^{-1}) \gamma^{\vee}(- c^{-1}).
\end{equation*}
We have that
\begin{equation}\label{order.of.vanishing.Fi}
    \ord(F_1)=\min\{ 2 , -\langle \alpha_1 , \beta^{\vee} \rangle - \langle \alpha_1 , \gamma^{\vee} \rangle \}, \,\,\, \ord(F_i) =  \min\Big\{ 2, - \langle \alpha_i , \beta^{\vee}\rangle - \langle \alpha_i, \gamma^{\vee} \rangle - \sum_{j=1}^{i-1}\langle \alpha_i, \alpha_{j}^{\vee} \rangle \ord(F_j) \Big\}.
\end{equation}
\begin{theorem}\label{main.theorem.highest.weight.vector.typeDn}
Let $\lambda\in X^{*}_+(T)$ and $f_{\lambda}\in H^{0}(\lambda)_\lambda$. Let $\eta_1, ..., \eta_l, \alpha_1, ..., \alpha_n, \beta, \gamma$ be distinct simple roots. Assume that $$w:=s_{\eta_1}\cdots s_{\eta_l}s_{\alpha_n}\cdots s_{\alpha_1}s_\beta s_\gamma s_{\alpha_1}\cdots s_{\alpha_n}$$ satisfies~\ref{cond-closed}. The order of vanishing of $f_{\lambda}$ on every point of $B^{+}wB$ is 
    \begin{equation*}
        \ord(f_\lambda) = \sum_{j=1}^{l}\langle \lambda, \eta_j^{\vee}\rangle + \langle \lambda, \beta^{\vee} \rangle + \langle \lambda, \gamma^{\vee} \rangle + \sum_{i=1}^{n}\langle \lambda, \alpha_{i}^{\vee} \rangle \ord(F_i).
    \end{equation*}
\end{theorem}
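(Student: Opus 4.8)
The plan is to mirror the proof of Theorem~\ref{main.theorem.highest.weight.vector.typeBn} step for step, the only structural change being that the single central reflection $s_\gamma$ is replaced by the pair $s_\beta s_\gamma$; since $\beta\neq\gamma$ are distinct simple roots, $\gamma-\beta$ is not a root, so $x_{-\beta}$ and $x_\gamma$ commute and the extra reflection causes no real difficulty. First I would prove the $\Dsf_m$-analogue of Lemma~\ref{odd.case.comp}: there is a finite set $\mathcal{B}(n)$ of positive linear combinations of $\alpha_1,\dots,\alpha_n,\beta,\gamma$ and a subset $\mathcal{B}'(n)\subset\{\alpha_1,\dots,\alpha_n,\beta,\gamma\}$ with
\begin{equation*}
\prod_{i=1}^{n}s_{\alpha_{n+1-i}}x_{\alpha_{n+1-i}}(a_{n+1-i,1})\,s_\beta x_\beta(b)\,s_\gamma x_\gamma(c)\prod_{i=1}^{n}s_{\alpha_i}x_{\alpha_i}(a_{i,2})=\Big(\prod_{\tau\in\mathcal{B}(n)}x_\tau\Big)\,t_n\,\Big(\prod_{\tau'\in\mathcal{B}'(n)}x_{-\tau'}\Big),
\end{equation*}
with $t_n$ as in the notation preceding the theorem. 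The induction is on $n$; the base case $n=0$ is the identity $s_\beta x_\beta(b)\,s_\gamma x_\gamma(c)=u_0^{+}\,\beta^\vee(-b^{-1})\gamma^\vee(-c^{-1})\,u_0$, obtained by applying \ref{sx} to each of $s_\beta x_\beta$ and $s_\gamma x_\gamma$, commuting $x_{-\beta}$ past $x_\gamma$, and pushing the torus factors to the right with \ref{tx} while collecting the $x_\beta,x_\gamma$ contributions into $U^{+}$ via \ref{com.relations}. The inductive step, peeling off one $s_{\alpha_{n+1}}$ from each side, is verbatim the computation in Lemma~\ref{odd.case.comp}: one defines $u^{+}$ by $u^{+}s_{\alpha_{n+1}}x_{\alpha_{n+1}}=s_{\alpha_{n+1}}x_{\alpha_{n+1}}u_n^{+}$ using \ref{com.relations} and \ref{wx}, then applies \ref{sx}, \ref{tx} and the evident $\Dsf_m$-analogue of \eqref{e.n+1.alpha.n+1}, namely $a_{n+1,1}a_{n+1,2}-\alpha_{n+1}(t_n)=F_{n+1}$, which is immediate from the recursive definition of the $F_i$. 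Throughout, the closedness hypothesis~\ref{cond-closed} is used exactly as in the odd case to pull the $U^{+}$-factors through $w$; for the words $w_j=s_j\cdots s_{m-1}s_m s_{m-2}\cdots s_{m-l}$ relevant to Theorem~\ref{th-main}\ref{item-th-main-even-orthogonal-Dn} this closedness was already checked in \S\ref{assumption that subset of roots is closed}.

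Next I would record the $\Dsf_m$-analogue of Lemma~\ref{odd.case.comp.cor1}: for distinct simple roots $\eta_1,\dots,\eta_l,\alpha_1,\dots,\alpha_n,\beta,\gamma$ one has
\begin{equation*}
\prod_{j=1}^{l}s_{\eta_j}x_{\eta_j}\;u_n^{+}\,t_n\,u_n=v^{+}\,\Big(\prod_{j=1}^{l}\eta_j^\vee\Big)\,t_n\,v,
\end{equation*}
where $v^{+}\in U^{+}$ is a product of root groups attached to positive roots in the span of the $\eta_j,\alpha_i$ and $v$ is a product of $x_{-\eta'}$ over simple roots $\eta'$ among the $\eta_j,\alpha_i$. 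The proof is obtained, exactly as for Lemma~\ref{odd.case.comp.cor1}, by combining the argument of Theorem~\ref{distinct.simple} for the product of distinct simple reflections $s_{\eta_1}\cdots s_{\eta_l}$ with the previous step, using once more that the $\eta_j$ are distinct from $\alpha_1,\dots,\alpha_n,\beta,\gamma$ so that the relevant negative root groups commute past the $x_{\alpha_i},x_\beta,x_\gamma$.

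Finally, for $w=s_{\eta_1}\cdots s_{\eta_l}s_{\alpha_n}\cdots s_{\alpha_1}s_\beta s_\gamma s_{\alpha_1}\cdots s_{\alpha_n}$ satisfying~\ref{cond-closed}, the identification of $wU^{+}B$ from \S\ref{section: geometry of reductive groups} together with the two displayed identities rewrites a general point of $wU^{+}B$ in the form $u^{+}\big(\prod_{j=1}^{l}\eta_j^\vee(-b_j^{-1})\big)t_n u$ with $u^{+}\in U^{+}$, $u\in U$. The defining formula for $f_\lambda$ on $U^{+}TU$ then gives $f_\lambda=\prod_{j=1}^{l}(-b_j)^{\langle\lambda,\eta_j^\vee\rangle}\lambda(t_n)^{-1}$, and substituting $t_n=\alpha_n^\vee(F_n^{-1})\cdots\alpha_1^\vee(F_1^{-1})\beta^\vee(-b^{-1})\gamma^\vee(-c^{-1})$ and reading off orders at the origin via \eqref{order.of.vanishing.Fi} yields the claimed value of $\ord(f_\lambda)$ at every point of $B^{+}wB$ (dominance of $\lambda$ guaranteeing that $f_\lambda$ extends across this locus). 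I do not anticipate a genuine obstacle: the content is the bookkeeping of which roots occur in the $U^{+}$-factors, and the only point that is not automatic is Condition~\ref{cond-closed}, which however is exactly what was verified for the type-$\Dsf_m$ words in \S\ref{assumption that subset of roots is closed}.
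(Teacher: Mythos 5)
Your proposal is correct and takes essentially the same approach as the paper: you reconstruct Lemma~\ref{even.case.comp} and Lemma~\ref{even.case.comp.cor1} (the $\Dsf_m$ analogues of Lemma~\ref{odd.case.comp} and Lemma~\ref{odd.case.comp.cor1}) and combine them exactly as in the proof of Theorem~\ref{main.theorem.highest.weight.vector.typeBn}, which is precisely how the paper proves the theorem. The paper's proofs of these lemmas merely refer back to the odd case, and your explicit base case $s_\beta x_\beta(b)\,s_\gamma x_\gamma(c)=u_0^{+}\,\beta^\vee(-b^{-1})\gamma^\vee(-c^{-1})\,u_0$ together with the inductive step via \ref{sx}, \ref{tx}, \ref{com.relations}, \ref{wx} and the identity $a_{n+1,1}a_{n+1,2}-\alpha_{n+1}(t_n)=F_{n+1}$ is exactly what those references unpack to.
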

\begin{proof}
Similar to \ref{main.theorem.highest.weight.vector.typeBn}, the theorem follows from~\ref{even.case.comp} and~\ref{even.case.comp.cor1}.
\end{proof}
%
%
%
\begin{lemma}\label{even.case.comp}
There exists a finite set $\mathcal{N}$ of positive linear combinations of $\alpha_1, ..., \alpha_n,\beta, \gamma$, and a subset $\mathcal{N}'\subset \{\alpha_1, ..., \alpha_n, \beta, \gamma\}$ such that
\begin{equation}\label{f.lambda.expression.even}
\prod_{i=1}^{n}s_{\alpha_{n+1-i}}x_{\alpha_{n+1-i}}(a_{n+1-i,1})s_\beta x_\beta(b) s_\gamma x_\gamma(c) \prod_{i=1}^{n} s_{\alpha_i}x_{\alpha_i}(a_{i,2})=\underbrace{\Big(\prod_{\eta\in\mathcal{N}} x_\eta\Big)}_{u_{n}^+} t_n \underbrace{\Big(\prod_{\eta'\in\mathcal{N}'}x_{-\eta'}\Big)}_{u_n}.
\end{equation}
\end{lemma}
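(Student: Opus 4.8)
The plan is to prove Lemma~\ref{even.case.comp} by induction on $n$, running the argument of Lemma~\ref{odd.case.comp} essentially verbatim; the only structural change is that the single middle factor $s_\gamma x_\gamma(c)$ of the odd case is replaced by the pair $s_\beta x_\beta(b)\,s_\gamma x_\gamma(c)$. Both sides of~\eqref{f.lambda.expression.even} are honest products of group elements, so the identity itself is a purely formal consequence of the root-group relations \ref{com.relations}--\ref{sx}, with no closedness hypothesis needed to state or prove it. For the base case $n=1$, treat $w=s_{\alpha_1}s_\beta s_\gamma s_{\alpha_1}$ by the computation displayed in the proof of Proposition~\ref{case.n=1}, now carrying two middle reflections: apply \ref{sx} to $s_{\alpha_1}x_{\alpha_1}$, to $s_\beta x_\beta(b)$ and to $s_\gamma x_\gamma(c)$; push the torus factors $\beta^\vee(-b^{-1})$ and $\gamma^\vee(-c^{-1})$ rightward past the positive root groups using \ref{com.relations} and \ref{tx} (here one uses that $\gamma-\beta$ is never a root, the difference of two distinct simple roots not being a root, so that $x_{-\beta}$ commutes with $x_\gamma$); and move the surviving $s_{\alpha_1}x_{\alpha_1}$ across the positive root groups via \ref{wx}. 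The extra monomial $b^{-\langle\alpha_1,\beta^\vee\rangle}$ in $F_1$, compared with $E_1$ in the odd case, is produced exactly when $\beta^\vee(-b^{-1})$ is conjugated past $x_{\alpha_1}$, in parallel with the origin of $c^{-\langle\alpha_1,\gamma^\vee\rangle}$ from $\gamma^\vee(-c^{-1})$; collecting the torus contributions gives $t_1=\alpha_1^\vee(F_1^{-1})\beta^\vee(-b^{-1})\gamma^\vee(-c^{-1})$, as in the notation above.

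For the inductive step, assume~\eqref{f.lambda.expression.even} for $n$ with factors $u_n^+, t_n, u_n$, and set $u_{n+1}\coloneqq u_n x_{-\alpha_{n+1}}$ and $u_{n+1}^+\coloneqq u^+ x_{\alpha_{n+1}}$, where $u^+$ is defined by $u^+ s_{\alpha_{n+1}}x_{\alpha_{n+1}}=s_{\alpha_{n+1}}x_{\alpha_{n+1}}u_n^+$. To make sense of $u^+$: write $u_n^+=\prod_\tau x_\tau$ with each $\tau$ a positive linear combination of $\alpha_1,\dots,\alpha_n,\beta,\gamma$; since $\Delta$ is linearly independent, $\tau\neq\alpha_{n+1}$, so \ref{com.relations} moves $x_{\alpha_{n+1}}$ to the right of $\prod_\tau x_\tau$ and \ref{wx} conjugates the resulting positive root groups by $s_{\alpha_{n+1}}$; as $s_{\alpha_{n+1}}$ permutes $R^+\setminus\{\alpha_{n+1}\}$, every root occurring in $u_{n+1}^+$ is again a positive linear combination of $\alpha_1,\dots,\alpha_{n+1},\beta,\gamma$, while $\mathcal{N}'$ only acquires $\alpha_{n+1}$. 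Isolating the outermost $s_{\alpha_{n+1}}x_{\alpha_{n+1}}$ on each side of the length-$(n+1)$ word, applying the induction hypothesis to the inner length-$n$ word, and then using \ref{sx}, \ref{tx}, and the identity $a_{n+1,1}a_{n+1,2}-\alpha_{n+1}(t_n)=F_{n+1}$ (the $\Dsf$-analogue of~\eqref{e.n+1.alpha.n+1}, immediate from the recursion for $F_{n+1}$ and the shape of $t_n$), followed by one further application of \ref{sx}, one arrives at $u_{n+1}^+ t_{n+1} u_{n+1}$ — the same chain of equalities as in the proof of Lemma~\ref{odd.case.comp}, with $E_i$ replaced by $F_i$ and one more middle reflection carried along.

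I expect the main obstacle to be purely bookkeeping: tracking precisely which positive roots appear in $u_n^+$, and in the $x_{i\tau+j\alpha_{n+1}}$ produced by \ref{com.relations}, at each stage, so that no forbidden coincidence occurs and all the relations apply — there is no conceptual difficulty beyond this. The only place a closedness condition genuinely enters is downstream, when Lemma~\ref{even.case.comp} is fed into Theorem~\ref{main.theorem.highest.weight.vector.typeDn}: there one must pass from this identity for the given product to a rewriting of an arbitrary element of $wU^+U$, which requires the relevant complements of positive roots to be closed — precisely Condition~\ref{cond-closed}, verified for the elements $w_j$ of type $\Dsf_m$ in \S\ref{assumption that subset of roots is closed}. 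Combining Lemma~\ref{even.case.comp} with the argument of Theorem~\ref{distinct.simple} to absorb the outer reflections $s_{\eta_1},\dots,s_{\eta_l}$ then yields Lemma~\ref{even.case.comp.cor1}, and hence Theorem~\ref{main.theorem.highest.weight.vector.typeDn}, just as in the odd case.
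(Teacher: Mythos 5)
Your proposal is correct and is exactly the argument the paper intends: the paper's proof of Lemma~\ref{even.case.comp} is the single line ``Same as the proof of~\ref{odd.case.comp},'' and you have faithfully spelled out what that means — induction on $n$ with the same definitions of $u_{n+1}^+$ and $u_{n+1}$, the recursion $a_{n+1,1}a_{n+1,2}-\alpha_{n+1}(t_n)=F_{n+1}$ replacing~\eqref{e.n+1.alpha.n+1}, and the base case adapted from~\ref{case.n=1} to carry the extra $s_\beta x_\beta(b)$ factor. Your explicit observations that $\gamma-\beta$ is never a root (so $x_{-\beta}$ commutes with $x_\gamma$) and that Condition~\ref{cond-closed} is needed only downstream in Theorem~\ref{main.theorem.highest.weight.vector.typeDn}, not for the identity itself, are both accurate and match the structure of the paper.
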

\begin{proof}
Same as the proof of \ref{odd.case.comp}.
\end{proof}
\begin{lemma}\label{even.case.comp.cor1}
Let $\eta_1, ..., \eta_l, \alpha_1, ..., \alpha_n, \beta, \gamma$ be distinct simple roots. Let $u_{n}^{+}$, $t_n$ and $u_n$ be as in \ref{even.case.comp}. Then
\begin{equation*}
\prod_{j=1}^{l}s_{\eta_j}x_{\eta_j} u_{n}^{+}t_n u_n 
= u^{+} \prod_{j=1}^{l}\eta_{j}^{\vee} t_n u,
\end{equation*}
where $u^{+}=\prod_\zeta x_{\zeta}$ with each $\zeta$ a positive root which is a linear combination of the $\alpha_1, ..., \alpha_n, \eta_1, ..., \eta_l$ and $u =\prod_{\zeta'} x_{-\zeta'}$ with each $\zeta'$ a simple root among $\alpha_1, ..., \alpha_n, \eta_1, ..., \eta_l$.
\end{lemma}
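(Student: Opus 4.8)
This is the type-$\Dsf_m$ counterpart of Lemma~\ref{odd.case.comp.cor1}, and the plan is to prove it in exactly the same way: by feeding the output of Lemma~\ref{even.case.comp} into the mechanism of the proof of Theorem~\ref{distinct.simple}. Recall that Lemma~\ref{even.case.comp} produces $u_n^+=\prod_{\eta\in\mathcal N}x_\eta\in U^+$ with every $\eta$ a positive linear combination of $\alpha_1,\dots,\alpha_n,\beta,\gamma$, the torus element $t_n=\alpha_n^\vee(F_n^{-1})\cdots\alpha_1^\vee(F_1^{-1})\beta^\vee(-b^{-1})\gamma^\vee(-c^{-1})$, and $u_n=\prod_{\eta'\in\mathcal N'}x_{-\eta'}$ with $\mathcal N'\subseteq\{\alpha_1,\dots,\alpha_n,\beta,\gamma\}$. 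Since $\eta_1,\dots,\eta_l$ are simple roots distinct from $\alpha_1,\dots,\alpha_n,\beta,\gamma$, linear independence of $\Delta$ guarantees that no root occurring in $u_n^+$ or $u_n$ equals any $\eta_j$.

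First I would commute the factor $\prod_{j=1}^l s_{\eta_j}x_{\eta_j}$ to the right past $u_n^+$, exactly as in the inductive step of Lemma~\ref{odd.case.comp}: one pushes each $x_{\eta_j}$ rightward through the root groups composing $u_n^+$ via the commutator relations~\ref{com.relations} --- each new root $i\eta_j+j'\eta$ so produced is positive and $\neq\eta_j$ --- and then conjugates the accumulated $U^+$-factor past $s_{\eta_j}$ using~\ref{wx}, which stays inside $U^+$ because $s_{\eta_j}$ permutes $R^+\setminus\{\eta_j\}$. Next, exactly as in the proof of Theorem~\ref{distinct.simple}, I would apply~\ref{sx} to each remaining $s_{\eta_j}x_{\eta_j}$ and use that $x_{-\eta_i}$ commutes with $x_{\eta_j}$ for $i\neq j$ (distinct simple roots) to turn $\prod_{j=1}^l s_{\eta_j}x_{\eta_j}(b_j)$ into $v^+\cdot\prod_{j=1}^l\eta_j^\vee(-b_j^{-1})\cdot\prod_{j=1}^l x_{-\eta_j}(\ast)$ with $v^+\in U^+$. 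Finally, absorb the $U^+$-factors on the left, note that $\prod_j\eta_j^\vee(-b_j^{-1})$ and $t_n$ both lie in $T$ and hence commute, and push the negative root groups $\prod_j x_{-\eta_j}(\ast)$ together with $u_n$ through $t_n$ using~\ref{tx} (which merely rescales their arguments); this delivers the asserted form $u^+\prod_{j=1}^l\eta_j^\vee\,t_n\,u$.

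Since every manipulation above is literally one already carried out in the proofs of Theorem~\ref{distinct.simple} and Lemma~\ref{odd.case.comp}, no new idea is required. The only point needing attention --- and the closest thing to an obstacle --- is the bookkeeping of which simple roots can occur in the supports of the final $u^+$ and $u$; this is settled by the same tracking of roots through~\ref{com.relations},~\ref{wx},~\ref{tx},~\ref{sx} and the linear independence of $\Delta$ that underlies Lemma~\ref{odd.case.comp.cor1}, so it poses no genuine difficulty.
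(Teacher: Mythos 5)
Your proof is correct and follows exactly the paper's approach: the paper proves this lemma by the single sentence ``Same as the proof of \ref{odd.case.comp.cor1},'' which in turn is proved by combining the mechanism of Theorem~\ref{distinct.simple} with Lemma~\ref{even.case.comp}, precisely as you have done. The only small matter worth flagging is that the paper's statement of the supports of $u^+$ and $u$ appears to omit $\beta,\gamma$ (and correspondingly $\gamma$ in Lemma~\ref{odd.case.comp.cor1}), since $u_n^+$ and $u_n$ already involve those simple roots; your bookkeeping implicitly tracks this correctly.
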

\begin{proof}
Same as the proof of \ref{odd.case.comp.cor1}.
\end{proof}
%
%
\subsubsection{Computation for the Hasse invariant in type \texorpdfstring{$\Dsf_m$}{Dn}}\label{computations.for.hasse.invariant.typeDn}
We make the computations analogous to \ref{computations.for.hasse.invariant.typeBn}. Consider a root datum of type $\Dsf_m$ with simple roots $e_1-e_2, ..., e_{m-1}-e_{m},e_{m-1}+e_m$. Let $\beta = e_{m-1}-e_m$ and let $\gamma = e_{m-1}+e_m$. For $i\geq m+1$, let $j=j(i)=\Big|\{m-2, ..., 2m-1-i\}\Big|$. Let $\alpha_1 = e_{m-2}-e_{m-1}, ..., \alpha_j=e_{2m-1-i}-e_{2m-i}$ and let $\eta_1=e_1-e_2,..., \eta_{m-j}=e_{m-j}-e_{m-j+1}$. Let $\lambda=e_1$. We compute $\ord(f_{\lambda})$ at every point of $B^+wB$ for every $w \in W$ as in  \ref{main.theorem.highest.weight.vector.typeDn}.

By \eqref{order.of.vanishing.Fi}, $\ord(F_i)=2$ for all $i$. Consider two cases:
\begin{enumerate}
    \item 
If $j=m-2$, then $\alpha_j=e_1-e_2$, so ``there are no $\eta_j$'s'' in \ref{main.theorem.highest.weight.vector.typeDn}.  Hence
\[
\ord(f_{\lambda}) = \langle \lambda , \beta^{\vee} \rangle + \langle \lambda , \gamma^{\vee} \rangle + \sum_{i=1}^{m-2}\langle \lambda , \alpha_i^\vee \rangle \ord(F_i) = 2.
\]
\item
If $j<m-2$, then \ref{main.theorem.highest.weight.vector.typeDn} gives 
\[
\ord(f_{\lambda}) = \sum_{j=1}^{l}\langle \lambda, \eta_j^{\vee}\rangle + \langle \lambda , \beta^{\vee} \rangle + \langle \lambda , \gamma^{\vee} \rangle + \sum_{i=1}^{j}\langle \lambda , \alpha_i^\vee \rangle \ord(F_i) = 1.
\]
\end{enumerate}
\subsection{Functoriality of Ogus' principle}\label{section: functoriality of ogus principle, lemma section}
Let $\varphi\colon G\to G'$ be as in \S\ref{section: functoriality of ogus principle, introduction}. 
Let $T'\subset B'$ be a Borel pair in $G'$. Set $T\coloneqq \varphi^{-1}(T')$ and $B\coloneqq \varphi^{-1}(B')$. 
For any $x\in \gzip^{\mu}(k)$ and $x'\in \sbt(k)$, say that $x$ and $x'$ are \textit{flag-related} if there is a $k$-point $x''\in \gzipf^{\mu}(k)$ such that $x=\piflagz(x'')$ and $x'=\piflags(x'')$. If $x$ is flag-related to $x'$, then $\varphi(x)$ is flag-related to $\varphi(x')$. 
\begin{lemma}\label{lemma: functoriality for good w}
Suppose that each $k$-point $x$ in $\gzip^{\mu}$ is flag-related to a $k$-point $x'$ in $\sbt_{w}$ 
for $w\in W$ as in \S\ref{section: product of distinct simple root reflections}-\ref{section: computations for type Dn}. For every $\fp$-representation $r' \colon G'\to \GL(V)$, Ogus' principle for $(G,\mu,r'\circ \varphi)$ is equivalent to Ogus' principle for $(G',\varphi \circ \mu,r')$. 
\end{lemma}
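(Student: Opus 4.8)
The plan is to reduce Ogus' Principle for each of the two triples to the explicit order-of-vanishing formulas of \S\ref{section:method of jantzen/demazure} and then observe that those formulas depend only on the root datum and on the pairings $\langle\lambda,\alpha^{\vee}\rangle$, so are manifestly invariant under $\varphi$.

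\textbf{Functoriality of the zip picture.} Since $\ker\varphi$ is central and $\varphi(G)$ is normal in $G'$, the morphism $\varphi$ induces an isomorphism $G^{\mathrm{ad}}\xrightarrow{\sim}G'^{\mathrm{ad}}$; together with $T=\varphi^{-1}(T')$ and $B=\varphi^{-1}(B')$ this identifies the based root data, Cartan matrices, Weyl groups and parabolic types of $(G,T)$ and $(G',T')$, compatibly with the duality pairing: if $\beta^{\vee}$ is a coroot of $(G,T)$ mapping to the coroot $\bar\beta^{\vee}$ of $(G',T')$, then $\langle\varphi^{*}\chi,\beta^{\vee}\rangle=\langle\chi,\bar\beta^{\vee}\rangle$ for all $\chi\in X^{*}(T')$. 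As $\varphi$ is $\mu$-equivariant it carries $P,Q,L,M$ to their primed analogues with the same types $I,J$ and the same $z$, whence (by functoriality of the $G$-zip construction, \cite{pink.wedhorn.ziegler.zip.data,pink.wedhorn.ziegler.additional}) it induces compatible morphisms $\gzip^{\mu}\to\gzip^{\varphi\circ\mu}$, $\gzipf^{\mu}\to\gzipf^{\varphi\circ\mu}$ and $\sbt\to\sbt$ commuting with $\piflagz$ and $\piflags$, matching Ekedahl--Oort and Schubert strata through the identification of Weyl groups (using the explicit description of strata in \cite{goldring.koskivirta.invent}), and surjective on $k$-points up to isomorphism (each zip stratum is a single orbit, hence has a unique $k$-point up to isomorphism). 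Moreover $r'\circ\varphi$ is of CY-type iff $r'$ is; one Hasse invariant vanishes identically iff the other does (as $\varphi$ is dominant); and, writing $\eta'$ for the negative of the highest weight of $r'$ and choosing $\lambda'$ with $D_{w_{0}}(\lambda')=(p-1)\eta'$ and $w_{0}\lambda'=-\lambda'$ as in \S\ref{general.strategy}, the character $\lambda:=\varphi^{*}\lambda'$ is admissible for the $G$-side machinery of \S\ref{general.strategy} since $D_{w_{0}}$ and $w_{0}$ commute with $\varphi^{*}$.

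\textbf{Reduction to a Schubert cell and $\varphi$-invariance.} By the definition recalled in \S\ref{section: functoriality of ogus principle, introduction}, $\clp_{x}(G,\mu,r'\circ\varphi)=\clp_{\varphi(x)}(G',\varphi\circ\mu,r')$ for all $x$; combined with the surjectivity above and the preservation of flag-relatedness under $\varphi$, it therefore suffices to prove $\ord_{x}\ha(G,\mu,r'\circ\varphi)=\ord_{\varphi(x)}\ha(G',\varphi\circ\mu,r')$ for every $x\in\gzip^{\mu}(k)$. Fix such an $x$, flag-related to $x'\in\sbt_{w}(k)$ with $w$ of one of the forms in \S\ref{section: product of distinct simple root reflections}--\S\ref{section: computations for type Dn}. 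By the unconditional smoothness of $\piflagz$ and $\piflags$, $\ord_{x}\ha(G,\mu,r'\circ\varphi)=\ord_{x'}\ha_{\sbt}$, and by Remark~\ref{general.strategy.f.to.w0f} together with \S\ref{general.strategy} this equals the order of vanishing of a highest-weight section $f_{\lambda}\in H^{0}(\lambda)_{\lambda}$ on $B^{+}\widetilde{w}B$, where $\widetilde{w}$ differs from $w$ by left translation by $w_{0}$ and hence has the same good form. By Theorem~\ref{distinct.simple}, \ref{main.theorem.highest.weight.vector.typeBn} or \ref{main.theorem.highest.weight.vector.typeDn} this order is an explicit sum $\Phi_{\widetilde{w}}(\lambda)$ depending only on $\widetilde{w}$, on the Cartan matrix (through the integers $\ord(E_{i})$, $\ord(F_{i})$ given by \eqref{order.of.vanishing.Ei} and \eqref{order.of.vanishing.Fi}), and on the integers $\langle\lambda,\alpha^{\vee}\rangle$ for simple $\alpha$. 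Applying the identical reduction over $G'$ at $\varphi(x)$ --- which is flag-related to $\varphi(x')\in\sbt_{w}(k)$ for the same $w$ --- gives $\ord_{\varphi(x)}\ha(G',\varphi\circ\mu,r')=\Phi_{\widetilde{w}}(\lambda')$. Finally, $\varphi^{*}\lambda'=\lambda$ and the compatibility of $\varphi$ with the pairing yield $\langle\lambda,\alpha^{\vee}\rangle=\langle\lambda',\bar\alpha^{\vee}\rangle$ for every simple $\alpha$, so $\Phi_{\widetilde{w}}(\lambda)=\Phi_{\widetilde{w}}(\lambda')$ as integers; this is the sought equality of orders of vanishing, and the lemma follows.

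\textbf{Main obstacle.} The delicate part is the functoriality package: matching the Ekedahl--Oort and Schubert stratifications through the Weyl-group identification, verifying that $\lambda=\varphi^{*}\lambda'$ is admissible for the reduction of \S\ref{general.strategy}, and correctly tracking the element $\widetilde{w}$ to be fed to the theorems of \S\ref{section:method of jantzen/demazure}; the $\varphi$-invariance of $\Phi_{\widetilde{w}}$ is then immediate. One genuinely needs the explicit formulas here rather than a formal pullback argument, because $\varphi$ need not be smooth (e.g.\ $\SL_{p}\to\PGL_{p}$), so $\ord_{x'}\varphi^{*}\ha_{\sbt}$ cannot be identified with $\ord_{\varphi(x')}\ha_{\sbt}$ directly --- it is precisely the shape of $\Phi_{\widetilde{w}}$ as a fixed combination of Cartan integers and values $\langle\lambda,\alpha^{\vee}\rangle$ that makes the two orders of vanishing agree.
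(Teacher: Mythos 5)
Your proposal is correct and follows essentially the same approach as the paper's proof: reduce via the smooth maps $\piflagz$, $\piflags$ and Remark~\ref{general.strategy.f.to.w0f} to the order of a highest-weight section on $B^{+}wB$, identify $\lambda=\varphi^{*}\lambda'$, and conclude because the explicit formulas of Theorems~\ref{distinct.simple}, \ref{main.theorem.highest.weight.vector.typeBn}, \ref{main.theorem.highest.weight.vector.typeDn} are combinations of pairings $\langle\lambda,\alpha^{\vee}\rangle$ and Cartan integers, hence invariant under $\varphi$. The one cosmetic difference is that you justify the admissibility of $\lambda$ via compatibility of $D_{w_0}$ with $\varphi^{*}$ and then invoke §\ref{general.strategy} on each side separately, whereas the paper instead uses the Weyl dimension formula to identify $H^{0}(\lambda)$ with $\varphi^{*}H^{0}(\lambda')$ and the highest-weight lines, so that $\piflagz^{*}\ha(G,\mu,r'\circ\varphi)=\piflags^{*}(w_0 f_{\lambda})$ follows directly from the corresponding identity over $G'$; both routes land on the same comparison of orders.
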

\begin{proof}
Let $x,x'$ and $w$ be as in the hypothesis. 
Since $\clp_x(G,\mu,r\circ \varphi)=\clp_{\varphi(x)}(G',\varphi\circ \mu,r)$ it suffices to show that $\ord_{\varphi(x)}(\ha(G',\varphi\circ \mu,r'))=\ord_{x}(\ha(G,\mu,r'\circ \varphi))$. 
Let $\lambda'\in X_{+}^{*}(T')$ such that $\piflagz^*\ha(G',\varphi\circ \mu,r)=\piflags^*(w_0f_{\lambda'})$. Set $\lambda\coloneqq \lambda'\circ \varphi$. Since $\varphi$ has central kernel and $G$ and $G'$ have the same adjoint group, we see (e.g. by Weyl dimension formula) that $H^0(\lambda)$ is the pullback $\varphi$ of the representation $H^0(\lambda')$. Furthermore, the highest weight space $H^0(\lambda)_{\lambda}$ is the pullback of the weight space $H^0(\lambda')_{\lambda'}$. Thus, $\piflagz^*\ha(G,\mu,r\circ \varphi)=\piflags^*(w_0f_{\lambda})$. 
%
The fact that $G$ and $G'$ have the same adjoint group and the assumption on $w$ implies that the order of vanishing of $f_{\lambda}$ on any point in $B^+wB$ is the same as the vanishing order of $f_{\lambda'}$ on any point in $B'^+wB'$, since both are given by a sum of pairings of $\lambda$ (resp. $\lambda'$) with coroots. Since $\varphi$ induces a morphism between diagram (\ref{eq-zipflag-sbt-diagram}) for $G$ and 
$G'$, we find that 
\[
\ord_{\varphi(x)}(\ha(G',\varphi\circ \mu,r))=\ord_{\varphi(x')}(w_0f_{\lambda'})=\ord_{x'}(w_0f_{\lambda})=\ord_{x}(\ha(G,\mu,r\circ \varphi)).
\]
\end{proof}
\section{Ogus Principle in type \texorpdfstring{$\Bsf_m$}{B} and \texorpdfstring{$\Dsf_m$}{D}}
\label{sec-orthogonal-case}
%
\subsection{Group-theoretic notation}\label{section: orthogonal an gspin varieties}
Let $\GG_0$ be a connected, reductive $\QQ$-group such that $\GG_{0,\RR} \cong \SO(n,2)$, the special orthogonal group of signature $(n,2)$. Let $\tilde \GG_0$ be the simply-connected double cover of $\GG_0$. Let $\GG:=(\tilde \GG_0 \times \mg_m)/\langle -1,-1 \rangle$ be the associated spin similitude group. The $\RR$-group $\GG_{\RR} \cong \GSpin(n,2)$. Let $\gx$ (resp. $(\GG_0,\XX_0$)) be a Shimura datum with group $\GG$ (resp. $\GG_0)$. The datum $\gx$ (resp. $(\GG_0, \XX_0$) is of Hodge (resp. abelian) type.

Assume that $\GG_0$ is unramified at $p$. Applying \S\ref{sec-intro-Hodge-type} to $\gx$ and $(\GG_0, \XX_0)$ gives cocharacter data $(G,\mu)$, $(G_0,\mu_0)$, special $k$-fibers $S_K$, $S_{K_0}$ of the associated Shimura varieties and smooth surjective Zip period maps $\zeta_K, \zeta_{K_0}$ respectively. The cocharacter  $\mu_0\colon \mg_m \to G_{0,k}$ is given in coordinates by $(1,0,...,0,-1)$.

\subsection{The Hodge character in the spin and half-spin cases}
For $n=2m+1$ odd (resp. $n=2m$ even), let 
  $\spin\colon G\to \GL(2^{m})$ (resp. $\spin^{\pm}: G \to \GL(2^{m-1})$) be the spin representation (resp. the two half-spin representations) of $G$. 
Identify
\[
X^*(T_{\tilde G}) = \{ (a_1, ..., a_m) \in \frac{1}{2}\mz^m : \text{ $a_i \in \mz$ for all $i$ or $a_i \in \frac{1}{2}\mz$ for all $i$} \}
\]
so that $R=\{\pm(e_i \pm e_j) |1 \leq i <j \leq m\} \cup \{\pm e_i | 1\ \leq i \leq m\} $ for $n=2m+1$ odd and $R=\{\pm(e_i \pm e_j) |1 \leq i <j \leq m\}$ for $n=2m$ even.
Recall the Hodge character $\eta \in X^*(L) $~\eqref{sec-hodge-char}.
\begin{proposition} \
\begin{enumerate}
\item
\label{item-spin-odd}
If $n=2m+1$ is odd, then the Hodge character of $(G,\mu,\spin)$ is $\eta=-2^{m-2}e_1$.
\item
\label{item-spin-even}
If $n=2m$ is even, then the Hodge character of $(G,\mu, \spin^{\pm})$ is $\eta=-2^{m-3}e_1$.    
\end{enumerate}    
\end{proposition}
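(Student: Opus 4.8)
The plan is to read off both Hodge characters directly from the definition in \S\ref{sec-hodge-char}, by writing the weights of the (half-)spin representations in the coordinates of \S\ref{section: orthogonal an gspin varieties} and applying $\mu$ to them. Recall from \S\ref{sec-hodge-char} that for a representation $\rho$ with exactly two $\mu$-weights $a<b$ one has $\eta(G,\mu,\rho)=\det V_a$, the sum of the weights occurring in the lowest $\mu$-weight space $V_a$. The two inputs I would assemble are: \textbf{(i)} the standard description of the (half-)spin weights --- when $n=2m+1$ the spin representation of $G$ has weight set $\{\tfrac12(\varepsilon_1 e_1+\cdots+\varepsilon_m e_m):\varepsilon\in\{\pm1\}^m\}$, each weight with multiplicity one, and when $n=2m$ the weights of $\spin^{+}$ (resp. $\spin^{-}$) are those $\varepsilon$ with an even (resp. odd) number of entries equal to $-1$ --- together with the fact that on $T_{\tilde{G}}$ the $\GSpin$-representation restricts to the usual $\Spin$-representation, so these are the weights to be fed in; and \textbf{(ii)} that, since $(G,\mu)$ arises from the orthogonal Shimura datum of \S\ref{section: orthogonal an gspin varieties}, the cocharacter $\mu$ satisfies $\langle\mu,e_1\rangle=1$ and $\langle\mu,e_i\rangle=0$ for $2\le i\le m$, which is the content of "$\mu_0$ is $(1,0,\dots,0,-1)$ in coordinates".

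Granting (i) and (ii), the $\mu$-weight of $\tfrac12(\varepsilon_1 e_1+\cdots+\varepsilon_m e_m)$ is $\tfrac12\varepsilon_1$, so in every case the representation has exactly the two $\mu$-weights $a=-\tfrac12<b=\tfrac12$ and the Hodge character is defined. The space $V_a$ is spanned by the weight vectors with $\varepsilon_1=-1$; it has dimension $2^{m-1}$ in the odd case and $2^{m-2}$ in each half-spin case. I would then compute $\eta=\det V_a$ coordinate by coordinate. The $e_1$-coefficient of the sum of these weights is $(\dim V_a)\cdot(-\tfrac12)$, namely $-2^{m-2}$ when $n=2m+1$ and $-2^{m-3}$ when $n=2m$. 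For each $j$ with $2\le j\le m$ the $e_j$-coefficient vanishes: the set of tuples $(\varepsilon_2,\dots,\varepsilon_m)$ occurring in $V_a$ is invariant under the involution that flips $\varepsilon_j$ (in the odd case) or that flips $\varepsilon_j$ together with some other $\varepsilon_k$, $k\ne j$ (in the half-spin case, which needs $m\ge 3$), and this involution negates $\varepsilon_j$, so the $\pm$ contributions cancel in pairs. This yields $\eta=-2^{m-2}e_1$ for $n=2m+1$ and $\eta=-2^{m-3}e_1$ for $n=2m$, as asserted.

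The substance is this weight bookkeeping; the points that need care are not conceptual. First, one must fix the normalization of $\mu$ (equivalently, which of $\pm\tfrac12$ is "$a$") to be the one forced by the orthogonal Shimura datum and not its opposite --- reversing it would replace $\eta$ by $-\eta$. Second, one should be explicit about the passage from $\Spin$ to the similitude group $\GSpin$: the $e_i$ are characters of $T_{\tilde{G}}$, and the equalities $\eta=-2^{m-2}e_1$ and $\eta=-2^{m-3}e_1$ are in the coordinates of \S\ref{section: orthogonal an gspin varieties}, where no similitude-character term appears; in a Kuga--Satake-type normalization one would instead obtain these weights shifted by a half-integer multiple of the similitude character, and the proof should state which convention is in force. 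Finally, the cancellation of the $e_j$-coefficients for $j\ge 2$ in the even case uses $m\ge 3$; the degenerate case $m=2$ (type $\Dsf_2$, i.e. $\SO(2,2)$) lies outside the orthogonal Shimura-variety setting considered and should be excluded or checked separately.
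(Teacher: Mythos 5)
Your proposal is correct and takes essentially the same route as the paper: list the (half-)spin weights $\tfrac12(\pm1,\dots,\pm1)$, identify the lower $\mu$-weight subspace $V_a$ as the half with first coordinate $-\tfrac12$, and sum. The paper states the cancellation of the $e_j$-coefficients for $j\ge 2$ without comment, whereas you verify it explicitly (and correctly note the $m\ge 3$ requirement for the pairing argument in the half-spin case and the similitude-character normalization caveat); these are useful clarifications but do not change the argument.
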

\begin{proof}
The Hodge character $\eta$ is the sum of the $T$-weights $\chi$ such that $\langle \chi, \mu\rangle=-1$. Since $\spin$ and $\spin^{\pm}$ are minuscule, their weights are the $W$-orbit of the highest weight. The set of $2^m$ weights of $\spin$ is $\{\frac{1}{2}(\pm 1, \ldots, \pm 1)\}$. The half with first coordinate $-1$ are precisely those that satisfy $\langle \chi, \mu\rangle=-1$. Their sum is $\eta=-2^{m-2}e_1$. The same argument applies to the half-spin representations, whose dimension is $2^{m-1}$. Hence $\eta=-2^{m-3}e_1$ in case $n=2m$ is even.         
\end{proof}


\subsection{The Bruhat stratification}\label{section: bruhat stratification orthogonal case}
We give an account of the Bruhat stratification as described in \cite[Section 3]{wedhorn.bruhat} (see also \cite[Section 7]{shen.zhang}). 
Since $G$ is a central extension of $G_0$ by $\mg_m$, it suffices to describe the case of $G_0$.  
%

{\bf Case $n$ odd:} Identify the Weyl group $W$ with the subset of the symmetric group $S_{n+2}$ given by
\begin{equation*}
    W = \{ \sigma \in S_{n+2} : \sigma(i) + \sigma(n+3-i)=n+3, i=1,...,n+2\}.
\end{equation*}
Let $m = \frac{n+1}{2}$ and for each $i=1,...,m-1$ let $s_i$ denote the reflection
\begin{equation}\label{odd.case.si}
    s_i = (i,i+1)(n+2-i, n+3-i)
\end{equation}
and let
\begin{equation*}
    s_m = (m,m+2).
\end{equation*}
The Weyl group $W$ is generated by $s_1, ..., s_m$ and we have that
\begin{equation*}
    I = J = \{ s_2, ..., s_m \}.
\end{equation*}
The length function induces a bijection of totally ordered sets (see \cite[Lemma 3.6]{wedhorn.bruhat})
\begin{equation*}
    {}^{I}W \to \{0,...,n\}, w\mapsto l(w)
\end{equation*}
with inverse
\begin{equation}\label{e.o.stratification.odd.orthogonal}
\begin{aligned}
    \{ 0,..., 2m-1\} &\to {}^{I}W \\
    d&\mapsto 
    \begin{cases} s_1 \cdots s_d & 1\leq d \leq m \\
    s_1\cdots s_m s_{m-1}\cdots s_{2m-d} & m+1\leq d \leq 2m-1 
    \end{cases}
\end{aligned}
\end{equation}
Let $w_{0,I,J}$ be the longest element in ${}^{I}W^{J}$. We have bijections
\begin{equation*}
    \{ 1 , s_1, w_{0,I,J} \} = {}^{I}W^{J} \to \{ 0,1,n\}.
\end{equation*}
{\bf Case $n$ even:} Let $s_i$ be as in (\ref{odd.case.si}) for $i=1,...,m-1$, and let
\begin{equation*}
    s_m \coloneqq (m-1,m+1)(m,m+2).
\end{equation*}
For all $i=1,...,m$, let 
\begin{equation*}
    w_i \coloneqq \begin{cases} s_1\cdots s_i & i \leq m \\
    s_1 \cdots s_m s_{m-2} \cdots s_{2m-1-i} & i \geq m+1
    \end{cases}
\end{equation*}
and let
\begin{equation*}
    w_{m-1}' \coloneqq s_1\cdots s_{m-2}s_m.
\end{equation*}
Then by \cite[Lemma 3.7]{wedhorn.bruhat} we have that
\begin{equation*}
    {}^{I}W = \{ \id \} \cup \{ w_i \}_{i=1}^{n} \cup \{ w_{m-1}'\}
\end{equation*}
with the order given by
\begin{equation*}
    \begin{aligned}
        \id \leq w_1 \leq ... \leq w_{m-2} \leq w_{m-1},w_{m-1}' \leq w_m \leq ... \leq w_n.
    \end{aligned}
\end{equation*}
The indexing set for the Bruhat stratification is given by
\begin{equation*}
    {}^{I}W^{J} = \{ 0 , w_1, w_n \},
\end{equation*}
where $w_n = w_{0,I,J}$ is the longest element in ${}^{I}W^{J}$.
%
%
%
%
%
%
%
\subsection{Order of vanishing of the Hasse invariant}
Keep the setting of \S\ref{section: bruhat stratification orthogonal case}. In particular, $w_0, w_{0,I}$ commute and $z=z^{-1}$.  
By \cite[Definition 9.4]{pink.wedhorn.ziegler.zip.data}, there is an equivalence relation on $W$ given by 
\[
w\sim w' \iff \text{there exists $y\in W_I$ s.t. $yw'z\varphi(y)^{-1}z=w$}.
\]
 For any $w,w'\in W$, by \cite[Theorem 9.17]{pink.wedhorn.ziegler.zip.data} we have that $w$ and $w'$ lie in the same Ekedahl-Oort stratum of $\gzip^{{}^{z}\mu}$ if and only if $zw\sim zw'$. 
\begin{remark}
Throughout this text we have used the frame $(B,T,z)$. To use the results from \cite{pink.wedhorn.ziegler.zip.data} in the following lemma we conjugate by $z$ to use the frame $({}^{z}B,T,z)$.
\end{remark}
\begin{lemma}\label{lemma: from sbt to gzips in orthogonal case}
For all $w,w'\in {}^{I}W$, the elements $w_0wz$ and $w_0w'z$ lie in the same $E$-orbit of $G$  if and only if $w=w'$.
\end{lemma}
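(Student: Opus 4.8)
The reverse implication is trivial, so assume $w_0wz$ and $w_0w'z$ lie in a common $E$-orbit; we must deduce $w=w'$. The plan is to reduce this, via the Pink--Wedhorn--Ziegler classification of $E$-orbits, to a purely combinatorial assertion about ${}^{I}W$, and then to settle that assertion using the explicit minimal representatives recorded in \S\ref{section: bruhat stratification orthogonal case}. Working in the frame $({}^{z}B,T,z)$ as in the Remark above, there is a bijection between ${}^{I}W$ and the set of $E$-orbits of $G$, and there is an explicit recipe determining, for a lift of an arbitrary $v\in W$, which orbit $G_{u}$ it lies in: $u$ is the distinguished ${}^{I}W$-representative of the $\sim$-class of $v$ (with $\sim$ as recalled before the lemma, transported via the identification $\gzip^{\mu}\cong\gzip^{{}^{z}\mu}$ and \cite[Theorem 9.17]{pink.wedhorn.ziegler.zip.data}; equivalently one may argue from the Bruhat-cell description $G_{u}=E\cdot Bu Bz^{-1}$, cf. \cite[Proposition 5.1.3]{koskivirta.imai}), and $\dim G_{u}=\dim P+\ell(u)$.

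Since $w_0wz=\dot w_0\dot w\dot z$ lifts the Weyl element $w_0wz$, and since $I=J$ gives $z=w_0w_{0,J}=w_0w_{0,I}$, whence $w_0wz=(w_0ww_0)\,w_{0,I}=\tau(w)\,w_{0,I}$ in $W$ — where $\tau$ is conjugation by $w_0$, i.e.\ the identity in type $\Bsf_m$ (and in type $\Dsf_m$ with $m$ even) and the diagram automorphism $\delta$ interchanging $\alpha_{m-1}\leftrightarrow\alpha_m$ in type $\Dsf_m$ with $m$ odd — the $E$-orbit of $w_0wz$ is $G_{\rho(w)}$, with $\rho(w)$ the ${}^{I}W$-representative of the $\sim$-class of $\tau(w)\,w_{0,I}$. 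As $\tau$ preserves $I$ and lengths, it restricts to a bijection of ${}^{I}W$; replacing $w$ by $\tau^{-1}(w)$, the lemma becomes the injectivity of $\rho\colon{}^{I}W\to{}^{I}W$, $w\mapsto(\text{${}^{I}W$-representative of the $\sim$-class of }w\,w_{0,I})$, which (${}^{I}W$ being finite) is the same as bijectivity. To prove injectivity I would use the explicit list of \S\ref{section: bruhat stratification orthogonal case}: in type $\Bsf_m$, ${}^{I}W$ is the length-chain $u_0<u_1<\dots<u_n$ with $\ell(u_i)=i$ ($n=2m-1$); in type $\Dsf_m$ it is the same chain with one extra element $w_{m-1}'$ of length $m-1$ incomparable to $w_{m-1}=u_{m-1}$ ($n=2m-2$). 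Using the shape of $w_{0,I}$ (the longest element of a $\Bsf_{m-1}$- resp.\ $\Dsf_{m-1}$-parabolic) together with $\dim G_{\rho(w)}=\dim P+\ell(\rho(w))$, one computes $\ell(\rho(u_i))$ — equivalently the dimension of the $E$-orbit of $w_0u_iz$ — and checks it is a strictly monotone function of $i$ along the chain. Strict monotonicity along a chain forces injectivity, which completes the proof in type $\Bsf_m$.

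In type $\Dsf_m$ one must still separate $w_{m-1}$ from $w_{m-1}'$, which have equal length and hence yield orbits of equal dimension, so the dimension count is silent. Here I would invoke the outer automorphism $\theta$ of $G$ attached to $\delta$: chosen defined over $\fp$ and normalizing $B$ and $T$, it then normalizes $P$ and $Q$ (both of type $I=J=\delta(I)$ and containing $B$), hence $E$, and it permutes the $E$-orbits by $\theta(G_{v})=G_{\delta(v)}$, fixing every $\delta$-invariant $G_{v}$ and interchanging $G_{w_{m-1}}$ with $G_{w_{m-1}'}$. Since $\delta$ fixes $w_0$, fixes $z$, and fixes every element of ${}^{I}W$ other than the transposed pair, applying $\theta$ shows: (i) for non-exceptional $w$ the orbit of $w_0wz$ is $\delta$-stable, so $\rho(w)\notin\{w_{m-1},w_{m-1}'\}$ — combined with the previous paragraph, $\rho$ then restricts to a bijection of ${}^{I}W\setminus\{w_{m-1},w_{m-1}'\}$ onto itself; and (ii) $\theta$ interchanges the $E$-orbits of $w_0w_{m-1}z$ and $w_0w_{m-1}'z$. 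By (i), $\rho(w_{m-1})$ and $\rho(w_{m-1}')$ lie in $\{w_{m-1},w_{m-1}'\}$, and by (ii) they are interchanged by $\delta$, so $\{\rho(w_{m-1}),\rho(w_{m-1}')\}=\{w_{m-1},w_{m-1}'\}$; in particular $\rho(w_{m-1})\ne\rho(w_{m-1}')$, and $\rho$ is a bijection, proving the lemma.

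I expect the main obstacle to be the length computation of the second paragraph: identifying, for each $u_i$, the ${}^{I}W$-representative of the $\sim$-class of $u_i\,w_{0,I}$ (equivalently, pinning down $\ell(\rho(u_i))$) and verifying strict monotonicity, which requires carefully unwinding the $\sim$-relation — or the description $G_{u}=E\cdot Bu Bz^{-1}$ — against the explicit representatives for both root systems, keeping track of the frame change from $(B,T,z)$ to $({}^{z}B,T,z)$. The type-$\Dsf_m$ incomparable pair is a secondary subtlety, but a genuine one: it is invisible to dimension and length invariants and really does need the outer automorphism.
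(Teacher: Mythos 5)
Your overall framework (reduce to injectivity of a self-map $\rho$ of ${}^{I}W$ via the Pink--Wedhorn--Ziegler classification, after the frame change) is sound, but the proof is genuinely incomplete at exactly the step you flag as ``the main obstacle.'' You never compute $\ell(\rho(u_i))$, never verify the claimed strict monotonicity along the chain, and never unwind the $\sim$-relation against the explicit representatives. Without that, nothing is established: the reduction to injectivity of $\rho$ is purely formal, and strict monotonicity is the entire content. Asserting that it ``should'' hold, and that you ``expect'' the obstacle to lie there, is not a proof. The type-$\Dsf_m$ outer-automorphism argument is a nice idea for the one pair that length cannot separate, but it sits on top of the uncarried-out chain computation and so does not rescue the argument.

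The paper's proof takes a much more direct route that bypasses all of this. After the same frame change, one must show $zw_0w\sim zw_0w'\Rightarrow w=w'$ for $w,w'\in{}^{I}W$. The key observation is centrality: in type $\Bsf_m$ (and $\Dsf_m$ with $m$ even) the longest element $w_0$ is central in $W$, so $zw_0w\sim zw_0w'$ is equivalent to $zw\sim zw'$, and the uniqueness of minimal coset representatives finishes it. In type $\Dsf_m$ with $m$ odd, $w_0$ is not central, but $zw_0=w_{0,I}$ is central in $W_I$ (since $L$ has type $\Dsf_{m-1}$ with $m-1$ even), and the $\sim$-relation only conjugates by elements of $W_I$, so the same cancellation applies. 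This disposes of the incomparable pair $\{w_{m-1},w_{m-1}'\}$ without any auxiliary outer automorphism, and it requires no dimension or length computation at all. You should look for the structural fact (centrality of $w_0$ or $w_{0,I}$) that makes the $\sim$-relation transparent, rather than trying to compute $\rho$ element by element.
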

\begin{proof}
By passing to the frame $({}^{z}B,T,z)$ it suffices to show that $zw_0w$ and $zw_0w'$ lie in the same Ekedahl-Oort stratum of $\gzip^{{}^{z}\mu}$ if and only if $w=w'$. 
If $n$ is odd, then $w_0=-1$ is central. Hence $zw_0w\sim zw_0w'$ if and only if $zw\sim zw'$. Since $zw,zw'\in {}^{I}W$ this implies that $zw=zw'$. Thus $w=w'$. 

If $n$ is even, then $w_0$ is central if and only if $\frac{n+2}{2}=m$ is also even, in which case the previous argument applies. Assume that $m$ is odd. Then $w_{0,I}$ is central in $W_I$ (since $L$ is of type $\Dsf_{m-1}$). Since $zw_0=w_{0,I}$ we see that $zw_0w$ and $zw_0w'$ lie in the same Ekedahl-Oort stratum if and only if $w_{0,I}w\sim w_{0,I}w'$. Since $w_{0,I}$ is central in $W_I$, this is true if and only if $w=w'$.
\end{proof}
\begin{theorem}\label{theorem: order of vanishing of hasse invariant on stack of G-zips for Bn and Dn Shimura datum}
Let $(G,\mu)$, $(G_0,\mu_0)$ as in  \S\ref{section: orthogonal an gspin varieties}. 
Let $m'=m$ if $n$ is odd and $m'=m-1$ if $n$ is even. For all $x\in \gzip^\mu(k)$ (resp. $x\in \gozip^{\mu_0}(k)$),
\[
\ord_x(\ha(G,\mu,\spin)) = \begin{cases} 0 & x \in \gzip_{w_{0,I,J}}^{\mu,\bruh} \\
                                2^{m'-2} & x \in \gzip_{s_1}^{\mu,\bruh} \\
                                2^{m'-1} & x\in \gzip_{1}^{\mu,\bruh}
                                \end{cases} ,
                                \,\,\,\,\,
\ord_x(\ha(G_0,\mu_0,\std)) = \begin{cases} 0 & x \in \gozip_{w_{0,I,J}}^{\mu_0,\bruh} \\
                                1 & x \in \gozip_{s_1}^{\mu_0,\bruh} \\
                                2 & x\in \gozip_{1}^{\mu_0,\bruh}
                                \end{cases}.
\]
\end{theorem}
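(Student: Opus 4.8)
**Proof plan for Theorem "order of vanishing of the Hasse invariant on the stack of $G$-zips for $\Bsf_m$ and $\Dsf_m$ Shimura datum".**

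The plan is to reduce the computation of $\ord_x \ha(G,\mu,\spin)$ (resp. $\ord_x\ha(G_0,\mu_0,\std)$) to the explicit highest-weight section computations of \S\ref{computations.for.hasse.invariant.typeBn} and \S\ref{computations.for.hasse.invariant.typeDn} via the general strategy of \S\ref{general.strategy}. First I would record which character $\lambda$ is relevant: by the Hodge character computation, $\ha(G,\mu,\spin)\in H^0(\gzip^\mu,\Lcal_{\gzip}((p-1)\eta))$ with $\eta=-2^{m'-2}e_1$ (the sign conventions match $\std$ in the $G_0$-case where $\eta=-e_1$), and choosing $\lambda$ with $D_{w_0}(\lambda)=(p-1)\eta$ one has $f_\lambda\in H^0(\lambda)_\lambda$ with $\lambda$ a positive multiple ($2^{m'-2}$, resp. $1$) of the fundamental weight $e_1$ dual to $\alpha_1=e_1-e_2$. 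The condition $w_0\lambda=-\lambda$ holds since $e_1$ is not fixed by $-w_0$ only in type $\Dsf_m$ with $m$ odd — but there $w_0$ swaps the two half-spin weights and still sends $e_1\mapsto -e_1$ on the relevant coordinate, so this is fine; I would check it case by case using the explicit $w_0$ on $X^*(T)$. Then by \S\ref{general.strategy}, to get $\ord_x\ha$ on the Ekedahl–Oort stratum through $w_0wz^{-1}$ it suffices to compute $\ord(f_\lambda)$ on $B^+wB$.

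Next I would match the Bruhat strata $\gzip_{w_{0,I,J}}^{\mu,\bruh}$, $\gzip_{s_1}^{\mu,\bruh}$, $\gzip_1^{\mu,\bruh}$ with Weyl group elements of the "simple enough" form handled by Theorems \ref{main.theorem.highest.weight.vector.typeBn} (resp. \ref{main.theorem.highest.weight.vector.typeDn}). Using the explicit description of ${}^IW$ and ${}^IW^J$ from \S\ref{section: bruhat stratification orthogonal case} and the parametrizations $w_j=s_j\cdots s_{m-1}s_m s_{m-1}\cdots s_{m-l}$ (type $\Bsf_m$) resp. $w_j=s_j\cdots s_{m-1}s_m s_{m-2}\cdots s_{m-l}$ (type $\Dsf_m$) whose closedness (Condition \ref{cond-closed}) was verified in \S\ref{assumption that subset of roots is closed}, I would identify the unique $w\in{}^IW$ such that the stratum $\gzip^{\mu,\bruh}_{\bullet}$ is the image under $\pi_{\mathtt{Zip},\mathcal{B}}$ of the EO-stratum $G_w$, then read off the answer from \S\ref{computations.for.hasse.invariant.typeBn}–\ref{computations.for.hasse.invariant.typeDn}: the deepest stratum ("no $\beta_j$'s"/"no $\eta_j$'s") gives $\ord(f_\lambda)=2\langle\lambda,\alpha_1^\vee\rangle$ after scaling, i.e. $2^{m'-1}$ for $\spin$ and $2$ for $\std$; the next stratum gives $\langle\lambda,\beta_1^\vee\rangle=2^{m'-2}$ (resp. $1$); and the open/big Bruhat stratum $w_{0,I,J}$ is where the Hasse invariant is nonvanishing, giving $0$.

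The bookkeeping obstacle — and the main technical point — is translating between the three stratifications in play: the $E$-orbit stratification of $\gzip^\mu$ indexed by ${}^IW$, its image in the Bruhat stack $\mathcal{B}_{I,J,\Delta}$ indexed by ${}^IW^J$, and the Schubert-cell labelling $B^+wB$ used in \S\ref{section:method of jantzen/demazure}. Here Lemma \ref{lemma: from sbt to gzips in orthogonal case} is exactly the bridge: it shows $w\mapsto$ ($E$-orbit of $w_0wz$) is injective on ${}^IW$, so the Ekedahl–Oort stratum containing $w_0wz^{-1}$ is unambiguously determined by $w$, and since $z=z^{-1}$ in this setting the substitution needed in \S\ref{general.strategy} is clean. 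I would then argue that the three Bruhat strata in the statement pull back along $\pi_{\mathtt{Zip},\mathcal{B}}$ precisely to the unions of EO-strata $G_w$ for $w$ lying over $1$, over $s_1$, and over $w_{0,I,J}$ respectively (using the surjections ${}^IW\to{}^IW^J\to\{0,1,n\}$ of \S\ref{section: bruhat stratification orthogonal case}), and that the minimum of $\ord(f_\lambda)$ over the $B^+wB$ in each such union is attained and equals the claimed value — monotonicity of the order along the closure relations, visible from the formulas $\ord(f_\lambda)=\sum_j\langle\lambda,\beta_j^\vee\rangle+\langle\lambda,\gamma^\vee\rangle+\sum_i\langle\lambda,\alpha_i^\vee\rangle\ord(E_i)$, makes this a finite check. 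Finally, the $G_0$-statement follows from the $G$-statement by the functoriality Lemma \ref{lemma: functoriality for good w} applied to the central isogeny $\tilde G_0\times\mg_m\to G$ (both $\spin$ and $\std$ pull back compatibly), or directly by rerunning the same computation with $\lambda=e_1$; either way the $\std$-case is literally the $m'$-independent version with $2^{m'-2}\rightsquigarrow 1$.
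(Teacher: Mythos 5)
Your proposal follows the same overall route as the paper: use \S\ref{general.strategy} to reduce $\ord_x\ha$ to $\ord(f_\lambda)$ on Schubert cells $B^+wB$ with $\lambda$ proportional to $e_1$, invoke the closedness verification of \S\ref{assumption that subset of roots is closed}, bridge EO-strata and Bruhat strata via Lemma \ref{lemma: from sbt to gzips in orthogonal case}, and read off the values from \S\ref{computations.for.hasse.invariant.typeBn}--\ref{computations.for.hasse.invariant.typeDn}.

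Three small points where the paper's actual argument is cleaner, and where your sketch is slightly off. First, the paper handles the $G$ vs.\ $G_0$ statements not via Lemma \ref{lemma: functoriality for good w} but by the direct scaling observation $\eta = 2^{m'-2}\eta_0$: since $\ord(f_\lambda)$ is linear in $\lambda$ (a sum of pairings $\langle\lambda,\cdot^\vee\rangle$), computing once for $\lambda = e_1$ gives both cases. Your functoriality route does not straightforwardly apply here, since $\spin$ is not $\std$ composed with a morphism $G \to G_0$; they are genuinely different representations linked only through the common Hodge character $e_1$ up to scalar, which is exactly what the scaling argument uses. Second, your worry about $w_0 e_1 = -e_1$ in type $\Dsf_m$ with $m$ odd is unfounded: the nontrivial diagram involution $-w_0$ only sends $e_m \mapsto -e_m$ and fixes $e_1$, so $w_0 e_1 = -e_1$ holds unconditionally, and the paper simply computes $D_{w_0}(e_1) = e_1 - p w_{0,J}e_1 = (p-1)\eta_0$ using $zw_0 = w_{0,J}$. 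Third, there is no ``minimum over $B^+wB$'' to justify: the vanishing order is a well-defined number at each point, constant on EO-strata, and the explicit formulas in \ref{computations.for.hasse.invariant.typeBn}/\ref{computations.for.hasse.invariant.typeDn} show directly that it takes the same value on every EO-stratum lying over a given Bruhat stratum; this is a constancy check, not an extremum.
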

\begin{proof}
Let $\eta_0 \coloneqq -e_1$. Since $\eta = 2^{m'-2}\eta_0$ the computations for the orthogonal case yield also the result for the GSpin case. So it suffices to prove the statement for $(G_0,\mu_0,\std)$. 

Since $w_{0,J}$ is a product of simple reflections $s_i$ for $i>1$, we see that $w_{0,J}e_1 = e_1$. Since $\sigma$ fixes $e_1$, 
\begin{equation*}
    D_{w_0}(e_1)  = e_1 - pzw_0 e_1 = e_1 - pw_{0,J} e_1 = (p-1)\eta_0.
\end{equation*}
For all $w\in {}^{I}W$, \ref{computations.for.hasse.invariant.typeBn} and \ref{computations.for.hasse.invariant.typeDn} give the order of vanishing for $f_{e_1}$ on all points in $B^{+}wB$. Since $w_0e_1=-e_1$, \S\ref{general.strategy} implies that for any $w$ in ${}^{I}W$, the order of vanishing of $\ha(G_0,\mu_0,\std)$ on the zip stratum containing $w_0wz$ is gotten from the order of $f_{e_1}$ on any point in $B^{+}wB$. \Cref{lemma: from sbt to gzips in orthogonal case} gives the order of vanishing of $\ha(G_0,\mu_0,\std)$ on all zip strata.

For odd $n$, \ref{computations.for.hasse.invariant.typeBn}\ref{item: vanishing deepest stratum type Bn} gives the order of vanishing of $f_{e_1}$ on the longest element in ${}^{I}W$, hence gives the order of vanishing of $\ha(G_0,\mu_0,\std)$ at the smallest zip stratum. The vanishing order of $\ha(G_0,\mu_0,\std)$ on all the strata lying in-between the open dense stratum and the closed stratum is given by \ref{computations.for.hasse.invariant.typeBn}\ref{item: vanishing middle strata type Dn}. For even $n$ we refer to \ref{computations.for.hasse.invariant.typeDn}. Under the bijection (\ref{e.o.stratification.odd.orthogonal}) the Bruhat stack is indexed by $\{ 0 ,1, n\}$, and the statement follows.
\end{proof}
\subsection{The conjugate line position}
Keep the notation of~\S\ref{section: orthogonal an gspin varieties}. For every $x\in \gozip^{\mu_0}(k)$, let 
\[
\underline{M}_{\mu_0}^{w}(x)=(M_{\mu_0}^{w}(x),\fil^{\bullet}M_{\mu_0}^{w}(x), \fil_{\Conj,\bullet}M_{\mu_0}^{w}(x),(\varphi_\bullet)_{\mu_0}^{w}(x))
\]
be the $F$-zip obtained from the universal $G_0$-zip by pushforward along $r_0\colon G_0\to \GL(n+2)$. 
\begin{theorem}\label{theorem: conjugate line position orthogonal case}
For every $x\in\gozip^{\mu_0}(k)$, the conjugate line position of the corresponding $F$-zip is given by
\begin{equation}
    \clp(\underline{M}_{\mu_0}^{w}(x)) = \begin{cases} 0 & x \in \gozip_{w_{0,I,J}}^{\mu_0,\bruh} \\
                                1 & x \in \gozip_{s_1}^{\mu_0,\bruh} \\
                                2 & x\in \gozip_{1}^{\mu_0,\bruh}
                                \end{cases}.
\end{equation}
\end{theorem}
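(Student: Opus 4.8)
The plan is to compute the conjugate line position directly from an explicit combinatorial model of each isomorphism class of $F$-zips over $k$, following Moonen–Wedhorn's ``standard $F$-zips''. First I would fix the standard representation $r_0\colon G_0 \to \GL(n+2)$ and the cocharacter $\mu_0 = (1,0,\dots,0,-1)$, so that the universal $F$-zip $\underline{M}_{\mu_0}^w(x)$ has Hodge filtration with graded pieces of ranks $1, n, 1$ (the $\mu_0$-weights $1,0,-1$ with the stated multiplicities). Since $\type(L) = \Bsf_{m-1}$ (resp. $\Dsf_{m-1}$), the index set ${}^IW$ of Ekedahl–Oort strata is the totally ordered set of \S\ref{section: bruhat stratification orthogonal case}, and the Bruhat stratification has index set ${}^IW^J = \{1, s_1, w_{0,I,J}\}$. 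The key point is that $i_0 = 1$ here (the smallest step of the Hodge filtration is the line $\fil^1_{\Hdg}$), so $\gr^{\Conj}_{1,x}$ is the conjugate line, and $\clp_x$ counts how many of the nontrivial Hodge steps $\fil^1_{\Hdg,x} \subset \fil^0_{\Hdg,x}$ contain that line, minus one — i.e. $\clp_x \in \{0,1,2\}$ according to whether $\gr^{\Conj}_{1,x}$ is not contained in $\fil^0$, contained in $\fil^0$ but not $\fil^1$, or contained in $\fil^1$.

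Next I would use the fact that $\clp$ is a special case of the relative position of two flags (as noted in \S\ref{sec-intro-Bruhat}): the conjugate line position equals the pullback to $\gozip^{\mu_0}$ of the coarser Bruhat stratification of $\GLnZip^{r_0 \circ \mu_0}$ obtained by replacing the conjugate parabolic $Q$ by the (larger) parabolic of $\GL(n+2)$ stabilizing the conjugate line. For the standard orthogonal $F$-zip this coarser stratification coincides with the genuine Bruhat stratification of $\gozip^{\mu_0}$, because the orthogonal $F$-zip is self-dual, so the conjugate line determines its orthogonal complement (a hyperplane), hence the full relative position of the Hodge and conjugate filtrations. Concretely: the element of ${}^IW^J$ attached to a stratum records exactly the relative position of $\fil^\bullet_{\Hdg}$ and $\fil^{\Conj}_\bullet$, and one reads off from the three double-coset representatives $1, s_1, w_{0,I,J}$ that the conjugate line sits in $\fil^1_{\Hdg}$, in $\fil^0 \setminus \fil^1$, or outside $\fil^0$ respectively. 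This is a matching of the three Bruhat strata with the three possible values of $\clp$.

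To make this rigorous I would pick, for each $w \in {}^IW$, the standard $F$-zip representative and write down the conjugate line and the Hodge line as explicit coordinate subspaces of $k^{n+2}$ in terms of the symplectic/orthogonal basis $e_1, \dots, e_{n+2}$; then compute $\dim(\gr^{\Conj}_{1,x} \cap \fil^j_{\Hdg,x})$ for $j = 0, 1$. Running through the list \eqref{e.o.stratification.odd.orthogonal} (and its $\Dsf_m$ analogue) one sees that the answer depends only on the image of $w$ in ${}^IW^J$ under $W \to W_I\backslash W/W_J$, giving the three cases. Finally, since the Bruhat stratum $\gozip^{\mu_0,\bruh}_{w'}$ for $w' \in {}^IW^J$ is by definition the union of the EO strata $\gozip_w$ with $w$ mapping to $w'$, and since $\clp$ is constant on each such stratum by the computation just performed, the stated formula follows.

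The main obstacle I expect is purely bookkeeping: correctly identifying the standard $F$-zip attached to each $w \in {}^IW$ in the orthogonal coordinates and tracking the conjugate filtration through the Frobenius-linear zipping isomorphism $\varphi_\bullet$, especially keeping the identifications of $W$ with subgroups of $S_{n+2}$ consistent between the odd and even cases. There is no conceptual difficulty — once the self-duality of orthogonal $F$-zips is invoked, the relative position is governed by a single number — but the even case $\Dsf_{m-1}$ requires a little care because of the two ``half'' strata $w_{m-1}, w_{m-1}'$ lying over the same point of the Bruhat stack, and one must check both give $\clp = 1$ (resp. the appropriate value), which follows since they map to the same element $s_1 \in {}^IW^J$.
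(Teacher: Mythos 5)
Your approach matches the paper's proof: both parametrize each stratum by its Moonen--Wedhorn standard $F$-zip, write the Hodge and conjugate filtrations as explicit coordinate subspaces of $k^{n+2}$, and check how deep the conjugate line $k^{\{1\}}$ sits in the Hodge filtration. The extra self-duality framing you invoke from \S\ref{sec-intro-Bruhat} is a nice conceptual gloss but is not needed once the explicit computation is carried out; one small slip is that $i_0$ in the paper's convention is the largest index with $\fil^{i_0}_{\Hdg}=\Vscr$ (so $i_0 = -1$ with $\mu_0$-weights $-1,0,1$, or $i_0=0$ in the normalized indices of the paper's proof), not the index of the smallest nontrivial Hodge step as your parenthetical suggests, but this does not affect the argument.
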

\begin{proof}
The proof follows by translating \cite[Section 2.6]{moonen.wedhorn} into this particular setting, as follows. 

The cocharacter $\mu_0 \in X_*(G_{0})=(1,0,...,0,-1)$. So the type of $\underline{M}_{\mu_0}^{w}(x)$ is 
\[
\tau \colon \mz\to \mz_{\geq 0},\,\,\, \tau(i) = \begin{cases}
    1 & i = 0,2 \\
    n & i = 1 \\
    0 & \text{otherwise}
\end{cases}
\]
Hence, in the notation of \cite[Section 2.6]{moonen.wedhorn} we have that 
\[
i_1 = 0, i_2 = 1, i_3=2.
\]
Thus
\[
n_1 = 1, n_2 = n, n_3 = 1
\]
and 
\[
m_1 = 1, m_2 = m_1 + n_2 = n+1, m_3 = m_2 + n_3 = n+2.
\]
The element $w_{0,I,J}$, which is of minimal length in $w_0 W_I$, is thus given by
\begin{equation*}
    w_{0,I,J}(i) = i + m_j + m_{j-1} - (n+2) , \,\,\, (n+2)-m_j < i \leq (n+2)-m_{j-1},
\end{equation*}
where $m_0\coloneqq 0$. Hence,
\begin{equation*}
    w_{0,I,J}(i) = \begin{cases}
    n+2 & i = 1 \\
    i & i=2,...,n+1 \\
    1 & x = n+2
    \end{cases}
\end{equation*}
For each tuple $(a_1, ..., a_m)$ with $m\leq n+2$, let $k^{\{a_1, ..., a_m\}}$ denote the subspace of $k^{n+2}$ spanned by the $a_1,...,a_m$-basis vectors (in a fixed basis of $k^{n+2}$). For each $w\in {}^{I}W$, following \cite[Section 1.9]{moonen.wedhorn} we identify $\underline{M}_{\mu_0}^{w}(x)$ with the following $F$-zip:
\begin{enumerate}
    \item $M_{\mu_0}^{w}(x) = k^{n+2}$
    \item $\fil^{\bullet}M_{\mu_0}^{w}(x): k^{n+2} \supset k^{\{ w(1), ..., w(m_2) \}} \supset k^{\{w(1)\}} \supset 0$ 
    \item $\fil_{\Conj,\bullet}M_{\mu_0}^{w}(x): 0 \subset k^{\{1\}} \subset k^{\{1,...,n+1\}} \subset k^{n+2}$
    \item $(\varphi_i)_{\mu_0}^w(x)$ is zero for $i\notin \{ i_1, i_2, i_3\}=\{ 0,1,2\}$ and for $i =i_j\in \{ i_1,i_2,i_3\}$ it is the isomorphism
\begin{equation*}
\begin{aligned}
    (\gr^{i}{\fil_{\bullet}M_{\mu_0}^{w}}(x))^{(p)} &= k^{\{ w(m_{3-j}+1), ..., w(m_{3-j+1})\}} \to \\
    & \,\,\,\,\,  \gr_{i}\fil_{\Conj,\bullet}M_{\mu_0}^{w}(x) = k^{\{ n+2-m_{3-j+1}+1, ..., n+2-m_{3-j}\}}
\end{aligned}
\end{equation*}
    induced by the permutation matrix associated to $w_{0,I,J}^{-1}w^{-1}$. 
\end{enumerate}
Hence, for each $w\in {}^{I}W$, to find the conjugate line position of $\underline{M}_{\mu_0}^{w}(x)$ it suffices to determine whether $1$ lies in $\{w(1)\}$ or in $\{w(1), ..., w(n+1)\}$, or in neither of them. If $w=1$ then $1 \in \{ w(1) \}$ so $a(\underline{M}_{\mu_0}^{1}(x))=2$. If $w=w_{0,I,J}$ is the longest element in ${}^{I}W$, then $\{ w(1), ..., w(m_2)\} = \{ n+2,2,...,n+1\}$ so $a(\underline{M}_{\mu_0}^{w_{0,I,J}}(x))=0$. If $w\neq1,w_{0,I,J}$ with $l(w)=d$, then we may write
\[
w = \begin{cases} s_1 \cdots s_d & 1\leq d \leq m \\
    s_1\cdots s_m s_{m-1}\cdots s_{2m-d} & m+1\leq d < 2m-1 
    \end{cases}
\]
(we only give the description in type $B_n$, but the arguments applies verbatim in type $D_n$). 
Hence, there is some $t_1\in \{2,...,n+1\}$ such that $w(t_1)=1$. Since $t_1\neq 1,n+2$, one has $1\notin \{w(1)\}$. But $1\in \{w(1), ..., w(n+1)\}$. Hence $\clp(\underline{M}_{\mu_0}^{w}(x))=1$.
\end{proof}
\begin{theorem}[\ref{th-main}\ref{item-th-main-odd-orthogonal-Bn},\ref{item-th-main-even-orthogonal-Dn}]\label{corollary: ogus principle for Bn}
The triple $(G_0,\mu_0,\std)$ satisfies Ogus' principle.
\end{theorem}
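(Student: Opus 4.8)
The plan is to read this off from the two preceding theorems. Unwinding the definition, Ogus' Principle for $(G_0,\mu_0,\std)$ amounts to three assertions: that $\std$ is of CY-type, that $\ha(G_0,\mu_0,\std)$ is not identically zero, and that $\ord_x\ha(G_0,\mu_0,\std)=\clp_x(G_0,\mu_0,\std)$ for every $x\in\gozip^{\mu_0}(k)$. The CY-type assertion is the remark following \ref{th-main}: the $\mu_0$-weights of $\std$ are $-1,0,1$ and the top weight has multiplicity one. That $\ha(G_0,\mu_0,\std)$ is not identically zero follows from \ref{theorem: order of vanishing of hasse invariant on stack of G-zips for Bn and Dn Shimura datum}, which records that it vanishes to order $0$ along the open Bruhat stratum $\gozip_{w_{0,I,J}}^{\mu_0,\bruh}$.

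For the pointwise equality, note that $\gozip^{\mu_0}$ has exactly three Bruhat strata, indexed by ${}^{I}W^{J}=\{1,\,s_1,\,w_{0,I,J}\}$, and that by \ref{theorem: order of vanishing of hasse invariant on stack of G-zips for Bn and Dn Shimura datum} and \ref{theorem: conjugate line position orthogonal case} both $\ord_x\ha(G_0,\mu_0,\std)$ and $\clp_x(G_0,\mu_0,\std)$ are constant along each of them. Here one uses that the $F$-zip $\underline{M}_{\mu_0}^{w}(x)$ appearing in \ref{theorem: conjugate line position orthogonal case} is precisely the universal $F$-zip $\underline{\Vscr}(G_0,\mu_0,\std)$ evaluated at $x$, so that $\clp(\underline{M}_{\mu_0}^{w}(x))=\clp_x(G_0,\mu_0,\std)$. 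Comparing the two tables of values, both invariants equal $2$ on the closed stratum ($w=1$), $1$ on the intermediate stratum ($w=s_1$), and $0$ on the open stratum ($w=w_{0,I,J}$). Since every $k$-point of $\gozip^{\mu_0}$ lies in exactly one Bruhat stratum, the two invariants agree everywhere, which is Ogus' Principle. The identical bookkeeping in types $\Bsf_{m}$ and $\Dsf_{m}$ yields both cases \ref{item-th-main-odd-orthogonal-Bn} and \ref{item-th-main-even-orthogonal-Dn} of \ref{th-main}.

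All of the substance sits in the two cited theorems, so that is where I expect the difficulty to be, rather than in the corollary itself. On the conjugate-line-position side one must unwind the Moonen--Wedhorn standard $F$-zip model to determine, for each $w\in{}^{I}W$, whether the conjugate line lands in the line, the hyperplane, or neither step of the Hodge filtration. On the order-of-vanishing side one must check \ref{cond-closed} for the relevant elements of ${}^{I}W$, run the Schubert-cell computations of \ref{main.theorem.highest.weight.vector.typeBn} and \ref{main.theorem.highest.weight.vector.typeDn} --- in particular that the recursively defined quantities $E_i$ (resp. $F_i$) all vanish to order $2$ --- and then transport the answer from the Schubert stack back to $\gzip^{\mu}$ using the diagram~\eqref{eq-zipflag-sbt-diagram} and the injectivity statement of \ref{lemma: from sbt to gzips in orthogonal case}. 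Granting those inputs, the corollary is simply the observation that the two tables of values coincide.
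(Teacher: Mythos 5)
Your proposal is correct and matches the paper's own argument: the proof in the text is a one-line "combine \ref{theorem: order of vanishing of hasse invariant on stack of G-zips for Bn and Dn Shimura datum} and \ref{theorem: conjugate line position orthogonal case}," and you have simply unwound this by observing that both invariants are constant on the three Bruhat strata and take the matching values $0,1,2$. Your added remarks — that $\std$ is of CY-type, that the Hasse invariant is not identically zero because it has order $0$ on the open stratum, and that $\underline{M}_{\mu_0}^{w}(x)$ is the universal $F$-zip evaluated at $x$ — are correct unpacking of hypotheses the paper treats as understood, and your closing paragraph accurately locates where the substance lies.
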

\begin{proof}
This follows by combining \ref{theorem: order of vanishing of hasse invariant on stack of G-zips for Bn and Dn Shimura datum} and \ref{theorem: conjugate line position orthogonal case}.
\end{proof}
\begin{corollary}[\ref{th-main}\ref{item-th-main-GL(4)}]\label{corollary: main theorem for GL(4)}
The triple $(\GL(4),\mu,\wedge^2)$ of \ref{th-main}\ref{item-th-main-GL(4)} satisfies Ogus' Principle.
\end{corollary}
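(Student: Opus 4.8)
The plan is to deduce this from the already-proved case~\ref{th-main}\ref{item-th-main-even-orthogonal-Dn} (Theorem~\ref{corollary: ogus principle for Bn}) with $m=3$, using the functoriality of Ogus' Principle (Lemma~\ref{lemma: functoriality for good w}) and the exceptional isogeny underlying the identification of root systems $\Asf_3\cong\Dsf_3$, as anticipated in \S\ref{section: functoriality of ogus principle, introduction}. Concretely, I would use the morphism $\varphi\colon\GL(4)\to\GSO(6)$, $g\mapsto\wedge^2 g$, where $\GSO(6)$ is the similitude group of the split symmetric form on $\wedge^2 k^4$ induced by the wedge pairing $\wedge^2 k^4\times\wedge^2 k^4\to\wedge^4 k^4\cong k$, with similitude character $\det$. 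A dimension count gives $\dim\GL(4)=16=\dim\GSO(6)$, so $\varphi$ is surjective; its kernel $\{\pm I\}$ is central; and $\GL(4)$ and $\GSO(6)$ share the adjoint group $\mathrm{PGL}_4=\mathrm{PSO}_6$. Likewise $\iota\colon\SO(6)\hookrightarrow\GSO(6)$ is a connected normal subgroup with trivial (hence central) kernel and the same adjoint group. Thus both $\varphi$ and $\iota$ have the shape of the morphisms allowed in \S\ref{section: functoriality of ogus principle, introduction} (at the level of derived/spin groups this $\varphi$ is just $\SL_4\cong\mathrm{Spin}_6$).

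Second, I would line up cocharacters and representations. Take the representative $\mu$ with $\mu(t)=\diag(t,t,1,1)$, so $\cent_{\GL(4)}(\mu)=\GL(2)\times\GL(2)$ has type $\Asf_1\times\Asf_1$ as in~\ref{th-main}\ref{item-th-main-GL(4)}. A short $\mu$-weight computation shows $\wedge^2$ splits $\wedge^2 k^4$ into weight spaces of dimensions $1,4,1$ (so $r=\wedge^2\std$ is of CY-type), and that $\varphi\circ\mu$ equals $\iota\circ\mu_0$ up to a central cocharacter of $\GSO(6)$, where $\mu_0=(1,0,0)$ is the cocharacter of \S\ref{section: orthogonal an gspin varieties} with $\type(\cent_{\SO(6)}\mu_0)=\Dsf_2$; twisting a cocharacter by a central one changes neither the stack $\gzip$ nor the Hasse invariant nor the conjugate line position, so this twist is harmless. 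On representations, $\std_{\GSO(6)}\circ\varphi=\wedge^2\std_{\GL(4)}$ and $\std_{\GSO(6)}\circ\iota=\std_{\SO(6)}$. Thus $(\GL(4),\mu,\wedge^2\std)$, $(\GSO(6),\varphi\circ\mu,\std)$ and $(\SO(6),\mu_0,\std)$ are linked by $\varphi$ and $\iota$ in the precise sense required by Lemma~\ref{lemma: functoriality for good w}.

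Third, I would verify the flag-relatedness hypothesis of Lemma~\ref{lemma: functoriality for good w}: every $k$-point of the relevant $\gzip$ stack must be flag-related to a point of a Schubert stratum $\sbt_w$ with $w$ of the ``simple enough'' form treated in \S\ref{section: product of distinct simple root reflections}--\S\ref{section: computations for type Dn}. By \S\ref{section:stratifications}, every $k$-point lies in $[E\backslash G_w]=\piflagz(\mathcal{X}_w)$ for a unique $w\in\iw$, hence is flag-related to a point of $\sbt_w=\piflags(\mathcal{X}_w)$; so it suffices that each $w\in\iw$ have the required form. By the even case of \S\ref{section: bruhat stratification orthogonal case} with $m=3$, $\iw=\{\id,\,s_1,\,s_1s_2,\,s_1s_2s_3,\,s_1s_3,\,s_1s_2s_3s_1\}$: the first five are products of distinct simple reflections (Theorem~\ref{distinct.simple}), and, since the two outer simple roots $\alpha_2,\alpha_3$ of $\Dsf_3$ are orthogonal, $s_1s_2s_3s_1=s_{\alpha_1}s_{\alpha_2}s_{\alpha_3}s_{\alpha_1}$ is of the form covered by Theorem~\ref{main.theorem.highest.weight.vector.typeDn} (no ``$\eta_j$''s, $n=1$) --- indeed this is exactly the input already used to establish Theorem~\ref{corollary: ogus principle for Bn} for $m=3$. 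With the hypothesis in hand, two applications of Lemma~\ref{lemma: functoriality for good w} give
$$
\text{Ogus for }(\SO(6),\mu_0,\std)\ \Longleftrightarrow\ \text{Ogus for }(\GSO(6),\varphi\circ\mu,\std)\ \Longleftrightarrow\ \text{Ogus for }(\GL(4),\mu,\wedge^2\std),
$$
and the left-hand side is Theorem~\ref{corollary: ogus principle for Bn} with $m=3$, finishing the proof.

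The main obstacle I expect is bookkeeping rather than conceptual: checking carefully that $\varphi$ genuinely satisfies the standing hypotheses of \S\ref{section: functoriality of ogus principle, introduction} (central kernel, full connected normal image, matching adjoint group) and that $\varphi\circ\mu$ transports to $\mu_0$ up to the harmless central twist above. One point of care is characteristic $2$: there $\ker\varphi=\{\pm I\}$ is non-smooth, so one cannot invoke the ``$\varphi$ smooth'' shortcut of \S\ref{section: functoriality of ogus principle, introduction} and must apply Lemma~\ref{lemma: functoriality for good w} directly, which is what the argument above does. As an alternative that avoids functoriality altogether, one could compute $\ord_x\ha(\GL(4),\mu,\wedge^2)$ directly via the Plücker embedding of $\GL(4)/B$ (the method indicated for~\ref{th-main}\ref{item-GL}) and $\clp_x$ via the standard $F$-zips of Moonen--Wedhorn, then check stratum-by-stratum that the two agree; but the route above is shorter and more transparent.
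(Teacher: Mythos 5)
Your proof is correct and takes the same route as the paper, which simply says ``Since $\GL(4)$ and $\SO(6)$ have the same adjoint group, the result follows from Lemma~\ref{lemma: functoriality for good w}'' and leaves all the bookkeeping implicit. What you add is genuinely useful: the paper's one-liner glosses over the fact that there is no direct morphism $\GL(4)\to\SO(6)$, so to invoke the functoriality lemma (which requires a homomorphism $\varphi\colon G\to G'$ with central kernel and normal image) one must interpose an intermediate group, and your chain $\GL(4)\xrightarrow{\wedge^2}\GSO(6)\hookleftarrow\SO(6)$ is a clean way to do it (passing through $\SL(4)\cong\mathrm{Spin}(6)$ would also work). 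You also carry out the verifications that the paper omits: that $\ker(\wedge^2)=\{\pm I\}$ is central, that $\varphi\circ\mu$ agrees with $\mu_0$ up to a harmless central twist, that $\std_{\GSO(6)}\circ\varphi=\wedge^2\std_{\GL(4)}$, and — importantly — that the flag-relatedness hypothesis of Lemma~\ref{lemma: functoriality for good w} is met, by enumerating $\iw=\{\mathrm{id},s_1,s_1s_2,s_1s_2s_3,s_1s_3,s_1s_2s_3s_1\}$ and checking that each element is covered by either Theorem~\ref{distinct.simple} or Theorem~\ref{main.theorem.highest.weight.vector.typeDn} (the latter for $s_1s_2s_3s_1$ with $n=1$, $l=0$, using that the two branch nodes of $\Dsf_3$ are orthogonal). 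Your remark about characteristic~$2$, where $\ker\varphi$ is non-smooth so one must apply the lemma directly rather than the smooth-$\varphi$ shortcut of \S\ref{section: functoriality of ogus principle, introduction}, is a good catch that the paper does not mention.
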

\begin{proof}
Since $\GL(4)$ and $\SO(6)$ have the same adjoint group, the result follows from \ref{lemma: functoriality for good w}.
\end{proof}
\subsection{Application to orthogonal and unitary Shimura varieties}
Let $\ha_{S_{K_0}}\coloneqq \zeta_{K_0}^*\ha(G_0,\mu_0,\std)$. Let $\ha_{S_{K}}\coloneqq \zeta_{K}^*\ha(G,\mu,\spin)$.  
\begin{corollary} 
Ogus' Principle holds for $S_{K_0}$: for every $x\in S_{K_0}(k)$,
\[
\ord_x(\ha_{S_{K_0}})=\clp_x(G_0,\mu_0,\std).
\]
Furthermore, 
the vanishing order of the Hasse invariant $\ha_{S_K}$ on $S_K$ (resp.  $\ha_{S_{K_0}}$ on $S_{K_0}$) is given for all $x\in S_K(k)$ (resp. $x\in S_{K_0}(k)$) by
\[
\ord_x(\ha_{S_K}) = \begin{cases} 0 & x \in S_{K,w_{0,I,J}}^{\bruh} \\
                                2^{m-2} & x \in S_{K,s_1}^{\bruh} \\
                                2^{m-1} & x\in S_{K,1}^{\bruh}
                                \end{cases} ,
                                \,\,\,\,\,
\ord_x(\ha_{S_{K_0}}) = \begin{cases} 0 & x \in S_{K_0,w_{0,I,J}}^{\bruh} \\
                                1 & x \in S_{K_0,s_1}^{\bruh} \\
                                2 & x\in S_{K_0,1}^{\bruh}
                                \end{cases}.
\]
\end{corollary}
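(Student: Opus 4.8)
The plan is to obtain every assertion by pulling back the two group-theoretic results \ref{theorem: order of vanishing of hasse invariant on stack of G-zips for Bn and Dn Shimura datum} and \ref{corollary: ogus principle for Bn} along the smooth, surjective Zip period maps $\zeta_K\colon S_K\to\gzip^{\mu}$ and $\zeta_{K_0}\colon S_{K_0}\to\gozip^{\mu_0}$ of \S\ref{section: orthogonal an gspin varieties}. First I would recall the definitions in play: by construction (just above the statement) $\ha_{S_K}=\zeta_K^*\ha(G,\mu,\spin)$ and $\ha_{S_{K_0}}=\zeta_{K_0}^*\ha(G_0,\mu_0,\std)$, and the Bruhat strata $S_{K,w}^{\bruh}$, $S_{K_0,w}^{\bruh}$ are by definition the preimages under $\pizipb\circ\zeta_K$, resp.\ $\pizipb\circ\zeta_{K_0}$, of the corresponding Bruhat strata of $\gzip^{\mu}$, resp.\ $\gozip^{\mu_0}$. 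Thus there is nothing to prove about the meaning of the two sides; the content is entirely a transport of the stack-level computations through $\zeta_K$ and $\zeta_{K_0}$.

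The two facts from the general set-up that make this transport work are: (i) for a smooth morphism $\zeta$ and a global section $f$ of a line bundle one has $\ord_s(\zeta^*f)=\ord_{\zeta(s)}(f)$ (\S\ref{section.order.of.vanishing}); and (ii) the conjugate line position is invariant under pullback along a Zip period map (\S\ref{sec-general-ogus}). For the explicit tables I would argue: given $x\in S_K(k)$, its image $\zeta_K(x)$ lies in some $\gzip_{w}^{\mu,\bruh}$ with $w\in\{w_{0,I,J},s_1,1\}$, i.e.\ $x\in S_{K,w}^{\bruh}$; then (i) together with \ref{theorem: order of vanishing of hasse invariant on stack of G-zips for Bn and Dn Shimura datum} gives $\ord_x(\ha_{S_K})=\ord_{\zeta_K(x)}(\ha(G,\mu,\spin))$, which is exactly the value listed there, and the same argument with $(G_0,\mu_0,\std)$ in place of $(G,\mu,\spin)$ produces the $S_{K_0}$ column. (Emptiness of a stratum makes the corresponding line of the table vacuous, so surjectivity of $\zeta_K$, $\zeta_{K_0}$ is not logically needed here, though it does ensure each case genuinely occurs.) For Ogus' Principle on $S_{K_0}$: for $x\in S_{K_0}(k)$, (i) gives $\ord_x(\ha_{S_{K_0}})=\ord_{\zeta_{K_0}(x)}(\ha(G_0,\mu_0,\std))$ and (ii) gives $\clp_x(G_0,\mu_0,\std)=\clp_{\zeta_{K_0}(x)}(G_0,\mu_0,\std)$; since the triple $(G_0,\mu_0,\std)$ satisfies Ogus' Principle by \ref{corollary: ogus principle for Bn} (that is, \ref{th-main}\ref{item-th-main-odd-orthogonal-Bn} and \ref{item-th-main-even-orthogonal-Dn}), the two numbers agree at $\zeta_{K_0}(x)$, hence at $x$; this is exactly the implication ``Ogus' Principle for $(G_0,\mu_0,\std)$ implies Ogus' Principle for $(S_{K_0},\zeta_{K_0},\std)$'' of \S\ref{sec-general-ogus}.

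I do not expect a genuine obstacle: the entire difficulty lies in the earlier stack-level theorems, and the only external input is the smoothness (and surjectivity) of the Zip period maps $\zeta_K$ and $\zeta_{K_0}$, which is imported from Zhang and Shen--Zhang via \S\ref{sec-intro-Hodge-type}--\S\ref{section: orthogonal an gspin varieties}. The one point worth flagging, and the reason Ogus' Principle is asserted only for $S_{K_0}$ and not for $S_K$, is that $\ha_{S_K}$ here is the \emph{spin} Hasse invariant $\zeta_K^*\ha(G,\mu,\spin)$ rather than the classical Hasse invariant of the associated Kuga--Satake abelian scheme; only its vanishing order is computed (via the explicit table), whereas comparison with the conjugate line position of the spin $F$-zip is not claimed.
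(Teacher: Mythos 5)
Your proof is correct and is exactly the paper's argument: the paper's proof is the one-liner that smoothness of $\zeta_K$, $\zeta_{K_0}$ reduces everything to~\ref{theorem: order of vanishing of hasse invariant on stack of G-zips for Bn and Dn Shimura datum} and~\ref{corollary: ogus principle for Bn}, which is precisely the transport you spell out. Your closing remark about why Ogus' Principle is only asserted for $S_{K_0}$ and not for the spin Hasse invariant on $S_K$ is a correct and sensible clarification, though it is left implicit in the paper.
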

\begin{proof}
Since the map $\zeta_K$ (resp. $\zeta_{K_0}$) is smooth, this follows from \ref{theorem: order of vanishing of hasse invariant on stack of G-zips for Bn and Dn Shimura datum} and \ref{corollary: ogus principle for Bn}.
\end{proof}
\begin{corollary}
Let $S_K$ be the $k$-special fibre of a unitary Shimura variety of signature $(2,2)$, as in \ref{cor-intro-Hodge-type}\ref{item: cor hodge type unitary}. Then $S_K$ satisfies Ogus' Principle. The Hasse invariant exactly assumes the values $0,1,2$ and the vanishing order at $x,x'\in S_K(k)$ are equal if and only if $x$ and $x'$ lie in the same Bruhat stratum.
\end{corollary}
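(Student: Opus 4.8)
The argument runs parallel to the preceding corollary. First, by~\S\ref{sec-intro-Hodge-type}, the signature $(2,2)$ unitary similitude group $\GG$ gives a reductive $\FF_p$-group $G$ with $G_k\cong\GL(4)_k$, a cocharacter $\mu$ whose centralizer $L=\cent_{G_k}(\mu)$ has $\type(L)=\Asf_1\times\Asf_1$, and a smooth surjective Zip period map $\zeta_K\colon S_K\to\gzip^{\mu}$ as in~\eqref{eq-zeta-shimura}; write $\ha_{S_K}:=\zeta_K^{*}\ha(G,\mu,r)$ for the Hasse invariant of $S_K$, where $r$ is the representation attached to the CY $F$-zip $H^g_{\dR}(Y/S_K)$ of~\S\ref{sec-intro-app-Shimura}. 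Since $\zeta_K$ is smooth, $\ord_x(\zeta_K^{*}f)=\ord_{\zeta_K(x)}(f)$ for every section $f$ (\S\ref{section.order.of.vanishing}) and the conjugate line position is invariant under pullback along $\zeta_K$ (\S\ref{sec-general-ogus}); since $\zeta_K$ is surjective, each zip stratum of $\gzip^{\mu}$, hence each of the three Bruhat strata $\gzip^{\mu,\bruh}_{w_{0,I,J}}$, $\gzip^{\mu,\bruh}_{s_1}$, $\gzip^{\mu,\bruh}_{1}$, meets its image. Thus it suffices to prove Ogus' Principle for $(G,\mu,r)$ and to compute the common value of $\ord_x\ha(G,\mu,r)$ and $\clp_x(G,\mu,r)$ on each of these three strata.

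For this I would reduce $(G,\mu,r)$ to case~\ref{th-main}\ref{item-th-main-GL(4)}, the triple $(\GL(4),\mu,\wedge^{2}\std)$, and then --- via the coincidental isomorphism $\Asf_3\cong\Dsf_3$ and~\ref{lemma: functoriality for good w} --- to case~\ref{th-main}\ref{item-th-main-even-orthogonal-Dn} with $m=3$, the triple $(\SO(6),\mu_0,\std)$. The hypothesis of~\ref{lemma: functoriality for good w} --- that every $k$-point of $\gzip^{\mu}$ is flag-related to a $k$-point of some $\sbt_w$ with $w$ of the form treated in \S\ref{section: product of distinct simple root reflections}--\ref{section: computations for type Dn} --- holds in type $\Dsf_3$ by the explicit description of $\iw$ recalled in~\S\ref{section: bruhat stratification orthogonal case}. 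One must also check that replacing the unitary $\FF_p$-form $G$ --- which, when $p$ is inert in $K$, is the quasi-split unitary group rather than $\GL(4)$ --- by the corresponding $\FF_p$-form of $\SO(6)$ leaves the computation unchanged; this holds because $\iw$, the frame element $z$ and the vanishing-order formulas of~\S\ref{section:method of jantzen/demazure} depend only on $(G_k,\mu)$ and on the Frobenius action on $X^{*}(T)$, which agree for the two forms. Together with~\ref{corollary: ogus principle for Bn}, this establishes Ogus' Principle for $(G,\mu,r)$, hence for $(S_K,\zeta_K,r)$.

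It remains to read off the values. Specializing~\ref{theorem: order of vanishing of hasse invariant on stack of G-zips for Bn and Dn Shimura datum} and~\ref{theorem: conjugate line position orthogonal case} to $n=4$, so that $m=3$ and $m'=m-1=2$, the common value of $\ord_x\ha(G,\mu,r)$ and $\clp_x(G,\mu,r)$ is $0$ on $\gzip^{\mu,\bruh}_{w_{0,I,J}}$, $2^{m'-2}=1$ on $\gzip^{\mu,\bruh}_{s_1}$, and $2^{m'-1}=2$ on $\gzip^{\mu,\bruh}_{1}$. Pulling back along $\zeta_K$ and using that the Bruhat stratification of $S_K$ is by definition the $\zeta_K$-pullback of that of $\gzip^{\mu}$, we conclude that $\ord_x(\ha_{S_K})=\clp_x(G,\mu,r)\in\{0,1,2\}$ for all $x\in S_K(k)$, that all three values occur, and that $\ord_x(\ha_{S_K})=\ord_{x'}(\ha_{S_K})$ for $x,x'\in S_K(k)$ precisely when $x$ and $x'$ lie in the same Bruhat stratum $S_{K,w}^{\bruh}$.

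The main obstacle is the reduction in the second paragraph: one has to pin down exactly which representation $r$ of $G$ controls the relevant Hasse invariant and conjugate line position, and to check that these two invariants agree, stratum by stratum, with those of $(\SO(6),\mu_0,\std)$, even though the symplectic embedding a priori furnishes $\wedge^g$ of a representation larger than the six-dimensional $\wedge^{2}\std$. This requires bookkeeping the decomposition of $H^1_{\dR}(Y/S_K)$ under the action of the imaginary quadratic field $K$, together with the self-duality of the resulting rank-$6$ orthogonal $F$-zip and the comparison of the unitary and orthogonal $\FF_p$-forms of $G$ at inert $p$; once this is carried out, everything else follows formally from results already proved.
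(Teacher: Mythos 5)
Your overall strategy is the same as the paper's, which proves this corollary in one line by citing the $(\GL(4),\mu,\wedge^{2}\std)$ case and leans, exactly as you do, on the coincidental isomorphism $\Asf_3\cong\Dsf_3$, on \ref{lemma: functoriality for good w}, and on the smoothness/surjectivity of the Zip period map. So the route is right. Two remarks.

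First, a small but real miscitation in the last step. You read the values $0,\ 2^{m'-2}=1,\ 2^{m'-1}=2$ off the $\gzip^{\mu}$ column of \ref{theorem: order of vanishing of hasse invariant on stack of G-zips for Bn and Dn Shimura datum}, i.e. the column for $\ha(G,\mu,\spin)$ on the $\GSpin$ stack. But the correspondence that \ref{corollary: main theorem for GL(4)} actually uses is $(\GL(4),\mu,\wedge^{2}\std)\leftrightarrow(\SO(6),\mu_0,\std)$: under the isogeny $\SL(4)\cong\Spin(6)\twoheadrightarrow\SO(6)$, the standard representation of $\SO(6)$ pulls back to $\wedge^{2}\std$, whereas the (half-)spin representation pulls back to $\std$ itself, which is not CY of type $\mu$ here. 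Hence the relevant column is the one for $\ha(G_0,\mu_0,\std)$ on $\gozip^{\mu_0}$, whose values are $0,1,2$ outright; likewise \ref{theorem: conjugate line position orthogonal case} gives $\clp=0,1,2$, not $0,2^{m'-2},2^{m'-1}$. Because $m'=2$ the two expressions happen to coincide numerically, so your answer is right, but the formula you invoke is the wrong one and would give the wrong answer for any $m'\neq 2$.

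Second, your final paragraph correctly flags that the identification of the Hasse invariant $\ha(H^{g}_{\dR}(Y/S_K))$ from the symplectic embedding with $\zeta_K^*\ha(\GL(4),\mu,\wedge^2\std)$ is not a formality (the two representations differ) and that the inert case of $p$ requires comparing the $\FF_p$-forms. The paper's one-line proof does not address these points either, so your proposal is in fact slightly more scrupulous than the source; just be aware you have not actually closed that step, only named it.
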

\begin{proof}
This follows from \ref{corollary: main theorem for GL(4)}.
\end{proof}
%
%
%
%
\subsection{Ogus' Principle for K3 surfaces via Zip period maps}
\label{sec-K3}
Let $M^{\circ}_{2d, K}$ denote the moduli space of $K3$-surfaces equipped with a $K_0^p$-level structure, and a polarization of degree $2d$ (see \cite[Section 3]{Madapusi-Tate-K3}). 
By
\cite[Corollary 5.15]{Madapusi-Tate-K3}, the Kuga-Satake construction induces a (non-canonical) open immersion 
\begin{equation}
\iota_{K_0}^{KS}:M^{\circ}_{2d, K} \to S_{K_0}.    
\end{equation}
The composition $\zeta_{K3}:M^{\circ}_{2d,K} \to \gozip^{\mu_0}$ is a smooth and surjective morphism, 
and the following diagram commutes 
\begin{equation*}
    \begin{tikzcd}
         M_{2d,K_0}^{\circ} \arrow[r, "\iota_{K_0}^{KS}"] \arrow[rd, "\zeta_{K3}"'] & S_{K_0} \arrow[d, "\zeta_{K_0}"] \\
         & \gozip^{\mu_0} 
    \end{tikzcd}
\end{equation*}
%
%
Let $\ha_{M_{2d,K}^{\circ}}:=\zeta_{K3}^*\ha(G_0,\mu_0,\std)$ be the Hasse invariant on $M_{2d,K}^{\circ}$. 
\begin{corollary}\label{corollary: hasse vanishing for k3}
Ogus Principle holds for $M_{2d,K}^{\circ}$. For all $x \in M_{2d,K}^{\circ}(k)$, 
\begin{equation*}
    \ord_x(\ha_{M_{2d,K}^{\circ}}) = \clp_x(H_{\dr}^2(X/M_{2d,K}^{\circ}))= \begin{cases} 0 & x \in (M_{2d,K}^{\circ})_{w_{0,I,J}}^{\bruh} \\
                                1 & x \in (M_{2d,K}^{\circ})_{s_1}^{\bruh} \\
                                2 & x\in (M_{2d,K}^{\circ})_{1}^{\bruh}
                                \end{cases}.
\end{equation*}
\end{corollary}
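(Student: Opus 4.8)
The plan is to deduce the entire statement from the already-proven case of the triple $(G_0,\mu_0,\std)$ by transporting it along the smooth, surjective Zip period map $\zeta_{K3}\colon M_{2d,K}^{\circ}\to\gozip^{\mu_0}$, which factors the open immersion $\iota_{K_0}^{KS}$ through $\zeta_{K_0}$. By construction $\ha_{M_{2d,K}^{\circ}}=\zeta_{K3}^*\ha(G_0,\mu_0,\std)$, so since $\zeta_{K3}$ is smooth, the discussion in \S\ref{section.order.of.vanishing} gives $\ord_x(\ha_{M_{2d,K}^{\circ}})=\ord_{\zeta_{K3}(x)}(\ha(G_0,\mu_0,\std))$ for every $x\in M_{2d,K}^{\circ}(k)$. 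Theorem~\ref{theorem: order of vanishing of hasse invariant on stack of G-zips for Bn and Dn Shimura datum} then supplies the values $0$, $1$, $2$, indexed by the Bruhat stratum of $\zeta_{K3}(x)$; and since the Bruhat strata $(M_{2d,K}^{\circ})_w^{\bruh}$ are by definition the fibres of $\pi_{\mathtt{Zip},\mathcal{B}}\circ\zeta_{K3}$ over the corresponding points of $\mathcal{B}_{I,J,\Delta}$, this already gives the right-hand case distinction for $\ord_x$.

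The step that I expect to require the most care is identifying the CY-$F$-zip $H^2_{\dR}(X/M_{2d,K}^{\circ})$ — or, what is the same for both invariants at issue, its primitive part — with the pullback along $\zeta_{K3}$ of the universal standard-representation $F$-zip $\underline{\Vscr}(G_0,\mu_0,\std)$ over $\gozip^{\mu_0}$. This is exactly what Madapusi-Pera's extension of the Kuga-Satake construction to mixed characteristic supplies \cite{Madapusi-Tate-K3}: the de Rham realization of the Kuga-Satake abelian motive over $S_{K_0}$ is the standard $F$-zip of $(G_0,\mu_0)$, and restricting along $\iota_{K_0}^{KS}$ identifies it with (the primitive part of) the de Rham cohomology of the universal K3 surface. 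I would check that the polarization class contributes only a trivial rank-one summand in the middle (weight-$0$) step of the Hodge filtration, so that it changes neither the conjugate line nor the Hasse invariant composition~\eqref{eq-def-hasse}; the bookkeeping here is governed by the $\mu_0$-weights $-1,0,1$ of $\std$, the two extreme weights having multiplicity one. Granting this, the conjugate line position is invariant under pullback, so $\clp_x\bigl(H^2_{\dR}(X/M_{2d,K}^{\circ})\bigr)=\clp_{\zeta_{K3}(x)}(G_0,\mu_0,\std)$, which by Theorem~\ref{theorem: conjugate line position orthogonal case} is $0$, $1$, or $2$ according to the Bruhat stratum of $\zeta_{K3}(x)$.

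Finally I would combine the two computations — or, more economically, invoke Corollary~\ref{corollary: ogus principle for Bn} directly at the $k$-point $\zeta_{K3}(x)\in\gozip^{\mu_0}(k)$ — to conclude $\ord_x(\ha_{M_{2d,K}^{\circ}})=\clp_x\bigl(H^2_{\dR}(X/M_{2d,K}^{\circ})\bigr)$ together with the stated values. The hard part is thus not the group theory, which is done in \S\ref{sec-orthogonal-case}, but pinning down this last identification cleanly: one needs the full $F$-zip structure (Hodge filtration, conjugate filtration, and the Frobenius-linear zip of the graded pieces) on the primitive $H^2_{\dR}$ of the universal K3 to coincide, under the Kuga-Satake morphism, with the pullback of the standard $F$-zip, and one needs the passage between $H^2_{\dR}$ and its primitive part to be harmless for both invariants. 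Both are available from \cite{Madapusi-Tate-K3} and from Ogus' original treatment; spelling out the weight/type dictionary is the remaining routine task.
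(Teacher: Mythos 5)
Your proposal is correct and follows essentially the same route as the paper: the corollary is deduced by transporting Theorem~\ref{theorem: order of vanishing of hasse invariant on stack of G-zips for Bn and Dn Shimura datum} and Corollary~\ref{corollary: ogus principle for Bn} along the smooth morphism $\zeta_{K3}=\zeta_{K_0}\circ\iota_{K_0}^{KS}$, using that both $\ord$ and $\clp$ are invariant under smooth Zip period maps. The paper's own proof is in fact just this one sentence. Where you go further is in explicitly flagging the point that the paper leaves tacit: one must identify the $F$-zip $H^2_{\dR}(X/M^\circ_{2d,K})$ (or rather its primitive part) with $\zeta_{K3}^*\underline{\Vscr}(G_0,\mu_0,\std)$ via Madapusi-Pera's integral Kuga--Satake construction, and one must check that discarding the polarization class — a rank-one piece of $\mu_0$-weight $0$ sitting inside both $\fil^1_{\Hdg}$ and $\fil^{\Conj}_1$ — changes neither $\ord$ of the Hasse invariant~\eqref{eq-def-hasse} nor $\clp$. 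That bookkeeping is correct and is a genuine gap the paper does not spell out, so your more explicit version is if anything an improvement, not a departure.
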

\begin{proof}
Since $\zeta_{K3} = \zeta_{K_0}\circ \iota_{K_0}^{KS}$ this follows from \ref{corollary: ogus principle for Bn} and \ref{theorem: order of vanishing of hasse invariant on stack of G-zips for Bn and Dn Shimura datum}.
\end{proof}
%
%
%
%
%
%
%
%
%
%
%
%
%
%
%
%
%
%
%
%
%
%
%
%
%
\section{Ogus Principle in type \texorpdfstring{$\Asf_n$}{A}}\label{unitary.case}
\subsection{Group-theoretic notation}\label{section: group notation unitary case}
Let $\mathbf{G}$ be a connected, reductive $\mq$-form of $\GL(n)$ such that $\mathbf{G}_\mr \cong \U(a,a')$ for some signature $(a,a')$ with $a+a'\eqqcolon n$. Assume that $\mathbf{G}$ is unramified at $p$ and that the special fiber $G$ is split over $\mf_p$. We have that $G\cong \GL(n)$. Let $T\subset G$ denote the diagonal matrices and $B\supset T$ the lower triangular matrices. Identify $X^{*}(T)$ with $\mz^n$ in the usual way; $e_i : \diag(t_1,...,t_n)\mapsto t_i$. Let $\mu \colon \mg_m\to G_{k}$ be the cocharacter given in coordinates by $(1,...,1,0,...,0)$, where there are $a$ number of $1$'s. Letting $\alpha_i = e_i - e_{i+1}$, we have that $\Delta=\{ \alpha_1, ..., \alpha_{n-1}\}$. The parabolics $P$ and $Q$ determined by $\mu$ are of type $I = \Delta \setminus \{ \alpha_{a}\}$ respectively $J=-w_0I$. For each $i$ let $s_i$ denote the simple reflection corresponding to $s_{\alpha_i}$. Let $\eta \in X^*(T)$ be the character given by $(-1, ..., -1,1, ..., 1)$, with $a$ number of $-1$'s.
\begin{remark}
If instead $\mathbf{G}_\mr\cong \GU(a,a')$, then $G\cong \GL(n)\times \mg_m$. The arguments in \S\ref{unitary.vanishing} applies (cf. \ref{section: functoriality of ogus principle, introduction}). We work with $\GL(n)$ for simplicity.
\end{remark}
\begin{remark}
The fundamental weights are only well-defined up to scaling by characters of the form $(a, ..., a)$ for some $a\in \ZZ$. Since $GL_{n,k}/B=SL_{n,k}/(B\cap SL_{n,k})$ and $(a,...,a)=0 \in X^{*}(T\cap SL_{n,k})$ we see that $H^{0}(\lambda)= H^{0}(\lambda + (a,...,a))$ for all characters $\lambda$. Thus, for any character $\nu_i$ such that $\langle \nu_i , \alpha_{j}^{\vee} \rangle =\delta_{i,j}$ for all simple roots $\alpha_j$ we refer to it as the fundamental weight corresponding to $\alpha_i$. Conversely, we use $\nu_i$ as notation for any fundamental weight corresponding to $\alpha_i$. Thus,  $\eta = -2\nu_{a}$.
\end{remark}
%
%
%
%
%
%
%
%
\subsection{Order of vanishing of the Hasse invariant}\label{unitary.vanishing}
For simplicity we omit the subscript and let $G=\GL(n)_k$ in this section. Let the signature be $(a,a')=(n-1,1)$. It is possible to use \S\ref{general.strategy} to obtain the vanishing order, but we present here an approach via the Pl\"ucker embedding which could be of independent interest. 
Let us quickly recall this embedding and introduce necessary notation.
\subsubsection{The Pl\"ucker embedding}
Let $F$ denote the flag
\begin{equation*}
    F : 0 \subset (e_n) \subset (e_{n-1},e_n) \subset \cdots \subset (e_2, ..., e_n) \subset k^n,
\end{equation*}
where $(e_1, ..., e_n)$ is the standard basis for $k^n$. Let $\text{Flag}_n$ denote the set of full flags in $k^n$. Then $\text{Stab}_G(F) = B$. Since $G$ acts transitively 
on $\text{Flag}_n$, the map $gB\mapsto gF$ induces an isomorphism $G/B\cong \text{Flag}_n$. 

Given an integer $m$ with $1\leq m \leq n-1$, we use the lexicographic order on the basis $\{ e_{i_1}\wedge \cdots \wedge e_{i_m}\}_{i_1<\cdots <i_m}$ of $\wedge^m k^n$. 

If $V\subset k^n$ is an $m$-dimensional subspace, let $[\wedge^m V]\in \PP(\wedge^m k^n)\cong \PP^{\binom{n}{m}-1}$ denote the point whose $i^{\text{th}}$ component is the coefficient of the $i^{\text{th}}$ element in the basis of $\wedge^m k^n$ as described above (well-defined up to a scalar). The Pl\"ucker embedding is the morphism
\begin{equation*}
\begin{aligned}
    \iota : G/B\cong \text{Flag}_n &\hookrightarrow \prod_{i=1}^{n-1} \PP(\wedge^{i}k^n) \cong \prod_{i=1}^{n-1}\PP^{\binom{n}{i}-1} \\
    \Big( 0\subsetneq V_1 \subsetneq \cdots \subsetneq V_{n-1}\subsetneq k^n \Big) &\mapsto \Big( [V_1], [\wedge^ 2V_2], ..., [\wedge^{n-1}V_{n-1}]\Big).
\end{aligned}
\end{equation*}
Let $[x_{0,i} : x_{1,i} : ... : x_{\binom{n}{i}, i} ]$ denote the coordinates of $\PP(\wedge^{i}k^n)$. 
For each $\sigma\in W=S_n$, $\sigma F$ is the flag
\begin{equation*}
    \sigma F : 0\subset (e_{\sigma(n)}) \subset ... \subset (e_{\sigma(n)}, e_{\sigma(n-1)}, ..., e_{\sigma(2)})\subset k^n.
\end{equation*}
For any $b\in B$, $b\sigma F \in B \sigma B/B \subset \prod_{i=1}^{n-1}\PP^{\binom{n}{i}-1}$ is the point whose whose $x_{j,i}$-coordinate is the coordinate of the $j^{\text{th}}$ basis vector when we write out 
\[
\wedge^{i}\Span\{be_{\sigma(n)}, be_{\sigma(n-1)}, ..., be_{\sigma(n-i+1)}\}
\]
as a sum of the basis of $\wedge^{i}k^n$ as described above. 

Let $U$ denote the unipotent radical of $B$. Analogously to the Pl\"ucker embedding, there is an open embedding $\iota_U \colon G/U \hookrightarrow \prod_{i=1}^{n-1} \wedge^{i} k^n \setminus \{ 0 \}$.
\begin{lemma}\label{restricting.coordinate.function}
Suppose that $f$ is a coordinate function on $\prod_{i=1}^{n-1} \PP(\wedge^i k^n)$. Then, for any point $x\in G(k)/B(k)$, we have that $\ord_x(\iota^* f) = \ord_x(f)$.
\end{lemma}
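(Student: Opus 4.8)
The plan is to deduce the equality from the general fact recorded at the end of \S\ref{section.order.of.vanishing}, that a smooth morphism preserves orders of vanishing: if $\zeta\colon S\to Y$ is smooth then $\ord_s(\zeta^*g)=\ord_{\zeta(s)}(g)$. The point to get around is that $\iota$ itself is a closed immersion, so this cannot be applied to $\iota$ directly; instead one applies it to the composite of $\iota$ with the projection onto the single relevant Pl\"ucker factor, which is smooth.

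For $1\le i\le n-1$ let $\pi_i\colon \mathrm{Flag}_n\to\mathrm{Gr}(i,n)$ be the morphism remembering only the $i$-dimensional step of a flag, so that on the $i$-th factor $\iota$ factors as $\mathrm{Flag}_n\xrightarrow{\pi_i}\mathrm{Gr}(i,n)\hookrightarrow\PP(\wedge^{i}k^{n})$, the second arrow being the Pl\"ucker embedding of the Grassmannian. The first step is to record that $\pi_i$ is smooth and surjective: its fibre over $W\in\mathrm{Gr}(i,n)$ is the product of the complete-flag varieties of $W$ and of $k^{n}/W$, and Zariski-locally on the base $\pi_i$ is the associated flag bundle built from the tautological sub- and quotient bundles, hence smooth over the smooth base $\mathrm{Gr}(i,n)$.

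The coordinate functions that enter the computation of \S\ref{unitary.vanishing} are homogeneous coordinates on the factor $i=n-1$ (the Hodge character being $\eta=-2\nu_{n-1}$, cf. \S\ref{section: group notation unitary case}), equivalently on $i=1$ by duality. For these indices $\binom{n}{i}=n$, so $\PP(\wedge^{i}k^{n})\cong\PP^{n-1}$ and the Pl\"ucker embedding $\mathrm{Gr}(i,n)\hookrightarrow\PP(\wedge^{i}k^{n})$ is an isomorphism of $(n-1)$-dimensional projective spaces; hence $\mathrm{pr}_i\circ\iota$ agrees, up to this isomorphism, with the smooth morphism $\pi_i$ (where $\mathrm{pr}_i$ is the projection to the $i$-th factor). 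For a coordinate function $f$ on this factor one has $\iota^*f=(\mathrm{pr}_i\circ\iota)^*f$, and the smooth-pullback principle then gives $\ord_x(\iota^*f)=\ord_{\iota(x)}(f)=\ord_x(f)$ for every $x\in G(k)/B(k)$, which is the assertion.

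The main obstacle is the reason the hypothesis on $f$ is needed. For an intermediate factor $1<i<n-1$ the map $\mathrm{pr}_i\circ\iota$ is not smooth --- indeed not even dominant, since $\mathrm{Gr}(i,n)$ is a proper subvariety of $\PP(\wedge^{i}k^{n})$ --- and a Pl\"ucker coordinate then vanishes to order $\ge 2$ at a singular point of the Schubert divisor it cuts out (already on $\mathrm{Gr}(2,4)$, where that divisor is a rank-four quadric threefold); so the argument really uses the identification $\mathrm{Gr}(i,n)=\PP(\wedge^{i}k^{n})$ available only for $i\in\{1,n-1\}$, and I would record this restriction explicitly at the point of use. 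A purely group-theoretic variant runs as follows: up to a nonzero scalar and a left translation by a representative of a Weyl element, $\iota^*f$ equals the highest weight section $f_{\nu_i}$; for $i\in\{1,n-1\}$ its zero divisor is the Schubert divisor $X_{s_iw_0}$, which is a smooth flag bundle over a projective space, so $f_{\nu_i}\in\mathfrak{m}_x\setminus\mathfrak{m}_x^2$ at every point of its vanishing; since moreover $\ord_x(f)\in\{0,1\}$ and $\iota^*f$ vanishes at $x$ exactly when $f$ vanishes at $\iota(x)$, the two orders must agree.
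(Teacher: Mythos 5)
Your argument is correct and takes a genuinely different route from the paper's. The paper compares the $T$-torsor $G/U\to G/B$ and the cone $\pi_{\PP}\colon\prod_i(\wedge^{i}k^n\setminus\{0\})\to\prod_i\PP(\wedge^{i}k^n)$ via a lift $\iota_U$ of $\iota$, asserted there to be an open immersion, and then squeezes $\ord(\iota^*f)$ between $\ord(f)$ and $\ord(\iota_U^*\pi_{\PP}^*f)$. You instead factor $\mathrm{pr}_i\circ\iota$ through the smooth surjection $\pi_i\colon G/B\to\mathrm{Gr}(i,n)$, observe that for $i\in\{1,n-1\}$ the Pl\"ucker map $\mathrm{Gr}(i,n)\to\PP(\wedge^ik^n)\cong\PP^{n-1}$ is an isomorphism, and apply the smooth-pullback principle recorded at the end of \S\ref{section.order.of.vanishing}. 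This is more transparent and, by isolating exactly where smoothness is used, it surfaces a hypothesis missing from the statement: the conclusion does fail for coordinates on intermediate factors $1<i<n-1$. For $n=4$, $i=2$: in the chart $p_{34}=1$ the Pl\"ucker relation gives $p_{12}=p_{13}p_{24}-p_{14}p_{23}$ on $\mathrm{Gr}(2,4)$, so $\iota^*p_{12}$ vanishes to order $2$ at any $x\in G/B$ lying over $\langle e_3,e_4\rangle$, whereas $\ord_{\iota(x)}(p_{12})=1$ ambiently. This also means the paper's own proof cannot be salvaged without the same restriction: the asserted openness of $\iota_U$ fails already on dimension grounds once $n\geq 4$ (indeed $\dim G/U\neq\sum_{i=1}^{n-1}\binom{n}{i}$), so the final equality in the paper's displayed chain of inequalities does not hold. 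As you observe, only $i=n-1$ is invoked in \S\ref{unitary.vanishing}, so nothing downstream is affected; but the hypothesis $i\in\{1,n-1\}$ should be added to the lemma, exactly as you propose.
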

\begin{proof}
Let $\pi_G \colon G/U\to G/B$ and $\pi_{\PP}\colon \prod_i^{n-1} \wedge^{i}k^n \to \prod_{i}^{n-1}\PP(\wedge^{i}k^n)$ denote the projections. The following diagram commutes:
\begin{equation*}
    \begin{tikzcd}
        G/U \arrow[r, "\iota_U"] \arrow[d, "\pi_G"] & \prod_i \wedge^{i}k^n \setminus \{ 0 \} \arrow[d, "\pi_{\PP}"] \\
        G/B \arrow[r, "\iota"] & \prod_i \PP(\wedge^{i}k^n)
    \end{tikzcd}
\end{equation*}
Since $f$ is a coordinate function, $\ord(f)=\ord(\pi_{\PP}^{*}f)$. Since $\iota_U$ is an open immersion and the diagram commutes,
\begin{equation}
    \ord(\iota_U^* \pi_{\PP}^*f)=\ord(\pi_G^*\iota^*f) \geq \ord(\iota^*f) \geq \ord(f) = \ord(\pi_{\PP}^*f) = \ord(\iota_U^* \pi_{\PP}^*f).
\end{equation}
Hence, the inequalities are equalities.
\end{proof}
%
%
%
%
%
%
%
%
\subsubsection{Order of vanishing of highest weight sections}
For each $w\in W$, let $C(w)\coloneqq BwB/B \subset G/B$ denote the stratum corresponding to $w$ under the Bruhat stratification of $G/B$. Let $X(w)\coloneqq \overline{C(w)}$.
\begin{lemma}\label{lemma: hasse vanishing unitary}
Let $f$ be a nontrivial global section of $H^0(w_0\eta)_\eta$. Then the zero locus of $f$ is exactly $X(w_0s_1)$, and for any $k$-point $x$ in $X(w_0s_1)$ we have that $\ord_x(f)=2$.
\end{lemma}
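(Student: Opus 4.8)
The plan is to make $f$ explicit by pulling it back along the Plücker embedding $\iota$ and to reduce everything to a linear coordinate on a projective space. Put $\lambda := w_0\eta$. Since $\eta = -2\nu_{n-1}$ and $w_0\nu_{n-1} = -\nu_1$, one has $\lambda = 2\nu_1$, a dominant weight with representative $(1,-1,\dots,-1)$, and $\eta = w_0\lambda$ is the lowest weight of the dual Weyl module $H^0(\lambda)$; since $w_0\lambda$ is an extreme weight, $H^0(w_0\eta)_\eta = H^0(\lambda)_{w_0\lambda}$ is one dimensional, so $f$ is determined up to a nonzero scalar and it is enough to exhibit one section with the asserted zero locus and vanishing order. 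The relevant projective factor is the $(n-1)$-st one: the component $gB \mapsto [\,ge_2 \wedge \cdots \wedge ge_n\,] \in \PP(\wedge^{n-1}k^n)$ of $\iota$ is surjective onto the Grassmannian $\mathrm{Gr}(n-1,k^n)$, which the Plücker embedding identifies with $\PP(\wedge^{n-1}k^n)$; under this surjection $\Ocal(1)$ pulls back to $\call_{G/B}(\nu_1)$, hence $\Ocal(2)$ to $\call_{G/B}(\lambda)$, and pullback of global sections $\sym^2\big((\wedge^{n-1}k^n)^\vee\big) \to H^0(\lambda)$ is injective.

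Next I would pin down $f$. Let $\xi$ be the Plücker coordinate dual to $e_1 \wedge \cdots \wedge e_{n-1}$; more generally the coordinate dual to $e_1 \wedge \cdots \wedge \widehat{e_j} \wedge \cdots \wedge e_n$ has $T$-weight $e_j$, read modulo the character $(1,\dots,1)$ (which is trivial on $G/B$). Among the degree-two monomials in these coordinates only $\xi^2$ has weight $\eta \equiv 2e_n$, so $(\iota^*\xi)^2 = \iota^*(\xi^2)$ is a nonzero eigensection of $\call_{G/B}(\lambda)$ of weight $\eta$; hence $f = c\,(\iota^*\xi)^2$ for some $c \in k^\times$. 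The section $\iota^*\xi$ sends $gB$ to the $(n-1)\times(n-1)$ minor of $g$ on rows $1,\dots,n-1$ and columns $2,\dots,n$, which vanishes exactly when $e_n$ lies in the $(n-1)$-dimensional step of the flag $gF$; inspecting the Bruhat decomposition of $G/B$ (or using that this is an irreducible divisor whose class in $\pic(G/B)$ is $c_1(\call_{G/B}(\nu_1)) = [X(w_0s_1)]$) one identifies this locus with the Schubert divisor $X(w_0 s_1)$, taken as a reduced subscheme. Since $V(f) = V\big((\iota^*\xi)^2\big) = V(\iota^*\xi)$ as closed subsets, the zero locus of $f$ is $X(w_0 s_1)$.

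Finally I would compute the order of vanishing: fix $x \in X(w_0 s_1)(k)$. Then $\iota(x)$ lies on the hyperplane $\{\xi = 0\}$, which is smooth, so $\xi$ has order exactly $1$ at $\iota(x)$; by \ref{restricting.coordinate.function}, $\ord_x(\iota^*\xi) = 1$. As the order of vanishing is multiplicative in products of sections (the associated graded of the regular local ring $\Ocal_{G/B,x}$ being an integral domain), $\ord_x(f) = \ord_x\big((\iota^*\xi)^2\big) = 2$. The essential point is that $f$ is \emph{literally} the square of a global section rather than merely a section with a double component; this is what forces $\ord_x(f) = 2$ at every point of $X(w_0 s_1)$, and it is exactly here that the explicit Plücker description — together with \ref{restricting.coordinate.function} and the smoothness of a hyperplane — does the work. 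The remaining points, namely identifying the Schubert divisor above precisely with $X(w_0 s_1)$ in the conventions of \S\ref{section: group notation unitary case} and tracking the $(1,\dots,1)$-twist ambiguity in $X^*(T)/X^*(\GL(n))$, are only bookkeeping.
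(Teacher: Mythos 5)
Your proof is correct and follows the same route as the paper: compute $w_0\eta = 2\nu_1$, realize $f$ as a scalar multiple of the square of the Pl\"ucker coordinate $\xi = x_{0,n-1}$ dual to $e_1\wedge\cdots\wedge e_{n-1}$, identify the zero locus with the Schubert divisor $X(w_0 s_1)$, and control vanishing orders via Lemma~\ref{restricting.coordinate.function}. The one point worth flagging is the final step: you apply Lemma~\ref{restricting.coordinate.function} to the linear coordinate $\xi$ itself (strictly within its stated hypotheses) and then use multiplicativity of $\ord_x$ on the regular local ring $\Ocal_{G/B,x}$, whereas the paper invokes the lemma directly for the quadratic $x_{0,n-1}^2$; your factorization isolates more transparently why the order is exactly $2$ at every point of $X(w_0 s_1)$ --- namely that $f$ is literally the square of a global section of $\call_{G/B}(\nu_1)$, not merely a section whose divisor happens to have a doubled component.
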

\begin{proof}
%
%
%
%
Since $\eta=-2\nu_{n-1}$, 
\begin{equation*}
    \mathcal{L}_{G/B}(w_0\eta) = \mathcal{L}_{G/B}(2\nu_1) = \calo_{G/B}(2X(w_0s_1)).
\end{equation*} 
Let $Y:=G/B \cap \{ x_{0,n-1}=0 \}$. We claim that 
\begin{equation}\label{x.w.0.y}
    X(w_0s_1) = Y.
\end{equation}
For all $b\in B$ and $\sigma\in W$, a point $b\sigma F$ in $B \sigma B/B$ corresponds to a tuple $$\left( [x_{0,1}:...:x_{n,1}], ..., [x_{0,i}:...:x_{\binom{n}{i},i}],..., [x_{0,n-1} : ... : x_{n,n-1}]\right),$$ where the coordinate $x_{0,n-1}$ is the coefficient of $e_1\wedge e_2 \wedge\cdots\wedge e_{n-1}$ when we write out $be_{\sigma(n)}\wedge be_{\sigma(n-1)}\wedge \cdots be_{\sigma(2)}$ as a sum of the basis vectors $e_{j_1}\wedge\cdots \wedge e_{j_{n-1}}$ (by definition of the image of the largest piece of the filtration $b\sigma F$ under the Pl\"ucker embedding). Notice now that the vector $e_1\wedge e_2 \wedge\cdots\wedge e_{n-1}$ will appear with a non-zero coefficient if and only if $\{\sigma(2),...,\sigma(n)\} = \{1,...,n-1\}$, i.e. if and only if $\sigma(1)=n$. In other words, $b \sigma F \in Y$ if and only if $\sigma(1)\neq n$. 

As a permutation, $w_0(j)=n-j+1$ for all $j\in\{1,2,...,n\}$ and $s_1=(12)$, so $w_0s_1(1)=w_0(2)=n-1\neq n$, whence $C(w_0s_1)\subset Y$. Since $Y$ is closed this implies that $X(w_0s_1)\subset Y$. Since $Y$ is an intersection of equidimensional varieties it is itself equidimensional. Suppose $Y\neq X(w_0s_1)$, then it has an irreducible decomposition $Y=X(w_0s_1)\cup Y_1\cup ... \cup Y_m$ where $\codim Y_i = \codim X(w_0s_1) = 1$ inside $G/B$. Since the codimension one Schubert varieties generate all divisors, $[Y_i]=\sum_{j\in J} a_j [X(w_0 s_j)]$ for some index set $J_i$ (with $\{1\}\neq J_i$ since $Y\neq X(w_0s_1)$) and  integers $a_j$. Thus, $Y_i=\cup_{j\in J_i} X(w_0s_j)$, as this is the support of $[Y_i]$. Bu $w_0s_j(1)=n$ for all $j\neq 1$. Whence $C(w_0s_j)\cap Y=\emptyset$. Thus $X(w_0s_j)\not\subset Y$. This contradicts that $Y_i = \cup_j X(w_0s_j)$. We conclude that $Y=X(w_0s_1)$.

This implies that
\begin{equation*}
\mathcal{L}_{G/B}(\nu_1) = \calo_{G/B}(X(w_0 s_1)) = \calo_{G/B}(Y) = \calo(0,...,0,1).
\end{equation*}
Thus
\begin{equation*}
\mathcal{L}_{G/B}(w_0\eta) = \mathcal{L}_{G/B}(2\nu_1) = \calo(0,...,0,2).
\end{equation*}
So elements of $H^{0}(w_0\eta)$ can be seen as homogeneous polynomials in the $x_{i,n-1}$-coordinates of degree 2. 
Let $h(x)=x_{0,n-1}^2$. 
Let $b=(b_{ij})_{1\leq i,j\leq n}\in B$. By definition of the action of $b$ on $G/B\subset \prod_{i=1}^{n-1} \PP(\wedge^{i}k^n)$ described above, 
\begin{equation*}
\begin{aligned}
(b\cdot h)(x) = h(b^{-1}x) &= h(b_{11}^{-1}b_{22}^{-1}\cdots b_{n-2,n-2}^{-1} x_{0,n-1} \ast, ..., \ast) \\
&= b_{11}^{-2}\cdots b_{n-2,n-2}^{-2}x_{0,n-1}^2 \\
&= \eta'(b)h(x),
\end{aligned}
\end{equation*}
where $\eta' = (-2,-2,...,-2,0)=\eta + (-1,...,-1)$. 
Thus
\begin{equation*}
\langle f \rangle = H^{0}(w_0 \eta)_\eta = \langle x_{0,n-1}^{2} \rangle.
\end{equation*}
By \ref{restricting.coordinate.function}, the vanishing order of $f$ is either $0$ or $2$. By (\ref{x.w.0.y}) the vanishing locus of $f$ is exactly $X(w_0s_{1})$.
\end{proof}
\subsubsection{The Bruhat stratification}
Since $W_I=W_J=\langle s_1, ..., s_{n-2}\rangle$ we see that ${}^{I}W^{J}=\{ 1 , s_{n-1}\}$, where the nondense Bruhat stratum $\pizipb^{-1}(x_{s_{n-1}})\subset \gzip^{\mu}$ is the closure of the Ekedahl-Oort stratum $[E\backslash G_{w_0s_1}]$. The Bruhat stratification of 
$\gzip^{\mu}$ is thus given by 
\[
\gzip^\mu = \pizipb^{-1}(x_1) \bigsqcup [E\backslash \overline{G_{w_0s_1}}]).
\]
The vanishing order of the Hasse invariant $\ha(G,\mu,\wedge^n(\std \oplus \std^{\vee}))$ is described explicitly by:
\begin{theorem}\label{hasse.vanishing.unitary}
Let $(G,\mu)$ be as in \S\ref{section: group notation unitary case}, with $(a,a')=(n-1,1)$. For any $x\in \gzip^{\mu}(k)$, 
if $\ha(G,\mu,\wedge^n(\std \oplus \std^{\vee}))$ 
vanishes at 
$x$, then it vanishes to order 2. The zero locus is exactly 
$\pizipb^{-1}(x_{w_0s_1})$.
\end{theorem}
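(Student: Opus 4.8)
The strategy is to reduce the statement on $\gzip^{\mu}$ to the computation on $G/B$ already carried out in Lemma~\ref{lemma: hasse vanishing unitary}, using the smoothness of the maps in diagram~\eqref{eq-zipflag-sbt-diagram} together with the dictionary of \S\ref{general.strategy} and \S\ref{line.bundles.on.stacks}. First I would identify the Hodge character: for the representation $r=\wedge^n(\std\oplus\std^\vee)$ in signature $(n-1,1)$ one has, by~\ref{sec-hodge-char} and the discussion in~\S\ref{section: group notation unitary case}, that the negative of the highest weight is $\eta=-2\nu_{n-1}$ (equivalently $-2\nu_a$ with $a=n-1$), so that $\ha(G,\mu,r)\in H^0(\gzip^\mu,\Lcal_{\gzip}((p-1)\eta))$. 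Next I would exhibit the character $\lambda$ with $D_{w_0}(\lambda)=(p-1)\eta$: since $w_{0,J}$ is a product of $s_i$ with $i<n-1$ it fixes $\nu_{n-1}$ up to a central character, so (as in the proof of Theorem~\ref{theorem: order of vanishing of hasse invariant on stack of G-zips for Bn and Dn Shimura datum}) one can take $\lambda=\nu_{n-1}$ scaled appropriately, check $w_0\lambda=-\lambda$ on $\SL_n$, and conclude via~\S\ref{general.strategy} that $\piflagz^*\ha(G,\mu,r)=\piflags^*\ha_{\sbt}$ for a section $\ha_{\sbt}\in H^0(\sbt,\call_{\sbt}(\lambda,-w_0\lambda))$ with $f_\lambda:=w_0\cdot\ha_{\sbt}\in H^0(\lambda)_\lambda = H^0(w_0\eta)_\eta$ (up to the harmless central twist).

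With this in place, the vanishing order of $\ha(G,\mu,r)$ on the Ekedahl--Oort stratum of $w_0wz^{-1}$ equals $\ord$ of $f_\lambda$ on $B^+wB$, which by Remark~\ref{general.strategy.f.to.w0f} equals $\ord$ of $\ha_{\sbt}$ on $Bw_0wB$; and this in turn is read off from Lemma~\ref{lemma: hasse vanishing unitary}, whose conclusion is that the vanishing locus of $f_\lambda$ on $G/B$ is exactly $X(w_0s_1)$ with vanishing order $2$ there. So I would argue: if $x\in\gzip^\mu(k)$ lies in a stratum whose lift to $\sbt$ meets $Bw_0wB$ with $w_0w\not\le w_0s_1$ (equivalently the stratum is not contained in the closed Bruhat stratum $\pizipb^{-1}(x_{w_0s_1})$), then $f_\lambda$ is nonvanishing there and $\ha(G,\mu,r)(x)\ne 0$; otherwise $\ord_x\ha(G,\mu,r)=2$. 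The identification of the closed Bruhat stratum with $\pizipb^{-1}(x_{w_0s_1})$ — equivalently $[E\backslash\overline{G_{w_0 s_1}}]$ — is exactly the content recorded just before the theorem statement, using $\piflagz(\overline{\calx}_w)=[E\backslash\overline{G}_w]$ from~\S\ref{section:stratifications} and ${}^IW^J=\{1,s_{n-1}\}$.

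The one genuine point to verify carefully — and the step I expect to be the main obstacle — is the \emph{compatibility of strata} across the three stacks: namely that under $\piflags$ and $\pi_{\mathtt{Zip},\mathcal{B}}$, the preimage of the non-dense Bruhat stratum $x_{w_0s_1}\in\mathcal{B}_{I,J,\Delta}$ corresponds precisely to the translates $Bw_0wB$ with $w\in{}^IW$ satisfying $w_0w\le w_0 s_1$, i.e. that the coarse Bruhat stratification pulled back from $\GLnZip^{r\circ\mu}$ agrees here with the genuine one — this is the phenomenon flagged in~\S\ref{sec-intro-Bruhat}. Concretely I would trace $w\in{}^IW$ through $w\mapsto w_0 w z^{-1}$ and the bijection ${}^IW\to\{E\text{-orbits}\}$, using that $z=z^{-1}$ (which holds since $I=J$), to check that the $E$-orbits on which $\ha$ vanishes are exactly those in $\overline{G_{w_0 s_1}}$; the length function being a total order on ${}^IW$ in type $\Asf$ makes this bookkeeping manageable. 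Once that identification is pinned down, the theorem follows by assembling Lemma~\ref{lemma: hasse vanishing unitary}, the smoothness of $\piflagz,\piflags,\pi_{\mathtt{Zip},\mathcal{B}}$ (so that $\ord$ is preserved, by~\S\ref{section.order.of.vanishing}), and the explicit description of ${}^IW^J$.
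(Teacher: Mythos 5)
Your plan matches the paper's proof: verify $D_{w_0}(-\eta)=(p-1)\eta$ (the paper does this directly using $w_{0,I}\eta=\eta$ and $w_0w_{0,J}w_0=w_{0,I}$), identify the relevant section inside $H^0(\sbt,\call_\sbt(-\eta,w_0\eta))=H^0(w_0\eta)_\eta$, and quote Lemma~\ref{lemma: hasse vanishing unitary} together with the explicit description of ${}^IW^J=\{1,s_{n-1}\}$ recorded just before the theorem. The one small wobble is that it is $\ha_\sbt$, not $f_\lambda=w_0\cdot\ha_\sbt$, that lives in $H^0(w_0\eta)_\eta$ and to which the Pl\"ucker lemma applies; but since you then pass back to $\ord$ of $\ha_\sbt$ on $Bw_0wB$ via Remark~\ref{general.strategy.f.to.w0f}, this does not affect the conclusion.
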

\begin{proof}
Since $\eta$ is a multiple of the fundamental weight corresponding to $\alpha_{n-1}^{\vee}$ and since $w_{0,I}$ is a product of simple reflections $s_{1}$, ..., $s_{n-2}$, we see that $w_{0,I}\eta = \eta$. Since $-w_0I=J$, conjugation by $w_0$ gives $w_0w_{0,J}w_0=w_{0,I}$. Hence $D_{w_0}(-\eta)=(p-1)\eta$. 
Thus, the order of vanishing of the Hasse invariant can be obtained from that of any generator of 
$H^{0}(\text{Sbt} , \mathcal{L}_{\sbt}(-\eta, w_0\eta)) = H^{0}(w_0\eta)_{\eta}$. 
The result follows from \ref{lemma: hasse vanishing unitary}.
%
%
%
%
%
\end{proof}
%
%
%
%
%
%
%
%
%
\subsection{The conjugate line position}
\begin{proposition}\label{proposition: clp in unitary case arbitrary signature}
Let $(G,\mu)$ be as in \S\ref{section: group notation unitary case}. For each $k$-point $x$ in $\gzip^{\mu}$, we have that \[
\clp_x(G,\mu,\wedge^n(\std\oplus \std^{\vee}))\leq 2\min\{a,a'\}.
\]
\end{proposition}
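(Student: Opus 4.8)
The plan is to compute the conjugate line position directly from the combinatorial model of the universal $F$-zip, following the Moonen--Wedhorn "standard $F$-zip" description as in the proof of Theorem \ref{theorem: conjugate line position orthogonal case}. First I would unwind the representation $r = \wedge^n(\std \oplus \std^{\vee})$ for $G = \GL(n)$ with $\mu = (1^a,0^{a'})$. The $\mu$-weights of $\std$ are $0,1$ (with multiplicities $a', a$) and those of $\std^\vee$ are $0,-1$; hence the $\mu$-weights of $\std\oplus\std^\vee$ lie in $\{-1,0,1\}$ and those of $\wedge^n(\std\oplus\std^\vee)$ lie in $\{-a,\dots,a\}$ but the \emph{highest} weight occurring is $a$ with multiplicity one (choose the $a$ weight-$1$ vectors from $\std$ and no weight-$(-1)$ vectors), confirming CY-type. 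Symmetrically the lowest weight is $-a$. More importantly, the number of distinct $\mu$-weights appearing in $\wedge^n(\std\oplus\std^\vee)$ is at most $2a+1$ if $a\le a'$, so the Hodge filtration of the associated $F$-zip has at most $2a+1$ nontrivial steps; since $r$ is self-dual-ish the same bound holds with $a'$, giving at most $2\min\{a,a'\}+1$ steps. The conjugate line position counts how many of these contain the conjugate line, which is at most the number of nontrivial steps minus one, i.e.\ $2\min\{a,a'\}$.

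More carefully, I would argue as follows. Pass to the standard $F$-zip $\underline{M}^w_\mu(x)$ obtained by pushing the universal $\GL(n)$-zip forward along $r$. By \cite[Section 1.9]{moonen.wedhorn} this $F$-zip has, as $k$-vector space, $\wedge^n(k^n\oplus (k^n)^\vee)$, with Hodge filtration indexed by the $\mu$-weights of $r$ and conjugate filtration a Frobenius-twisted permutation thereof, exactly as in the proof of Theorem \ref{theorem: conjugate line position orthogonal case}. The conjugate line $\gr_{i_0}^{\Conj}$ is the image of a single basis vector $e_{j_1}\wedge\cdots\wedge e_{j_n}$ (with $j_1<\cdots<j_n$ a fixed multi-index determined by $w_{0,I,J}$ and $w$), and $\clp_x$ is $|\{j : \gr_{i_0,x}^{\Conj}\subset \fil^j_{\Hdg,x}\}| - 1$. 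Since $\fil^j_{\Hdg}$ is spanned by the basis vectors of $r\circ\mu$-weight $\ge j$ (up to the $B$-action, which does not change membership at the level of the flag defined over the relevant strata), the line $\gr_{i_0,x}^{\Conj}$ sits in $\fil^j_{\Hdg,x}$ precisely for $j$ running over an interval of integers whose length is bounded by the number of distinct weights, $\le 2\min\{a,a'\}+1$. Subtracting one gives the claimed bound $2\min\{a,a'\}$.

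The main obstacle I anticipate is bookkeeping: correctly identifying which multi-index $e_{j_1}\wedge\cdots\wedge e_{j_n}$ represents the conjugate line as $w$ varies over ${}^IW$ (for general signature, not just $(n-1,1)$), and checking that its $\mu$-weight together with the weights for which it lies in a Hodge step really sweep out a consecutive block of integers rather than a gappy set — in principle the bound $2\min\{a,a'\}$ would still hold even with gaps, but the clean argument wants contiguity. A secondary subtlety is the interaction of the $B$-action (or $E'$-action on the zipflag stack) with the weight decomposition: one must ensure that the combinatorial reduction to a permutation-matrix model, valid for the standard $F$-zips over $k$-points, is what is needed here, which it is since $\clp_x$ is defined pointwise on $k$-points and every isomorphism class over $k$ is standard. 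Once the bookkeeping is set up, the inequality itself is immediate from counting weights.
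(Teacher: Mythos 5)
Your weight-counting step is incorrect. With $\mu=(1^a,0^{a'})$, the $\mu$-weights of $\std\oplus\std^\vee$ are $1,0,-1$ with multiplicities $a,2a',a$, so the top weight of $\wedge^n(\std\oplus\std^\vee)$ is $a$ (pick all $a$ weight-$1$ vectors and $a'$ weight-$0$ vectors, which is allowed since $a'\le 2a'$), and in fact the weights fill out $\{-a,\dots,a\}$ as soon as $a'\ge 1$, \emph{regardless} of whether $a\le a'$. So the number of distinct $\mu$-weights is $2a+1$, which is $2\max\{a,a'\}+1$ when $a>a'$. Self-duality of $r$ only yields symmetry of the weight set about $0$ — something you already used to get $\{-a,\dots,a\}$ — and does \emph{not} let you replace $a$ by $a'$ in the count. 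Thus your argument gives $\clp\le 2\max\{a,a'\}$, not $2\min\{a,a'\}$; e.g.\ for $(a,a')=(n-1,1)$, which is the case that feeds into \Cref{hasse.vanishing.unitary}, your bound is $\clp\le 2n-2$ rather than the asserted $\clp\le 2$. (Your concern about the set of $j$ being ``gappy'' is a non-issue: $\fil^{\bullet}_{\Hdg}$ is decreasing, so $\{j:\gr^{\Conj}_{i_0}\subset\fil^j\}$ is automatically an initial segment.)

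The paper's proof does not count weights of $\wedge^n V$ at all; that bound cannot see the $\min$. Instead it reduces $\clp_x(\wedge^n\Vscr_x)$ to a linear-algebra statement on the rank-$2n$ $F$-zip $\Vscr_x=\Vscr(\std)_x\oplus\Vscr(\std^\vee)_x$: choosing a Moonen--Wedhorn standard basis $e_1,\dots,e_{2n}$ with $e_1,\dots,e_n$ spanning $\fil^1_{\Hdg}(\Vscr_x)$, the conjugate line position of $\wedge^n\Vscr_x$ equals the number of $e_i$ with $i>n$ sent by Frobenius into $\fil^1_{\Hdg}(\Vscr_x)$ (i.e.\ $\dim(\im\fr\cap\fil^1)$). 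Since $\fr$ respects the decomposition and $\fil^1$ meets $\Vscr(\std)_x$, $\Vscr(\std^\vee)_x$ in subspaces of dimensions $a$, $a'$, the count in each summand is bounded both by the number of basis vectors outside $\fil^1$ there and, via $\ker\fr=\fil^1$ and linear independence, by the dimension of the piece of $\fil^1$ there; this gives $\min\{a,a'\}$ in each summand and $2\min\{a,a'\}$ in total. To fix your proof you would need an input of this kind on the interaction of Frobenius with the two summands of the rank-$2n$ zip, not a coarser count of how many Hodge steps $\wedge^n V$ has.
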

\begin{proof}
Let $\underline{\Vscr}$ be the $F$-zip over $\gzip^{\mu}$ associated to $\std\oplus \std^{\vee}$. Fix a $k$-point $x$ in $\gzip^\mu$. The descending filtration on the $F$-zip $\wedge^n\Vscr_x$ is induced from the zip-datum of $\underline{\Vscr}$. The $m^{\text{th}}$ graded piece of the descending filtration is given by
\begin{equation*}
    \fil_{\Hdg}^m (\wedge^n\Vscr_x) = \im\Big( (\fil_{\Hdg}^{1}\Vscr_x)^{\otimes m} \otimes (\Vscr_x)^{\otimes n-m} \to (\Vscr_x)^{\otimes n} \to \bigwedge^{n}\Vscr_x \Big),
\end{equation*}
and thus the $m^{\text{th}}$ graded piece of the Hodge filtration is given by 
\begin{equation*}\label{graded.piece}
    \gr^{m}\fil_{\Hdg}^{\bullet}(\wedge^n\Vscr_x) = \bigwedge^{m} \fil_{\Hdg}^1(\Vscr_x) \otimes \bigwedge^{n-k} (\Vscr_x/\fil_{\Hdg}^1(\Vscr_x)).
\end{equation*}
Hence, in a basis $e_1, ..., e_{2n}$ of $\Vscr_x$ such that $e_1, ..., e_n$ is a basis of $\fil_{\Hdg}^1(\Vscr_x)$, $\clp_x(G,\mu,\wedge^n(\std\oplus \std^{\vee}))$ 
is given by the number of the basis vectors $e_{n+1}, ..., e_{2n}$ which are mapped into $\fil_{\Hdg}^1(\Vscr_x)$ by Frobenius, denoted $\fr$.

We have that $\underline{\Vscr}_x = \underline{\Vscr}(G,\mu,\std)_x\oplus \underline{\Vscr}(G,\mu,\std^{\vee})_x$. By definition of $\mu$, we have that $a = \dim \fil_{\Hdg}^1(\Vscr_x)\cap \Vscr(G,\mu,\std)_x$ and $a'=\dim \fil_{\Hdg}^1(\Vscr_x)\cap \Vscr(G,\mu,\std^{\vee})_x$. Let $e_1, ..., e_{2n}$ be a basis of $\Vscr_x$ such that $e_1, ..., e_a$ is a basis of $\fil_{\Hdg}^1(\Vscr_x)\cap \Vscr(G,\mu,\std)_x$ and $e_{a+1}, ..., e_n$ is a basis of $\fil_{\Hdg}^1(\Vscr_x)\cap \Vscr(G,\mu,\std^{\vee})_x$. By \cite{moonen1} we may assume that for any $i=1,...,2n$, $\fr(e_i)= e_j$ for some $j \leq i$ or $\fr(e_i)=0$. 
Since $a=\dim \fil_{\Hdg}^1(\Vscr_x)\cap \Vscr(G,\mu,\std)_x$, we have that
\begin{equation}\label{bound1.a'}
    \#\{ e_i \in \Vscr(G,\mu,\std)_x : e_i\notin \fil_{\Hdg}^1(\Vscr_x) \text{ and } \fr(e_i) \in \fil_{\Hdg}^1 \cap \, \Vscr(G,\mu,\std)_x \} \leq a'.
\end{equation}
Since $\ker(\fr)=\fil_{\Hdg}^1(\Vscr_x)$ we also have that
\begin{equation}\label{bound2.a'}
    \#\{ e_i \in \Vscr(G,\mu,\std^{\vee})_x : e_i\notin \fil_{\Hdg}^1(\Vscr_x) \text{ and }\fr(e_i) \in \fil^1 \cap \Vscr(G,\mu,\std^{\vee})_x \} \leq a'.
\end{equation}
Indeed, if $e_{i_1}, ..., e_{i_{a'+1}} \in \Vscr(G,\mu,\std^{\vee})_x\setminus \fil_{\Hdg}^1(\Vscr_x)$ are mapped to $\fil_{\Hdg}^1(\Vscr_x)$, then $\fr(e_{i_1}), ..., \fr(e_{i_{a'+1}})$ are linearly dependent, and thus there exists $x_j \in k$ such that $\sum_j x_j \fr(e_{i_j})=0$, i.e. such that $\sum_{j=1}^{a'+1} x_{j}^{1/p} e_{i_j} \in \ker(\fr) = \fil_{\Hdg}^1(\Vscr_x)$, which is a contradiction. By a similar argument we can replace $a'$ by $a$ in (\ref{bound1.a'}) and (\ref{bound2.a'}) to see that 
\begin{equation*}
    \clp_x(G,\mu,\wedge^n(\std \oplus \std^{\vee}))\leq 2\min\{a,a'\}.
\end{equation*} 
\end{proof}
%
%
\begin{corollary}\label{a.number.unitary}
Let $(G,\mu)$ be as in \S\ref{section: group notation unitary case} and suppose that the signature is $(n-1,1)$. For each $k$-point $x$ in $\gzip^{\mu}$, we have that
\begin{equation*}
    \clp_x(G,\mu,\wedge^n(\std \oplus \std^{\vee}))=0, \,\,\, \text{or} \,\,\, \clp_x(G,\mu,\wedge^n(\std \oplus \std^{\vee}))=2.
\end{equation*}
\end{corollary}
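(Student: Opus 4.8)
The plan is to combine the bound $\clp_x\le 2\min\{a,a'\}=2$ from Proposition~\ref{proposition: clp in unitary case arbitrary signature} (here $\{a,a'\}=\{n-1,1\}$, so $\min\{a,a'\}=1$) with the observation that $\clp_x$ is always \emph{even}; this rules out the value $1$. So the substance of the argument is an evenness statement, which I would extract by refining the proof of Proposition~\ref{proposition: clp in unitary case arbitrary signature} from an inequality into an identity.

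First I would make the reduction precise. Let $\underline{\Vscr}$ be the rank-$2n$ $F$-zip over $\gzip^\mu$ attached to $\std\oplus\std^\vee$, and fix $x\in\gzip^\mu(k)$. As in the proof of Proposition~\ref{proposition: clp in unitary case arbitrary signature} (via the standard $F$-zips of Moonen--Wedhorn), the conjugate line of the CY $F$-zip $\wedge^n\underline{\Vscr}$ at $x$ is $\wedge^n\fil^{\Conj}_0(\Vscr_x)$, where $\fil^{\Conj}_0(\Vscr_x)$ is the rank-$n$ bottom step of the conjugate filtration; and $\wedge^n\fil^{\Conj}_0(\Vscr_x)\subset\fil^m_{\Hdg}(\wedge^n\Vscr_x)$ holds exactly when $\dim\bigl(\fil^{\Conj}_0(\Vscr_x)\cap\fil^1_{\Hdg}(\Vscr_x)\bigr)\ge m$. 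Counting the relevant $\mu$-weights therefore gives the clean formula $\clp_x=\dim\bigl(\fil^{\Conj}_0(\Vscr_x)\cap\fil^1_{\Hdg}(\Vscr_x)\bigr)$.

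Next I would use the self-duality built into $\std\oplus\std^\vee$. This representation of $\GL(n)$ carries a canonical symplectic form (pairing $\Vscr(\std)$ and $\Vscr(\std^\vee)$ hyperbolically, scaled by the similitude character), and it is horizontal for the $F$-zip structure. Hence $\Vscr(\std)_x$ and $\Vscr(\std^\vee)_x$ are mutually dual Lagrangians, and both $\fil^1_{\Hdg}(\Vscr_x)$ and $\fil^{\Conj}_0(\Vscr_x)$ are Lagrangian (each is the weight-$(-1)$ space for the suitably twisted $\mu$, which is isotropic of half dimension since the form has $\mu$-weight $-1$). Writing $A:=\fil^1_{\Hdg}(\Vscr_x)\cap\Vscr(\std)_x$ and $B:=\fil^{\Conj}_0(\Vscr_x)\cap\Vscr(\std)_x$, the Lagrangian conditions force $\fil^1_{\Hdg}(\Vscr_x)\cap\Vscr(\std^\vee)_x=A^\perp$ and $\fil^{\Conj}_0(\Vscr_x)\cap\Vscr(\std^\vee)_x=B^\perp$, annihilators under the perfect pairing $\Vscr(\std)_x\times\Vscr(\std^\vee)_x\to k$. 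Since both flags are compatible with the decomposition $\Vscr_x=\Vscr(\std)_x\oplus\Vscr(\std^\vee)_x$,
\[
\clp_x=\dim(B\cap A)+\dim(B^\perp\cap A^\perp)=\dim(B\cap A)+\bigl(n-\dim(B+A)\bigr)=2\dim(B\cap A),
\]
using $\dim A+\dim B=a+a'=n$. As $\fil^1_{\Hdg}(\Vscr_x)\cap\Vscr(\std)_x$ is a hyperplane and $\fil^{\Conj}_0(\Vscr_x)\cap\Vscr(\std)_x$ is a line (signature $(n-1,1)$), $\dim(B\cap A)\in\{0,1\}$, whence $\clp_x\in\{0,2\}$.

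The main obstacle is the bookkeeping in the first two steps: rigorously promoting the inequality $\clp_x\le 2$ of Proposition~\ref{proposition: clp in unitary case arbitrary signature} to the equality $\clp_x=\dim(\fil^{\Conj}_0\cap\fil^1_{\Hdg})$, and tracking the $\mu$-weight indices (and the twist by the similitude character that converts $\std^\vee$ into the correct summand) so that $\fil^1_{\Hdg}$ and $\fil^{\Conj}_0$ of $\Vscr$ really are the Lagrangian pieces of the right twisted summands. A more computational alternative that sidesteps the symplectic formalism: $\clp_x$ depends only on the Ekedahl--Oort stratum of $x$, of which there are exactly $n=|{}^{I}W|$, totally ordered; on each one read off $c_1:=\dim\bigl(\fil^{\Conj}_0(\Vscr(\std)_x)\cap\fil^1_{\Hdg}(\Vscr(\std)_x)\bigr)$ and its counterpart $c_2$ for $\Vscr(\std^\vee)$ from the standard $F$-zips of Moonen--Wedhorn, verify $c_1=c_2$ using the order-reversing duality $\Vscr(\std^\vee)\cong\Vscr(\std)^\vee$ (up to twist) on each stratum, and conclude $\clp_x=c_1+c_2=2c_1\in\{0,2\}$.
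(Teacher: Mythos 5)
Your argument is correct and is essentially the paper's: the paper's one-line proof invokes the decomposition $\underline{\Vscr}(G,\mu,\std\oplus\std^\vee)_x = \underline{\Vscr}(G,\mu,\std)_x\oplus\underline{\Vscr}(G,\mu,\std^\vee)_x$ together with the duality $\underline{\Vscr}(G,\mu,\std^\vee)_x \cong \underline{\Vscr}(G,\mu,\std)_x^\vee$, so the two contributions to $\clp_x$ already isolated in the proof of Proposition~\ref{proposition: clp in unitary case arbitrary signature} are equal, forcing $\clp_x$ to be even, and then applies the bound $\clp_x\le 2$. Your ``computational alternative'' is this argument verbatim, and your Lagrangian reformulation is a sound geometric repackaging of the same duality fact.
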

\begin{proof}
Since $\underline{\Vscr}(G,\mu,\std\oplus \std^{\vee})_x = \underline{\Vscr}(G,\mu,\std)_x\oplus \underline{\Vscr}(G,\mu,\std^{\vee})_x$ and since $\underline{\Vscr}(G,\mu,\std^{\vee})_x$ is the dual of $\underline{\Vscr}(G,\mu,\std)_x$, this follows from the previous proposition.
\end{proof}
\begin{theorem}[\ref{th-main}\ref{item-GL}]\label{ogus.principle.unitary.case}
Let $(G,\mu)$ be as in \S\ref{section: group notation unitary case}, with $(a,a')=(n-1,1)$. Then Ogus' principle holds for $(G,\mu,\wedge^n(\std\oplus \std^{\vee}))$.
\end{theorem}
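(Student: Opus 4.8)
The plan is to assemble Theorem~\ref{ogus.principle.unitary.case} from the two halves already computed in this section: the order of vanishing of the Hasse invariant (Theorem~\ref{hasse.vanishing.unitary}) and the conjugate line position (Corollary~\ref{a.number.unitary}). Ogus' Principle for $(G,\mu,\wedge^n(\std\oplus\std^{\vee}))$ is the assertion that for every $x\in\gzip^{\mu}(k)$,
\[
\ord_x\ha(G,\mu,\wedge^n(\std\oplus\std^{\vee}))=\clp_x(G,\mu,\wedge^n(\std\oplus\std^{\vee})),
\]
so it suffices to show that both sides are governed by the \emph{same} stratification of $\gzip^{\mu}$, namely the Bruhat stratification $\gzip^\mu=\pizipb^{-1}(x_1)\sqcup[E\backslash\overline{G_{w_0s_1}}]$ described just before Theorem~\ref{hasse.vanishing.unitary}, taking the common value $0$ on the open stratum and $2$ on the closed one.

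First I would recall that by Theorem~\ref{hasse.vanishing.unitary} the left-hand side is $0$ for $x$ in the open Bruhat stratum $\pizipb^{-1}(x_1)$ and equals $2$ for $x\in\pizipb^{-1}(x_{w_0s_1})=[E\backslash\overline{G_{w_0s_1}}]$, and in particular it takes only the values $0$ and $2$. Next I would recall that by Corollary~\ref{a.number.unitary} the right-hand side also takes only the values $0$ and $2$ at every $k$-point. It therefore remains to identify the locus where $\clp_x=2$ with the closed Bruhat stratum. Since the Hasse invariant is by definition $r^*\ha(\underline{\Vscr}(G,\mu,r))$ and its zero locus is precisely the non-ordinary locus, while $\clp_x\ge 1$ exactly on that same zero locus (the conjugate line lies in a positive step of the Hodge filtration iff the Hasse invariant vanishes), the two loci coincide: $\clp_x>0\iff\ha$ vanishes at $x\iff x\in\pizipb^{-1}(x_{w_0s_1})$. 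Combining this with the fact that on that stratum $\clp_x$ can only be $2$ (Corollary~\ref{a.number.unitary}) and $\ord_x\ha$ is exactly $2$ (Theorem~\ref{hasse.vanishing.unitary}), equality holds at every point, which is Ogus' Principle.

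The one point requiring a short argument — and the main (mild) obstacle — is the coincidence of the vanishing locus of the Hasse invariant with the locus $\{\clp_x\ge 1\}$. This is essentially definitional: $\ha(\underline{\Vscr})$ as in~\eqref{eq-def-hasse} is the composition $(\gr^{i_0}_{\Hdg})^{(p)}\to\gr^{\Conj}_{i_0}\to\Vscr\to\gr^{i_0}_{\Hdg}$, and this composition vanishes at $x$ precisely when the image of $\gr^{\Conj}_{i_0,x}$ in $\Vscr_x$ lies in $\fil^{i_0+1}_{\Hdg,x}$, i.e.\ precisely when the conjugate line sits in a deeper step of the Hodge filtration than $\fil^{i_0}_{\Hdg}$; by definition~\eqref{eq-intro-clp} this is exactly $\clp_x\ge 1$. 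Alternatively, one may avoid even this remark by invoking \ref{lemma: functoriality for good w} together with the other cases of \ref{th-main}, but the direct comparison of Theorem~\ref{hasse.vanishing.unitary} and Corollary~\ref{a.number.unitary} is cleaner here since both have already pinned down the relevant strata; the proof is then a one-line assembly.

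\begin{proof}
By Theorem~\ref{hasse.vanishing.unitary}, for $x\in\gzip^{\mu}(k)$ the section $\ha(G,\mu,\wedge^n(\std\oplus\std^{\vee}))$ vanishes at $x$ if and only if $x\in\pizipb^{-1}(x_{w_0s_1})$, in which case it vanishes to order exactly $2$; hence $\ord_x\ha(G,\mu,\wedge^n(\std\oplus\std^{\vee}))=0$ for $x\in\pizipb^{-1}(x_1)$ and $=2$ for $x\in\pizipb^{-1}(x_{w_0s_1})$. By Corollary~\ref{a.number.unitary}, $\clp_x(G,\mu,\wedge^n(\std\oplus\std^{\vee}))\in\{0,2\}$ for every $x$. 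It remains to check that $\clp_x=2$ exactly on $\pizipb^{-1}(x_{w_0s_1})$. By the definition~\eqref{eq-def-hasse} of the Hasse invariant, $\ha(\underline{\Vscr}(G,\mu,r))$ vanishes at $x$ precisely when the conjugate line $\gr^{\Conj}_{i_0,x}$ maps into $\fil^{i_0+1}_{\Hdg,x}$, i.e.\ precisely when $\clp_x(\underline{\Vscr}(G,\mu,r))\ge 1$; pulling back along $r=\wedge^n(\std\oplus\std^{\vee})$ and using $\clp_x(G,\mu,r)=\clp_x(\underline{\Vscr}(G,\mu,r))$ shows that $\{x:\clp_x\ge 1\}$ equals the zero locus $\pizipb^{-1}(x_{w_0s_1})$ of $\ha(G,\mu,r)$ determined in Theorem~\ref{hasse.vanishing.unitary}. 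Combining with Corollary~\ref{a.number.unitary}, $\clp_x=0$ for $x\in\pizipb^{-1}(x_1)$ and $\clp_x=2$ for $x\in\pizipb^{-1}(x_{w_0s_1})$. Thus $\ord_x\ha(G,\mu,\wedge^n(\std\oplus\std^{\vee}))=\clp_x(G,\mu,\wedge^n(\std\oplus\std^{\vee}))$ for all $x\in\gzip^{\mu}(k)$, which is Ogus' Principle for $(G,\mu,\wedge^n(\std\oplus\std^{\vee}))$.
\end{proof}
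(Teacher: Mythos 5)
Your proof is correct and takes essentially the same approach as the paper, which simply says the result follows by combining Theorem~\ref{hasse.vanishing.unitary} with Corollary~\ref{a.number.unitary}. The only thing you add — and it is the right thing to add — is the explicit observation that the zero locus of the Hasse invariant coincides with the locus where $\clp_x\ge 1$, which is needed to pin down on which stratum the conjugate line position equals $2$ (Corollary~\ref{a.number.unitary} alone only says $\clp_x\in\{0,2\}$); the paper treats this as immediate.
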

\begin{proof}
This follows by combining \ref{hasse.vanishing.unitary} with \ref{a.number.unitary}.
\end{proof}
\subsection{Application to unitary Shimura varieties}
Let $\gx$ be the Shimura datum with group $\mathbf{G}$. Assume that $(a,a')=(n-1.1)$. Applying \ref{sec-intro-Hodge-type} to $\gx$ gives the special $k$-fiber $S_K$ of the associated Shimura variety and a smooth surjective Zip period map $\zeta_K\to \gzip^{\mu}$. Let $\ha_{S_K}\coloneqq \zeta_K^*\ha(G,\mu,\wedge^n(\std\oplus \std^{\vee}))$.
As in \S\ref{sec-orthogonal-case}, we deduce immediately the following statement for unitary Shimura varieties.
\begin{corollary}
For any $k$-point $s$ in $S$ we have that $\ord_s(\ha_{S_K})\in \{0,2\}$ and $\ord_s(\ha_{S_K})\neq 0$ if and only if $s\in S_{w_0s_1}^{\bruh}$. Furthermore, Ogus' principle holds; for every $x \in S_K(k)$,
$$\ord_x \zeta^*\ha(G,\mu,r)=\clp_x(G,\mu,r).$$ 
\end{corollary}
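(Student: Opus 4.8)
The plan is to deduce the statement by pulling back its stack-level counterpart along the smooth, surjective Zip period map $\zeta_K \colon S_K \to \gzip^{\mu}$ recalled in \S\ref{sec-intro-Hodge-type}. The point is that every quantity appearing in the corollary — the vanishing order of the Hasse invariant, the Bruhat stratification, and the conjugate line position — is compatible with pullback along a smooth morphism, so the corollary is a formal consequence of \ref{hasse.vanishing.unitary}, \ref{a.number.unitary} and \ref{ogus.principle.unitary.case}.

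First I would recall from \S\ref{section.order.of.vanishing} that for a smooth morphism $\zeta$ to any of the quotient stacks in play and a section $f$ of an associated line bundle, $\ord_s(\zeta^* f) = \ord_{\zeta(s)}(f)$ for every $k$-point $s$. Applying this with $f = \ha(G,\mu,\wedge^n(\std\oplus\std^\vee))$ and $\zeta = \zeta_K$ gives, for each $s \in S_K(k)$,
\[
\ord_s(\ha_{S_K}) = \ord_{\zeta_K(s)}\bigl(\ha(G,\mu,\wedge^n(\std\oplus\std^\vee))\bigr).
\]
By \ref{hasse.vanishing.unitary} the right-hand side lies in $\{0,2\}$ and is nonzero precisely when $\zeta_K(s) \in \pizipb^{-1}(x_{w_0 s_1})$. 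Since $S_{w_0 s_1}^{\bruh} = (\pizipb \circ \zeta_K)^{-1}(x_{w_0 s_1})$ by the definition of the Bruhat stratum, the latter condition is equivalent to $s \in S_{w_0 s_1}^{\bruh}$. This yields the first two assertions; surjectivity of $\zeta_K$ further guarantees that both values $0$ and $2$ are actually attained.

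For the last equality, I would use that the conjugate line position is defined pointwise in terms of the fiber $F$-zip and is therefore invariant under pullback along $\zeta_K$, so $\clp_x(G,\mu,\wedge^n(\std\oplus\std^\vee)) = \clp_{\zeta_K(x)}(G,\mu,\wedge^n(\std\oplus\std^\vee))$; alternatively one simply invokes \ref{a.number.unitary}. Combining this with the previous step and with Ogus' Principle on $\gzip^{\mu}$ established in \ref{ogus.principle.unitary.case}, we obtain, for all $x \in S_K(k)$,
\[
\ord_x(\ha_{S_K}) = \ord_{\zeta_K(x)}\bigl(\ha(G,\mu,r)\bigr) = \clp_{\zeta_K(x)}(G,\mu,r) = \clp_x(G,\mu,r).
\]

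There is no substantive obstacle here: all the content resides in the stack-level statements \ref{hasse.vanishing.unitary}, \ref{a.number.unitary}, \ref{ogus.principle.unitary.case} together with the smoothness of $\zeta_K$. The only point requiring a word of care is matching the indexing of the Bruhat stratification on $S_K$ with that on $\gzip^{\mu}$, which follows from the compatibility of $\zeta_K$ with $\pizipb$ and, for the attainment of both values, from the surjectivity of $\zeta_K$.
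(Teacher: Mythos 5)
Your proof is correct and takes exactly the approach the paper intends: the paper's one-line justification ("As in \S\ref{sec-orthogonal-case}, we deduce immediately\dots") is precisely the pullback-along-a-smooth-Zip-period-map argument you spell out, invoking the preservation of vanishing order under smooth morphisms from \S\ref{section.order.of.vanishing}, the stack-level computation \ref{hasse.vanishing.unitary}, the stack-level Ogus' Principle \ref{ogus.principle.unitary.case}, and compatibility of Bruhat strata via $\pizipb\circ\zeta_K$. Your additional remark about surjectivity guaranteeing that both values $0$ and $2$ are actually attained on $S_K$ is a small, correct refinement the paper leaves implicit.
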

\begin{remark}
If $\mathbf{G}_\mr\cong \GU(n-1,1)$, then $\ha_{S_K}$ is the classical Hasse invariant associated to de Rham cohomology of the universal abelian scheme over $S_K$. 
\end{remark}
%
%
%
%
%
%
\section{Ogus Principle in type \texorpdfstring{$\Csf_n$}{C}}
\label{siegel.case}
\subsection{Explicit description of the Hasse invariant}
Let $(V,\psi)$ be a symplectic space of dimension $2n$ over $k$.  Let $G\coloneqq \GSp(V,\psi)$ denote the corresponding symplectic group. 
Let $J$ denote the $n\times n$-matrix $J = \text{antidiag}(1,...,1)$. Choose a basis such that $\psi$ is the symplectic form determined by the block matrix $\text{antidiag}(-J,J)$. Let $v_1,...,v_{2n}$ be the standard basis vectors for $k^{2n}$. %
Let $\mu$ be the cocharacter $z\mapsto \diag(zI_n, z^{-1}I_n)$. Under the identification $V =k^{2n}$ the two parabolic subgroups determined by $\mu$ are given by $P = \stab_G\langle v_{n+1},..., v_{2n} \rangle$ and $Q =\stab_G\langle v_1, ..., v_n\rangle$.

The Hasse invariant of $(G,\mu,\wedge^n \std^{\vee})$ is explicitly given (cf. \cite[Section 8.1.2]{koskivirta.imai}) as a function on $G$ by
\begin{equation*}
    \ha(G,\mu,\wedge^n \std^{\vee}) \colon \begin{bmatrix}
        A & B \\ C & D
    \end{bmatrix} \mapsto \det(A).
\end{equation*}
For simplicity, let $\ha(G,\mu)\coloneqq \ha(G,\mu,\wedge^n \std^{\vee})$. 
\subsection{Order of vanishing of the Hasse invariant}
Viewing all $2n\times 2n$-matrices as the affine space $\AA^{(2n)^2}$,  extend $\ha(G,\mu)$ to a function $\widetilde{\ha}(G,\mu)$ on $\AA^{(2n)^2}$. Since $\ord_x(\widetilde{\ha}(G,\mu)) = n-\rk(A)$ for every $k$-point $x=\begin{bmatrix}
        A & B \\ C & D
    \end{bmatrix}\in \AA^{(2n)^2}$, we proceed to show that $\ord_x(\ha(G,\mu)) = \ord_x(\widetilde{\ha}(G,\mu))$ for all $x\in G(k)$.
\begin{notation}
For any $2n\times 2n$-matrix $X$, let $X_{A}$ denote the upper left $n\times n$-matrix of $X$. That is, $X = \begin{bmatrix}
    X_A & \ast \\ \ast & \ast
\end{bmatrix}$.
\end{notation}
\begin{lemma}\label{bruhat.stratum.ranks}
Two elements $X,Y \in \GSp(2n,k)$ lie in the same Bruhat stratum if and only if $\rk(X_A)=\rk(Y_A)$.
\end{lemma}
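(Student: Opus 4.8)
The plan is to use the standard description of the Bruhat stratification of $\GSp(2n)$ (as set up in \S\ref{siegel.case} and in \cite{wedhorn.bruhat}, \cite{shen.zhang}): the Bruhat strata of $\gzip^{\mu}$, equivalently the $P\backslash G_k/Q$ double cosets, are indexed by ${}^{I}W^{J}$, and two elements $X, Y \in \GSp(2n,k)$ lie in the same Bruhat stratum if and only if they lie in the same $P$-$Q$ double coset, i.e. $Y \in PXQ$. Recall that in the Siegel case $P = \stab_G\langle v_{n+1},\dots,v_{2n}\rangle$ and $Q = \stab_G\langle v_1,\dots,v_n\rangle$, so $P$ is the stabilizer of the Lagrangian $W^- := \langle v_{n+1},\dots,v_{2n}\rangle$ and $Q$ is the stabilizer of the complementary Lagrangian $W^+ := \langle v_1,\dots,v_n\rangle$. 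The first step is to reinterpret the invariant $\rk(X_A)$ intrinsically: writing $\mathrm{pr}\colon V \to V/W^-$ for the projection, one has $X_A$ (up to the fixed choices of bases) equal to the matrix of the composite $W^+ \xrightarrow{X} V \xrightarrow{\mathrm{pr}} V/W^-$, so $\rk(X_A) = \dim \mathrm{pr}(X W^+) = n - \dim(X W^+ \cap W^-)$.

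Next I would show this quantity is constant on $P$-$Q$ double cosets. If $Y = p X q$ with $p \in P$, $q \in Q$, then $q$ preserves $W^+$, so $Y W^+ = p X q W^+ = p(X W^+)$; and $p$ preserves $W^-$, so $\dim(Y W^+ \cap W^-) = \dim(p(X W^+) \cap p(W^-)) = \dim(X W^+ \cap W^-)$. Hence $\rk(Y_A) = \rk(X_A)$, giving the ``only if'' direction. For the converse, I would argue that this single invariant separates all the double cosets: the set ${}^{I}W^{J}$ has exactly $n+1$ elements (indexed say by $d = 0,1,\dots,n$), because $W_I = W_J$ is the Weyl group of the Levi $\GL(n)\times\Gm$ and the minimal double-coset representatives for a maximal parabolic in type $\Csf_n$ are counted by the possible relative positions $d = \dim(X W^+ \cap W^-)$ of the two Lagrangians, which range over $0,1,\dots,n$. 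Since $\rk(X_A) = n - \dim(X W^+ \cap W^-)$ also takes exactly the $n+1$ values $0,1,\dots,n$, and is constant on each stratum, and there are exactly $n+1$ strata, the map ``stratum $\mapsto$ value of $\rk(X_A)$'' is a bijection; in particular $\rk(X_A) = \rk(Y_A)$ forces $X, Y$ into the same stratum.

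The main obstacle is the converse direction, specifically pinning down that the relative-position invariant $\dim(XW^+\cap W^-)$ exhausts all $n+1$ values and that the Bruhat stratification of $\GSp(2n)$ relative to $(P,Q)$ has precisely $n+1$ strata indexed this way — i.e. making rigorous the identification of ${}^{I}W^{J}$ with $\{0,1,\dots,n\}$ via relative position of Lagrangians. This is essentially the classical theory of the orbits of $\Sp$ on pairs of Lagrangian subspaces (the Bruhat decomposition for the Lagrangian Grassmannian), and for the writeup I would either cite \cite{wedhorn.bruhat} for the combinatorial indexing of ${}^{I}W^{J}$ in type $\Csf_n$ directly, or give the short explicit argument producing, for each $d$, a symplectic automorphism sending $W^+$ to a Lagrangian meeting $W^-$ in dimension exactly $d$ (e.g. $v_i \mapsto v_i$ for $i \le d$ and $v_{d+j} \mapsto v_{n+d+j}$, $v_{n+j}\mapsto -v_{d+j}$ for the remaining coordinates, checked to preserve $\psi$). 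Granting that, the lemma follows.
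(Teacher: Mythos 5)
Your proof is correct and takes essentially the same route as the paper's: both establish the ``only if'' direction via invariance of $\rk(X_A)$ under the $(P,Q)$-action (you derive this intrinsically as $\rk(X_A)=n-\dim(XW^+\cap W^-)$, while the paper just uses the block-triangular shape of $p\in P$, $q\in Q$ to write $(pXq)_A=p_A X_A q_A$), and both settle the converse by noting ${}^{I}W^{J}$ has $n+1$ elements and exhibiting representatives realizing each of the ranks $0,\dots,n$. One small caveat on your explicit construction: it is written for a symplectic form pairing $v_i$ with $v_{n+i}$, whereas the paper's $\psi=\mathrm{antidiag}(-J,J)$ pairs $v_i$ with $v_{2n+1-i}$, so as stated the transformations do not preserve $\psi$; the paper's representatives $X_i=\begin{bmatrix} A_i & -J\\ J & 0\end{bmatrix}$ with $A_i=\diag(1,\dots,1,0,\dots,0)$ are the version adapted to its convention.
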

\begin{proof}
If $X$ and $Y$ lie in the same Bruhat stratum, then there are $p\in P(k)$ and $q\in Q(k)$ such that $pXq=Y$. Since $p_A,q_A$ are invertible we see that $\rk Y_A = \rk(pXq)_A = \rk X_A$. To prove the converse, it suffices to find representatives $X_0,...,X_n$ of different Bruhat strata such that $\rk(X_{0,A})< ...< \rk (X_{n,A})$. For example, if $A_i$ denotes the $n\times n$-diagonal matrix $\diag(1,...,1,0,...,0)$ whose first $i$ entries along the diagonal are $1$ and the rest are 0, then with
\begin{equation*}
    X_i \coloneqq \begin{bmatrix}
         A_i & -J \\ J & 0 
    \end{bmatrix}
\end{equation*}
we find such $X_0,...,X_n$.
\end{proof}
\begin{lemma}\label{hasse.inv.bruhat.stratum}
For any $x_1,x_2$ lying in the same Bruhat stratum, we have that
\begin{equation*}
    \ord_{x_1}(\ha(G,\mu))=\ord_{x_2}(\ha(G,\mu)).
\end{equation*}
\end{lemma}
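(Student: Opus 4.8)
The plan is to reduce the statement to the already-established fact that the order of vanishing is preserved under pullback by smooth morphisms, combined with the action-invariance of $\ha(G,\mu)$ up to a character. Concretely, fix $x_1,x_2$ in the same Bruhat stratum. By definition of the Bruhat stratification of $\gzip^{\mu}$ (equivalently, of $G$ under the $P\times Q$-action $(p,q)\cdot g = pgq^{-1}$), there exist $p\in P(k)$ and $q\in Q(k)$ with $x_2 = p x_1 q^{-1}$. Consider the automorphism $\tau_{p,q}\colon G\to G$, $g\mapsto p g q^{-1}$; it is an isomorphism of $k$-varieties, hence in particular smooth, and $\tau_{p,q}(x_1) = x_2$. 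Therefore $\ord_{x_1}(\tau_{p,q}^{*}\ha(G,\mu)) = \ord_{x_2}(\ha(G,\mu))$ by the last sentence of \S\ref{section.order.of.vanishing}.

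It remains to identify $\tau_{p,q}^{*}\ha(G,\mu)$ with $\ha(G,\mu)$ up to a nonvanishing scalar function on $G$, which does not affect orders of vanishing at closed points. From the explicit formula $\ha(G,\mu)\begin{bmatrix} A & B \\ C & D\end{bmatrix} = \det(A)$, I would compute the upper-left $n\times n$ block of $p g q^{-1}$. Writing $p = \begin{bmatrix} p_A & * \\ 0 & p_D\end{bmatrix}$ (block upper triangular, since $P = \stab_G\langle v_{n+1},\dots,v_{2n}\rangle$) and $q^{-1} = \begin{bmatrix} q'_A & 0 \\ * & q'_D\end{bmatrix}$ (block lower triangular, since $Q = \stab_G\langle v_1,\dots,v_n\rangle$), a direct block multiplication shows $(pgq^{-1})_A = p_A\, g_A\, q'_A$, because the off-diagonal blocks of $p$ and $q^{-1}$ only contribute to the other three blocks of the product. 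Hence
\begin{equation*}
\ha(G,\mu)(pgq^{-1}) = \det(p_A)\det(q'_A)\,\ha(G,\mu)(g),
\end{equation*}
so $\tau_{p,q}^{*}\ha(G,\mu) = c\cdot \ha(G,\mu)$ with $c = \det(p_A)\det(q'_A) \in k^{\times}$ a nonzero constant. Since multiplication by a nonzero constant preserves the order of vanishing at any closed point, we conclude $\ord_{x_2}(\ha(G,\mu)) = \ord_{x_1}(\tau_{p,q}^{*}\ha(G,\mu)) = \ord_{x_1}(\ha(G,\mu))$.

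The only mild subtlety — and the step I would be most careful about — is the block-multiplication claim $(pgq^{-1})_A = p_A g_A q'_A$: one must check that the parabolic $P$ really consists of block upper triangular matrices and $Q$ of block lower triangular ones in the chosen basis, and that $q\mapsto q^{-1}$ preserves block lower triangularity. This is immediate from $P = \stab_G\langle v_{n+1},\dots,v_{2n}\rangle$ and $Q = \stab_G\langle v_1,\dots,v_n\rangle$ and the fact that the stabilizer of a coordinate subspace (and of its complement) is block triangular, with the inverse of a block triangular matrix again block triangular of the same shape. Everything else is formal. Note that this lemma, together with Lemma \ref{bruhat.stratum.ranks} and the computation $\ord_x(\widetilde{\ha}(G,\mu)) = n - \rk(A)$, will let us pin down $\ord_x(\ha(G,\mu))$ on each Bruhat stratum by evaluating on the explicit representatives $X_i$ of Lemma \ref{bruhat.stratum.ranks}.
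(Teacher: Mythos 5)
Your approach is essentially the same as the paper's: apply the automorphism $\tau_{p,q}$ of $G$ given by left-right multiplication by the pair defining the Bruhat equivalence, use that an isomorphism preserves vanishing order, and show $\tau_{p,q}^*\ha(G,\mu)$ differs from $\ha(G,\mu)$ by the unit $\det(p_A)\det(q'_A)$.

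However, the step you flagged as the one to be careful about is precisely where you have the block shapes reversed. Since $P=\stab_G\langle v_{n+1},\dots,v_{2n}\rangle$ and $\mu(z)=\diag(zI_n,z^{-1}I_n)$ with $P$ the parabolic of non-positive $\mu$-weights, an element $p\in P$ has the form $\begin{bmatrix} p_A & 0 \\ p_C & p_D\end{bmatrix}$ (upper-right block vanishes, \emph{block lower triangular}), and dually $q\in Q=\stab_G\langle v_1,\dots,v_n\rangle$, as well as $q^{-1}$, has the form $\begin{bmatrix} q_A & q_B \\ 0 & q_D\end{bmatrix}$ (\emph{block upper triangular}). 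With your stated shapes ($p$ block upper triangular, $q^{-1}$ block lower triangular) the product does \emph{not} satisfy $(pgq^{-1})_A=p_A g_A q'_A$: one gets $(pgq^{-1})_A=(p_A g_A+p_B g_C)q'_A+(p_A g_B+p_B g_D)q'_C$, which does not factor through $g_A$. With the correct shapes one finds $pg=\begin{bmatrix} p_A g_A & p_A g_B \\ \ast & \ast\end{bmatrix}$ and then $(pgq^{-1})_A=p_A g_A q'_A$, which is what the argument needs. So the conclusion of your proof is correct and matches the paper, but the justification of the key block identity as written would fail; once the triangularities are flipped, everything goes through exactly as in the paper's proof.
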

\begin{proof}
Let $(p,q)\in P\times Q$ such that $px_1q = x_2$. Left and right multiplication $g\mapsto pgq$ gives an isomorphism of schemes $\psi_{p,q}\colon \GSp(2n)\to \GSp(2n)$. For any $x\in \GSp(2n)(k)$, we thus have that 
\begin{equation}\label{ord.psi1}
\ord_x(\psi_{p,q}^*\ha(G,\mu))=\ord_{\psi_{p,q}(x)}(\ha(G,\mu)).    
\end{equation}
We claim that 
\begin{equation}\label{ord.psi2}
\ord_x(\psi_{p,q}^*\ha(G,\mu))=\ord_x(\ha(G,\mu)).
\end{equation}
Indeed, if $k[\GSp(2n)]$ denotes the coordinate ring of $\GSp(2n)$, then $\ha(G,\mu)$ is the image of the element $\det(x_{ij})_{1\leq i,j\leq n}$ under the natural morphism $k[x_{ij}]_{1\leq i,j \leq 2n}\to k[\GSp(2n)]$  and $\psi_{p,q}^*\ha(G,\mu)$ is the image of the element $\det(p_Aq_A)\det(x_{ij})_{1\leq i,j\leq n}$. As they only differ by multiplication with $\det(p_Aq_A)\in k^{\times}$ we see that their order of vanishing is the same at all points. In particular, with $x=x_1$ we get from combining (\ref{ord.psi1}) and (\ref{ord.psi2}) that
\begin{equation}
    \ord_{x_2}(\ha(G,\mu)) = \ord_{\psi_{p,q}(x_1)}(\ha(G,\mu)) = \ord_{x_1}(\psi_{p,q}^*\ha(G,\mu)) = \ord_{x_1}(\ha(G,\mu)).
\end{equation}
%
\end{proof}
\begin{theorem}
\label{vanishing.siegel.case}
The vanishing order of $\ha(G,\mu)=\ha(G,\mu,\wedge^n \std^{\vee})$ precisely assumes the values $0,1,...,n$. Two points admit the same vanishing order if and only if they lie in the same Bruhat stratum.
\end{theorem}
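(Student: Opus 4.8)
The plan is to prove the sharp formula
\[
\ord_x(\ha(G,\mu))=n-\rk(A)\qquad\text{for every }k\text{-point }x=\begin{bmatrix}A&B\\ C&D\end{bmatrix}\text{ of }G=\GSp(2n),
\]
from which the theorem is immediate: the right-hand side attains every value in $\{0,1,\dots,n\}$ (for instance on the matrices $X_i$ appearing in the proof of \ref{bruhat.stratum.ranks}), and by \ref{bruhat.stratum.ranks} two points $x_1,x_2\in G(k)$ lie in the same Bruhat stratum exactly when $\rk(x_{1,A})=\rk(x_{2,A})$, i.e. exactly when $\ord_{x_1}(\ha(G,\mu))=\ord_{x_2}(\ha(G,\mu))$.

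One inequality is purely formal. Since $G\subset\AA^{(2n)^2}$ is closed and $\ha(G,\mu)$ is the restriction of $\widetilde{\ha}(G,\mu)$, the surjection $\calo_{\AA^{(2n)^2},x}\twoheadrightarrow\calo_{G,x}$ carries the maximal ideal onto the maximal ideal, hence carries $\fkm_x^{m}$ onto a subset of $\fkm_x^{m}$ for every $m$. Thus $\widetilde{\ha}(G,\mu)\in\fkm_x^{m}$ forces $\ha(G,\mu)\in\fkm_x^{m}$, so
\[
\ord_x(\ha(G,\mu))\ \geq\ \ord_x(\widetilde{\ha}(G,\mu))\ =\ n-\rk(A),
\]
using the rank formula for $\widetilde{\ha}(G,\mu)$ recalled just before the theorem.

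For the reverse inequality, by \ref{hasse.inv.bruhat.stratum} it suffices to check it at a single representative of each Bruhat stratum, and by \ref{bruhat.stratum.ranks} I would use the matrices $X_i=\begin{bmatrix}A_i&-J\\ J&0\end{bmatrix}$ with $A_i=\diag(1,\dots,1,0,\dots,0)$ ($i$ ones), so that $\rk(X_{i,A})=i$ and the goal is $\ord_{X_i}(\ha(G,\mu))\leq n-i$. The key idea is to produce an explicit formal arc through $X_i$ on which $\ha(G,\mu)$ has valuation exactly $n-i$. Setting $D_t=\diag(1,\dots,1,t,\dots,t)$ ($i$ ones, then $n-i$ copies of $t$) and $\gamma(t)=\begin{bmatrix}D_t&-J\\ J&0\end{bmatrix}$, a one-line computation using that $D_t$ is symmetric and $J^2=I$ shows $\gamma(t)^{T}\Omega\gamma(t)=\Omega$ for $\Omega=\begin{bmatrix}0&-J\\ J&0\end{bmatrix}$, so $\gamma$ factors through $\Sp(2n)\subset G$, with $\gamma(0)=X_i$. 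Pulling back along $\Spec k[[t]]\to\AA^1$ centered at $0$ and using $\ha(G,\mu)(\gamma(t))=\det(D_t)=t^{n-i}$, one gets $\ha(G,\mu)\notin\fkm_{X_i}^{\,n-i+1}$, i.e. $\ord_{X_i}(\ha(G,\mu))\leq n-i$. Combined with the formal inequality, $\ord_{X_i}(\ha(G,\mu))=n-i$, and \ref{hasse.inv.bruhat.stratum} together with \ref{bruhat.stratum.ranks} then propagates $\ord_x(\ha(G,\mu))=n-\rk(A)$ to all of $G(k)$.

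The only delicate point — and the first thing I would verify — is that the arc $\gamma(t)$ genuinely lies in $\GSp(2n)$ (with trivial multiplier) and passes through the chosen representative $X_i$; everything else is bookkeeping with local rings and the already-known rank formula. A more conceptual alternative would be to show that the block projection $X\mapsto A$ defines a smooth morphism $G\to\AA^{n^2}$ and invoke the compatibility of the order of vanishing with smooth pullback from \S\ref{section.order.of.vanishing}; but checking smoothness at points where $A$ is singular requires a tangent-space computation inside $\GSp(2n)$ that the explicit arc circumvents entirely.
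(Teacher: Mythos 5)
Your proposal is correct and takes essentially the same approach as the paper: both establish the explicit formula $\ord_x(\ha(G,\mu))=n-\rk(A)$ by squeezing between the lower bound coming from restricting $\widetilde{\ha}$ from $\AA^{(2n)^2}$ to $G$ and the upper bound coming from pulling back along an auxiliary map through $X_i$ on which $\ha$ is a monomial, and both reduce to the representatives $X_i$ via Lemmas~\ref{bruhat.stratum.ranks} and~\ref{hasse.inv.bruhat.stratum}. Your arc $\gamma(t)=\begin{bmatrix}D_t&-J\\ J&0\end{bmatrix}$ is exactly the restriction of the paper's map $\psi\colon\AA^n\to\GSp(2n)$ to the line $t\mapsto(1,\dots,1,t,\dots,t)$, so the two arguments differ only in whether the pullback computation is done on all of $\AA^n$ at once or stratum by stratum along a coordinate arc.
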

\begin{proof}
By \ref{bruhat.stratum.ranks} and \ref{hasse.inv.bruhat.stratum} it suffices to find points $x_0, ..., x_n \in \GSp(2n,k)$ lying in pairwise distinct Bruhat strata such that $\ord_{x_i}(\ha(G,\mu))=n-i$. Let $\psi\colon \AA^n \to \GSp(2n)$ denote the composition of the morphisms
\begin{equation*}
\begin{aligned}
    \AA^n &\hookrightarrow \prod_{i=1}^{n} \SL(2) \\
    (a_1, ..., a_n)&\mapsto \Big(\begin{bmatrix} a_1 & -1 \\ 1 & 0 \end{bmatrix}, ...,
    \begin{bmatrix}
        a_n & -1 \\
        1 & 0
    \end{bmatrix}\Big)
\end{aligned}
\end{equation*}
and
\begin{equation*}
    \begin{aligned}
        \prod_{i=1}^{n} \SL(2) &\hookrightarrow \GSp(2n) \\
        \Big(\begin{bmatrix}
            a_1 & b_1 \\ c_1 & d_1
        \end{bmatrix}, ...
        \begin{bmatrix}
            a_n & b_n \\ c_n & d_n
        \end{bmatrix}\Big) &\mapsto \begin{bmatrix}
            \diag(a_1,...,a_n) & \diag(b_1,...,b_n)J \\
            \diag(c_n,...,c_1)J & \diag(d_n, ..., d_1)
        \end{bmatrix}.
    \end{aligned}
\end{equation*}
Let $\iota\colon \GSp(2n)\hookrightarrow \AA^{(2n)^2}$ denote the inclusion. 
For every $a\in \AA^n(k)$, 
\begin{equation}\label{inequalities.squeezing.ha}
    \ord_a(\psi^*\ha(G,\mu)) \geq \ord_{\psi(a)}(\ha(G,\mu)) \geq \ord_{\iota(\psi(a))}(\widetilde{\ha}(G,\mu)).
\end{equation}
Since $\psi^*\ha(G,\mu)$ is given by $a \mapsto \prod_i a_i$,
\begin{equation*}
    \ord_a(\psi^*\ha(G,\mu)) = \Big|\{ i : a_i = 0 \}\Big| = n-\rk(\psi(a)_A) = \ord_{\iota(\psi(a))}(\widetilde{\ha}(G,\mu)).
\end{equation*}
So equality holds in 
\eqref{inequalities.squeezing.ha}. 
By \ref{bruhat.stratum.ranks}, the images $x_0=\psi(0)$ and $x_i\coloneqq \varphi(e_1+ \ldots +e_i)$ have the desired property.
\end{proof}
\begin{theorem}[\ref{th-main}\ref{item-GSp}]\label{corollary: main theorem siegel case}
The triple $(G,\mu,\wedge^n\std^{\vee})$ satisfies Ogus' principle.
\end{theorem}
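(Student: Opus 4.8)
The plan is to combine the computation of the vanishing order of the Hasse invariant in the Siegel case, carried out in \Cref{vanishing.siegel.case}, with a parallel computation of the conjugate line position, and to observe that the two match stratum by stratum. Since both invariants are constant on Bruhat strata (for the Hasse vanishing order this is \Cref{hasse.inv.bruhat.stratum}; for the conjugate line position it is immediate from its definition~\eqref{eq-intro-clp}), it suffices to compare the two functions $G \to \ZZ$ on a set of representatives of the Bruhat strata, equivalently on the finite poset ${}^{I}W^{J}$, or even just on the explicit representatives $x_0, \ldots, x_n$ exhibited in the proof of \Cref{vanishing.siegel.case}.

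\medskip

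First I would recall that by \Cref{bruhat.stratum.ranks} the Bruhat strata of $\GSp(2n)$ are exactly the loci where $\rk(X_A)$ is constant, so there are $n+1$ of them, indexed by $\rk(X_A) \in \{0, 1, \ldots, n\}$, and \Cref{vanishing.siegel.case} says $\ord_x \ha(G,\mu) = n - \rk(X_A)$. Next I would compute $\clp_x(G,\mu,\wedge^n \std^{\vee})$ on the same strata. The universal $F$-zip underlying $(G,\mu,\wedge^n \std^{\vee})$ is $\wedge^n$ of the standard $F$-zip $\underline{\Vscr}$ of rank $2n$; here the Hodge filtration $\fil^1_{\Hdg}\Vscr_x = \langle v_{n+1}, \ldots, v_{2n}\rangle$ is Lagrangian and the conjugate line of $\wedge^n\Vscr_x$ is $\wedge^n$ of the conjugate hyperplane of $\Vscr_x$, just as in the proof of \Cref{proposition: clp in unitary case arbitrary signature}. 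Unwinding~\eqref{eq-intro-clp} exactly as in that proof (and as in \cite[Section 2.6]{moonen.wedhorn} using the standard $F$-zips), $\clp_x$ of $\wedge^n\Vscr_x$ counts how many basis vectors of a Frobenius-adapted basis of the conjugate Lagrangian lie inside $\fil^1_{\Hdg}\Vscr_x$; a short linear-algebra argument identical in spirit to \Cref{proposition: clp in unitary case arbitrary signature} shows this count equals $n - \rk(X_A)$ for a representative $X$ of the Bruhat stratum. Concretely, I expect to verify this directly on the representatives $X_i = \begin{bmatrix} A_i & -J \\ J & 0\end{bmatrix}$ from \Cref{bruhat.stratum.ranks}, for which the Frobenius-linear map and the Hodge Lagrangian are both diagonal, so the count is visibly $n - i = n - \rk(X_{i,A})$.

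\medskip

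Having matched $\ord_x \ha(G,\mu) = n - \rk(X_A) = \clp_x(G,\mu,\wedge^n\std^{\vee})$ on each Bruhat stratum, and knowing both sides are Bruhat-stratum-constant, the identity~\eqref{eq-ogus-principle} follows for all $x \in \GZip^{\mu}(k)$, which is exactly the assertion of the theorem. Alternatively, and perhaps more cleanly, once the conjugate line position is shown to be $n - \rk(X_A)$ one can simply invoke \Cref{vanishing.siegel.case} as a black box and conclude. I expect the main obstacle to be the bookkeeping in the conjugate-line-position computation: translating the graded-pieces description of $\wedge^n$ of an $F$-zip (as in \Cref{proposition: clp in unitary case arbitrary signature}) into a clean count in terms of $\rk(X_A)$, and making sure the Frobenius-adapted basis argument is airtight in the presence of the symplectic (rather than merely $\GL$) structure — in particular that the conjugate filtration of the standard $\GSp$-$F$-zip is the annihilator of the Hodge filtration, so its line is determined by its hyperplane. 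Everything else is either already proved in the excerpt or a routine diagonal computation on the representatives $X_i$.
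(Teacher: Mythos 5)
Your proposal is correct in outline and matches the paper's high-level strategy (compare the vanishing order from Theorem~\ref{vanishing.siegel.case} with the conjugate line position, stratum by stratum). However, for the $\clp$ side the paper takes a much shorter route: it simply cites \cite[Example 2.4]{wedhorn.bruhat2}, where Wedhorn already records that the invariant $\dim(\fil^1_{\Hdg}\cap \fil_{0}^{\Conj})$ (the classical $a$-number) takes the values $0,1,\ldots,n$ and indexes the Bruhat strata of $\GSp(2n)$. Your plan instead recomputes this from scratch by applying the $\wedge^n$ graded-pieces formula (as in Proposition~\ref{proposition: clp in unitary case arbitrary signature}) and a Frobenius-adapted basis argument, landing on $\clp_x = \dim(C\cap\fil^1_{\Hdg}) = n - \rk(X_A)$, which indeed agrees with $\ord_x\ha(G,\mu)$ from Theorem~\ref{vanishing.siegel.case}. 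That is a valid and more self-contained argument; the price is the extra bookkeeping that the paper avoids by citing the literature. Two small corrections: early on you say ``$\wedge^n$ of the conjugate hyperplane'' --- in the Siegel case the conjugate subspace is the rank-$n$ Lagrangian, not a hyperplane (you correct this later); and your worry that ``the conjugate filtration is the annihilator of the Hodge filtration'' is a red herring --- the Hodge and conjugate Lagrangians are each self-annihilating but generally not annihilators of one another, and the computation of $C\cap\fil^1_{\Hdg}$ via the first $n$ columns of $X$ does not need this.
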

\begin{proof}
This follows from \ref{vanishing.siegel.case} and \cite[Example 2.4]{wedhorn.bruhat2}.
\end{proof}
\subsection{Application to Siegel modular varieties}
\begin{corollary} 
The Siegel modular variety $S_{g,K}$~\eqref{sec-intro-Hodge-type} satisfies Ogus' Principle. 
The order of vanishing of $\ha_{S_{g,K}}$  exactly assumes the values $0,1,...,n$. Two points have the same order of vanishing if and only if they lie in the same Bruhat stratum.
\end{corollary}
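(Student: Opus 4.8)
The plan is to deduce the corollary from the stack-level results \ref{vanishing.siegel.case} and \ref{corollary: main theorem siegel case} by pulling back along the Zip period map, exactly as in the orthogonal and unitary cases treated above. Recall from \S\ref{sec-intro-Hodge-type} that $S_{g,K}$ carries a Zip period map $\zeta_K\colon S_{g,K}\to\gzip^{\mu}$ which is smooth (Zhang) and surjective (Kisin--Madapusi Pera--Shin), with $G=\GSp(2g)$ and $\mu$ as in \S\ref{siegel.case} (so that $n=g$), and that $\ha_{S_{g,K}}=\zeta_K^{*}\ha(G,\mu,\wedge^{g}\std^{\vee})=\zeta_K^{*}\ha(G,\mu)$ is the classical Hasse invariant of $S_{g,K}$.

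First I would record the two compatibilities under $\zeta_K$: since $\zeta_K$ is smooth, the last sentence of \S\ref{section.order.of.vanishing} gives $\ord_{s}(\ha_{S_{g,K}})=\ord_{\zeta_K(s)}(\ha(G,\mu))$ for every $s\in S_{g,K}(k)$, and the conjugate line position is invariant under pullback (\S\ref{sec-general-ogus}), whence $\clp_{s}(G,\mu,\wedge^{g}\std^{\vee})=\clp_{\zeta_K(s)}(G,\mu,\wedge^{g}\std^{\vee})$. Feeding these into Ogus' Principle on $\gzip^{\mu}$ (\ref{corollary: main theorem siegel case}) yields $\ord_{s}(\ha_{S_{g,K}})=\clp_{s}(G,\mu,\wedge^{g}\std^{\vee})$ for all $s$, i.e. Ogus' Principle for $S_{g,K}$. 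Next, by \ref{vanishing.siegel.case} the vanishing order of $\ha(G,\mu)$ takes precisely the values $0,1,\dots,n$ on $\gzip^{\mu}(k)$; since $\zeta_K$ is surjective on $k$-points and vanishing orders match under $\zeta_K^{*}$, the function $s\mapsto\ord_{s}(\ha_{S_{g,K}})$ assumes exactly these values on $S_{g,K}(k)$.

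Finally I would match the Bruhat stratifications. By the definition of $X_{w}^{\bruh}$ in \S\ref{background.gzips}, the Bruhat stratum of $s\in S_{g,K}(k)$ is a fibre of $\pizipb\circ\zeta_K$, so two $k$-points $s,s'$ of $S_{g,K}$ lie in the same Bruhat stratum if and only if $\zeta_K(s)$ and $\zeta_K(s')$ lie in the same Bruhat stratum of $\gzip^{\mu}$. By \ref{vanishing.siegel.case} (through \ref{bruhat.stratum.ranks}), on $\gzip^{\mu}$ two points carry the same vanishing order of $\ha(G,\mu)$ if and only if they lie in a common Bruhat stratum; transporting this equivalence through $\zeta_K$ via $\ord_{s}(\ha_{S_{g,K}})=\ord_{\zeta_K(s)}(\ha(G,\mu))$ gives the last assertion. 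The whole argument is a formal consequence of the smoothness and surjectivity of $\zeta_K$, so I anticipate no genuine difficulty; the only point needing a line of care is the identification of the Bruhat stratification of $S_{g,K}$ with the pullback of that of $\gzip^{\mu}$, which is immediate from $X_{w}^{\bruh}=(\pizipb\circ\zeta_K)^{-1}(w)$.
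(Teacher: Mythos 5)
Your proposal is correct and follows precisely the approach the paper intends: the paper's proof is a one-line citation of \ref{corollary: main theorem siegel case} and \ref{vanishing.siegel.case}, and the implicit argument is exactly the pullback mechanism you spell out (smoothness of $\zeta_K$ preserves vanishing order by \S\ref{section.order.of.vanishing}, $\clp$ is invariant under pullback, surjectivity gives that all values are attained, and the Bruhat stratification of $S_{g,K}$ is by definition the preimage under $\pizipb\circ\zeta_K$). You have merely filled in the routine details the authors leave to the reader.
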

\begin{proof}
This follows from \ref{corollary: main theorem siegel case} and \ref{vanishing.siegel.case}.
\end{proof}
\subsection{Beyond Shimura varieties: The \texorpdfstring{$(\Csf_n, \Csf_{n-1})$}{(Cn,Cn-1)}-case}
Let $G=\Sp(2n)$ and let $\mu=e_1$ be the fundamental weight corresponding to the simple root $e_1-e_2$. The pair $(W, W_I)$ is the same as in the orthogonal $\Bsf_n$-case. Neither the pair $(G,\mu)$, nor the adjoint pair $(G^{\ad}, \mu^{\ad})$ arises from a Shimura variety, because $\mu^{\ad}$ is cominuscule but not minuscule.
\begin{theorem}[\ref{th-intro-Cn-Cn-1}]
\label{th-Cn-Cn-1}
Let $G=\Sp(2n)$, $\mu=e_1$ and
 and $r=\std$. Then Ogus' holds for $(G,\mu,r)$ at all strata but the minimal one. More precisely, let $w \in \iw$ and $ x \in \GZip^{\mu, \EO}_w(k)$. Then:
 \begin{enumerate}
\item If  $w \neq e,z$, then Ogus' Principle holds: $\ord_x \ha(G,\mu,r) = \clp_x(G,\mu,r)=1$;
\item  If $w=z$, then Ogus' Principle holds:     $\ord_x \ha(G,\mu,r) = \clp_x(G,\mu,r)=0$;
\item If $w=e$, then $\ord_x \ha(G,\mu,r)=1$ but $\clp_x(G,\mu,r)=2$ so Ogus' Principle fails but the inequality~\eqref{eq-intro-ineq} holds.
 \end{enumerate}
 In particular, the divisor of the Hasse invariant $\ha(G,\mu,r)$ is reduced.
 \end{theorem}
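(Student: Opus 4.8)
The strategy is to compute the two invariants $\ord_x \ha(G,\mu,r)$ and $\clp_x(G,\mu,r)$ separately at each Ekedahl--Oort stratum and compare, exactly as in the orthogonal cases of \S\ref{sec-orthogonal-case}. First I would record the combinatorics: for $G=\Sp(2n)$, $\mu=e_1$, the Weyl group $W$, the parabolic types $I$ and $J$, and the poset $\iw$ coincide with those of the orthogonal $\Bsf_n$-case treated in \S\ref{section: bruhat stratification orthogonal case}. In particular $\iw$ is totally ordered, $z=w_0w_{0,J}$ is the longest element, and a minimal representative of length $d$ is given (as in \eqref{e.o.stratification.odd.orthogonal}) by a product of the simple reflections $s_1,\dots,s_m,s_{m-1},\dots$. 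The difference with type $\Bsf_n$ is only in the pairings $\langle\lambda,\alpha^\vee\rangle$ for the last simple root, which is long rather than short; this is where the factor of $2$ that spoils Ogus' Principle at the closed stratum will ultimately come from.

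Next, for the order of vanishing: $r=\std$ is of CY-type with highest weight $e_1$, so $\ha(G,\mu,r)\in H^0(\GZip^\mu,\Lcal_{\GZip}((p-1)\eta))$ with $\eta=-e_1$. Following \S\ref{general.strategy}, since $w_{0,J}e_1=e_1$ and $\sigma$ fixes $e_1$, one has $D_{w_0}(e_1)=(p-1)\eta$, and $w_0 e_1=-e_1$, so the vanishing order of $\ha(G,\mu,r)$ on the stratum containing $w_0 w z$ equals the order of $f_{e_1}\in H^0(e_1)_{e_1}$ on $B^+wB$. I would then apply Theorem \ref{distinct.simple} and Proposition \ref{case.n=1} (after checking Condition \ref{cond-closed}, whose verification is essentially identical to the $\Bsf_m$ computation of \S\ref{assumption that subset of roots is closed}), keeping track of the pairings $\langle e_1,\alpha_i^\vee\rangle$. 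For every non-identity $w$ one gets a term $\langle e_1,\beta_1^\vee\rangle=1$ contributing, and the remaining terms vanish because $\langle e_1,\alpha_i^\vee\rangle=0$ for the relevant $\alpha_i$, giving order $1$ on all strata except the open one, where it is $0$; and on the minimal stratum (``no $\beta_j$'s'') the same computation yields order $1$, because the repeated-reflection contribution $\langle e_1,\alpha^\vee\rangle\min\{2,-\langle\alpha,\beta^\vee\rangle\}$ still vanishes. Then an analogue of Lemma \ref{lemma: from sbt to gzips in orthogonal case} transfers this from translated Schubert cells to the actual zip strata; here $w_0=-1$ is central (since $\Sp(2n)$ has $-1$ in its Weyl group), so the argument is even cleaner than in the even orthogonal case.

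For the conjugate line position I would translate the standard $F$-zip dictionary of Moonen--Wedhorn \cite[\S2.6]{moonen.wedhorn} as in the proof of Theorem \ref{theorem: conjugate line position orthogonal case}: the type of the $F$-zip attached to $\std$ is $(1,2n-2,1)$ — now $r$ is \emph{self-dual} under the symplectic form — and the conjugate line position at a point indexed by $w\in\iw$ is read off as whether the image of the conjugate line lands in $\fil^1_{\Hdg}$, $\fil^0_{\Hdg}$, or neither, which is controlled by whether $1\in\{w(1)\}$, $\{w(1),\dots,w(2n-1)\}$. For $w$ the longest element one gets $\clp=0$; for intermediate $w$ one gets $\clp=1$; and for $w=\mathrm{id}$ one gets $\clp=2$. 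Comparing: the orders computed above are $0,1,\dots,1,1$ along the chain, whereas the conjugate line positions are $0,1,\dots,1,2$ — agreement everywhere except the minimal stratum, where the order is $1$ but $\clp=2$. The reducedness of the Hasse divisor is then immediate: the order is $\le 1$ at every point, and the divisor of $\ha(G,\mu,r)$ reduces to Chevalley's formula \cite[Theorem 2.2.1]{goldring.koskivirta.invent}, hence it is supported on the codimension-one stratum closure $\overline{\GZip^\mu_{s_1}}$ with multiplicity $1$. The step I expect to be the real obstacle is the precise computation of $\clp$ at the minimal stratum: one must confirm that the conjugate line of the universal $\Sp(2n)$-zip at the closed point genuinely lies in $\fil^1_{\Hdg}$ (not merely $\fil^0$), which forces $\clp=2$ despite the Hasse invariant vanishing only simply — i.e. one must carefully pin down the standard $F$-zip structure at $w=\mathrm{id}$ and check that the permutation matrix associated with $w_{0,I,J}^{-1}$ sends the conjugate line into the deepest Hodge step, and contrast this with the symplectic-form constraint that makes it differ from the $\Bsf_n$-case.
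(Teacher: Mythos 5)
Your overall plan matches the paper's: the proof is to redo the $(\Bsf_n,\Bsf_{n-1})$ computation verbatim, noting that the poset $\iw$ and the Bruhat combinatorics are identical and that the only change is in certain root pairings. The paper's own proof is a one-liner saying exactly this: ``the root-pairings with value $2$ in that case are $1$ here.''

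However, your stated reason for $\ord_x\ha=1$ at the minimal stratum is wrong and in fact inconsistent with your own conclusion. You write that ``the repeated-reflection contribution $\langle e_1,\alpha^\vee\rangle\min\{2,-\langle\alpha,\beta^\vee\rangle\}$ still vanishes.'' If on the minimal stratum there are no $\beta_j$'s (so no $\langle e_1,\beta_1^\vee\rangle$ term) and the repeated-reflection contribution \emph{vanishes}, the total order would be $0$, not $1$. The actual mechanism is the one the paper alludes to: in type $\Csf_n$ the last simple root is $\gamma=2e_n$ with $\gamma^\vee=e_n$, so $\langle\alpha_1,\gamma^\vee\rangle=\langle e_{n-1}-e_n,e_n\rangle=-1$ rather than $-2$; then \eqref{order.of.vanishing.Ei} gives $\ord(E_i)=1$ for all $i$ (instead of $2$), and Theorem \ref{main.theorem.highest.weight.vector.typeBn} with $\lambda=e_1$, no $\beta_j$'s, and $\alpha_{n-1}=e_1-e_2$ yields
\[
\ord(f_\lambda)=\langle e_1,\gamma^\vee\rangle+\sum_i\langle e_1,\alpha_i^\vee\rangle\ord(E_i)=0+1\cdot 1=1.
\]
So the contribution does \emph{not} vanish; it becomes $1$ rather than $2$. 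Fixing this phrase makes your argument coincide with the paper's, and the rest of the plan (transferring from $B^+wB$ via $w_0$-translation and Lemma \ref{lemma: from sbt to gzips in orthogonal case}, noting $w_0=-1$ is central in type $\Csf_n$; computing $\clp$ via the Moonen--Wedhorn standard $F$-zips of type $(1,2n-2,1)$ on $\GL(2n)$, getting $0,1,2$ at the dense, intermediate, and closed strata; deducing reducedness because $\ord\le1$ everywhere) is sound and follows the same route as the paper.
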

\begin{proof}
    This follows in the same way as the $(\Bsf_n, \Bsf_{n-1})$ case~\ref{corollary: ogus principle for Bn}, except that the root-pairings with value $2$ in that case are $1$ here.
\end{proof}
\begin{remark}
It is interesting that the Hasse invariant exhibits more regular behavior in the non-Shimura case $(\Csf_n,\Csf_{n-1})$ than in the case $(\Bsf_n, \Bsf_{n-1})$ which does arise from orthogonal Shimura varieties.
   
\end{remark}
%
%
%
%
%
%
%
%
%
%
%
%
%
%
%
%
\printbibliography 
\end{document}